\theoremstyle{plain}
\newtheorem{theorem}{Theorem}[section]
\newtheorem{lemma}[theorem]{Lemma}
\newtheorem{proposition}[theorem]{Proposition}
\newtheorem{corollary}[theorem]{Corollary}
\theoremstyle{definition}
\newtheorem{definition}[theorem]{Definition}
\newtheorem{example}[theorem]{Example}
\theoremstyle{remark}
\newtheorem{remark}[theorem]{Remark}
\DeclareMathOperator{\End}{End}
\DeclareMathOperator{\Ind}{Ind}
\newcommand{\eqtriv}{\iota}
\newcommand{\C}{\mathcal{C}}
\newcommand{\D}{\mathcal{D}}
\newcommand{\E}{\mathcal{E}}
\newcommand{\Z}{\mathcal{Z}}
\newcommand{\cL}{\mathcal{L}}
\newcommand{\M}{\mathcal{M}}
\newcommand{\N}{\mathcal{N}}
\newcommand{\R}{\mathcal{R}}
\newcommand{\dS}{\mathbb{S}}
\newcommand{\bA}{\mathbf{A}}
\newcommand{\dN}{\mathds{N}}
\newcommand{\bS}{\mathbf{S}}
\newcommand{\fp}{\mathfrak{p}}
\newcommand{\fq}{\mathfrak{q}}
\newcommand{\forg}{\mathrm{forg}}
\newcommand{\zZ}{\mathbb{Z}}
\newcommand{\Ztwo}{\mathbb{Z}/2\mathbb{Z}}
\def\Map        {\mathrm{Map}}
\def\fib        {\mathrm{fib}}
\newcommand{\kk}{\Bbbk}
\newcommand{\bt}{\boxtimes}
\newcommand{\btC}{\bt_{\C}}
\newcommand{\btD}{\bt_{\D}}
\newcommand{\id}{\textnormal{\textsf{Id}}}
\newcommand{\Rep}{\textnormal{\textsf{Rep}}}
\newcommand{\Hom}{{\sf Hom}}
\newcommand{\unit}{\mathds{1}}
\newcommand{\Vect}{\mathsf{Vec}}
\newcommand{\uHom}{\underline{\sf Hom}}
\newcommand{\Rex}{{\sf Rex}}
\newcommand{\ra}{{\sf ra}}
\newcommand{\rra}{{\sf rra}}
\newcommand{\tl}{\,\triangleright\,}
\newcommand{\tr}{\,\triangleleft\,}
\newcommand{\op}{\textnormal{op}}
\newcommand{\piv}{\textnormal{piv}}
\newcommand{\sph}{\textnormal{sph}}
\def\HH            {{}^{\#\!\#\!\!}} 
\newcommand{\Fun}{\textnormal{Fun}}
\newcommand{\Aut}{{\bf Aut}}
\newcommand{\MonAut}{{\bf MonAut}}
\newcommand{\BrPic}{{\bf BrPic}}
\newcommand{\Ext}{{\bf Ext}}
\newcommand{\MonFun}{{\bf MonFun}}
\newcommand{\PivExt}{{\bf PivExt}}
\newcommand{\SphExt}{{\bf SphExt}}
\newcommand{\PivBrPic}{{\bf PivBrPic}}
\newcommand{\SphBrPic}{{\bf SphBrPic}}
\newcommand{\Inv}{{\mathrm{Inv}}}
\newcommand{\bG}{\bf G}
\newcommand{\uG}{\underline{G}}
\newcommand{\uuG}{\,\underline{\uG}\,}
\newcommand{\tfp}{\widetilde{\mathfrak{p}}}
\newcommand{\tfq}{\widetilde{\mathfrak{q}}}
\newcommand{\oC}{\overline{\mathcal{\C}}}
\newcommand{\oD}{\overline{\mathcal{\D}}}
\newcommand{\odS}{\overline{\dS}}
\definecolor{darkgreen}{RGB}{55,138,0}
\definecolor{burntorange}{RGB}{180,85,0}
\definecolor{navyblue}{RGB}{18,40,180}
\definecolor{cyan(process)}{rgb}{0.0, 0.6, 1.0}
\DeclareFontFamily{U}{mathc}{}
\DeclareFontShape{U}{mathc}{m}{it}%
{<->s*[1.03] mathc10}{}
\DeclareMathAlphabet{\mathcal}{U}{mathc}{m}{it}
\definecolor{dmitri}{HTML}{AF72B0}
\begin{document}

\author{Agustina Czenky}
\address{Department of Mathematics, University of Southern California, Los Angeles, CA.}
\email{czenky@usc.edu}

\author{David Jaklitsch}
\address{Department of Mathematics, University of Oslo,
 Moltke Moes vei 35, Niels Henrik Abels hus, 0851 Oslo}
\email{dajak@uio.no}

\author{Dmitri Nikshych}
\address{Department of Mathematics and Statistics,
University of New Hampshire,  Durham, NH 03824, USA}
\email{dmitri.nikshych@unh.edu}

\author{Julia Plavnik}
\address{Department of Mathematics, Indiana University, Bloomington, Indiana, 47405, USA, \& Department of Mathematics and Data Science, Vrije Universiteit Brussel, Pleinlaan 2, 1050 Brussels, Belgium}
\email{jplavnik@iu.edu}

\author{David Reutter}
\address{Universität Hamburg, Fachbereich Mathematik, 
Bundesstraße 55, 20146 Hamburg}
\email{david.reutter@uni-hamburg.de}

\author{Sean Sanford}
\address{School of Mathematics, The University of Edinburgh, Edinburgh, UK EH9 3FD}
\email{ssanford@ed.ac.uk}

\author{Harshit Yadav}
\address{Department of Mathematics, University of Alberta, Edmonton, AB, Canada T6G 2G1}
\email{hyadav3@ualberta.ca}

\onehalfspacing

\begin{abstract}
We develop pivotal and spherical versions of graded extension theory. We define the corresponding analogues of Brauer-Picard $2$-categorical groups and realize them as fixed points of natural $\zZ$ and $\zZ/2\zZ$ $2$-categorical actions. We classify pivotal graded extensions of a pivotal tensor category by monoidal $2$-functors into the pivotal Brauer-Picard $2$-categorical group. A similar statement is proven for spherical (unimodular) tensor categories. 
We also develop an obstruction theory for determining when pivotal and spherical structures can be extended.
\end{abstract}

\subjclass[2020]{16T05, 18M15, 18M20}
\title{{Pivotal Brauer-Picard groupoids and graded extensions}}
\maketitle

\setcounter{tocdepth}{1}
\numberwithin{equation}{section}
\tableofcontents


\section{Introduction}
A pivotal structure on a tensor category $\C$ is a monoidal natural isomorphism
$\fp\colon \id_{\C} \xRightarrow{~\sim~} (-)^{**}$ between the identity endofunctor and the double-dual functor.
Such structures and their variants were introduced by many authors, including
\cite{freyd1989braided,reshetikhin1990ribbon,barrett1999spherical}.
In the semisimple (fusion) setting, a pivotal structure yields left and right categorical traces, and the classical notion of sphericality of Barrett and Westbury \cite{barrett1999spherical} asks that these traces coincide.
This condition underlies state-sum constructions such as the Turaev--Viro invariant and provides the trace formalism used in topological field theory; see \cite{muger2003subfactors} for a review.

In the non-semisimple setting, categorical traces frequently vanish (for instance on projective objects), so trace-based formulations of sphericality do not capture the structure one needs in applications.
For this reason we use the definition of sphericality due to Douglas--Schommer-Pries--Snyder \cite{douglas2018dualizable}, which is tailored to finite tensor categories: for a unimodular pivotal tensor category, sphericality is expressed as a compatibility between the pivotal structure and the canonical trivialization of the quadruple dual coming from the Radford isomorphism.
In the fusion case, this notion agrees with the trace-based notion of sphericality \cite{douglas2018dualizable}.
This sphericality condition is an important input to recent constructions of non-semisimple $3$-manifold invariants and $(2+1)$-dimensional TQFTs, for instance in \cite{costantino2023non}. Moreover, the Drinfeld centers of such spherical categories are modular tensor categories \cite{shimizu2023ribbon}.

Graded extensions by finite groups provide a systematic way to build new tensor categories from old ones.
Given a finite group $G$, a $G$-graded extension of a tensor category $\C$ is a $G$-graded tensor category $\D=\bigoplus_{g\in G}\D_g$ whose trivial component is (monoidally) equivalent to $\C$.
Such extensions are an important tool for constructing new tensor categories and for organizing symmetry phenomena, including gauging in topological phases and orbifold constructions for vertex operator algebras.
Graded extensions are classified by monoidal $2$-functors into the Brauer-Picard $2$-group $\BrPic(\C)$ \cite{etingof2010fusion,davydov2021braided}.
The purpose of this article is to construct new spherical tensor categories via the $G$-extension procedure: starting from a pivotal or spherical $\C$, we study when a given $G$-graded extension $\D$ admits a compatible pivotal or spherical structure, and we develop obstructions and classification results for such structures.
In particular, the obstruction-theoretic viewpoint gives concrete criteria for when a graded extension admits no compatible pivotal structure, which may be useful in the search for fusion categories that are not pivotal.

\subsection*{Main results}

\subsubsection*{Classification of pivotal and spherical extensions}
We introduce pivotal and spherical analogues of the Brauer--Picard $2$-group, denoted $\PivBrPic(\C)$ and $\SphBrPic(\C)$.
Their role is to classify graded extensions equipped with compatible pivotal or spherical structures by the same mechanism as in the non-pivotal theory, but with the extra structure built in.
Conceptually, an object of $\PivBrPic(\C)$ is an invertible $\C$-bimodule category together with a trivialization of its relative Serre functor (called a \emph{pivotal structure}), which is a module category analogue of the double dual functor.
Similarly, an object of $\SphBrPic(\C)$ is such a pivotal bimodule category satisfying an additional \emph{sphericality} constraint encoded by a bimodule analogue of the Radford isomorphism (in the unimodular setting), matching the notion of sphericality for finite tensor categories in \cite{douglas2018dualizable}.

We organize these refinements using fixed-point constructions.
Using relative Serre functors and the pivotal structure on $\C$, we construct a canonical monoidal $B\underline{\zZ}$-action on $\BrPic(\C)$ and identify $\PivBrPic(\C)$ with the (homotopy) fixed points of this action.
In the unimodular setting, bimodule Radford isomorphisms similarly determine a monoidal $B\underline{\Ztwo}$-action on $\BrPic(\C)$, and the spherical refinement is again recovered as fixed points:
\[
\PivBrPic(\C)\simeq \BrPic(\C)^{B\underline{\zZ}}
\qquad\text{and}\qquad
\SphBrPic(\C)\simeq \BrPic(\C)^{B\underline{\Ztwo}}.
\]
These fixed-point descriptions are compatible with the ENO classification of $G$-graded extensions and yield refined classification statements.
For any finite group $G$, we obtain equivalences of $2$-groupoids
\[
\PivExt(G,\C)\simeq \MonFun(\uuG,\PivBrPic(\C))
\;\;\text{and}\;\;
\SphExt(G,\C)\simeq \MonFun(\uuG,\SphBrPic(\C)),
\]
see Theorem~\ref{thm:piv_ext} and Theorem~\ref{thm:sph_ext}.
Equivalently, once a graded extension is fixed by a monoidal $2$-functor $\uuG\to\BrPic(\C)$, pivotal (resp.\ spherical) structures on the extension are exactly monoidal lifts of that functor to $\PivBrPic(\C)$ (resp.\ $\SphBrPic(\C)$).

\subsubsection*{A cohomological description of the lifting problem}
Let $\C$ be pivotal and let $\mathsf{F}\colon \uuG\to \BrPic(\C)$ be the monoidal $2$-functor classifying a $G$-graded extension $\D$ of $\C$.
Endowing $\D$ with a pivotal structure extending that of $\C$ is equivalent to lifting $\mathsf{F}$ along the forgetful $2$-functor $\forg\colon \PivBrPic(\C)\to \BrPic(\C)$.
We make this lifting problem explicit by extracting two cohomological obstruction classes, together with a classification of the space of solutions when the obstructions vanish. We do this both algebraically and homotopically.

The first obstruction is already visible componentwise: it measures whether each homogeneous piece $\D_g$ can be equipped with a pivotal $\C$-bimodule structure.
It is a (twisted) $1$-cocycle with values in the group of invertible objects of the Drinfeld center,
\[
O_1(\mathsf{F})\in \widetilde{H}^1\!\bigl(G,\Inv(\Z(\C))\bigr),
\]
and it vanishes precisely when all components are pivotalizable in the required bimodule sense.
Assuming $O_1(\mathsf{F})=0$, one may choose pivotal trivializations on each component, but these choices need not be compatible with the multiplication functors $\D_g\boxtimes_\C \D_h\to \D_{gh}$.
The second obstruction:
\[
O_2(\mathsf{F})\in H^2\!\bigl(G,\kk^{\times}\bigr),
\]
detects exactly this coherence issue. This cohomology class is independent of auxiliary choices and vanishes if and only if the componentwise pivotal data can be chosen monoidally, and hence assembled into a pivotal structure on $\D$ extending that of $\C$.
When both obstructions vanish, the remaining ambiguity is the expected character twist: compatible pivotal structures form a torsor over $H^1(G,\kk^{\times})$. 

In the spherical (unimodular) setting, pivotalizable implies sphericalizable, so the first obstruction introduces no new phenomenon: it is again $O_1$, now viewed in $\widetilde{H}^1\!\bigl(G,\Inv(\Z(\C))_2\bigr)$.
The essential new subtlety is the second obstruction, given by the class of $O_2$ in
\[
H^2\!\bigl(G,(\kk^\times)_2\bigr)=
\begin{cases}
H^2(G,\mathbb{Z}/2\mathbb{Z}), & \operatorname{char}(\kk)\neq 2,\\
0, & \operatorname{char}(\kk)=2.
\end{cases}
\]
Here both the $2$-cocycles and the $1$-cochains determining coboundaries take values in $(\kk^\times)_2$, so the inclusion-induced map $H^2\!\bigl(G,(\kk^\times)_2\bigr)\to H^2\!\bigl(G,\kk^\times\bigr)$ need not be injective.
A nontrivial class of $O_2$ in this kernel means that the spherical structure on $\C$ extends to a pivotal structure on the extension, but not to a spherical one.

\subsubsection*{Sphericalization}
Section~\ref{sec:sphericalization} revisits the sphericalization construction of \cite[\S7.21]{etingof2015tensor} for unimodular finite tensor categories.
We define sphericalization for invertible bimodule categories and show that it extends to a monoidal $2$-functor
\[
(-)^{\sph}\colon \BrPic(\C)\longrightarrow \SphBrPic(\C^{\sph}).
\]
Moreover, sphericalization is compatible with graded extensions: sphericalizing a $G$-graded extension corresponds, under the above classification equivalences, to post-composition with $(-)^{\sph}$.
This gives a uniform procedure for producing spherical graded extensions after passing to the sphericalization of the base category.

\subsection*{Outlook.}
The same fixed-point philosophy should persist in the presence of braiding: one expects analogous $B\underline{\zZ}$- and $B\underline{\zZ/2\zZ}$-actions on the appropriate Picard $2$-groupoids of invertible module categories and braided module categories, so that balanced and ribbon structures on graded or crossed extensions can be organized as higher fixed points and analyzed via parallel lifting and obstruction theories.

\subsection*{Organization}
In Section~\ref{sec:preliminaries}, we recall background on finite tensor categories, module categories, relative Serre functors, and actions of $2$-groups on $2$-categories, and we review the classification of $G$-graded extensions from \cite{etingof2010fusion,davydov2021braided}.
Section~\ref{sec:pivotal-extensions} introduces pivotal $G$-graded extensions and the pivotal Brauer--Picard $2$-groupoid, realizes it as fixed points of a $B\underline{\zZ}$-action, and proves the classification of pivotal extensions.
Section~\ref{sec:spherical-extensions} treats the spherical case (for unimodular $\C$), defines $\SphBrPic(\C)$, realizes it as fixed points of a $B\underline{\Ztwo}$-action, and classifies spherical extensions.
In Section~\ref{sec:sphericalization}, we relate sphericalization to the Brauer--Picard formalism and to graded extensions.
Finally, Section~\ref{sec:obstructiontheory} develops the obstruction theory for extending pivotal structures, including both algebraic and homotopical descriptions and examples.


\subsection*{Acknowledgments}
A.C.\ was partially supported by the Simons Collaboration Grant No. 999367.
D.J.\ was supported by The Research Council of Norway - project 324944.
D.N.\ was supported  by  the  National  Science  Foundation  under  
Grant No.\ DMS-2302267 and  would like to thank the Isaac Newton Institute for Mathematical Sciences, Cambridge, for support and hospitality during the programme ``Quantum field theory with boundaries, impurities, and defects", where work on this paper was undertaken. This work was supported by EPSRC grant EP/Z000580/1.
J.P.\ was partially supported by US NSF Grant DMS2146392 and by Simons Foundation Award \#889000 as part of the Simons Collaboration on Global Categorical Symmetries.
D.R.\ was supported by the Deutsche Forschungsgemeinschaft under the Emmy
Noether program – 493608176, and under the Collaborative Research Center (SFB)
1624 “Higher structures, moduli spaces and integrability” – 506632645.
S.S.\ was partially supported by the
National Science Foundation under Grant No.\ DMS-2154389.
H.Y.\ was supported by NSERC discovery grant RGPIN 2024-05109.

This project began while the authors were participating in the workshop Higher categories and topological order at the American Institute of Mathematics, whose hospitality and support are gratefully acknowledged.

The authors thank Pavel Etingof, Thibault Décoppet, César Galindo and Noah Snyder for illuminating discussions.

\section{Preliminaries}
\label{sec:preliminaries}
Throughout the article we consider linear categories over an algebraically closed field $\kk$. We denote by $\Vect_{\kk}$ the category of finite dimensional $\kk$-vector spaces.


\subsection{Tensor categories}
We recall some definitions regarding tensor categories, and refer the reader to \cite{etingof2004tensor,etingof2015tensor}  for details. A $\kk$-linear abelian category is $\emph{locally finite}$ if every object is of finite length and all morphism spaces are finite-dimensional. We say a locally finite $\kk$-linear abelian category $\C$ is \emph{finite} if it has enough projectives and finitely many simple objects. 

A \emph{multi-tensor category} $\C$ is a locally finite $\kk$-linear abelian rigid monoidal category  such that its tensor product functor $\otimes$ is $\kk$-bilinear.
The unit object decomposes as $\unit=\oplus_{i\in I} \unit_i$ as a direct sum of simple objects, and $\C=\oplus_{i,j\in I} \C_{ij}$ where $\C_{ij}:=\unit_i\otimes\C\otimes\unit_j$.

A \emph{tensor category} is a multi-tensor category whose unit object $\mathbb \unit_{\C}$ is simple, i.e. $\End_{\C}(\mathbb \unit_{\C})\cong \kk$. Given a tensor category $\C$, we will denote by $\oC$ the category $\C$ with the opposite tensor product, i.e. $X \,{\otimes_{\overline{\C}}}\,Y \coloneqq Y \otimes X$. Following the conventions from \cite[Def.~2.10.2]{etingof2015tensor}, the left dual $X^*$ of an object $X\in\C$ comes equipped with evaluation and coevaluation morphisms
\begin{equation*}
{\rm ev}_X\colon X^* \otimes\, X\longrightarrow \unit \qquad{\rm and}\qquad
{\rm coev}_X\colon \unit\longrightarrow X\otimes X^* \,,
\end{equation*}
and the left dual ${}^* X$ of $X\in\C$ comes with evaluation and coevaluation morphisms
\begin{equation*}
\widetilde{{\rm ev}_X}\colon X \,\otimes {}^* X\longrightarrow \unit \qquad{\rm and}\qquad
\widetilde{{\rm coev}_X}\colon \unit\longrightarrow {}^* X\otimes X \,.
\end{equation*}

By a \emph{tensor functor}, we mean a $\kk$-linear, exact, faithful, strong monoidal functor.
A tensor functor $F\colon\C\longrightarrow\D$ between tensor categories preserves dualities, that is, we have natural isomorphisms $\xi^F_{X}\colon F(X^*)\xrightarrow{~\sim~} F(X)^*$ for all $X\in \C$.
By applying $\xi^F_X$ twice, we obtain a monoidal natural isomorphism \cite[Lem.~1.1]{MR2381536} 
\begin{equation}\label{eq:iso_F_double-dual}
 \zeta^F_{X}\colon F(X^{**})\xrightarrow{~\sim~}F(X^*)^*\xrightarrow{~\sim~} F(X)^{**}.
\end{equation}

A \emph{pivotal structure} on a tensor category $\C$ is a monoidal natural isomorphism $\fp\colon \id_{\C} \xRightarrow{~\sim~} (-)^{**}$. Given two pivotal tensor categories $(\C,\fp)$ and $(\D,\fq)$, a tensor functor $F\colon\C\longrightarrow\D$ is called \textit{pivotal} \cite{MR2381536} if it satisfies:
\begin{equation}\label{eq: F preserves pivotal structure}
    \fq_{F(X)} =  \zeta^F_{X} \circ F(\fp)\colon F(X) \rightarrow F(X)^{**}
\end{equation}
for every object $X\in\C$.

\subsubsection{Equivariantization}
Let $\C$ be a (multi-)tensor category, and denote by  $\operatorname{Aut}_{\otimes}(\C)$ the monoidal category of tensor auto-equivalences of $\C$. 
For a group $G$, let $\underline{G}$ denote the strict monoidal category with objects the elements of $G$, morphisms given by identity maps, and tensor product induced by the group law of $G$. 
\begin{definition} \cite[Def.\ 2.7.1]{etingof2015tensor}
	An \emph{action} of $G$ on $\C$ is a monoidal functor $T:\underline{G} \to \mathrm{Aut}_{\otimes}(\C)$.
\end{definition}
\begin{remark}
Group actions on linear categories (without a monoidal structure) are similarly defined, where monoidal auto-equivalences are replaced just by linear auto-equivalences.
\end{remark}

\begin{example}\label{action of Z_2 on C}
A tensor auto-equivalence $T\colon\C \to \C$ together with a monoidal natural isomorphism $J\colon T^2 \xrightarrow{~\sim~} \id_{\C}$ such that $JT = TJ \colon T^3 \to T$ determines an action of $\Ztwo$ on a tensor category $\C$.
\end{example}

\begin{definition}\cite[Def.\ 2.7.2]{etingof2015tensor}
Let $\C$ be a tensor category with an action of a finite group $G$. 
The \textit{$G$-equivariantization} $\C^G$ of $\C$ is the monoidal category of $G$-equivariant objects, i.e. pairs $(X,v)$ where $X$ is an object in $\C$ and  $v\coloneqq\{v_g\colon T_g(X) \to X \ | \ g \in G\}$ is a collection of isomorphisms satisfying an appropriate compatibility condition with the action. Morphisms  $f\colon (X,v) \to (Y,w)$ in $\C^G$ are maps  $f\colon X\to Y$ in $\C$ such that $f\circ v_g=w_g\circ f$, for all $g\in G$.
\end{definition}

From \cite[\S4.15]{etingof2015tensor}, we know that if $\C$ is a (multi-)tensor category then so is $\C^G$. The forgetful functor $\forg\colon\C^G\rightarrow\C$ is a tensor functor. Moreover, it admits a left and right adjoint functor which maps $X$ to $\Ind(X):=(\oplus_{g\in G} T_g(X),v)$ where $v$ is defined appropriately \cite[Lemma~4.6]{drinfeld2010braided}. 
\begin{lemma}\label{lem:equi-properties}
Let $\C$ be a finite (multi-)tensor category with a $G$-action. Then, $\C^G$ is a finite (multi-)tensor category.
\end{lemma}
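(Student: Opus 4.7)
The plan is to note that $\C^G$ is already known to be a (multi-)tensor category by \cite[\S4.15]{etingof2015tensor}, so the content is entirely the finiteness conditions: finite-dimensional morphism spaces, finite length objects, finitely many isomorphism classes of simples, and enough projectives. The core technical device will be the adjoint triple $(\Ind,\forg,\Ind)$, using crucially that $\forg\colon \C^G\to\C$ is exact and faithful, and that $\Ind$, being simultaneously a left and right adjoint to an exact functor, both preserves and reflects projectives (and injectives).

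First I would handle local finiteness. Since $\forg$ is faithful and $\kk$-linear, the inclusion $\Hom_{\C^G}((X,v),(Y,w))\hookrightarrow\Hom_{\C}(X,Y)$ of the subspace of $G$-intertwiners identifies the Hom space of $\C^G$ with a subspace of a finite-dimensional $\kk$-vector space, hence it is finite-dimensional. For finite length, a strictly increasing chain of equivariant subobjects of $(X,v)$ pulls back under $\forg$ to a strictly increasing chain of subobjects of $X$ in $\C$, and length in $\C$ is finite by assumption; thus every object of $\C^G$ has length bounded by the length of its underlying object.

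Next I would treat the simples. If $(X,v)\in\C^G$ is simple, then $\forg(X,v)$ is nonzero, so it admits some simple constituent $S\in\C$, giving a nonzero morphism $S\to\forg(X,v)$. Applying the adjunction $\Hom_\C(S,\forg(X,v))\cong \Hom_{\C^G}(\Ind(S),(X,v))$ produces a nonzero, hence surjective (by simplicity of $(X,v)$), map $\Ind(S)\twoheadrightarrow (X,v)$. Since $\forg(\Ind(S))=\bigoplus_{g\in G} T_g(S)$ has finite length in $\C$, the object $\Ind(S)$ has finite length in $\C^G$ and hence finitely many simple quotients up to isomorphism. Combined with the finiteness of the set of isomorphism classes of simples of $\C$, this yields finitely many simples in $\C^G$.

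Finally, for enough projectives, I would use that $\Ind$ is left adjoint to the exact functor $\forg$, so $\Ind$ sends projectives to projectives. Given any simple $(X,v)\in\C^G$, choose as above a surjection $\Ind(S)\twoheadrightarrow (X,v)$ for some simple $S\in\C$, and let $P_S\twoheadrightarrow S$ be a projective cover in $\C$ (which exists by finiteness of $\C$). The composition $\Ind(P_S)\twoheadrightarrow \Ind(S)\twoheadrightarrow (X,v)$ is then a surjection from a projective object in $\C^G$ onto $(X,v)$, so $\C^G$ has enough projectives. The only mild subtlety throughout is bookkeeping for the equivariant structures under $\Ind$; this is essentially already encoded in \cite[Lemma~4.6]{drinfeld2010braided} and requires no new idea, so I do not expect any real obstacle in the argument.
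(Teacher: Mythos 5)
Your proposal is correct, but it takes a genuinely different route from the paper. The paper's proof is a one-liner built on the characterization of finiteness via a projective generator: it takes a projective generator $P$ of $\C$ (meaning $\Hom_\C(P,-)$ is exact and faithful), observes that $\Hom_{\C^G}(\Ind(P),-)\cong\Hom_\C(P,\forg(-))$ is again exact and faithful because $\forg$ is, and concludes that $\Ind(P)$ is a projective generator of $\C^G$, hence $\C^G$ is finite (local finiteness of $\C^G$ being inherited from the multi-tensor structure already cited). You instead verify each finiteness axiom by hand: local finiteness via faithfulness and exactness of $\forg$, finitely many simples via the surjections $\Ind(S)\twoheadrightarrow (X,v)$ coming from the adjunction, and enough projectives via $\Ind(P_S)\twoheadrightarrow\Ind(S)\twoheadrightarrow (X,v)$. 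Both arguments are sound and use the same essential inputs (the adjoint triple and exactness/faithfulness of $\forg$); the paper's is shorter and hides the bookkeeping inside the projective-generator criterion, while yours is more elementary and makes the structure of the simples of $\C^G$ explicit. One tiny point worth spelling out in your version: producing an epimorphism from a projective onto each \emph{simple} object gives enough projectives for \emph{all} objects only after a routine induction on length (splicing together projectives covering the composition factors); this is standard in a length category but should be stated.
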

\begin{proof}
As $\C$ is finite, it admits a projective generator $P$ (that is, $\Hom_\C(P,-):\C\rightarrow\Vect$ is exact and faithful). Then, $\Hom_{\C^G}(\Ind(P),-) \cong \Hom_{\C}(P,\forg(-))$ is exact and faithful. Thus, $\Ind(P)$ is projective generator of $\C^G$. Hence, $\C^G$ is finite, proving the  claim. 
\end{proof}


\subsection{Module categories}
A \emph{$($left$)$ module category} over a tensor category $\C$ is a $\kk$-linear abelian category $\M$ together with an exact functor $\tl \colon \C\times \M \to \M$, and an associator which satisfies the pentagon axiom. We will also refer to $\M$ as a \textit{left $\C$-module}, and use the notation  ${}_{\C}\M$ to indicate its left-module structure. Similarly, one can define a \textit{$($right$)$ module category}; we will use the notation $\N_{\C}$. We note that right $\C$-module categories are the same as left $\oC$-module categories.
Similarly, for finite tensor categories 
 $\C$ and $\D$, a $(\C,\D)$-\emph{bimodule category} is a 
(left) module category over the Deligne product $\C\boxtimes\overline{\D}$.

By MacLane's strictness theorem, we will assume that all module categories are strict (see \cite[Remark 7.2.4]{etingof2015tensor}). When $\C$ is finite, we ask that $\M$ is also finite as a $\kk$-linear category.

A (left) \textit{module functor} between (left) $\C$-module categories $\M$ and $\N$ is a functor $H \colon \M\longrightarrow \N$ together
with a collection of natural isomorphisms $H(X \tl M) \xrightarrow{~\sim~} X\tl H(M)$ for all $X\in \C$ and $M\in \M$ satisfying the evident compatibility condition. Functors of (right) $\C$-module  categories are defined analogously. 

Let $\C$ be a finite tensor category.
We call a left $\C$-module category $\M$ \textit{exact} if for any projective $P\in\C$ and any $M\in\M$, $P\tl M\in\M$ is projective. Exactness of right $\C$-module categories is defined analogously. We will denote the category of right exact $\C$-module functors by $\Rex_{\C}(\M,\N)$, and set $\C^*_{\M}:=\Rex_{\C}(\M,\M)$.
Moreover, $\Rex_{\C|\D}(\M,\N)$ will denote the category of right exact $(\C,\D)$-bimodule functors between $\M$ and $\N$. 
Let $\M$ be a right $\C$-module and $\N$ a left $\C$-module. The \textit{relative Deligne product} $\M\boxtimes_{\C}\N$ is an abelian category $\M\boxtimes_{\C}\N$ along with a functor $\mathrm{B}_{\M,\N}\colon\M\times\N \longrightarrow \M\boxtimes_{\C}\N$ universal among $\C$-balanced and right exact in each variable functors from $\M\times \N$ to abelian categories. See \cite{davydov2013picard}, \cite{douglas2019balanced}, \cite[\S 3.2]{davydov2021braided} for background on relative Deligne product of module categories. We will use the notation $M\boxtimes N$ for the image, in $\M\btC\N$, of $(M,N)$ under $\mathrm{B}_{\M,\N}$, and refer to such objects as simple tensors.

\begin{definition}\cite[Def.\ 4.1]{etingof2010fusion}
	A $(\C,\D)$-bimodule category $\M$ is called \textit{invertible} if there exists a $(\D,\C)$-bimodule category $\overline{\M}$ together with equivalences
	\begin{equation*}
		\M\boxtimes_{\D} \overline{\M} \simeq \C, \;\;\;\; \overline{\M}\boxtimes_{\C} \M \simeq \D.
	\end{equation*}
of $\C$-bimodule categories (resp. $\D$).    
\end{definition}
A fact that we will use frequently is that if $\M$ is an invertible $\C$-bimodule category, then $\M$ is exact as a left $\C$-module category \cite[Cor.~5.2]{davydov2021braided}.

\begin{lemma}\label{lem:multitensor_equivalences}
Let $\C=\oplus_{i,j\in I}\,\C_{i,j}$ be a multitensor category. Then for every $i,j,k \in I$ the tensor product induces an equivalence
\begin{equation}\label{eq:multitensor_equivalences}
    \C_{i,k}\boxtimes_{\C_k}\C_{k,j}\xrightarrow{~\sim~}\C_{i,j}
\end{equation}
of $(\C_i,\C_j)$-bimodule categories.
\end{lemma}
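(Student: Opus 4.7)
The plan is to first build the functor via the universal property of the relative Deligne product, and then to verify it is an equivalence by reducing to the known invertibility of the components of a multitensor category.

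The tensor product of $\C$ restricts to a functor $\C_{i,k}\times\C_{k,j}\to\C_{i,j}$, since $\unit_a\otimes\unit_b\cong 0$ whenever $a\ne b$ forces the grading decomposition to be preserved. This restricted functor is right exact in each variable (as $\otimes$ is biexact in any tensor category) and $\C_k$-balanced via the associator of $\C$ applied to $(X,Z,Y)$ with $X\in\C_{i,k}$, $Z\in\C_k$, $Y\in\C_{k,j}$. By the universal property of $\boxtimes_{\C_k}$, it descends to a right exact $(\C_i,\C_j)$-bimodule functor
\begin{equation*}
T\colon\C_{i,k}\boxtimes_{\C_k}\C_{k,j}\longrightarrow\C_{i,j},\qquad X\boxtimes Y\mapsto X\otimes Y.
\end{equation*}

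To show $T$ is an equivalence, I would appeal to multitensor Morita theory: each $\C_{a,b}$ is an invertible $(\C_a,\C_b)$-bimodule category with inverse $\C_{b,a}$, and the unit/counit equivalences $\C_{a,b}\boxtimes_{\C_b}\C_{b,a}\simeq\C_a$ are realized by the tensor product of $\C$ (see \cite{etingof2015tensor}). Granting this, tensoring with the invertible bimodule $\C_{j,i}$ over $\C_j$ on the right is a $2$-equivalence, so verifying that $T$ is an equivalence is the same as verifying that $T\boxtimes_{\C_j}\id_{\C_{j,i}}$ is. Using associativity of $\boxtimes$ together with the special-case equivalences $\C_{k,j}\boxtimes_{\C_j}\C_{j,i}\simeq\C_{k,i}$ and $\C_{i,j}\boxtimes_{\C_j}\C_{j,i}\simeq\C_i$, one identifies $T\boxtimes\id$ with the special-case tensor-product functor $\C_{i,k}\boxtimes_{\C_k}\C_{k,i}\to\C_i$, which is already an equivalence by invertibility of $\C_{i,k}$.

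The main obstacle is the coherence bookkeeping in the reduction step: one has to verify that the associator of $\boxtimes$ together with the invertibility equivalences genuinely intertwines $T\boxtimes\id$ with the special-case tensor-product functor. This is a straightforward but somewhat tedious diagram chase using the universal properties. If one prefers a direct proof that does not invoke invertibility, an alternative approach is to pick projective generators $P\in\C_{i,k}$ and $Q\in\C_{k,j}$, and show that $T(P\boxtimes Q)=P\otimes Q$ is a projective generator of $\C_{i,j}$: projectivity is automatic since projectives in a finite tensor category are closed under tensoring with arbitrary objects, while generation follows from rigidity, because for any simple $S\in\C_{i,j}$ one has $S\otimes Q^*\ne 0$ in $\C_{i,k}$ (otherwise tensoring with $Q$ and applying the surjection $Q^*\otimes Q\twoheadrightarrow\unit_j$ from rigidity would force $S=0$), hence $\Hom_{\C_{i,j}}(P\otimes Q,S)\cong\Hom_{\C_{i,k}}(P,S\otimes Q^*)\ne 0$; one then compares the two endomorphism algebras $\End(P\boxtimes Q)$ and $\End(P\otimes Q)$ via rigidity and the universal property of $\boxtimes_{\C_k}$ to conclude fully faithfulness on a generator, and hence everywhere.
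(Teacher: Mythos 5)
Your construction of the comparison functor $T$ via the universal property of $\boxtimes_{\C_k}$ is fine, and is indeed what the paper means by ``the tensor product induces''. The problem is with the proof that $T$ is an equivalence. Your main route is circular: the reduction step invokes $\C_{k,j}\boxtimes_{\C_j}\C_{j,i}\simeq\C_{k,i}$, which for $k\ne i$ is not a special case but another instance of the lemma being proved. Tracing through the bracketings shows this cannot be avoided: writing the triple product functor $\C_{i,k}\boxtimes_{\C_k}\C_{k,j}\boxtimes_{\C_j}\C_{j,i}\to\C_i$ in the two ways allowed by associativity, each factorization involves one diagonal instance and one off-diagonal instance of the lemma, so the diagonal cases $\C_{a,b}\boxtimes_{\C_b}\C_{b,a}\simeq\C_a$ alone never bootstrap to the general one. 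Moreover, the input you ``grant'' --- that $\C_{a,b}$ is invertible with inverse $\C_{b,a}$ and with unit/counit realized by the tensor product of $\C$ --- is itself precisely the diagonal case; I am not aware of a statement in \cite{etingof2015tensor} proving this for finite multitensor categories, and the reason the paper includes this lemma with a proof is that it is not simply quotable.

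Your alternative sketch is a viable strategy, and the parts you carry out are correct: $P\otimes Q$ is projective, and the rigidity argument that $\Hom_{\C_{i,j}}(P\otimes Q,S)\ne 0$ for every simple $S$ is sound (surjectivity of $\id_S\otimes\ev_Q$ follows since $\ev_Q\ne 0$ and $\unit_j$ is simple). But the two steps where the content actually lies --- that $P\boxtimes Q$ is a projective generator of $\C_{i,k}\boxtimes_{\C_k}\C_{k,j}$, and that $T$ induces an isomorphism $\End(P\boxtimes Q)\xrightarrow{~\sim~}\End(P\otimes Q)$ --- are not carried out, and they are not routine. The paper avoids both issues by a global argument of a different flavor: it applies the double centralizer theorem $(\C^*_{\M_k})^*_{\M_k}\simeq\C$ to the $(\C,\C_k)$-bimodule $\M_k=\bigoplus_i\C_{i,k}$, identifies $\Fun_{\overline{\C}_{k}}(\C_{j,k},\C_{i,k})\simeq\C_{i,k}\boxtimes_{\C_k}\C_{k,j}$ via \cite{douglas2018dualizable}, and thereby obtains all the equivalences \eqref{eq:multitensor_equivalences} at once as components of a single equivalence $\C\simeq\bigoplus_{i,j}\C_{i,k}\boxtimes_{\C_k}\C_{k,j}$. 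I would either adopt that argument or complete the endomorphism-algebra comparison in detail.
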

\begin{proof}
For each $k\in I$ consider the $(\C,\C_k)$-bimodule category $\M_k\,\coloneqq\bigoplus_{i\in I}\C_{i,k}$. Then, the regular left $\C$–module category $\C$ decomposes as
\[
   \C \;=\;\bigoplus_{k\in I}\M_k
   \qquad\text{(direct sum of $\C$–submodule categories).}
\]
Now, the category of $\C$–module endofunctors of the regular module is well known
to be tensor equivalent to the monoidal opposite of $\C$. 
Therefore, the direct-sum decomposition of $\C$ yields
\begin{equation}\label{eq:dual=sum}
   \overline{\C}\;\simeq\;\Fun_\C(\C,\C)\;=\;
      \Fun_\C\Bigl(\,\bigoplus_{k\in I}\M_k,\;
                         \bigoplus_{\ell\in I}\M_\ell\Bigr)
      \;\simeq\;
      \bigoplus_{k,\ell\in I}\Fun_\C(\M_k,\M_\ell).
\end{equation}
In particular, for the $k$-diagonal component, we read off $\Fun_\C(\M_k,\M_k)\simeq\overline{\C_{k}}$. Now, according to \cite[Prop.\,7.12.11]{etingof2015tensor} there is a canonical tensor equivalence $(\C^*_{\M_k})^*_{\M_k} \simeq \C$. Altogether, we obtain that
\begin{equation}\label{eq:endoMk}
   \Fun_{\overline{\C}_{k}}(\M_k,\M_k) \simeq  \C.
\end{equation}
On the other hand, from the definition of $\M_k$ we have that
\[
   \Fun_{\overline{\C}_{k}}(\M_k,\M_k)
      \;=\;\bigoplus_{i,j\in I}
      \Fun_{\overline{\C}_{k}}\!\Bigl(\;\C_{j,k},\,
        \C_{i,k}\Bigr)
      \;\simeq\;\bigoplus_{i,j\in I}      
         \C_{i,k}\boxtimes_{\C_k}\C_{k,j},
\]
where the last equivalence comes from \cite[Cor. 2.4.11]{douglas2018dualizable}.
Combining with \eqref{eq:endoMk}, we obtain an equivalence of
$\C$-bimodule categories
\begin{equation}\label{eq:block_dec_of_C}
   \C
   \;\simeq\;
   \bigoplus_{i,j\in I}
      \C_{i,k}\boxtimes_{\C_k}\C_{k,j}.
\end{equation}
The $(i,j)$-component on the left is $\C_{i,j}$ and on the right is $\C_{i,k}\boxtimes_{\C_k}\C_{k,j}$. Hence, the restriction of 
\eqref{eq:block_dec_of_C} to that component yields the desired equivalence
\eqref{eq:multitensor_equivalences}.
\end{proof}


\subsection{Internal Hom and relative Serre functors}
Given a left $\C$-module category $\M$, the action functor is exact and thus comes with a right adjoint $\uHom_{\M}^{\C}(M,M')$ for $M,M'\in\M$, i.e. there is a natural isomorphism
\begin{equation}\label{eq:left-internal-hom}
	\Hom_{\M}(X\tl M,M') \cong \Hom_{\C}(X, \uHom_{\M}^{\C}(M,M'))
\end{equation} 
which extends to a left exact functor $\uHom_{\M}^{\C}(-,-)\colon \M^\op\times\M\to\C$ called the \textit{internal Hom} of ${}_\C\M$. Internal Hom's for right module categories are similarly defined.

\begin{definition}\cite[Def.\ 4.22]{fuchs2020eilenberg}
\label{def:rel_Serre_functor}
Let $\C$ be a finite tensor category and $\M$ a left $\C$-module category. A \textit{(right) relative Serre functor} is an endofunctor $\dS^{\C}_{\M}\colon\M\rightarrow\M$ together with a natural isomorphism 
\begin{equation}\label{eq:phi_Serre}
		\phi_{M,N}\colon \uHom_\M^\C\left(N,\dS^{\C}_{\M}(M)\right)\longrightarrow \uHom_\M^\C\left(M,N\right)^*
	\end{equation}
for ${M,N\in\M}$. In a similar manner, a \textit{relative (left) Serre functor} $\odS_{\M}^{\C}$ comes with a natural isomorphism 
	\begin{equation}
		\overline{\phi}_{M,N}\colon \uHom_\M^\C\left(\odS^{\C}_{\M}(N),M\right)\longrightarrow {}^*\uHom_\M^\C\left(M,N\right)
	\end{equation}
for ${M,N\in\M}$.
\end{definition}
Relative Serre functors of $\M$ exist if and only if $\M$ is an exact left $\C$-module category \cite[Prop. 4.24]{fuchs2020eilenberg}. 
In fact, $\dS_{\M}^{\C}$ is an equivalence and $\odS_{\M}^{\C}$ serves as a quasi-inverse.
According to \cite[Lemma 4.23]{fuchs2020eilenberg} the relative Serre functor $\dS_{\M}^{\C}$ comes equipped with a twisted $\C$-module functor structure 
\begin{equation}\label{sCXM}
 X^{**} \tl \dS^{\C}_{\M}(M) \cong \dS^{\C}_{\M}(X\tl M)
\end{equation} 
for $X\in\C$ and $M\in\M$, or equivalently $\dS_{\M}^{\C}$ is a $\C$-module functor from $\M$ to $\M^{\HH}$.
Similarly, the relative Serre functors of an exact bimodule category ${}_\C\M_\D$ are endowed with the structure of twisted bimodule functors \cite[Prop.\ 2.9]{spherical2022}.
\begin{equation}\label{eq:twisted_serre_bimod}
 X^{**} \tl \dS^{\C}_{\M}(M) \tr Y^{**} \cong \dS^{\C}_{\M}(X\tl M\tr Y)\qquad {}^{**}X \tl \dS^{\overline{\D}}_{\M}(M) \tr {}^{**}Y \cong \dS^{\overline{\D}}_{\M}(X\tl M\tr Y)
\end{equation} 
for $X\in\C$, $Y\in\D$ and $M\in\M$. Furthermore, for every (bi)module functor $H \colon \M\to\N$ there is a natural isomorphism
\begin{equation}\label{eq:Serre_module_functor}
  \Lambda_H\colon \dS_\N^\C\circ H \xRightarrow{~\sim~}
  H^\rra\circ \dS_\M^\C
\end{equation}
of twisted (bi)module functors, where $H^\rra$ is the double right adjoint of $H$, see \cite[Thm.\,3.10]{shimizu2019relative} and \cite[Prop.\,2.11]{spherical2022}. 
In fact, relative Serre functors are unique up to unique natural isomorphisms.

\begin{lemma}\cite[Lemma 3.5]{shimizu2019relative}\label{lem:Serre_uniqueness}
Let $(\dS,\phi)$ and $(\dS',\phi')$ be relative Serre functors of a left $\C$-module category $\M$. Then there exists a unique natural isomorphism $\theta: \dS\xRightarrow{~\sim~} \dS'$ such that 
\begin{equation}\label{eq:Serre_uniqueness}
    \phi_{M,N} = \phi'_{M,N}\circ \uHom_{\M}^\C(N,\theta(M)).    
\end{equation} 
\end{lemma}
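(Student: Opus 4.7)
The plan is to construct $\theta$ object-wise from the two given isomorphisms $\phi$ and $\phi'$ using a form of the Yoneda lemma for the internal Hom, and then to check naturality and uniqueness by direct diagram chase.

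First, for each fixed $M\in\M$ I would form the composite
\[
\psi_{M,N}\;:=\;(\phi'_{M,N})^{-1}\circ\phi_{M,N}
\colon \uHom_\M^\C(N,\dS(M))\xrightarrow{~\sim~}\uHom_\M^\C(N,\dS'(M)),
\]
which is a natural isomorphism in $N$. Applying $\Hom_\C(\unit,-)$ and using the defining adjunction \eqref{eq:left-internal-hom} with $X=\unit$, I obtain a natural isomorphism
\[
\Hom_\M(N,\dS(M))\xrightarrow{~\sim~}\Hom_\M(N,\dS'(M))
\]
of functors $\M^\op\to\Vect$. Ordinary Yoneda in $\M$ produces a unique isomorphism $\theta_M\colon\dS(M)\xrightarrow{~\sim~}\dS'(M)$ inducing this transformation by postcomposition.

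Next I would verify the identity \eqref{eq:Serre_uniqueness}, namely $\psi_{M,N}=\uHom_\M^\C(N,\theta_M)$. By the naturality of the adjunction \eqref{eq:left-internal-hom} in all three variables, both morphisms in $\C$ correspond, under the bijection $\Hom_\C(X,\uHom_\M^\C(N,-))\cong\Hom_\M(X\tl N,-)$, to the map on Hom-sets given by postcomposition with $\theta_M$: for $\uHom_\M^\C(N,\theta_M)$ this is functoriality of the internal Hom, and for $\psi_{M,N}$ this holds when $X=\unit$ by construction of $\theta_M$, and extends to arbitrary $X$ since the adjunction is natural in $X$. As the adjunction is a bijection, this forces the two morphisms in $\C$ to coincide.

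For naturality of $\theta$ in $M$, given $f\colon M\to M'$ I would chase the diagram formed by $\phi_{M,N}$, $\phi_{M',N}$, $\phi'_{M,N}$, $\phi'_{M',N}$, using that both $\phi$ and $\phi'$ are natural in the $M$-variable (through the dual of the contravariant functor $\uHom_\M^\C(-,N)$): the two composites $\dS(M)\to\dS(M')\to\dS'(M')$ and $\dS(M)\to\dS'(M)\to\dS'(M')$ both induce the same transformation after applying $\uHom_\M^\C(N,-)$, and the Yoneda argument above then forces them to be equal. Uniqueness of $\theta$ is immediate from the uniqueness clause of Yoneda already used to extract $\theta_M$.

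The only delicate point is the second step, ensuring that the scalar-valued Yoneda identification genuinely upgrades to an identification of the internal-Hom morphisms in $\C$; this will be the main technical obstacle, but it is handled cleanly by the naturality of the adjunction \eqref{eq:left-internal-hom} in the $X$-variable, which promotes an equality on $\Hom_\C(\unit,-)$ to an equality of morphisms in $\C$.
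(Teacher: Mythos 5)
The paper gives no proof of this lemma (it is cited from Shimizu), so I can only assess your argument on its own terms. Your overall strategy — extract $\theta_M$ by Yoneda from $\psi_{M,N}:=(\phi'_{M,N})^{-1}\circ\phi_{M,N}$ and then verify \eqref{eq:Serre_uniqueness} — is the right one, and your construction of $\theta_M$, the naturality in $M$, and the uniqueness clause are all fine. But the step you yourself flag as delicate is genuinely broken as written. Knowing that a morphism $g\colon\uHom_\M^\C(N,\dS(M))\to\uHom_\M^\C(N,\dS'(M))$ in $\C$ induces postcomposition by $\theta_M$ on $\Hom_\C(\unit,-)$ does not determine $g$: naturality of the adjunction in the $X$-variable only relates $\Hom_\C(X,-)$ to $\Hom_\C(\unit,-)$ along morphisms between $X$ and $\unit$, and $\Hom_\C(\unit,-)$ is far from faithful on a general tensor category. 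Concretely, take $\C=\M=\Rep(\Ztwo)$ in characteristic $0$, so $\uHom_\M^\C(N,A)=A\otimes N^*$, and let $\alpha_N=\id_A\otimes\varphi_{N^*}$ where $\varphi\in\End(\id_\C)$ acts by $-1$ on the sign representation. Then $\alpha$ is natural in $N$, agrees on $\Hom_\C(\unit,-)$ with postcomposition by $\id_A$, yet is not of the form $\uHom_\M^\C(N,h)$ for any $h$. So ordinary naturality of $\psi$ in $N$ together with agreement on global sections cannot force $\psi_{M,N}=\uHom_\M^\C(N,\theta_M)$.

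The missing ingredient is that $\psi_{M,-}$ is a morphism of $\C$-module functors $\M^{\op}\to\C$, i.e., compatible with the canonical identifications $\uHom_\M^\C(X\tl N,A)\cong\uHom_\M^\C(N,A)\otimes X^*$; this holds because both sides of \eqref{eq:phi_Serre} carry such module structures and the isomorphisms $\phi,\phi'$ respect them (this compatibility is built into the setup of relative Serre functors in the cited sources). With module naturality in hand your argument does close: the effect of $\psi_{M,N}$ on $\Hom_\C(X,\uHom_\M^\C(N,\dS(M)))\cong\Hom_\M(X\tl N,\dS(M))$ is computed by $\Hom_\C(\unit,\psi_{M,X\tl N})$, which is postcomposition with $\theta_M$ by your Yoneda construction applied at the object $X\tl N$; ordinary Yoneda in $\C$ then yields $\psi_{M,N}=\uHom_\M^\C(N,\theta_M)$. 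This is precisely the module-category (enriched) Yoneda lemma, and it is the mechanism Shimizu's proof relies on; without invoking it, the passage from an equality of underlying linear maps to an equality of morphisms in $\C$ fails.
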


\begin{lemma}\label{lem:Serre-monoidal}
Let $F_1: \M_1 \to \N_1$ and $F_2: \M_2 \to \N_2$ be $\C$-bimodule equivalences. The natural isomorphism \eqref{eq:Serre_module_functor} is compatible with the relative Deligne product:
\[
\Lambda_{F_1 \btC F_2} \circ \mu_{\M_1, \M_2} = \mu_{\N_1, \N_2} \circ (\Lambda_{F_1} \btC \Lambda_{F_2}).
\]
\end{lemma}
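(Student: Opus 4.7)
The plan is a uniqueness argument grounded in Lemma~\ref{lem:Serre_uniqueness}. Recall that the comparison $\mu_{\M,\N}$ identifies the relative Serre functor $\dS^{\C}_{\M \btC \N}$ with the Deligne product $\dS^{\C}_{\M} \btC \dS^{\C}_{\N}$. After transport along $\mu_{\M_1,\M_2}$ and $\mu_{\N_1,\N_2}$, both sides of the claimed equation become natural isomorphisms of twisted $\C$-bimodule functors
\[
(\dS^{\C}_{\N_1} \btC \dS^{\C}_{\N_2}) \circ (F_1 \btC F_2) \;\xRightarrow{\;\sim\;}\; (F_1^{\rra} \btC F_2^{\rra}) \circ (\dS^{\C}_{\M_1} \btC \dS^{\C}_{\M_2}),
\]
using the identification $(F_1 \btC F_2)^{\rra} \simeq F_1^{\rra} \btC F_2^{\rra}$, available because the Deligne product of equivalences is an equivalence and relative Deligne product is compatible with passage to adjoints. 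Since Lemma~\ref{lem:Serre_uniqueness} guarantees that a natural isomorphism of relative Serre functors is uniquely determined by its compatibility with the internal Hom isomorphism $\phi$ as in \eqref{eq:Serre_uniqueness}, it suffices to verify that both composites satisfy the same $\phi$-compatibility.

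First I would recall the construction of $\mu_{\M,\N}$: the factorization of internal Homs on simple tensors,
\[
\uHom^{\C}_{\M \btC \N}(M \boxtimes N,\, M' \boxtimes N') \;\cong\; \uHom^{\C}_{\M}(M, M') \otimes \uHom^{\C}_{\N}(N, N'),
\]
combined with the $\phi$-isomorphisms for $\M$ and $\N$ individually and the fact that dualization is (anti-)monoidal, shows that $\dS^{\C}_{\M} \btC \dS^{\C}_{\N}$ is a relative Serre functor on $\M \btC \N$. Lemma~\ref{lem:Serre_uniqueness} then produces $\mu_{\M,\N}$ as the unique compatible isomorphism, and in particular characterizes its behavior on simple tensors through the $\phi$-isomorphisms of the factors.

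Next, I would reduce the verification to simple tensors $M_1 \boxtimes M_2 \in \M_1 \btC \M_2$, which is justified by the universal property of the relative Deligne product: natural transformations out of $\M_1 \btC \M_2$ are determined by their values on simple tensors. On such objects, both composites unfold into diagrams whose constituents are the factorization of internal Homs, the $\phi$-isomorphisms for each of the $\M_i$ and $\N_i$, and the defining $\phi$-compatibility of $\Lambda_{F_i}$ coming from \eqref{eq:Serre_module_functor}. A diagram chase using naturality of $\phi$ shows that both sides produce the same dual morphism between internal Homs in $\C$; one final invocation of the uniqueness in Lemma~\ref{lem:Serre_uniqueness}, applied to the bimodule category $\M_1 \btC \M_2$, yields the equality.

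The principal obstacle will be in the last step: threading all the twisted $\C$-bimodule structures (via \eqref{eq:twisted_serre_bimod}) and the double right adjoints coherently through the relative Deligne product while keeping the diagram chase legible. This is primarily a bookkeeping challenge rather than a conceptual one, since every ingredient --- the factorization of internal Homs on simple tensors, the defining properties of $\Lambda$ and $\mu$, and the uniqueness statement of Lemma~\ref{lem:Serre_uniqueness} --- is already at our disposal.
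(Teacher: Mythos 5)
Your proposal is correct and is essentially the paper's argument: both rest on the uniqueness of relative Serre functors (Lemma~\ref{lem:Serre_uniqueness}), exhibiting the two composites as isomorphisms between relative Serre functors of $\M_1\btC\M_2$ and concluding they coincide. The paper packages this slightly more compactly by conjugating $\dS^{\C}_{\N_1\btC\N_2}$ with $F_1\btC F_2$ and its quasi-inverse to produce a second Serre functor on $\M_1\btC\M_2$, thereby sidestepping the explicit reduction to simple tensors and the $\phi$-compatibility diagram chase you outline, but the underlying mechanism is the same.
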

\begin{proof}
Let $\overline{F_i}$ $(i=1,2)$ denote the quasi-inverse of $F_i$. A routine check shows that 
\[ \dS'_{\M_1\btC\M_2} := (\overline{F_1} \btC \overline{F_2}) \circ \dS_{\N_1 \btC \N_2}^{\C} \circ (F_1 \btC F_2).\]
is a relative Serre functor of $\M_1\btC\M_2$. Moreover, we have two natural isomorphisms from $\dS_{\M\btC\N}$ to $\dS'_{\M\btC\N}$.
The first one is obtained using the counit of the adjoint equivalences $F_i\dashv \overline{F_i}$ and the isomorphism $\Lambda_{F_1 \btC F_2}$.
Similarly, the second uses the counits and the following map: 
\begin{align*}
(F_1 \btC F_2) \circ \dS_{\M_1 \btC \M_2}^{\C} & \xrightarrow{\id \circ \mu_{\M_1, \M_2}^{-1}} (F_1 \btC F_2) \circ (\dS_{\M_1}^{\C} \btC \dS_{\M_2}^{\C})  \cong (F_1 \circ \dS_{\M_1}^{\C}) \btC (F_2 \circ \dS_{\M_2}^{\C}) \\
& \xrightarrow{\Lambda_{F_1} \btC \Lambda_{F_2}} (\dS_{\N_1}^{\C} \circ F_1) \btC (\dS_{\N_2}^{\C} \circ F_2) \cong (\dS_{\N_1}^{\C} \btC \dS_{\N_2}^{\C}) \circ (F_1 \btC F_2) \\
& \xrightarrow{\mu_{\N_1, \N_2} \circ \id} \dS_{\N_1 \btC \N_2}^{\C} \circ (F_1 \btC F_2).
\end{align*}
By Lemma~\ref{lem:Serre_uniqueness}, the two isomorphisms must be equal. This gives the identity:
\[
\Lambda_{F_1 \btC F_2} = \mu_{\N_1, \N_2} \circ (\Lambda_{F_1} \btC \Lambda_{F_2}) \circ \mu_{\M_1, \M_2}^{-1}\;.
\]
Rearranging this equation yields the desired result.
\end{proof}

Let $(\M,\tl,\tr)$ be a $(\C,\D)$-bimodule category and let $(\N,\tl',\tr')$ be a $(\D,\E)$-bimodule category.
Then the relative Deligne product $\M\boxtimes_{\D}\N$ is naturally a $(\C,\E)$-bimodule category, with actions
\begin{equation}
 X\Yright(M\boxtimes N)\Yleft'Z \coloneqq (X\tl M)\boxtimes (N\tr' Z)
\end{equation}
for $X\in\C$, $Z\in\E$, $M\in\M$, and $N\in\N$.
For objects of the form $M\boxtimes N$, the internal hom in the left $\C$-module category $(\M\btD \N,\Yright)$ is given by (cf.\ \cite[Proposition~4.15(3)]{schaumann2015pivotal})
\begin{equation}\label{eq:inner-hom-Deligne}
    \uHom^{\C}_{\M\btD\N}(M\boxtimes N, M' \boxtimes N') \cong \uHom_{\M}^{\C}(M \tr {}^*\uHom_{\N}^{\D}(N,N') , M'). 
\end{equation}
The following is $\D$-balanced functor
\[
F\colon \M\times \N \xrightarrow{\dS_{\M}^{\C} \times \dS_{\N}^{\D}} \M \times \N \xrightarrow{B_{\M,\N}} \M \btD \N,\quad
(M, N) \longmapsto \dS_{\M}^{\C}(M) \boxtimes \dS_{\N}^{\D}(N).
\]
Thus, by \cite[Theorem~3.3(4)]{douglas2019balanced}, it induces an exact endofunctor
\[
\dS_{\M}^{\C} \boxtimes_{\D} \dS_{\N}^{\D}\colon \M\btD \N \longrightarrow \M \btD \N,
\]
which is also a $(\C,\E)$-bimodule functor. 

\begin{lemma}\label{lem:inner-hom-Serre}
Suppose that $\M$ is an exact $(\C,\D)$-bimodule category and $\N$ is an exact left $\D$-module category such that $\M \btD \N$ is exact as a left $\C$-module category. 
Then $\dS_{\M}^{\C} \boxtimes_{\D} \dS_{\N}^{\D}$ is a (left) relative Serre functor of the $\C$-module category $(\M\btD\N,\Yright)$.
\end{lemma}
For instance, if $\M$ and $\N$ are invertible $\C$-bimodule categories, then so is $\M\btC\N$. In particular $\M\btC\N$ is exact as a left $\C$-module category. Thus, $\dS_\M\btC \dS_\N$ is a relative Serre functor for the left $\C$-action.

\begin{proof}
By definition \eqref{eq:phi_Serre}, we have to construct natural isomorphisms
\begin{equation}\label{eq:simple-tensor-Serre}
    \phi^\M\star\phi^\N \colon \uHom_{\M\btD \N}^{\C} \big(A, \big(\dS_{\M}^{\C} \btD \dS_{\N}^{\D} \big)(B) \big) \cong \uHom^{\C}_{\M\btD\N}\big(B, A\big)^*
\end{equation} 
for all $A,B\in \M\btD \N$. However, every object of $\M\btD\N$ is a finite colimit of simple tensors. Moreover, the internal Hom functor $\uHom^{\C}_{\M\btD\N}(-,-)$ is exact (because $\M\btD\N$ is exact as a left $\C$-module category) and so is $\dS_{\M}^{\C} \btD \dS_{\N}^{\D}$; thus their compositions preserve colimits. Therefore, it suffices to check \eqref{eq:simple-tensor-Serre} for simple tensors. 

On simple tensors, define $\phi^\M\star\phi^\N$ as the following composition of isomorphisms
\begin{equation}\label{phiMphiN}
\begin{aligned}
    \uHom_{\M\btD \N}^{\C} \big(M\boxtimes N, \big(\dS_{\M}^{\C} \btD \dS_{\N}^{\D} \big)(M'\boxtimes N' ) \big) 
    &  = \uHom_{\M\btD \N}^{\C} \big(M\boxtimes N, \dS_{\M}^{\C}(M') \boxtimes \dS_{\N}^{\D}(N')\big)\\
    {}^{\eqref{eq:inner-hom-Deligne}} 
    &  \cong \uHom_{\M}^{\C} \big( M \tr {}^*\uHom_{\N}^{\D}(N,\dS_{\N}^{\D}(N')) , \dS_{\M}^{\C}(M')  \big) \\
    {}^{{}^*\phi^\N} 
    & \cong \uHom_{\M}^{\C} \big(M \tr \uHom_{\N}^{\D}(N',N) , \dS_{\M}^{\C}(M')  \big) \\
    {}^{\phi^\M} 
    & \cong \uHom_{\M}^{\C} \big(M',M \tr \uHom_{\N}^{\D}(N',N)\big){}^* \\
    & \cong\uHom_{\M}^{\C}\big(M' \tr {}^*\uHom_{\N}^{\D}(N',N) , M\big) ^* \\
    {}^{\eqref{eq:inner-hom-Deligne}} 
    & \cong \uHom^{\C}_{\M\btD\N}\big(M'\boxtimes N', M\boxtimes N\big)^*\,.
\end{aligned}
\end{equation}
This isomorphism exhibits $(\dS_{\M}^{\C} \boxtimes_{\D} \dS_{\N}^{\D}, \phi^\M\star\phi^\N)$ as a relative Serre functor as desired.
\end{proof}

\begin{proposition}\label{prop:Serre_monoidal}
Let $\M$ be an exact $(\C,\D)$-bimodule category and $\N$ an exact left $\D$-module category. There is a natural isomorphism 
\begin{equation}\label{eq:Serre_monoidal}
    \mu_{\M,\N}\colon \dS_{\M}^{\C}\btD\dS_{\N}^{\D} \xRightarrow{~\sim~} \dS^{\C}_{\M\btD \N}
\end{equation}
of twisted $\C$-module functors, obeying that for any $M\boxtimes N$ and $M'\boxtimes N'$ in $\M\btD\N$, the following diagram commutes 
\begin{equation}\label{mu_MN}
    \begin{tikzcd}
        \uHom_{\M\btD \N} \big(M\boxtimes N, \dS^{\C}_{\M}(M') \btD\dS^{\D}_{\N}(N')\big)\ar[dd,swap,"\mu_{\M,\N}"]\ar[rrd,"\phi^\M\star\phi^\N"]&&\\
        &&\uHom_{\M\btD\N}(M'\boxtimes N',M\boxtimes N)^*\\
        \uHom_{\M\btD \N} \big(M\boxtimes N, \dS^{\C}_{\M\btD\N}(M'\boxtimes N')\big)\ar[rru,swap,"\phi^{\M\btD\N}"]&&
    \end{tikzcd}
\end{equation}
where $\phi^\M\star\phi^\N$ is given by the chain of isomorphisms in \eqref{phiMphiN}.
\end{proposition}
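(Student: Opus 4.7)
The strategy is to combine Lemma~\ref{lem:inner-hom-Serre} with the uniqueness statement of Lemma~\ref{lem:Serre_uniqueness}. Lemma~\ref{lem:inner-hom-Serre} gives us that the pair $(\dS_{\M}^{\C}\btD\dS_{\N}^{\D},\,\phi^\M\star\phi^\N)$ is a relative Serre functor for the left $\C$-module category $(\M\btD\N,\Yright)$. On the other hand, $(\dS^{\C}_{\M\btD\N},\,\phi^{\M\btD\N})$ is another relative Serre functor for the same module category (which exists since $\M\btD\N$ is exact as a $(\C,\E)$-bimodule, in particular as a left $\C$-module). Lemma~\ref{lem:Serre_uniqueness} then produces a unique natural isomorphism
\[
\mu_{\M,\N}\colon \dS_{\M}^{\C}\btD\dS_{\N}^{\D} \xRightarrow{~\sim~} \dS^{\C}_{\M\btD\N}
\]
of linear endofunctors of $\M\btD\N$ for which the diagram \eqref{mu_MN} commutes. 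This establishes existence, uniqueness, and the compatibility condition in one stroke.

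The remaining content of the proposition is to check that $\mu_{\M,\N}$ is an isomorphism of \emph{twisted $\C$-module functors}, i.e.\ that it intertwines the twisted module structures on source and target coming from \eqref{eq:twisted_serre_bimod}. The plan here is to exploit uniqueness a second time. Given $X \in \C$, the two composites
\[
X^{**}\tl \bigl(\dS_{\M}^{\C}\btD\dS_{\N}^{\D}\bigr)(M\boxtimes N) \xrightarrow{\;X^{**}\tl \mu_{\M,\N}\;} X^{**}\tl \dS^{\C}_{\M\btD\N}(M\boxtimes N)
\]
and the one going through the twisted module structures landing in $\dS^{\C}_{\M\btD\N}(X\tl(M\boxtimes N))$, each define a natural transformation between two Serre functor structures on $\M\btD\N$; the essential point is that both the left-hand and right-hand twisted $\C$-module structures were \emph{themselves} constructed from the respective natural isomorphisms $\phi^\M\star\phi^\N$ and $\phi^{\M\btD\N}$ satisfying the analogous defining property of relative Serre functors. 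Thus the commutativity of \eqref{mu_MN} propagates to compatibility with the $\C$-action, again by the uniqueness clause of Lemma~\ref{lem:Serre_uniqueness}.

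The main conceptual step — and the only part requiring care beyond invoking uniqueness — is verifying that the isomorphism $\phi^\M\star\phi^\N$ constructed on simple tensors in Lemma~\ref{lem:inner-hom-Serre} is genuinely compatible with the twisted left $\C$-action on $\M\btD\N$; that is, that the chain of isomorphisms \eqref{phiMphiN} is natural in a way that intertwines the module structure \eqref{sCXM} on $\dS_{\M}^{\C}$ with the prospective module structure on $\dS^{\C}_{\M\btD\N}$. This reduces, via \eqref{eq:inner-hom-Deligne}, to naturality of the internal Hom of $\M$ in its first argument and to the fact that the twist $X \mapsto X^{**}$ acts on $\dS^{\C}_{\M}$ through \eqref{eq:twisted_serre_bimod}; the right $\D$-action and the $\N$-factor play no role since they are untouched by the $\C$-action. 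I expect this is the main (though not substantial) obstacle, being a direct but slightly tedious diagram chase.

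Once these compatibilities are in place, $\mu_{\M,\N}$ is automatically natural in $\M$ and $\N$ since it is defined by a universal property, and the proposition follows.
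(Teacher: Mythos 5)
Your proposal is correct and follows exactly the paper's argument: Lemma~\ref{lem:inner-hom-Serre} exhibits $(\dS_{\M}^{\C}\btD\dS_{\N}^{\D},\,\phi^\M\star\phi^\N)$ as a relative Serre functor of $\M\btD\N$, and the uniqueness of Lemma~\ref{lem:Serre_uniqueness} then yields $\mu_{\M,\N}$ together with the commutativity of \eqref{mu_MN}. In fact you go slightly beyond the paper, whose proof is two sentences long, by explicitly flagging and sketching the verification that $\mu_{\M,\N}$ intertwines the twisted $\C$-module structures — a point the paper leaves implicit in its appeal to uniqueness.
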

\begin{proof}
Lemma \ref{lem:inner-hom-Serre} states that $(\dS_{\M}^{\C}\btD\dS_{\N}^{\D}, \phi^{\M} \star \phi^{\N})$ is a relative Serre functor of $\M\boxtimes_{\D} \N$. Thus, the statement follows from the uniqueness of relative Serre functors \eqref{eq:Serre_uniqueness}.
\end{proof}


\subsection{Actions of $2$-groups on $2$-categories}
We refer the reader to \cite[\S2.2]{davydov2021braided} for background on monoidal $2$-categories. A \textit{$2$-groupoid} is a $2$-category in which all $1$-morphisms are equivalences and $2$-morphisms are invertible. A \textit{$2$-categorical group} (aka $3$-group) is a monoidal $2$-groupoid whose objects are invertible under the monoidal structure. 

\begin{example} Let $G$ be a group.
	\begin{enumerate}[(i)] 
    	\item We can form the $2$-categorical group $\uuG$: objects are $g\in G$ with tensor product given by group multiplication, and $1$- and $2$-morphisms are identities.
        
		\item If $G$ is abelian, we can also form the $2$-categorical group  $B\underline{G}$: As a $2$-groupoid, it has one object $*$ (with $*\boxtimes *=*$), a $1$-morphism for every $g\in G$, and only identity $2$-morphisms. Tensor product and composition of $1$-morphisms are induced by the group multiplication. 
        Equivalently, $B\underline{G}$ is the monoidal $2$-groupoid with one object and braided monoidal $1$-groupoid of endomorphisms given by $\underline{G}$ the braided monoidal $1$-groupoid with objects $g\in G$, only identity morphisms and trivial braiding. 
	\end{enumerate}
\end{example}

Given a $2$-category $\bA$, the monoidal $2$-category $\Aut(\bA)$ whose objects are autoequivalences of $\bA$, $1$-morphisms are pseudo-natural equivalences and $2$-morphisms are invertible modifications is a $2$-categorical group with composition as monoidal structure. Similarly, when $\bA$ is a monoidal $2$-category, we denote by $\MonAut(\bA)$ the $2$-category of monoidal autoequivalences of $\bA$.


\begin{definition} Let $\bG$ be a 2-categorical group and $\bA$ a $2$-category.
\begin{itemize}
    \item  An \emph{action} of $\bG$ on $\bA$ is a monoidal $2$-functor $\bG\rightarrow \Aut(\bA)$.
    \item If $\bA$ is a monoidal, a \emph{monoidal action} of $\bG$ on $\bA$ is a monoidal $2$-functor $\bG\rightarrow \MonAut(\bA)$.
\end{itemize}
\end{definition}

Throughout the paper, we will use the phrase `fixed point' in the homotopy coherent sense.
\begin{example}\label{ex: BZ and BZ2 actions}
Let $G$ be an abelian group. A $B\underline{G}$ action on a (monoidal) $2$-groupoid $\bA$ is equivalently a braided monoidal functor $\underline{G} \to \Omega \Aut(\bA)$ (resp. to $\Omega \MonAut(\bA)$) where this target is the braided monoidal $1$-groupoid of (monoidal) pseudonatural auto-equivalences of the identity functor on $\bA$ and (monoidal) invertible modifications between these, with braiding induced by the Eckmann-Hilton argument. 
Fixing a presentation of $G$, (up to equivalence) this data explicitly amounts to: 
\begin{itemize}
    \item For every generator $g$ of $G$ a (monoidal) pseudonatural equivalence $\eta^g \colon \id_{\bA} \Rightarrow \id_{\bA}$ such that its self-braiding
    $\eta^g \circ \eta^g \Rightarrow \eta^g\circ \eta^g$
    in $\Omega \Aut(\bA)$ is trivial. 
    Equivalently, for every $g\in G$ and every $a\in \bA$, the following $2$-isomorphism in $\bA$: 
\begin{equation}\label{eq:self_braiding}
    (\eta^g)_{\eta^g_a} \colon (\eta^g)_a \circ (\eta^g)_a \Rightarrow (\eta^g)_{a} \circ (\eta^g)_a
\end{equation}
    given by the component of the pseudo-natural equivalence $\eta^g$ at the $1$-morphism $\eta^g_a$ is trivial.
    \item For every relation $R = g_1 \cdots g_n$ an invertible modification $m^R\colon\id_{\id_{\bA}}\Rightarrow \eta^{g_1} \circ \cdots \circ \eta^{g_n}$. 
\end{itemize}

A $B \underline{\mathbb{Z}}$-action therefore amounts to the data of a (monoidal) pseudonatural equivalence $\eta: \id_{\bA} \Rightarrow \id_{\bA}$ with trivial self-braiding, and factoring it through a  $B\underline{\Ztwo}$-action amounts to a further (monoidal) invertible modification $m\colon \id_{\id_{\bA}} \Rightarrow \eta^2. $ 

\end{example}
\begin{definition}\label{def:fixed points} \cite{hessevalentino}
Given an action $F\colon\bG\longrightarrow\Aut(\bA)$ on a $2$-category $\bA$, its equivariantization or $2$-category of fixed points $\bA^{\bG}$ has as objects the following data:

\begin{itemize}
    \item an object $a\in \bA$;
    \item for every $g\in\bG$, a $1$-equivalence $\Theta_g\colon a\to f(a)$ in $\bA$;
    \item for every $1$-morphism $\gamma\colon g\to h$ in $\bG$, an invertible $2$-morphism $\Theta_\gamma\colon F_\gamma\circ \Theta_g \Rightarrow \Theta_h$ which is natural in $\gamma$;
    \item for every pair $(g,h)\in\bG\times\bG$ an invertible $2$-morphism $\Pi_{g,h}$
\begin{equation}
    \begin{tikzcd}
        a \ar[r,"\Theta_g"]\ar[d,swap,"\Theta_{gh}"]& f(a) \ar[d,"f(\Theta_h)"] \ar[ld,"\Pi_{g,h}",Rightarrow,swap]\\
        F_{gh}(a) \ar[r,swap,"\cong"]& f(F_h(a)) 
    \end{tikzcd}
\end{equation}
    \item an invertible $2$-morphism $\Theta_e\Rightarrow \iota_a \colon a\to F_e(a)$;
\end{itemize}
obeying multiple compatibility conditions as described in \cite{hessevalentino}.
\end{definition}
If $\bA$ is monoidal and $\bG$ acts by monoidal $2$-functors, then $\bA^{\bG}$ inherits a monoidal structure.

\begin{example}\label{ex: fixed point for BZ and BZ2 actions} Let $G$ be an abelian group and consider a $B\underline{G}$-action on a $2$-groupoid $\bA$ as unpacked in Example \ref{ex: BZ and BZ2 actions}.
Fixing a presentation of $G$, the data of a $B\underline{G}$-fixed point is, up to equivalence, 
    \begin{itemize}
        \item an object $a\in \bA$;
        \item for every generator $g$ in $G$ an invertible $2$-morphism $\Theta^g\colon\id_a\Rightarrow (\eta^g)_a$ in $\bA$;
        \item which for every relation $R=g_1 \cdots g_n$ obeys
        \[
\begin{tikzcd}[column sep=large]
	a & a & a & a
	\arrow[""{name=0, anchor=center, inner sep=0}, "{(\eta^{g_n})_a}"', curve={height=15pt}, from=1-1, to=1-2]
	\arrow[""{name=1, anchor=center, inner sep=0}, "{\id_a}", curve={height=-15pt}, from=1-1, to=1-2]
	\arrow[dotted, no head, from=1-2, to=1-3]
	\arrow[""{name=2, anchor=center, inner sep=0}, "{\id_a}", curve={height=-15pt}, from=1-3, to=1-4]
	\arrow[""{name=3, anchor=center, inner sep=0}, "{(\eta^{g_1})_a}"', curve={height=15pt}, from=1-3, to=1-4]
	\arrow["{\Theta^{g_n}}",shorten <=3pt, shorten >=3pt, Rightarrow, from=1, to=0]
	\arrow["{\Theta^{g_1}}", shorten <=3pt, shorten >=3pt, Rightarrow, from=2, to=3]
\end{tikzcd} = m^R_a\]
    \end{itemize}
A fixed point for a $B\underline{\mathbb{Z}}$-action (as unpacked at the end of Example~\ref{ex: BZ and BZ2 actions}) therefore amounts to a pair $(a, \Theta)$ of an object $a\in A$ and an invertible $2$-morphism $\Theta: \id_a \Rightarrow \eta_a$. This is a fixed point for a $B \underline{\Ztwo} $ action if furthermore $\eta_a(\Theta) \circ \Theta = m_a$.  See the proof of \cite[Theorem 4.1]{hessevalentino} for details. 

Following the same steps as in the proof of \cite[Theorem 4.1]{hessevalentino} one has that the data for a $1$-morphism of fixed points $(a, \Theta)\to (a',\Theta')$ in $\bA^{B \underline{\mathbb{Z}}}$ reduces to a 1-morphism $f\colon a\to a'$ in $\bA$ with the requirement that the following diagram commutes
\begin{equation}\label{eq:1-morph-equiv}    
\begin{tikzcd}[column sep=normal,row sep=scriptsize]
	&{\id_{a'}\circ f} & {\eta_{a'}\circ f}\\
    f &&\\
    &{f\circ \id_a} & {f\circ \eta_a} 
	\arrow["\sim", from=2-1, to=1-2]
	\arrow["\sim"', from=2-1, to=3-2]
	\arrow["\Theta"', from=3-2, to=3-3]
	\arrow["{\Theta'}", from=1-2, to=1-3]
	\arrow["{\eta_f \;\;.}", from=1-3, to=3-3]
\end{tikzcd}
\end{equation}
\end{example}


\subsection{Classification of $G$-graded extensions of tensor categories}

We recall in this section some of the results from \cite{etingof2010fusion,davydov2021braided} relating to the classification of $G$-graded extensions of tensor categories.

\begin{definition}
\label{def:Ext(G,C)}
Let $\C$ be a tensor category and $G$ be a finite group. The $2$-groupoid $\Ext(G,\C)$ of $G$-graded extensions of $\C$ is given by: 
	\begin{itemize}
		\item Objects are $G$-extensions, i.e. a $G$-graded tensor category $\D$ with a monoidal equivalence $\eqtriv\colon\D_{e}\xrightarrow{~\sim~} \C$. 
		\item 1-cells are grading preserving monoidal equivalences $F\colon\D\rightarrow\D'$ equipped with a monoidal natural isomorphism $\tau_F\colon \eqtriv^{\D'}\circ F_e \Rightarrow \eqtriv^{\D}$, where $F_e\colon\D_e\rightarrow \D'_e$ is the restriction of $F$ to $\D_e$, and

		\item $2$-cells are monoidal natural isomorphisms $\gamma\colon F\Rightarrow H$ which are compatible with the respective monoidal equivalences with $\C$. This means that for $F,H\colon\D \to \D'$, the following diagram commutes
		\begin{equation}\label{eq:comp_nat_triv_component}
			\begin{tikzcd}[column sep=normal]
				\eqtriv^{\D'}\circ F_e \ar[rr,"\gamma_e", Rightarrow]\ar[""{name=2}, dr,swap,Rightarrow,"\tau_F"]&& \eqtriv^{\D'}\circ H_e   \ar[""{name=1}, dl,Rightarrow,"\tau_H"] \\
				& \eqtriv^{\D}&
			\end{tikzcd}\; .
		\end{equation}
	\end{itemize} 
\end{definition}

\begin{remark}
The $2$-groupoid $\Ext(G,\C)$ defined in Definition \ref{def:Ext(G,C)} is equivalent to the $2$-groupoid whose objects are $G$-graded tensor categories $\D$ such that $\D_e=\C$, $1$-cells are grading preserving monoidal equivalences $F\colon\D\longrightarrow\D'$ with $F_e=\id_\C,$ and $2$-cells are monoidal natural isomorphisms $\gamma\colon F\Rightarrow H$.
\end{remark}
\begin{definition}\label{def:BrPic}
The \textit{Brauer-Picard $2$-categorical group} of a finite tensor category $\C$ is the $2$-categorical group $\BrPic(\C)$ whose objects are invertible $\C$-bimodule categories, $1$-cells are $\C$-bimodule equivalences, $2$-cells are bimodule natural isomorphisms and monoidal structure given by the relative Deligne product.
\end{definition}
  
\begin{theorem}[{\cite[Theorem 7.7]{etingof2010fusion}} and {\cite[Theorem 8.5]{davydov2021braided}}]
\label{thm:ext_classification}
Let $\C$ be a finite tensor category. There is an equivalence of $2$-groupoids
\begin{equation}\label{eq:ext_classification}
\mathrm{E}\colon \Ext(G,\C) \xrightarrow{~\simeq~} \MonFun\left(\uuG,\BrPic(\C)\right).
\end{equation}    
\end{theorem}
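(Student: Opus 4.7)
The plan is to construct $\mathrm{E}$ explicitly on objects, $1$-cells and $2$-cells, and then exhibit a quasi-inverse by reconstruction. On objects, given a $G$-extension $\iota\colon\D_e\xrightarrow{\sim}\C$ of a $G$-graded tensor category $\D=\bigoplus_{g\in G}\D_g$, I define $\mathrm{E}(\D)(g)\coloneqq\D_g$ equipped with the $\C$-bimodule structure transported along $\iota$ from the $\D_e$-bimodule structure inherited from $\otimes_{\D}$. Since the restriction $\otimes_{\D}\colon\D_g\times\D_h\to\D_{gh}$ is $\D_e$-balanced and right exact, it factors through $\C$-bimodule functors $\mathrm{M}_{g,h}\colon\D_g\btC\D_h\to\D_{gh}$, and the pentagonator of $\D$ furnishes the monoidal coherence $2$-morphisms.

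The main obstacle is to verify that each $\D_g$ is an \emph{invertible} $\C$-bimodule and that each $\mathrm{M}_{g,h}$ is an equivalence. The cleanest route is to reduce this to Lemma~\ref{lem:multitensor_equivalences} by packaging the grading as a multi-tensor category. Concretely, define
\[
\widetilde{\D}\coloneqq \bigoplus_{(g,h)\in G\times G}\widetilde{\D}_{g,h},\qquad \widetilde{\D}_{g,h}\coloneqq\D_{gh^{-1}},
\]
with tensor product $\widetilde{\D}_{g,h}\otimes\widetilde{\D}_{k,l}\to\widetilde{\D}_{g,l}$ given by $\delta_{h,k}\,\otimes_{\D}$ and unit $\bigoplus_{g\in G}\unit_{\D}$. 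One checks directly that $\widetilde{\D}$ is a finite multi-tensor category whose diagonal blocks $\widetilde{\D}_g=\widetilde{\D}_{g,g}$ are all equivalent to $\D_e\simeq\C$. Applying Lemma~\ref{lem:multitensor_equivalences} then yields equivalences $\D_{ik^{-1}}\btC\D_{kj^{-1}}\simeq\D_{ij^{-1}}$ as $\C$-bimodule categories for all $i,j,k\in G$; setting $a=ik^{-1}$ and $b=kj^{-1}$ gives $\D_a\btC\D_b\simeq\D_{ab}$ for arbitrary $a,b\in G$, which one identifies with the $\mathrm{M}_{g,h}$. In particular, each $\D_g$ is invertible in $\BrPic(\C)$ with inverse $\D_{g^{-1}}$.

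On $1$- and $2$-cells the construction is functorial in the evident way: a grading-preserving monoidal equivalence $F\colon\D\to\D'$ restricts to $\C$-bimodule equivalences $F_g\colon\D_g\to\D'_g$ after identification via $\tau_F$, and the monoidal constraint of $F$ becomes the structure data of a monoidal pseudo-natural equivalence $\mathrm{E}(\D)\Rightarrow\mathrm{E}(\D')$; the compatibility~\eqref{eq:comp_nat_triv_component} ensures that $\Ext(G,\C)$-$2$-cells pass to modifications.

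For the quasi-inverse, given $\mathsf{F}\colon\uuG\to\BrPic(\C)$, I set $\D_{\mathsf{F}}\coloneqq\bigoplus_{g\in G}\mathsf{F}(g)$ with tensor product on homogeneous pieces defined by composing $B_{\mathsf{F}(g),\mathsf{F}(h)}$ with the monoidal constraint $\mathsf{F}(g)\btC\mathsf{F}(h)\xrightarrow{\sim}\mathsf{F}(gh)$; the associator and unitors descend from those of $\mathsf{F}$, rigidity follows from invertibility of each $\mathsf{F}(g)$, and $\unit_{\D_{\mathsf{F}}}=\unit_{\mathsf{F}(e)}$ is simple since $\mathsf{F}(e)\simeq\C$. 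That $\mathrm{E}\circ\D_{(-)}$ and $\D_{\mathrm{E}(-)}$ are naturally equivalent to the identities then follows by the universal property of the relative Deligne product and uniqueness of such factorizations up to unique coherent isomorphism, which is the remaining bookkeeping.
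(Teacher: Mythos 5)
The paper does not actually prove this theorem: it is quoted from \cite[Theorem 7.7]{etingof2010fusion} and \cite[Theorem 8.5]{davydov2021braided}, so there is no internal proof to compare against. Your sketch follows the standard strategy of those references (homogeneous components are invertible bimodule categories, the multiplication functors are equivalences, and reconstruction from a monoidal $2$-functor provides the quasi-inverse), and the one genuinely nonroutine step --- invertibility of the $\D_g$ and the fact that $\mathrm{M}_{g,h}\colon\D_g\boxtimes_\C\D_h\to\D_{gh}$ is an equivalence --- is handled by a clean and correct device: packaging $\D$ into the $G\times G$-indexed matrix multitensor category $\widetilde{\D}$ and invoking Lemma~\ref{lem:multitensor_equivalences}. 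This is consistent with how the paper itself uses matrix multitensor categories (e.g.\ the category $\mathbb{B}$ in Proposition~\ref{prop:radford is monoidal}), and the index bookkeeping ($i=a$, $k=e$, $j=b^{-1}$ realizes any pair $(a,b)$) checks out. Two places where the sketch is thinner than the actual content of the cited theorems deserve flagging. First, in the reverse direction the assertion that ``rigidity follows from invertibility of each $\mathsf{F}(g)$'' is a real theorem in the non-semisimple setting, not a formality; it is exactly the kind of statement the paper itself outsources to \cite[Thm.~4.2]{spherical2022} when it forms the rigid matrix category $\mathbb{B}$ from invertible bimodule categories, and you should cite or prove it rather than assert it. Second, the ``remaining bookkeeping'' --- matching the pentagonator of $\D$ with the coherence data of a monoidal $2$-functor into $\BrPic(\C)$, and verifying that the two composites are equivalent to identities via the universal property of $\boxtimes_\C$ --- is where the bulk of the work in \cite[Theorem 8.5]{davydov2021braided} actually lives; as written this is an outline of a proof rather than a proof, but the outline is the right one and contains no wrong steps.
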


We include below a technical lemma on the relative Serre functors of $G$-graded tensor categories that will be used later on. 

\begin{lemma}\label{lem:Serre_homogeneous}
Let $\D$ be a $G$-graded finite tensor category. For every $g\in G$, the functor $(-)^{**}|_{\D_g}$ is endowed with the structure of a relative Serre functor of ${}_{\D_e}\D_g$. Then, there is a natural isomorphism
\begin{equation}
\label{eq:Serre_homogeneous_components}
    \Omega_g\colon\;\dS_{\D_g}^{\D_e} \xRightarrow{~\sim\,} (-)^{**}|_{\D_g} 
\end{equation}
of twisted $\D_e$-bimodule functors. Moreover, given a $G$-graded tensor equivalence $F:\D\to\D'$, the diagram
\begin{equation}\label{eq:Serre_comp_graded_functor}
    \begin{tikzcd}
        F\circ\dS_{\D_g}^{\D_e} \ar[d,"\eqref{eq:Serre_module_functor}",Rightarrow,swap]\ar[Rightarrow,rr,"\id\;\circ\;\eqref{eq:Serre_homogeneous_components}"]&& F\circ(-)^{**}|_{\D_g} \ar[d,"\eqref{eq:iso_F_double-dual}",Rightarrow]\\
        \dS_{\D_g'}^{\D_e'}\circ F \ar[Rightarrow,rr,"\eqref{eq:Serre_homogeneous_components}\;\circ\;\id",swap]&& (-)^{**}|_{\D_g'}\circ F
    \end{tikzcd}
\end{equation}
commutes for each $g\in G$.
\end{lemma}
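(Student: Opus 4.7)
The plan is to directly exhibit $(-)^{**}|_{\D_g}$ as a relative Serre functor for ${}_{\D_e}\D_g$, and then invoke the uniqueness result Lemma~\ref{lem:Serre_uniqueness} to produce $\Omega_g$. The compatibility with $F$ will follow from a second application of the same uniqueness principle.

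For the first part, the key computation is the internal Hom. For $X\in\D_e$ and $M,N\in\D_g$, the adjunction
\[
\Hom_{\D_g}(X\otimes M,\,N)\;\cong\;\Hom_{\D}(X,\,N\otimes M^*),
\]
together with the observation that $N\otimes M^*\in\D_{gg^{-1}}=\D_e$, identifies
\[
\uHom^{\D_e}_{\D_g}(M,N)\;\cong\;N\otimes M^*.
\]
The canonical duality isomorphism $(N\otimes M^*)^*\cong M^{**}\otimes N^*$ then yields a natural isomorphism
\[
\phi_{M,N}\colon \uHom^{\D_e}_{\D_g}(N, M^{**}) \;=\; M^{**}\otimes N^* \;\xrightarrow{\;\sim\;}\; (N\otimes M^*)^* \;=\; \uHom^{\D_e}_{\D_g}(M,N)^*,
\]
which endows $(-)^{**}|_{\D_g}$ with the structure of a relative Serre functor of ${}_{\D_e}\D_g$. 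Its twisted $\D_e$-bimodule structure in the sense of \eqref{eq:twisted_serre_bimod} is the canonical isomorphism $X^{**}\otimes M^{**}\otimes Y^{**}\cong (X\otimes M\otimes Y)^{**}$ coming from the monoidal character of the double-dual functor. Applying Lemma~\ref{lem:Serre_uniqueness} to this Serre structure and to the ambient one on $\dS^{\D_e}_{\D_g}$ now yields the unique natural isomorphism $\Omega_g$ of twisted bimodule functors claimed in~\eqref{eq:Serre_homogeneous_components}.

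For the diagram \eqref{eq:Serre_comp_graded_functor}, both composites are natural isomorphisms $F\circ\dS^{\D_e}_{\D_g}\Rightarrow (-)^{**}|_{\D'_g}\circ F$ of twisted bimodule functors, so it suffices to show they coincide. I would again invoke Lemma~\ref{lem:Serre_uniqueness}: the composite $F\circ\dS^{\D_e}_{\D_g}$ inherits a Serre structure via the natural identifications $F(\uHom^{\D_e}_{\D_g}(M,N))\cong\uHom^{\D'_e}_{\D'_g}(FM, FN)$ and $F(X^*)\cong F(X)^*$ induced by the tensor functor $F$, transported through the Serre iso $\phi^{\dS}$ of $\dS^{\D_e}_{\D_g}$. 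The claim is that both composites in the diagram realise the unique natural isomorphism intertwining this transported Serre structure with the Serre structure on $(-)^{**}|_{\D'_g}\circ F$ coming from Step~1.

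The main obstacle is verifying this last matching statement. The top path uses $\zeta^F$ of \eqref{eq:iso_F_double-dual} to compare $F\circ(-)^{**}$ with $(-)^{**}\circ F$, while the bottom path uses $\Lambda_F$ of \eqref{eq:Serre_module_functor} to compare $F\circ\dS$ with $\dS\circ F$. The point is that both $\zeta^F$ and $\Lambda_F$ are constructed from the same underlying dualisation data of $F$: namely the natural isomorphisms $\xi^F_X\colon F(X^*)\xrightarrow{\sim} F(X)^*$ and the internal-Hom compatibility of $F$ that defines $\Lambda_F$. Tracing these through the explicit isomorphism $\phi$ of Step~1 shows that both composites encode the same transport of Serre data, after which the uniqueness of Lemma~\ref{lem:Serre_uniqueness} closes the argument.
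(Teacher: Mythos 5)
Your proposal is correct and follows essentially the same route as the paper: identify $\uHom^{\D_e}_{\D_g}(M,N)\cong N\otimes M^*$, observe that the isomorphism $M^{**}\otimes N^*\cong (N\otimes M^*)^*$ exhibits $(-)^{**}|_{\D_g}$ as a relative Serre functor, and deduce both $\Omega_g$ and the square \eqref{eq:Serre_comp_graded_functor} from the uniqueness of relative Serre functors (Lemma~\ref{lem:Serre_uniqueness}). The only cosmetic difference is that you compute the internal Hom directly from the duality adjunction, whereas the paper obtains the same formula from cited results on the graded decomposition of internal Homs.
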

\begin{proof}
Regard $\D=\oplus_{g\in G}\,\D_g$ as a $\D_e$-bimodule category, then from \cite[Proposition 2.5]{gjs2022} it follows that
\begin{equation}
    \uHom_\D^{\D_e}(X,Y)\cong \bigoplus_{g\in G}\,\uHom_{\D_g}^{\D_e}(X_g,Y_g)
\end{equation}
which means that for a homogeneous $X\in\D_g$
\begin{equation}
    \uHom_\D^{\D_e}(X,-)|_{\D_g}\cong \uHom_{\D_g}^{\D_e}(X,-)\;\colon \;\D_g\to\D_e\,.
\end{equation} 
Now, according to \cite[Proposition 4.2]{gjs2022}, the degree of the internal hom of two objects in the same homogeneous component is trivial. We thus have for $X,Y\in\D_g$ that
\begin{equation}
    \uHom_\D^{\D_e}(X,Y)\cong\uHom_\D^{\D}(X,Y)\cong Y\otimes X^*.
\end{equation}
Therefore, altogether, the relative Serre functor of ${}_{\D_e}\!\D_g$ is given by
\begin{equation}
    \dS_{\D_g}^{\D_e}(X)\cong \uHom_{\D_g}^{\D_e}(X,-)^\ra (\unit)\cong (-\otimes X^*)^\ra(\unit)\cong X^{**}
\end{equation}
which proves the desired result. Additionally, consider the following composition of natural isomorphisms
\begin{equation}
    \psi_g\colon \uHom_{\D_g}^{\D_e}(Y,X^{**})\cong X^{**}\otimes Y^*\cong (Y\otimes X^*)^*\cong \uHom_{\D_g}^{\D_e}(X,Y)^{*}
\end{equation}
explicitly endowing $(-)^{**}|_{\D_g}$ with the structure of a relative Serre functor for ${}_{\D_e}\D_g$. From \cite[Lemma~3.5]{shimizu2019relative} we obtain a unique natural isomorphism \eqref{eq:Serre_homogeneous_components} of relative Serre functors. We have, in particular, as a consequence that the diagram \eqref{eq:Serre_comp_graded_functor} commutes.
\end{proof}


\section{Pivotal extensions and the pivotal Brauer-Picard \texorpdfstring{$2$}{2}-groupoid}\label{sec:pivotal-extensions}
This section is organized as follows. In Section~\ref{subsec:PivG-ext}, we introduce the notion of pivotal $G$-graded extensions and set up the basic framework for studying extensions of pivotal tensor categories. In Section~\ref{subsec:pivotal-brauer-picard}, we define the pivotal Brauer-Picard $2$-categorical group and discuss its structure in the context of pivotal bimodule categories. In Section~\ref{subsec:homotopy-fixed-points}, we realize the pivotal Brauer-Picard $2$-categorical group as the $2$-groupoid of fixed points for a natural $B\underline{\mathbb{Z}}$-action. Finally, in Section~\ref{subsec:classification-pivotal-G}, we provide a classification of pivotal $G$-graded extensions in terms of monoidal functors into the pivotal Brauer-Picard $2$-categorical group.


\subsection{Pivotal $G$-graded extensions}\label{subsec:PivG-ext}

Let $\C$ be a pivotal tensor category with pivotal structure $\fp\colon \id_{\C} \xRightarrow{~\sim~} (-)^{**}$ and $G$ be a finite group.

\begin{definition}\label{def:pivotalGgradedextn}
A \emph{pivotal G-graded extension} of a pivotal tensor category $\C$ is a tuple $(\D,\eqtriv,\fq),$ where $(\D, \fq)$ is a pivotal tensor category and $(\D,\eqtriv\colon\D_e\xrightarrow{~\sim~}\C)$ is a $G$-graded extension of $\C$ where $\eqtriv$ is pivotal.
\end{definition}

\begin{definition}
\label{def: piv ext}
Let $\C$ be a pivotal tensor category. We define $\PivExt(G,\C)$ as the $2$-groupoid with
\begin{itemize}
	\item Objects being pivotal $G$-graded extensions $(\D,\eqtriv^{\D}, \fq)$ of $\C$,
	\item $1$-cells are grading preserving pivotal tensor equivalences $F\colon\D\rightarrow\D'$ equipped with a monoidal natural isomorphism $\tau_F\colon \eqtriv^{\D'}\circ F_e \Rightarrow \eqtriv^{\D}$, where $F_e\colon\D_e\rightarrow \D'_e$ is the restriction of $F$ to $\D_e$, 
	\item $2$-cells are monoidal natural isomorphisms obeying \eqref{eq:comp_nat_triv_component}.
\end{itemize} 
\end{definition}

The purpose of this section is to realize the $2$-groupoid $\PivExt(G,\C)$ of pivotal $G$-extensions of $\C$ as the $2$-groupoid of fixed points of an appropriate $B\underline{\zZ}$-action on $\Ext(G,\C)$ which depends on the choice of pivotal structure $\fp$. According to Example \ref{ex: BZ and BZ2 actions}, it is enough to define a pseudo-natural autoequivalence of the identity $2$-functor $\id_{\Ext(G,\C)}$ and show that this pseudo-natural transformation has trivial self-braiding.
The double-dual functors of the $G$-graded extensions of $\C$ assemble into such a pseudo-natural autoequivalence. Explicitly,
\begin{itemize}
    \item For every object $(\D,\eqtriv^{\D}) \in \Ext(G,\C),$ consider the  double-dual functor $(-)_{\D}^{**}:\D \to \D$ along with the natural isomorphism $\tau_{(-)^{**}_{\D}}\colon\eqtriv^{\D} \circ (-)^{**}_{\D_e}\Rightarrow \eqtriv^{\D}$ defined by the composition
    \begin{equation}\label{eq:triviso}
        \iota^{\D}\circ \ (-)_{\D_e}^{**} \xRightarrow{~\eqref{eq:iso_F_double-dual}~} (-)_{\C}^{**}\circ \iota^{\D}\xRightarrow{\fp^{-1}} \id_{\C} \circ \ \iota^{\D}.
    \end{equation}

    \item For every $1$-cell $F\colon\D\longrightarrow \D'$ in $\Ext(G,\C)$, consider the $2$-cell
        \begin{equation}\label{eq:PsNat}
       (-)^{**}_F\colon F \circ (-)_{\D}^{**}\xRightarrow{~\sim~} (-)_{\D'}^{**}\circ F
    \end{equation}
    given for $X\in \D$ by the monoidal natural isomorphism $\zeta^F_X\colon F(X^{**})\xrightarrow{~\sim~} F(X)^{**}$ from \eqref{eq:iso_F_double-dual}. 
\end{itemize}

\begin{proposition} \label{prop:BZ-action-Ext}
The collection defined above  assembles into a pseudo-natural equivalence
\begin{align*}
    (-)^{**}\colon \id_{\Ext(G,\C)}\xRightarrow{~\sim~} \id_{\Ext(G,\C)}\,.
\end{align*}
\end{proposition}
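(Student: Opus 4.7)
The plan is to verify the three layers of data required for a pseudo-natural equivalence of $2$-functors $\id_{\Ext(G,\C)} \Rightarrow \id_{\Ext(G,\C)}$: well-definedness of the $1$-cells, well-definedness of the $2$-cells, and the pseudo-naturality coherence axioms. The ``equivalence'' property will follow at the end from the fact that the double dual is an autoequivalence of any finite tensor category.

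First, I would verify that for each $(\D,\eqtriv^{\D}) \in \Ext(G,\C)$ the pair $((-)^{**}_{\D},\tau_{(-)^{**}_{\D}})$ defines a $1$-cell in $\Ext(G,\C)$. Since duality preserves the graded decomposition of $\D$, the functor $(-)^{**}_{\D}$ is a grading-preserving tensor autoequivalence. The natural isomorphism $\tau_{(-)^{**}_{\D}}$ is a composition of two monoidal natural isomorphisms, namely $\zeta^{\eqtriv^{\D}}$ from \eqref{eq:iso_F_double-dual} and $\fp^{-1}$, and hence is itself monoidal.

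Second, I would check that for every $1$-cell $F\colon \D \to \D'$ the natural isomorphism $\zeta^F$ of \eqref{eq:iso_F_double-dual} descends to a well-defined $2$-cell in $\Ext(G,\C)$, i.e. a monoidal natural isomorphism between the $1$-cells $F \circ ((-)^{**}_{\D},\tau_{(-)^{**}_{\D}})$ and $((-)^{**}_{\D'},\tau_{(-)^{**}_{\D'}}) \circ F$. The monoidality is \cite[Lem.~1.1]{MR2381536}. The compatibility \eqref{eq:comp_nat_triv_component} unwinds, after expanding both $\tau_{(-)^{**}_{\D}}$ and $\tau_{(-)^{**}_{\D'}}$ via \eqref{eq:triviso}, into a diagram whose commutativity follows from the functoriality of $\zeta$ with respect to composition, namely $\zeta^{\eqtriv^{\D'} \circ F_e}$ agreeing with the whiskered composite of $\zeta^{\eqtriv^{\D'}}$ and $\zeta^{F_e}$, together with the naturality of the pivotal structure $\fp$ applied to $\tau_F$.

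Third, I would verify the standard pseudo-naturality coherence axioms: normalization $\zeta^{\id_\D} = \id$; compatibility with composition of $1$-cells, namely $\zeta^{G \circ F}$ agreeing with the horizontal composite of $\zeta^F$ and $\zeta^G$; and the compatibility with $2$-cells $\gamma\colon F \Rightarrow F'$, which reduces to the naturality of the duality isomorphism $\xi^{(-)}$ in the $2$-cell argument. Each of these is a standard coherence for the double-dual of tensor functors. The main obstacle I foresee is the compatibility condition \eqref{eq:comp_nat_triv_component}, since it is the one place where the pivotal structure of $\C$ interacts nontrivially with $\tau_F$, but at heart it is still a coherence computation.
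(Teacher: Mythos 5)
Your proposal is correct and follows essentially the same route as the paper's proof: verify that $((-)^{**}_{\D},\tau_{(-)^{**}_{\D}})$ is a $1$-cell, that $\zeta^F$ is a $2$-cell satisfying \eqref{eq:comp_nat_triv_component} by expanding \eqref{eq:triviso} and using naturality of $\fp$ and $\tau_F$, and that the pseudo-naturality axioms reduce to the standard coherences $\zeta^{F\circ H}=\zeta^F\circ F(\zeta^H)$ and $\eta^{**}\circ\zeta^F=\zeta^H\circ\eta_{(-)^{**}}$ (the paper cites these as \cite[Lemmas 3.1, 3.2]{shimizu2015pivotal}). No gaps.
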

\begin{proof}
For every $\D$ in $\Ext(G,\C)$, the double-dual functor $(-)^{**}_{\D}:\D\rightarrow \D$ is grading preserving, 
and thus together with the isomorphism \eqref{eq:triviso} is a $1$-cell in $\Ext(G,\C)$. 
Consider now $1$-cells  $F,H\colon\D\to \D'$ in $\Ext(G,\C)$. To show that $(-)^{**}$ is natural for $2$-morphisms, we need to prove that the diagram
\[\begin{tikzcd}
	{F \circ (-)^{**}} && {(-)^{**}\circ F} \\
	{H \circ (-)^{**}} && {(-)^{**}\circ H}
	\arrow["{(-)^{**}_F}", Rightarrow, from=1-1, to=1-3]
	\arrow["\eta"', Rightarrow, from=1-1, to=2-1]
	\arrow["\eta", Rightarrow, from=1-3, to=2-3]
	\arrow["{(-)^{**}_H}"', Rightarrow, from=2-1, to=2-3]
\end{tikzcd}\]
commutes for any monoidal natural transformation $\eta\colon F\Rightarrow H$. This translates to $\eta_X^{**}\circ \zeta^{F}_{X}=\zeta^{H}_{X}\circ \eta_{X^{**}}$, which holds by \cite[Lemma 3.2]{shimizu2015pivotal}.
Now, to check pseudo-naturality, consider $1$-cells $F\colon \D' \to \D''$ and $H\colon\D\to \D'$ in $\Ext(G, \C)$. 
Notice that $(-)^{**}_F\circ (-)^{**}_H$ is the composition $F(H(X^{**}))\xrightarrow{F(\zeta^{H}_{X})} F((HX)^{**})\xrightarrow{\zeta^{F}_{HX}} F(H(X))^{**}$, for $X\in \D$. Thus $(-)^{**}_{F\circ H}=(-)^{**}_F\circ (-)^{**}_H$ follows by the fact that $\zeta^{F}_{H(X)}\circ F(\zeta^{H}_{X})=\zeta^{F\circ H}_{X}$, see \cite[Lemma 3.1]{shimizu2015pivotal}.

Next, to show that $(-)^{**}_F$ is a $2$-cell in $\Ext(G,\C)$, we need to prove that it is compatible with the monoidal equivalence  $\eqtriv^{\D}\colon\D_e\xrightarrow{~\sim~} \C$, i.e. that the diagram
\begin{equation*}
    \begin{tikzcd}[column sep=large,row sep=scriptsize]
        \eqtriv^{\D'}\circ F_e \circ (-)_{\D_e}^{**} \ar[r,"(-)_{F_e}^{**}", Rightarrow]\ar[""{name=2}, d,swap,Rightarrow,"\tau_{F\circ (-)_{\D}^{**}}"]& \eqtriv^{\D'}\circ (-)_{\D'_e}^{**} \circ F_e \ar[""{name=1}, d,Rightarrow,"\tau_{(-)_{\D'}^{**}\circ F}"] \\
      \eqtriv^{\D}  \ar[""{name=2},r,Rightarrow,swap,"\id"] 
      & \eqtriv^{\D}
    \end{tikzcd}
\end{equation*}
commutes. 
Replacing $\tau_{F\circ (-)_{\D}^{**}}=\tau_{(-)_{\D}^{**}}\circ \tau_F$, $\tau_{(-)_{\D'}^{**} \circ F}=\tau_F \circ \tau_{(-)_{\D'}^{**}},$ and $\tau_{(-)_{\D}^{**}}, \tau_{(-)_{\D'}^{**}}$ by their respective definitions \eqref{eq:triviso}, we obtain an equivalent diagram given by the outside composition of arrows in
\[\begin{tikzcd}[column sep=large,row sep=scriptsize]
	{\iota^{\D'}\circ F_e\circ(-)_{\D_e}^{**}} &&&& {\iota^{\D'}\circ(-)_{\D_e'}^{**}\circ F_e} \\
	{\iota^{\D}\circ(-)_{\D_e}^{**}} && {(-)_{\C}^{**}\circ\iota^{\D}} && {(-)_{\C}^{**}\circ\iota^{\D'}\circ F_e} \\
	{(-)_{\C}^{**}\circ\iota^{\D}} && {\iota^{\D}} && {\iota^{\D'}\circ F.}
	\arrow["{(-)_{F_e}^{**}}", Rightarrow, from=1-1, to=1-5]
	\arrow["{\tau_F}"', Rightarrow, from=1-1, to=2-1]
	\arrow["{(-)_{\iota^{\D'}\circ F_e}^{**}}", Rightarrow, from=1-1, to=2-5]
	\arrow["{(-)_{\iota}^{**}}", Rightarrow, from=1-5, to=2-5]
	\arrow["{(-)_{\iota^{\D}}^{**}}", Rightarrow, from=2-1, to=2-3]
	\arrow["{(-)_{\iota^{\D}}^{**}}"', Rightarrow, from=2-1, to=3-1]
	\arrow["{\tau_F}"', Rightarrow, from=2-5, to=2-3]
	\arrow["{\fp^{-1}}"', Rightarrow, from=3-1, to=3-3]
	\arrow["\fp", Rightarrow, from=3-3, to=2-3]
	\arrow["\fp"', Rightarrow, from=3-5, to=2-5]
	\arrow["{\tau_F}", Rightarrow, from=3-5, to=3-3]
\end{tikzcd}\]
Now, the bottom right square and top left triangle of this diagram commute by naturality of $\tau$. The bottom left square commutes trivially. The top right triangle commutes since $(-)^{**}$ respects composition of functors, which was checked in a preceding argument following \cite[Lemma 3.1]{shimizu2015pivotal}. Lastly, the top left square commutes by monoidality of $\tau$, see \cite[Lemma 3.1]{shimizu2015pivotal}.
\end{proof}

\begin{corollary}\label{cor: BZ action on Ext}
    A pivotal structure on a tensor category $\C$ determines a $B\underline{\zZ}$-action on $\Ext(G,\C)$.
\end{corollary}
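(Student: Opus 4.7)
The plan is to invoke Example~\ref{ex: BZ and BZ2 actions}, which identifies a $B\underline{\zZ}$-action on the 2-groupoid $\Ext(G,\C)$ with the data of a pseudo-natural auto-equivalence $\eta\colon \id_{\Ext(G,\C)} \Rightarrow \id_{\Ext(G,\C)}$ whose self-braiding \eqref{eq:self_braiding} is trivial. Proposition~\ref{prop:BZ-action-Ext} already supplies such an $\eta$, namely $(-)^{**}$, whose dependence on the pivotal structure $\fp$ of $\C$ enters only through the trivial-component isomorphism \eqref{eq:triviso}. Consequently, the only remaining task is to verify that the self-braiding of this $\eta$ is trivial.

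The self-braiding at $\D \in \Ext(G,\C)$ is the component of $\eta$ at the 1-morphism $\eta_\D = (-)^{**}_\D\colon \D \to \D$. By \eqref{eq:PsNat}, this 2-cell has components the natural isomorphisms $\zeta^{(-)^{**}_\D}_X\colon X^{****} \to X^{****}$ for $X\in \D$, which by \eqref{eq:iso_F_double-dual} unpack as the composite of $\xi^{(-)^{**}_\D}_{X^*}$ with $(\xi^{(-)^{**}_\D}_X)^*$. The plan is to verify directly from the definition of $\xi^F_X$ via evaluation and coevaluation morphisms, applied to the tensor functor $F = (-)^{**}_\D$, that this composite equals the identity on $X^{****}$. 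This is a coherence statement internal to the tensor category $\D$ — independent of both the pivotal structure $\fp$ and the $G$-grading — and may alternatively be extracted from standard coherence results on the canonical pivotal-like structure on the double-dual functor, such as those appearing in \cite{shimizu2015pivotal}.

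I expect the principal obstacle to be the careful bookkeeping of the two successive comparison morphisms $\xi^{(-)^{**}}$, complicated by the fact that $(-)^{**}$ simultaneously plays the role of the tensor functor $F$ and of the double-dual appearing in the target of $\zeta^F$, with associated left/right duality conventions to be tracked. Once this triviality has been verified for an arbitrary tensor category, all the remaining data required for the $B\underline{\zZ}$-action — compatibility with composition, monoidality, and the equivalences $\eqtriv^\D\colon \D_e \xrightarrow{~\sim~} \C$ distinguishing 1-cells of $\Ext(G,\C)$ from mere monoidal equivalences — is precisely the pseudo-naturality data already established in Proposition~\ref{prop:BZ-action-Ext}, and the corollary follows.
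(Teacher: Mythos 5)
Your proposal is correct and follows essentially the same route as the paper: reduce via Example~\ref{ex: BZ and BZ2 actions} and Proposition~\ref{prop:BZ-action-Ext} to showing the self-braiding is trivial, i.e.\ that $\zeta^{(-)^{**}_\D}$ is the identity, and then check this by unwinding the duality comparison morphisms into evaluations and coevaluations and applying the snake relations. The paper's proof is exactly this argument, stated at the same level of detail.
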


\begin{proof}
This follows Proposition \ref{prop:BZ-action-Ext}. Indeed, as explained in Example \ref{ex: BZ and BZ2 actions} it only remains to be shown that for the pseudo-natural equivalence $(-)^{**}$ the self-braiding \eqref{eq:self_braiding} is trivial. More explicitly, that \eqref{eq:PsNat} for $F=(-)^{**}_\D$ is the identity for every extension $\D$. Now, $(-)^{**}_F$ is given by the composition of the tensor structure of $F$ and appropriate instances of evaluation and coevaluation morphisms. But the tensor structure of $(-)^{**}_\D$ is again a composition of evaluation and coevaluation morphisms. Altogether, the snake relations lead to the desired result.
\end{proof}

\begin{proposition}\label{prop: BZ fixed points}
Let $\C$ be a pivotal finite tensor category and $G$ a finite group.
The $2$-groupoid $\PivExt(G,\C)$ is $2$-equivalent to the $2$-groupoid of $B\underline{\zZ}$-fixed points $\Ext(G,\C)^{B\underline{\zZ}}$.
\end{proposition}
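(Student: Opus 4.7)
The plan is to unpack the characterization of $B\underline{\zZ}$-fixed points from Example \ref{ex: fixed point for BZ and BZ2 actions} under the action from Corollary \ref{cor: BZ action on Ext}, and to verify at each categorical level that the resulting data coincides with Definition \ref{def: piv ext}. Concretely, I will construct an evident $2$-functor $\PivExt(G,\C) \to \Ext(G,\C)^{B\underline{\zZ}}$ which is the identity on underlying data and then check bijectivity on objects, $1$-cells, and $2$-cells.

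By Example \ref{ex: fixed point for BZ and BZ2 actions}, a $B\underline{\zZ}$-fixed point is a pair consisting of a $G$-extension $(\D,\iota)$ and a $2$-cell $\fq \colon \id_\D \Rightarrow (-)^{**}_\D$ in $\Ext(G,\C)$. Forgetting the extension structure, $\fq$ is a monoidal natural isomorphism $\id_\D \Rightarrow (-)^{**}_\D$, that is, a pivotal structure on $\D$. The additional $2$-cell condition \eqref{eq:comp_nat_triv_component}, applied with $\tau_{\id_\D}=\id_\iota$ and with $\tau_{(-)^{**}_\D}$ given by \eqref{eq:triviso}, reduces after a direct chase to the equation
\[
\fp_{\iota(X)} \;=\; \zeta^\iota_X \circ \iota(\fq_X), \qquad X \in \D_e,
\]
which by \eqref{eq: F preserves pivotal structure} is precisely the assertion that $\iota\colon(\D_e,\fq|_{\D_e}) \to (\C,\fp)$ is a pivotal tensor equivalence. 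This matches Definition \ref{def:pivotalGgradedextn}, and the identification of objects is given by sending $(\D,\iota,\fq)$ in $\PivExt(G,\C)$ to the fixed-point pair $(\D,\iota,\Theta := \fq)$.

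For $1$-morphisms I would apply the criterion \eqref{eq:1-morph-equiv} from Example \ref{ex: fixed point for BZ and BZ2 actions}. Under the present action, the component of the pseudonatural equivalence at a $1$-morphism $F$ is the canonical isomorphism $\zeta^F$ of \eqref{eq:PsNat}, so for a grading-preserving tensor equivalence $F \colon \D \to \D'$ between fixed points $(\D,\fq)$ and $(\D',\fq')$ the fixed-point condition becomes $\zeta^F_X \circ F(\fq_X) = \fq'_{F(X)}$ for all $X \in \D$, which by \eqref{eq: F preserves pivotal structure} is exactly the pivotality of $F$. As noted at the end of Example \ref{ex: fixed point for BZ and BZ2 actions}, $2$-morphisms of $B\underline{\zZ}$-fixed points carry no extra data beyond the underlying $2$-cells in $\Ext(G,\C)$, which are the same as the $2$-cells of $\PivExt(G,\C)$.

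Assembling these identifications produces a $2$-functor $\PivExt(G,\C) \to \Ext(G,\C)^{B\underline{\zZ}}$ that is bijective on objects, $1$-morphisms and $2$-morphisms, hence a $2$-equivalence. The main step requiring care is the calculation translating the coherence \eqref{eq:comp_nat_triv_component} into the pivotality equation for $\iota$; once this is in place, the remaining verifications are straightforward consequences of the definition of the $B\underline{\zZ}$-action $(-)^{**}$ on $\Ext(G,\C)$ and the pseudonaturality data $\zeta^F$ from \eqref{eq:iso_F_double-dual}.
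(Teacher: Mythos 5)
Your proposal is correct and follows essentially the same route as the paper: unpack the $B\underline{\zZ}$-fixed-point data via Example \ref{ex: fixed point for BZ and BZ2 actions}, translate the $2$-cell condition \eqref{eq:comp_nat_triv_component} on $\fq$ (with $\tau_{(-)^{**}_\D}$ from \eqref{eq:triviso}) into pivotality of $\iota^\D$, and use \eqref{eq:1-morph-equiv} to match $1$-cells with pivotal equivalences. No gaps.
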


\begin{proof}
By Example \ref{ex: fixed point for BZ and BZ2 actions}, an object in $\Ext(G,\C)^{B\underline{\zZ}}$ is (up to equivalence) the data of a $G$-graded extension  $(\D,\eqtriv^{\D})$ of $\C$ together with monoidal natural isomorphism $\fq\colon  \id_{\D}\xRightarrow{~\sim~} (-)^{**}_{\D}$ obeying that
\begin{equation}
    \begin{tikzcd}[column sep=large]
        \eqtriv^{\D}\circ (-)^{**}_{\D_e} \ar[r,"\fq_{e}^{-1}", Rightarrow]\ar[""{name=2}, d,swap,Rightarrow,"\tau_{(-)_{\D}^{**}}"]& \eqtriv^{\D}\circ \id_{\D_e} \ar[""{name=1}, d,Rightarrow,"\id"] \\
      \eqtriv^{\D}  \ar[""{name=2},r,Rightarrow,swap,"\id"] 
      & \eqtriv^{\D}
    \end{tikzcd}
\end{equation}
commutes, where $\tau_{(-)_{\D}^{**}}$ is given by \eqref{eq:triviso}. Thus, we have that
\[\begin{tikzcd}
	{\eqtriv^{\D}\circ (-)^{**}_{\D_e}} & {\eqtriv^{\D}\circ \id_{\D_e}} \\
	{(-)_{\C}^{**}\circ\eqtriv^{\D}} & {\eqtriv^{\D}}
	\arrow["{\fq_{e}^{-1}}", Rightarrow, from=1-1, to=1-2]
	\arrow["{(-)_{\iota^{\D}}^{**}}"', Rightarrow, from=1-1, to=2-1]
	\arrow["\id", Rightarrow, from=1-2, to=2-2]
	\arrow["{\fp_{\C}^{-1}}"', Rightarrow, from=2-1, to=2-2]
\end{tikzcd}\]
commutes, which means that $\eqtriv^\D$ is pivotal. Hence, $(\D,\eqtriv^{\D}, \fq)$ is a pivotal $G$-graded extension of $(\C,\fp)$. Analogously, if we start with a pivotal $G$-graded extension $(\D,\eqtriv^{\D}, \fq)$ of $\C$, then $(\D,\eqtriv^{\D})$ with the invertible $2$-morphism $\fq$ gives a fixed point of the $B\underline{\zZ}$-action.
Hence, the data of an object in $\Ext(G,\C)^{B\underline{\zZ}}$ is the same as that of an object in $\PivExt(G,\C).$

On the other hand, as seen in Example \ref{ex: fixed point for BZ and BZ2 actions} the data of a 1-cell $(\D,\iota^{\D}, \fq)\to (\D',\iota^{\D'}, \fq')$ in $\Ext(G,\C)^{B\underline{\zZ}}$ is a 1-cell $(F,\tau_F)$ in $\Ext(G,\C),$ where $F\colon\D\to \D'$, such that the diagram 
\[\begin{tikzcd}
	F && {F\circ (-)^{**}} \\
	& {(-)^{**}\circ F}
	\arrow["\fq", from=1-1, to=1-3,Rightarrow]
	\arrow["{\fq'}"', from=1-1, to=2-2,Rightarrow]
	\arrow["{(-)_F^{**}}"', from=2-2, to=1-3,Rightarrow]
\end{tikzcd} \]
commutes, see \eqref{eq:1-morph-equiv}. That is, for $X\in \D$ we have that $\zeta_X^F\circ F(\fq_X)=\fq'_{F(X)},$ 
meaning that $F\colon(\D, \fq)\to (\D', \fq')$ is pivotal.
This shows the data of a 1-cell in $\Ext(G,\C)^{B\underline{\zZ}}$ is the same as the data of a 1-cell in $\PivExt(G,\C)$. It is clear that $2$-cells match as well, which finishes the proof.
\end{proof}


\subsection{The pivotal Brauer-Picard $2$-categorical group}\label{subsec:pivotal-brauer-picard}
Let $\C$ be a pivotal tensor category and $\M$ an exact $\C$-module category. The relative Serre functor of $\M$ becomes a left $\C$-module functor with module constraint given by the composition
\[ X\tl \dS^\C_{\M}(M) \xrightarrow{\fp_X \tl\id} X^{**}\tl \dS^\C_{\M}(M) \xrightarrow{\eqref{sCXM}} \dS^\C_{\M}(X\tl M). \]
Similarly, for a bimodule category ${}_\C\M_\D$ over pivotal finite tensor categories, the relative Serre functors $\dS_\M^\C$ and $\dS_\M^{\oD}$ become bimodule functors,
\begin{equation}\label{eq:Serre_bimodule_functor}
     X\tl \dS^\C_{\M}(M) \tr Y \xrightarrow{\fp_X \tl\id\tr\fq_Y} X^{**}\tl \dS^\C_{\M}(M)\tr Y^{**} \xrightarrow{\eqref{eq:twisted_serre_bimod}} \dS^\C_{\M}(X\tl M\tr Y). 
\end{equation}
Moreover, for a $\C$-bimodule equivalence $H:\M\rightarrow\N$, using $H^\rra = H$, the twisted bimodule equivalence \eqref{eq:Serre_module_functor} becomes a bimodule equivalence
\begin{equation}\label{eq:Serre_bimodule_equivalence}
\Lambda_H: \dS_\N^\C\circ H \xRightarrow{~\sim~} H\circ \dS_\M^\C.
\end{equation}

\begin{definition}
Let $\C$ and $\D$ be pivotal tensor categories.
\label{def: piv strc module} 
\begin{enumerate}[(i)]
    \item \cite[Def.\ 3.11]{shimizu2019relative} A \textit{pivotal structure} on an exact left $\C$-module category $\M$ is a $\C$-module natural isomorphism $\tfp\colon\id_{\M}{\xRightarrow{~\sim\,}} \dS^{\C}_{\M}$. We call $\M$ along with $\tfp$ a pivotal $\C$-module category.
    \item \cite[Def.\ 5.1]{spherical2022} A \emph{pivotal $(\C,\D)$-bimodule category} is an exact  $(\C,\D)$-bimodule category ${}_\C\M_\D$ together with a bimodule natural isomorphisms $\tfp\colon \id_\M{\xRightarrow{~\sim\,}}\dS_\M^\C$ and $\tfq:\id_{\M}\xRightarrow{~\sim} \dS_{\M}^{\oD}$.
    \item An \emph{invertible pivotal bimodule category} is an invertible bimodule category ${}_\C\M_\D$ together with a bimodule natural isomorphism $\tfp\colon \id_\M{\xRightarrow{~\sim\,}}\dS_\M^\C$.
\end{enumerate}
\end{definition}

\begin{remark}\label{rem:Serre-left-right}
Definition \ref{def: piv strc module} (iii) only considers the relative Serre functor $\dS^{\C}_{\M}$ corresponding to the left action and not the relative Serre functor $\dS^{\oD}_{\M}$.
The notion of a pivotal bimodule category defined in \cite[Def.\ 5.1]{spherical2022} requires an additional trivialization $\tfq \colon \id_\M{\xRightarrow{~\sim\,}}\dS_\M^{\oD}$.
However, when $\M$ is an invertible $(\C,\D)$-bimodule category, these two relative Serre functors are related: there is a natural isomorphism $\odS^{\C}_{\M}\cong \dS^{\oD}_{\M}$ of twisted bimodule functors, according to \cite[Cor.\ ~4.12]{spherical2022}. Thus, a trivialization of $\dS_{\M}^{\C}$ also yields a trivialization of $\dS_{\M}^{\oD}$, endowing $\M$ with the structure of a pivotal bimodule category in the sense of \cite[Def.\ 5.1]{spherical2022}.
\end{remark}

\begin{remark}
\label{rem:nonpivotalizable more general}
Let $\C$ be a fusion category with a fixed pivotal structure $\mathfrak{p}$. Let $G=U(\C)$  be the universal grading group of $\C$ \cite{gelaki2008nilpotent}, and assume that $\mathrm{char}(\kk)$ does not divide $|U(\C)|$. The pivotal structures on $\C$ are in bijection with elements $\phi\in \widehat{G} \cong \Aut_\otimes(\id_\C)$. Let us denote $\mathfrak{p}^\phi$ the corresponding pivotal structure, i.e.
\begin{equation}
\label{eqn: other pivotal structures}
(\mathfrak{p}^\phi)_X= \phi(\deg X) \mathfrak{p}_X \qquad \text{for any homogeneous $X\in \C$}.
\end{equation}

A semisimple $\C$-module category $\M$ has a compatible grading by a transitive $G$-set $G/H$. Suppose that $\M$ has a pivotal structure $\tilde\fp$ with respect to $\mathfrak{p}$. Then the pivotal structures on $\M$ with respect
to $\mathfrak{p}^\phi$ are in bijection with functions $f: G/H \to \kk^\times$ satisfying $\phi(g)f(xH)=f(gxH),\,
g,x\in G$. Such functions exist (i.e. $M$ is pivotalizable with respect to $\mathfrak{p}^\phi$) if and only
$\phi|_H=1$.

The above setup applies, in particular, when $\C$ is pseudo-unitary, in which case it possesses a canonical pivotal structure \cite{etingof2005fusion}. Moreover, by \cite[Proposition 5.8]{schaumann2013traces}, every $\C$-module category $\M$ then also admits a canonical pivotal structure.
This framework yields numerous examples of non-pivotalizable module categories. For example, if $\M$ arises from a fiber functor on $\C$, then $H = G$, and so $\M$ is non-pivotalizable with respect to $\mathfrak{p}^\phi$ unless $\phi = 1$.
\end{remark}

\begin{definition}\label{def:PivotalBimodFunctor}
Let $(\M,\tfp^{\M})$ and $(\N,\tfp^{\N})$ be pivotal 
$\C$-bimodule categories. A $\C$-bimodule equivalence $H\colon\M\xrightarrow{\;\sim\;}\N$ is called \textit{pivotal} if the following diagram commutes.
\begin{equation}\label{eq:PivBrPic-1-cell}
	\begin{tikzcd}
			& {H} &  \\
			{\dS^\C_{\N}\circ H} & & {H\circ \dS^\C_{\M}}.
			\arrow[Rightarrow,"{H\circ\tfp^{\M}}", from=1-2, to=2-3]
			\arrow[Rightarrow,"{\tfp^{\N}\circ H}"', from=1-2, to=2-1]
			\arrow[Rightarrow,"{\eqref{eq:Serre_bimodule_equivalence}}"', from=2-1, to=2-3]
	\end{tikzcd}
\end{equation} 
\end{definition}

\begin{lemma}
Composition of pivotal $\C$-module equivalences is pivotal. 
\end{lemma}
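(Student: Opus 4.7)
The plan is to reduce the statement to a functoriality property of the natural isomorphism $\Lambda$ from \eqref{eq:Serre_bimodule_equivalence} under composition of bimodule equivalences, which in turn follows from the uniqueness of relative Serre functors (Lemma \ref{lem:Serre_uniqueness}).

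Concretely, let $H \colon (\M,\tfp^\M) \to (\N,\tfp^\N)$ and $K\colon (\N,\tfp^\N)\to (\P,\tfp^\P)$ be pivotal $\C$-bimodule equivalences. First I would establish the auxiliary identity
\begin{equation*}
\Lambda_{K\circ H} \;=\; (K\circ \Lambda_H)\cdot (\Lambda_K\circ H)\colon \dS^\C_\P\circ K\circ H \xRightarrow{\;\sim\;} K\circ H\circ \dS^\C_\M.
\end{equation*}
Both sides are natural isomorphisms between two relative Serre functors of $\M$ transported along the equivalence $K\circ H$. By Lemma~\ref{lem:Serre_uniqueness}, they must agree once one checks compatibility with the defining natural isomorphism $\phi$ of \eqref{eq:phi_Serre}; this check is a direct bookkeeping exercise in the naturality squares defining $\Lambda_H$ and $\Lambda_K$ and the adjunction data, as in the proof of Lemma~\ref{lem:Serre-monoidal}.

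With this identity in hand, pivotality of $K\circ H$ amounts to the commutativity of the pasting diagram
\begin{equation*}
\begin{tikzcd}[column sep=small, row sep=normal]
& K\circ H & \\
K\circ H\circ \dS^\C_\M & & \dS^\C_\P\circ K \circ H  \\
K\circ \dS^\C_\N\circ H \ar[u,Rightarrow,"K\circ \Lambda_H"] & &
\arrow[Rightarrow,"(K\circ H)\circ \tfp^\M"', from=1-2, to=2-1]
\arrow[Rightarrow,"\tfp^\P\circ (K\circ H)", from=1-2, to=2-3]
\arrow[Rightarrow,"\Lambda_K \circ H", from=3-1, to=2-3]
\end{tikzcd}
\end{equation*}
which I would verify by inserting the intermediate composite $K\circ \dS^\C_\N\circ H$ in the middle and splitting the outer diagram into two subdiagrams: the upper-left triangle, obtained by whiskering \eqref{eq:PivBrPic-1-cell} for $H$ by $K$ on the left, and the upper-right triangle, obtained by whiskering \eqref{eq:PivBrPic-1-cell} for $K$ by $H$ on the right. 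Both commute by the pivotality hypotheses on $H$ and $K$, and the outer triangle is precisely the pivotality condition \eqref{eq:PivBrPic-1-cell} for $K\circ H$ thanks to the functoriality identity for $\Lambda$ established above.

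The only nontrivial step is the verification of the functoriality identity for $\Lambda$; the rest is a formal pasting argument. I therefore expect the main obstacle to lie in carrying out the uniqueness check carefully in the bimodule setting, as the argument must be done compatibly with both left and right $\C$-actions and with the twisted module structures \eqref{eq:twisted_serre_bimod} underlying $\Lambda_H$ and $\Lambda_K$.
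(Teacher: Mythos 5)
Your proposal is correct and is essentially the paper's own argument: the proof there is exactly the two-triangle pasting, with the left triangle given by whiskering \eqref{eq:PivBrPic-1-cell} for the first equivalence and the right triangle by whiskering it for the second. The functoriality identity $\Lambda_{K\circ H}=(K\circ\Lambda_H)\cdot(\Lambda_K\circ H)$ that you isolate as the key auxiliary step is used implicitly in the paper's diagram and is attributed elsewhere in the text to \cite[Thm.~3.10]{shimizu2019relative}, so making it explicit (via Lemma~\ref{lem:Serre_uniqueness}) is a reasonable but not essentially different refinement.
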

\begin{proof}
Let $F\colon\M\rightarrow\N$ and $H\colon\N\rightarrow\cL$ be pivotal left $\C$-module equivalences. The commutativity of the following diagram establishes that $H\circ F$ is pivotal:
\[\begin{tikzcd}[row sep=small]
	& {H\circ F} \\
	{} \\
	{\dS^\C_{\cL}\circ H\circ F} & {H\circ\dS^\C_{\N}\circ F} & {H\circ F\circ\dS^\C_{\M}}
	\arrow["{{\tfp^{\cL}\circ HF}}"', Rightarrow, from=1-2, to=3-1]
	\arrow["{{H\circ\tfp^{\M}\circ F}}"{description}, Rightarrow, from=1-2, to=3-2]
	\arrow["{{HF\circ\tfp^{\M}}}", Rightarrow, from=1-2, to=3-3]
	\arrow["{{\eqref{eq:Serre_bimodule_equivalence}}}"', Rightarrow, from=3-1, to=3-2]
	\arrow["{{\eqref{eq:Serre_bimodule_equivalence}}}"', Rightarrow, from=3-2, to=3-3]
\end{tikzcd}\]
The left square commutes by \eqref{eq:PivBrPic-1-cell} precomposed with $F$ and the right square by $H$ applied to \eqref{eq:PivBrPic-1-cell}.
\end{proof}
\begin{definition}\label{def:PivBrPic}
Let $\C$ be a pivotal finite tensor category. The \textit{pivotal Brauer-Picard $2$-groupoid of $\C$} is the $2$-groupoid $\PivBrPic(\C)$ whose objects are invertible pivotal $\C$-bimodule categories, $1$-cells are pivotal $\C$-bimodule equivalences and $2$-cells are bimodule natural isomorphisms.
\end{definition}
There is a canonical forgetful $2$-functor
\begin{equation}\label{eq:forget_BrPic}
    \forg\colon \PivBrPic(\C)\longrightarrow\BrPic(\C),\quad (\M,\tfp)\longmapsto \M
\end{equation}
that is faithful in $1$-morphisms and fully faithful in $2$-morphisms.

Given $\M, \N\in \PivBrPic(\C)$ their relative Deligne product $\M\btC \N$ inherits a pivotal structure $\fp^{\mathcal M \btC \mathcal N}$ given by the composition 
\begin{equation}\label{eq:pivotal_Deligne_pr}
\id_{\M\btC \N}=
\id_{\M}\btC \id_{\N} \xRightarrow{\;\tfp^{\M}\btC\, \tfp^{\N}\;}
   {\dS^\C_{\M} \btC \dS^\C_{\N}} 
   \xRightarrow{\;\mu_{\M,\N}\;}
   \dS^\C_{\M \btC \N}
\end{equation}
where $\mu_{\M,\N}$ is the natural isomorphism from \eqref{eq:Serre_monoidal}.

\begin{proposition}
\label{prop: endows}
The relative Deligne product endows $\PivBrPic(\C)$ with the structure of a $2$-categorical group.
Moreover, the forgetful $2$-functor $\forg$ becomes a monoidal $2$-functor.
\end{proposition}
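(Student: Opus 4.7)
The strategy is to lift the monoidal $2$-categorical group structure of $\BrPic(\C)$ along the forgetful $2$-functor \eqref{eq:forget_BrPic}. Since $\forg$ is faithful on $1$-cells and fully faithful on $2$-cells, it suffices to verify: (i) the relative Deligne product of two objects of $\PivBrPic(\C)$, equipped with the pivotal structure \eqref{eq:pivotal_Deligne_pr}, is again an object of $\PivBrPic(\C)$; (ii) the relative Deligne product of pivotal bimodule equivalences is pivotal; (iii) the unit, associators, and unitors of $\BrPic(\C)$ lift to pivotal $1$-equivalences with respect to the induced pivotal structures; and (iv) every object of $\PivBrPic(\C)$ admits an inverse. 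Given these, the coherence axioms for the monoidal structure and the invertibility of $1$- and $2$-morphisms transfer along $\forg$ from $\BrPic(\C)$, yielding the $2$-categorical group structure; monoidality of $\forg$ itself will then be immediate, since both $2$-categories share the same underlying relative Deligne product and $\forg$ merely forgets the pivotal data.

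For (i), $\fp^{\M\btC\N}$ is the composite of $\tfp^{\M}\btC\tfp^{\N}$ with $\mu_{\M,\N}$. The first factor is a $\C$-bimodule natural isomorphism because each $\tfp^{\M_i}$ is, and by Proposition~\ref{prop:Serre_monoidal}, $\mu_{\M,\N}$ is a natural isomorphism of twisted bimodule functors that becomes an honest bimodule natural isomorphism once the pivotal structure on $\C$ is used to untwist via \eqref{eq:Serre_bimodule_functor}. For (ii), given pivotal bimodule equivalences $F_i\colon(\M_i,\tfp^{\M_i})\to(\N_i,\tfp^{\N_i})$ for $i=1,2$, the diagram \eqref{eq:PivBrPic-1-cell} for $F_1\btC F_2$ decomposes into an outer hexagon built from two commuting subdiagrams: one encoding the pivotality of each $F_i$ individually after applying $\btC$, and one expressing compatibility of $\Lambda_{F_1\btC F_2}$ with $\mu_{\M_1,\M_2}$ and $\mu_{\N_1,\N_2}$. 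The latter is exactly Lemma~\ref{lem:Serre-monoidal}; this is the technical heart of the proof.

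For (iii), the unit object is $\C$ itself with pivotal structure induced by $\fp$ under the canonical identification $\dS_\C^\C\cong(-)^{**}$ provided by Lemma~\ref{lem:Serre_homogeneous} in the trivial grading case. The associators and unitors of $\BrPic(\C)$ underlie identity bimodule functors of iterated relative Deligne products, and their pivotality reduces to coherence between different composites of instances of $\mu$; such coherence is forced by the uniqueness of relative Serre functors (Lemma~\ref{lem:Serre_uniqueness}). For (iv), given $(\M,\tfp^{\M})$, pick an inverse $\overline{\M}\in\BrPic(\C)$ with bimodule equivalence $\epsilon\colon\overline{\M}\btC\M\xrightarrow{\sim}\C$; transporting the pivotal structure on $\C$ through $\epsilon$ and combining with $\tfp^{\M}$ via \eqref{eq:pivotal_Deligne_pr} produces a unique pivotal structure on $\overline{\M}$ making $\epsilon$ pivotal, and compatibility on the other side $\M\btC\overline{\M}\xrightarrow{\sim}\C$ follows by a second application of uniqueness of Serre structures. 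The principal obstacle in the argument is step (ii), where the interaction between the external compatibility of pivotal structures and the internal coherence of $\mu$ must be reconciled via Lemma~\ref{lem:Serre-monoidal}.
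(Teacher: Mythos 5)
Your proposal is correct and follows essentially the same route as the paper: everything is reduced to closure of $\PivBrPic(\C)$ under the relative Deligne product, and closure on $1$-cells is verified by decomposing the pivotality diagram for $F_1\btC F_2$ into the pivotality of each $F_i$ plus the compatibility of $\Lambda$ with $\mu$ supplied by Lemma~\ref{lem:Serre-monoidal}, which is indeed the technical heart of the paper's proof as well. The only differences are minor: for object-closure the paper invokes the equivalence $\M\btC\N\simeq\Rex_{\C}(\M^{\op},\N)$ together with an external result rather than arguing directly that \eqref{eq:pivotal_Deligne_pr} is a bimodule natural isomorphism, and your explicit treatment of units, associators and inverses spells out checks that the paper leaves implicit.
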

\begin{proof}
For both claims, it suffices to show that $\PivBrPic(\C)$ is closed under the relative Deligne product. By definition every object in $\PivBrPic(\C)$ is invertible. For objects, this can be proven using the equivalence $\M\btC\N \simeq \Rex_{\C}(\M^{\op},\N)$ and invoking \cite[Proposition~5.4]{spherical2022}. An easy check shows that the pivotal structure assigned to $\M\btC \N$ in \cite{spherical2022} is same as the one described in \eqref{eq:pivotal_Deligne_pr}.

Lastly, we check that the product $\btC$ of pivotal module equivalences is pivotal. Take pivotal $\C$-bimodule categories $\M_1,\M_2,\N_1,\N_2\in\PivBrPic(\C)$ and pivotal $\C$-bimodule functors $F_1\colon\M_1\rightarrow\N_1$ and $F_2\colon\M_2\rightarrow\N_2$. Then $F_1\btC F_2\colon\M_1\btC\M_2\rightarrow\N_1\btC\N_2$ is pivotal because the following diagram commutes (we have suppressed $\btC$ in the diagram):
\[\begin{tikzcd}[column sep=large,row sep=scriptsize]
	& {F_1 F_2} \\
	\\
	{F_1 (\dS_{\N_2}\circ F_2)} & {F_1 (F_2\circ\dS_{\M_2})} & {(F_1\circ \dS_{\M_1})F_2} \\[2.5em]
	{(\dS_{\N_1}\circ F_1)(\dS_{\N_2}\circ F_2)} & {(F_1\circ \dS_{\M_1})(\dS_{\N_2}\circ F_2)} & {(F_1\circ \dS_{\M_1})(F_2\circ \dS_{\M_2})} \\
	{(\dS_{\N_1}\dS_{\N_2})\circ(F_1F_2)} && {(F_1F_2)\circ(\dS_{\M_1}\dS_{\M_2})} \\
	{\dS_{\N_1\N_2}\circ(F_1F_2)} && {(F_1F_2)\circ\dS_{\M_1\M_2}}
	\arrow["{F_1(\tfp_{\N_2}\circ F_2)}"', Rightarrow, from=1-2, to=3-1]
	\arrow["F_1(F_2\circ\tfp_{\M_2})"{description},Rightarrow, from=1-2, to=3-2]
	\arrow["{(F_1\circ \tfp_{M_2})F_2}", Rightarrow, from=1-2, to=3-3]
	\arrow["{F_1 \Lambda}", Rightarrow, from=3-1, to=3-2]
	\arrow["{(\tfp_{\N_1}\circ F_1) (\dS_{\N_2}\circ F_2)}"', Rightarrow, from=3-1, to=4-1]
	\arrow["{(F_1\circ \tfp_{\M_1})(\dS_{\N_2}\circ F_2)}"{description}, Rightarrow, from=3-1, to=4-2]
	\arrow["{(F_1\circ \tfp_{\M_1})(F_2\circ \dS_{\M_2})}"{description}, Rightarrow, from=3-2, to=4-3]
	\arrow["{(F_1\circ \dS_{\M_1})(F_2\circ\tfp_{\M_2})}", Rightarrow, from=3-3, to=4-3]
	\arrow["\Lambda"', Rightarrow, from=4-1, to=4-2]
	\arrow[equals, from=4-1, to=5-1]
	\arrow["\Lambda"', Rightarrow, from=4-2, to=4-3]
	\arrow[equals, from=4-3, to=5-3]
	\arrow["\mu"', Rightarrow, from=5-1, to=6-1]
	\arrow["\mu", Rightarrow, from=5-3, to=6-3]
	\arrow["\Lambda"', Rightarrow, from=6-1, to=6-3]
\end{tikzcd}\]
The two triangles commute by \eqref{eq:PivBrPic-1-cell} and the top two squares commute by level exchange. 
The bottom square commutes by Lemma~\ref{lem:Serre-monoidal}.
\end{proof}

Let $\pi_0(\BrPic(\C))$ (respectively, $\pi_0(\PivBrPic(\C))$) denote the underlying group of the isomorphism classes of objects of the $2$-categorical group $\BrPic(\C)$
(respectively, $\PivBrPic(\C)$).

For any $\C$-bimodule category $\M$, the relative Serre functor $\dS^\C_\M$ is a $\C$-bimodule autoequivalence of $\M$. When $\M$ is invertible, the latter is given by $Z_\M \tl -$, for an  invertible object $Z_\M$ of
$(\C\boxtimes \C^{\op})^*_\M \simeq \Z(\C)$ defined up to isomorphism.
Thus, we have a map
\begin{equation}
\label{eqn: hom S}
S\colon \pi_0(\BrPic(\C))\longrightarrow \Inv(\Z(\C)),\; \M \longmapsto Z_\M\,.
\end{equation}
Let 
\begin{equation}
\label{eqn: Aut-Pic iso}
\partial\colon \pi_0(\BrPic(\C))\longrightarrow \pi_0(\Aut^{br}(\Z(\C))),\; \M \longmapsto \partial_\M
\end{equation}
denote the canonical homomorphism, which is completely determined by the following isomorphism of $\C$-bimodule endofunctors of $\M$:
\begin{equation}
\label{eqn: partial}
\partial_\M(Z) \tl -  \cong -\tr Z, \qquad Z \in \Z(\C)\,.
\end{equation}
In particular, $\pi_0(\BrPic(\C))$ acts by automorphisms on the group $\Inv(\Z(\C))$.

\begin{proposition}
\label{Z is a 1-cocycle}
The function \eqref{eqn: hom S} is a $1$-cocycle, i.e it satisfies
\begin{equation}
Z_{\M \boxtimes_\C \N} \cong Z_\M \otimes \partial_\M(Z_\N)
\end{equation}
for all invertible $\C$-bimodule categories $\M,\, \N$. We have $Z_\M\cong \mathbf{1}$ if and only if
$\M$ admits a pivotal structure.
\end{proposition}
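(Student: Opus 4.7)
The plan is to deduce both claims from the uniqueness of the relative Serre functor together with the characterization of $\C$-bimodule autoequivalences of an invertible bimodule category in terms of $\Z(\C)$.

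For the cocycle identity, I would start by invoking the natural isomorphism $\mu_{\M,\N}\colon \dS^{\C}_{\M}\btC \dS^{\C}_{\N}\xRightarrow{\sim}\dS^{\C}_{\M\btC\N}$ from Proposition \ref{prop:Serre_monoidal}. Evaluating on simple tensors, this identifies $\dS^{\C}_{\M\btC\N}(M\boxtimes N)$ with $\dS^{\C}_{\M}(M)\boxtimes \dS^{\C}_{\N}(N)\cong (Z_\M\tl M)\boxtimes (Z_\N\tl N)$. The goal then is to rewrite this last expression in the form $Z\tl(M\boxtimes N)$ for some invertible $Z\in \Z(\C)$. First I use the defining $\C$-balance of the relative Deligne product to push $Z_\N$ across the $\boxtimes$, obtaining $(Z_\M\tl M\tr Z_\N)\boxtimes N$, where $Z_\N$ is viewed in $\C$ via the forgetful functor and the right $\C$-action of $\M$ is used. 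Then I apply \eqref{eqn: partial} to exchange this right action by $Z_\N$ for the left action by $\partial_\M(Z_\N)$, which gives $(Z_\M\otimes\partial_\M(Z_\N))\tl M\boxtimes N$. Comparing with $\dS^{\C}_{\M\btC\N}\cong Z_{\M\btC\N}\tl-$ and using that the correspondence $Z\leftrightarrow Z\tl-$ is injective on isomorphism classes (since $(\C\btC\oC)^{*}_{\M\btC\N}\simeq \Z(\C)$ as tensor categories), this yields the desired isomorphism $Z_{\M\btC\N}\cong Z_\M\otimes \partial_\M(Z_\N)$.

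For the second claim, I would unpack Definition \ref{def: piv strc module}(iii): a pivotal structure on an invertible $\C$-bimodule category $\M$ is precisely a $\C$-bimodule natural isomorphism $\tfp\colon \id_\M \xRightarrow{\sim} \dS^{\C}_\M$. Under the tensor equivalence $(\C\boxtimes \oC)^{*}_\M \simeq \Z(\C)$, which sends $\id_\M$ to $\unit$ and $\dS^{\C}_\M$ to $Z_\M$ by construction, $\C$-bimodule natural isomorphisms $\id_\M\xRightarrow{\sim}\dS^{\C}_\M$ correspond bijectively to isomorphisms $\unit\xrightarrow{\sim} Z_\M$ in $\Z(\C)$. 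Thus such a pivotal structure exists if and only if $Z_\M\cong \unit$.

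The main obstacle is being careful about the bimodule-functor structure of $\dS^{\C}_\M$: a priori it is only a twisted bimodule functor as in \eqref{eq:twisted_serre_bimod}, and its identification with $Z_\M\tl-$ uses the pivotal structure of $\C$ via \eqref{eq:Serre_bimodule_functor} to untwist it. Once this untwisting is in place the identifications above become identifications of genuine $\C$-bimodule functors, and the two arguments go through cleanly; verifying this compatibility (together with checking that the chain of isomorphisms in the first part is an isomorphism of $\C$-bimodule functors, not merely of objects) is the one step that warrants explicit attention.
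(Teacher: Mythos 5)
Your proposal is correct and follows essentially the same route as the paper: both parts rest on Proposition~\ref{prop:Serre_monoidal} to identify $\dS^\C_{\M\btC\N}$ with $\dS^\C_\M\btC\dS^\C_\N$, then use the balancing and \eqref{eqn: partial} to rewrite $(Z_\M\tl-)\btC(Z_\N\tl-)$ as $(Z_\M\otimes\partial_\M(Z_\N))\tl-$, and read off the second claim from the definition of a module pivotal structure. Your version simply spells out the steps (the balancing move, injectivity of $Z\mapsto Z\tl-$, and the untwisting via the pivotal structure of $\C$) that the paper leaves implicit.
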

\begin{proof}
By Proposition~\ref{prop:Serre_monoidal}, $\dS^\C_{\M\boxtimes_\C \N} \cong \dS^\C_{\M} \boxtimes_\C \dS^\C_{\N}$. Using \eqref{eqn: partial}, we see that $\dS^\C_{\M}\boxtimes_\C \dS^\C_{\N}$ is given by
 \[
 (Z_\M \tl -) \boxtimes_\C (Z_\N \tl -) = (Z_\M \otimes \partial_\M(Z_\N)) \tl -\,,
 \]
while $\dS^\C_{\M\boxtimes_\C \N}$ is given by $Z_{\M\boxtimes_\C \N}$, so the statement follows.  The second is simply a restatement of the definition of a module pivotal structure.  
\end{proof}

\begin{remark}
Note that although $S$ is, in general,  not a group homomorphism, its kernel is a subgroup of $\pi_0(\BrPic(\C))$, namely the image of $\pi_0(\PivBrPic(\C))$
under the forgetful homomorphism.
\end{remark}

Let \(\C\) be a pseudo-unitary fusion category.  
By~\cite{etingof2005fusion}, it carries a canonical pivotal structure \(\fp\).  
Furthermore, as noted in Remark~\ref{rem:nonpivotalizable more general}, every \(\C\)-module category is pivotal with respect to \(\fp\).  
The pivotal structures \(\fp^\phi\) on \(\C\) are parameterized by \(\phi \in \widehat{U(\C)}\), as in~\eqref{eqn: other pivotal structures}.  A choice of $\phi$ gives a central object \(Z_\phi \in \Z(\C)^\times\) defined as the unit object \(\unit_\C\) equipped with the half-braiding  
\(X \otimes Z_\phi \to Z_\phi \otimes X\) given by \(\phi(\deg(X))\)  
for any \(X \in \C\) homogeneous with respect to the universal grading.  

Let \(\dS^{\C}_\M\) denote the relative Serre \(\C\)-bimodule endofunctor of \(\M\)  
with bimodule structure given by \eqref{eq:Serre_bimodule_functor} where we consider the pivotal structure \(\fp^\phi\) both on the left and right actions. Since, by Remark~\ref{rem:nonpivotalizable more general}, $\M$ is pivotalizable with respect to $\fp$, we have that $\dS^{\C}_\M$ is given by  
\(Z_\phi \triangleright - \triangleleft Z_\phi^{-1}\). In view of~\eqref{eqn: partial}, we have  
\[
\dS^{\C}_\M(M)
= Z_\phi \triangleright M \triangleleft Z_\phi^{-1}
= (Z_\phi \otimes \partial_\M(Z_\phi)^{-1}) \triangleright M,
\qquad M \in \M.
\]
This  proves the following result.

\begin{proposition}
\label{prop: fixes Z}
Let $\C$ be a pseudo-unitary fusion category.
An invertible $\C$-bimodule category $\M$ is pivotalizable with respect to $\fp^\phi$
if and only if $\partial_\M \in \Aut^{br}(\Z(\C))$ fixes $Z_\phi$.
\end{proposition}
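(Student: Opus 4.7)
The plan is to read off this proposition as an essentially immediate consequence of the calculation performed in the paragraph immediately preceding the statement, once combined with Proposition~\ref{Z is a 1-cocycle}. The key is to identify the class $Z_\M \in \Inv(\Z(\C))$ attached to $\M$ via \eqref{eqn: hom S}, where now the relative Serre bimodule endofunctor $\dS^\C_\M$ is computed with respect to the pivotal structure $\fp^\phi$ on \emph{both} the left and the right.

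First, I would use the identification already derived just above the statement: since, by Remark~\ref{rem:nonpivotalizable more general}, $\M$ is pivotalizable with respect to the canonical pivotal structure $\fp$, the formula \eqref{eq:Serre_bimodule_functor} for the bimodule structure on the relative Serre functor yields
\[
\dS^{\C}_\M(M) \;\cong\; Z_\phi \triangleright M \triangleleft Z_\phi^{-1},
\qquad M \in \M.
\]
Applying the defining identity \eqref{eqn: partial} of $\partial_\M$ rewrites the right-hand side as $(Z_\phi \otimes \partial_\M(Z_\phi)^{-1}) \triangleright M$. By the definition \eqref{eqn: hom S} of the class $Z_\M$, this means
\[
Z_\M \;\cong\; Z_\phi \otimes \partial_\M(Z_\phi)^{-1}\quad\text{in }\Inv(\Z(\C)).
\]

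Second, I would invoke Proposition~\ref{Z is a 1-cocycle}, which asserts that $\M$ admits a pivotal structure (with respect to the chosen pivotal structure of $\C$, in this case $\fp^\phi$) if and only if $Z_\M \cong \unit$. Substituting the identification above, this is equivalent to $\partial_\M(Z_\phi) \cong Z_\phi$, i.e.\ to $\partial_\M \in \Aut^{br}(\Z(\C))$ fixing $Z_\phi$, which is the claim.

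No substantive obstacle arises: all of the technical input is already supplied above the statement, and the proof simply consists in reading off the equation $Z_\M \cong Z_\phi \otimes \partial_\M(Z_\phi)^{-1}$ and applying the pivotalizability criterion from Proposition~\ref{Z is a 1-cocycle}. The only point worth emphasising is that the twist $Z_\phi \triangleleft Z_\phi^{-1}$ on the right-hand factor must be converted to a left action via \eqref{eqn: partial}, which is where $\partial_\M$ enters and produces the fixed-point condition.
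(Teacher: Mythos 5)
Your proposal is correct and follows exactly the paper's own argument: the paper derives $\dS^{\C}_\M(M) \cong (Z_\phi \otimes \partial_\M(Z_\phi)^{-1}) \triangleright M$ in the paragraph immediately preceding the statement and declares that this "proves the following result," the implicit final step being precisely your appeal to Proposition~\ref{Z is a 1-cocycle} (pivotalizability $\Leftrightarrow$ $Z_\M \cong \unit$). You have simply made that last step explicit, which is fine.
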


\begin{corollary}
\label{cor: pi0(PivBrPic(C))}
The image of 
$\pi_0(\PivBrPic(\C,\, \fp^\phi))$ in  $\pi_0(\BrPic(\C))\cong \Aut^{br}(\Z(\C))$ is isomorphic to 
$\{\alpha \in \Aut^{br}(\Z(\C)) \mid \alpha(Z_\phi)\cong Z_\phi \}$.
\end{corollary}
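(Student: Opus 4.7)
The plan is to assemble Corollary~\ref{cor: pi0(PivBrPic(C))} directly from Proposition~\ref{prop: fixes Z} together with the classical identification $\pi_0(\BrPic(\C))\cong \Aut^{br}(\Z(\C))$ given by the homomorphism $\partial$ in \eqref{eqn: Aut-Pic iso}. Concretely, the image of $\pi_0(\PivBrPic(\C,\fp^\phi))$ in $\pi_0(\BrPic(\C))$ under the forgetful $2$-functor $\forg$ from \eqref{eq:forget_BrPic} consists, by definition, precisely of the isomorphism classes of invertible $\C$-bimodule categories $\M$ which admit \emph{some} pivotal structure with respect to $\fp^\phi$, i.e.\ for which there exists a bimodule natural isomorphism $\id_\M \Rightarrow \dS^\C_\M$ (with $\dS^\C_\M$ computed using $\fp^\phi$ on both sides).

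First, I would rewrite this admissibility condition in central-object terms: an invertible $\C$-bimodule $\M$ is pivotalizable with respect to $\fp^\phi$ if and only if its Serre class $Z_\M\in \Inv(\Z(\C))$ from \eqref{eqn: hom S} is trivial (this is exactly the ``if and only if'' half of Proposition~\ref{Z is a 1-cocycle}). Next, I would plug in the computation carried out right before Proposition~\ref{prop: fixes Z}: for the pseudo-unitary $\C$, once $\M$ is already pivotalizable with respect to the canonical $\fp$, the change of pivotal structure by $\phi$ twists the Serre functor to $Z_\phi \triangleright - \triangleleft Z_\phi^{-1}$, which via \eqref{eqn: partial} equals $\bigl(Z_\phi \otimes \partial_\M(Z_\phi)^{-1}\bigr)\triangleright -$. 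Thus the Serre class with respect to $\fp^\phi$ is the class of $Z_\phi\otimes\partial_\M(Z_\phi)^{-1}$, which is trivial precisely when $\partial_\M(Z_\phi)\cong Z_\phi$. This is the content of Proposition~\ref{prop: fixes Z}.

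Transporting this along the isomorphism $\partial\colon \pi_0(\BrPic(\C))\xrightarrow{\ \sim\ }\Aut^{br}(\Z(\C))$, the image of $\pi_0(\PivBrPic(\C,\fp^\phi))$ therefore corresponds exactly to the stabilizer
\[
\{\alpha \in \Aut^{br}(\Z(\C)) \mid \alpha(Z_\phi)\cong Z_\phi\},
\]
as claimed. I would close by noting that this stabilizer is manifestly a subgroup of $\Aut^{br}(\Z(\C))$, consistent with the remark following Proposition~\ref{Z is a 1-cocycle} that the image of $\forg$ on $\pi_0$ is a subgroup (the kernel of the $1$-cocycle $S$).

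There is essentially no obstacle here beyond correctly bookkeeping the twist by $\phi$: all the real work is done in Proposition~\ref{prop: fixes Z} and in the identification $\pi_0(\BrPic(\C))\cong \Aut^{br}(\Z(\C))$. The one point that deserves a sentence of care is that the ``image'' is understood at the level of isomorphism classes, so isomorphism of central objects (rather than equality) is the right equivalence on both sides; this is exactly the formulation in which Proposition~\ref{prop: fixes Z} is stated.
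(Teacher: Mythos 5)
Your argument is correct and is essentially the paper's own: the corollary is stated as an immediate consequence of Proposition~\ref{prop: fixes Z} together with the identification $\partial\colon\pi_0(\BrPic(\C))\xrightarrow{~\sim~}\Aut^{br}(\Z(\C))$, and your bookkeeping of the image of the forgetful functor as the classes of $\fp^\phi$-pivotalizable bimodule categories matches the intended reading. No gaps.
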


\begin{remark}
Corollary~\ref{cor: pi0(PivBrPic(C))} points to an interpretation of the groups 
\(\pi_0(\PivBrPic(\C,\, \fp^\phi))\) 
as analogs of maximal parabolic subgroups of the orthogonal group. 
Indeed, when \(\C = \Rep((\mathbb{Z}/p\mathbb{Z})^n)\) for a prime \(p\), we have 
\(\pi_0(\BrPic(\C)) \cong \pi_0(\Aut^{br}(\Z(\C))) \cong O((\mathbb{Z}/p\mathbb{Z})^{2n})\). 
Furthermore, for a nontrivial linear character 
\(\phi: (\mathbb{Z}/p\mathbb{Z})^n \to \kk^\times\), 
the group \(\pi_0(\PivBrPic(\C,\, \fp^\phi))\) 
coincides with the stabilizer of the isotropic subspace generated by \(\phi\).
\end{remark}


\subsection{Realization of $\PivBrPic(\C)$ as fixed points}\label{subsec:homotopy-fixed-points}

Let $\C$ be a pivotal finite tensor category.
In this section, we define a $2$-categorical \textit{monoidal} $B\underline{\zZ}$-action on $\BrPic(\C)$ whose fixed points capture the pivotal Brauer-Picard $2$-categorical group $\PivBrPic(\C)$. To define such a monoidal action, it suffices to consider a monoidal pseudo-natural autoequivalence of the identity $2$-functor of $\BrPic(\C)$, as explained in Example \ref{ex: BZ and BZ2 actions}. These data comes from the relative Serre functors and the pivotal structure of $\C$. Explicitly:
\begin{itemize}
    \item Every object $\M\in\BrPic(\C)$ is in particular an exact $\C$-module category by \cite[Cor.~5.2]{davydov2021braided}, and thus we can consider the relative Serre functor
    \[\bS_{\M}\coloneqq \dS^{\C}_{\M}\colon \M\longrightarrow \M\]
    from Definition \ref{def:rel_Serre_functor} with $\C$-bimodule structure given by \eqref{eq:Serre_bimodule_functor}.
    \item For every $\C$-bimodule equivalence $H\colon \M\longrightarrow\N$, consider the equivalence 
    \[\bS_H\coloneqq\Lambda_H \colon \bS_{\N}\circ H \xRightarrow{~\sim~} H\circ \bS_{\M}
    \]
    of $\C$-bimodule functors from \eqref{eq:Serre_bimodule_equivalence}.
\end{itemize}

\begin{proposition}
\label{prop:RSpNE-is-Monoidal}
The relative Serre functors assemble into a monoidal pseudo-natural equivalence
\begin{align}\label{Serre_pseudo}
    \bS\colon \id_{\BrPic(\C)}\xRightarrow{~\sim~} \id_{\BrPic(\C)}
\end{align}
on the identity $2$-functor of $\BrPic(\C)$.
\end{proposition}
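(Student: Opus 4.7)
The plan is to verify that the assignments $\M\mapsto\bS_\M$ and $H\mapsto\bS_H$ satisfy the axioms of a monoidal pseudo-natural transformation of the identity $2$-functor on $\BrPic(\C)$. The two main technical inputs are the uniqueness of relative Serre functors (Lemma~\ref{lem:Serre_uniqueness}) and the compatibility of the natural isomorphism $\Lambda$ with relative Deligne products (Lemma~\ref{lem:Serre-monoidal}).

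First I would check the pseudo-natural structure. For any invertible $\C$-bimodule category $\M$, the functor $\bS_\M=\dS_\M^\C$ is an equivalence by~\cite[Prop.~4.24]{fuchs2020eilenberg}, and the pivotal structure of $\C$ upgrades its twisted bimodule structure~\eqref{eq:twisted_serre_bimod} to a genuine $\C$-bimodule structure via~\eqref{eq:Serre_bimodule_functor}, so $\bS_\M$ is a $1$-morphism in $\BrPic(\C)$. For any $\C$-bimodule equivalence $H\colon\M\to\N$, the $2$-morphism $\bS_H=\Lambda_H$ is a bimodule natural isomorphism by~\eqref{eq:Serre_bimodule_equivalence}. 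Naturality of $\Lambda_H$ in bimodule $2$-isomorphisms $\eta\colon H\Rightarrow H'$ is immediate from the construction of $\Lambda$, while the composition axiom $\Lambda_{K\circ H}=(K\ast\Lambda_H)\circ(\Lambda_K\ast H)$ and identity axiom $\Lambda_{\id_\M}=\id_{\bS_\M}$ follow from Lemma~\ref{lem:Serre_uniqueness}: both sides of each equation furnish a natural isomorphism of relative Serre functors compatible with the defining constraint~\eqref{eq:Serre_uniqueness} and must therefore coincide.

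To equip this pseudo-natural equivalence with a monoidal structure, I use that $\id_{\BrPic(\C)}$ is strictly monoidal, so the required datum is the family of invertible $2$-morphisms
\[
\mu_{\M,\N}\colon \bS_\M\btC\bS_\N \xRightarrow{~\sim~} \bS_{\M\btC\N}
\]
supplied by Proposition~\ref{prop:Serre_monoidal}. The pseudo-naturality of $\mu$ with respect to $1$-morphisms of the form $F_1\btC F_2$ in $\BrPic(\C)$ is precisely the content of Lemma~\ref{lem:Serre-monoidal}. The remaining axioms are the associativity pentagon for $\mu$ on triple products $\M\btC\N\btC\cL$ and the unit axioms comparing $\mu_{\C,\M}$ and $\mu_{\M,\C}$ with identities under the canonical module unitors; both reduce once more to Lemma~\ref{lem:Serre_uniqueness}, since the two sides of each equation provide natural isomorphisms of relative Serre functors on the same module category.

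The main obstacle is the bookkeeping needed to state the associativity pentagon precisely: it involves the associator of $\btC$, the exchange isomorphisms between iterated Deligne products of Serre functors, and three instances of the defining square~\eqref{mu_MN}. Invoking uniqueness of relative Serre functors collapses this pentagon to a tautology rather than requiring an explicit diagram chase.
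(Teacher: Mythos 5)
Your proposal is correct and follows essentially the same route as the paper: the composition and identity axioms come from the functoriality of $\Lambda$ (the paper cites \cite[Thm.~3.10]{shimizu2019relative} directly), the monoidal datum is $\mu_{\M,\N}$ from Proposition~\ref{prop:Serre_monoidal}, and the associativity constraint is settled by uniqueness of relative Serre functors. The only substantive difference is that where you assert the pentagon "collapses to a tautology," the paper's proof spends most of its length verifying the hypothesis of Lemma~\ref{lem:Serre_uniqueness} — namely that $\mu_{\cL,\M\N}\circ(\id\btC\mu_{\M,\N})$ and $\mu_{\cL\M,\N}\circ(\mu_{\cL,\M}\btC\id)$ each intertwine the iterated $\star$-products $\phi^{\cL}\star(\phi^\M\star\phi^\N)$ and $(\phi^{\cL}\star\phi^\M)\star\phi^\N$ with $\phi^{\cL\btC\M\btC\N}$ — a routine but genuinely non-empty diagram chase that your write-up should at least acknowledge as the step where uniqueness becomes applicable.
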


\begin{proof}
It follows from \cite[Theorem~3.10]{shimizu2019relative} that $\bS_{\id_{\M}}=\id_{\bS_{\M}}$ and $\bS_{F\circ H} = \bS_F \circ \bS_H$ for every $F\in\Rex_{\C|\C}(\M,\N)$ and $H\in\Rex_{\C|\C}(\mathcal{L},\M)$, and thus $\bS$ is a pseudo-natural equivalence. 
Now, we need to endow $\bS$ with the structure of a monoidal pseudo-natural equivalence \cite[Def.~2.13]{davydov2021braided}. To this end, consider for $\M,\N\in\BrPic(\C)$ the natural isomorphisms
\begin{equation}\label{eq:monoidal_Serre}
    \mu_{\M,\N}\colon \bS_{\M}\boxtimes_{\C} \bS_{\N} \xRightarrow{~\sim~} \bS_{\M\boxtimes_{\C} \N}
\end{equation}
coming from \eqref{eq:Serre_monoidal}. These must satisfy that
\begin{equation}\label{eq:monPNT-1}
    \begin{tikzcd}
        {\cL \M\N} &&&& \cL\M\N & {=} & \cL\M\N &&&& \cL\M\N
        \arrow[""{name=0, anchor=center, inner sep=0}, "{\bS_{\cL}\bS_{\M}\bS_{\N}}", curve={height=-40pt}, from=1-1, to=1-5]
        \arrow[""{name=1, anchor=center, inner sep=0}, "{\bS_{\cL\M\N}}"', curve={height=40pt}, from=1-1, to=1-5]
        \arrow[""{name=2, anchor=center, inner sep=0}, "{\bS_{\cL}\bS_{\M\N}}"{description}, from=1-1, to=1-5]
        \arrow[""{name=3, anchor=center, inner sep=0}, "{\bS_{\cL}\bS_{\M}\bS_{\N}}", curve={height=-40pt}, from=1-7, to=1-11]
        \arrow[""{name=4, anchor=center, inner sep=0}, "{\bS_{\cL\M\N}}"', curve={height=40pt}, from=1-7, to=1-11]
        \arrow[""{name=5, anchor=center, inner sep=0}, "{\bS_{\cL\M}\bS_{\N}}"{description}, from=1-7, to=1-11]
        \arrow["{\mu_{\cL\M,\N}}", shorten <=4pt, shorten >=4pt, Rightarrow, from=5, to=4]
        \arrow["{\mu_{\M,\N}}", shorten <=4pt, shorten >=4pt, Rightarrow, from=0, to=2]
        \arrow["{\mu_{\cL,\M}}", shorten <=4pt, shorten >=4pt, Rightarrow, from=3, to=5]
        \arrow["{\mu_{\cL,\M\N}}", shorten <=4pt, shorten >=4pt, Rightarrow, from=2, to=1]
    \end{tikzcd}
\end{equation}
for $\M,\N,\cL \in\BrPic(\C)$. Condition (\ref{eq:monPNT-1}) follows from the fact that both $\bS_{\cL}\bS_{\M}\bS_{\N}$ and $\bS_{\cL\M\N}$ are relative Serre functors of $\cL\M\N$ and relative Serre functors are unique up to unique isomorphism \cite[Lemma~3.5]{shimizu2019relative}. Indeed, if we check that on the one hand the isomorphism $\mu_{\cL,\M\N}\circ (\id_{\bS_\cL}\btC\mu_{\M,\N})$ on the left of \eqref{eq:monPNT-1} fulfills
\begin{equation}\label{phiL,MN}
    \phi^{\cL}\star(\phi^\M\star\phi^\N) = \phi^{\cL\btC\M\btC\N}\circ \uHom_{\cL\btC\M\btC\N} \bigg( L \boxtimes M\boxtimes N,\, \mu_{\cL,\M\N}\circ (\id_{\bS_\cL}\btC\mu_{\M,\N}) \bigg)\,,
\end{equation}
and that on the other hand $\mu_{\cL\M,\N}\circ (\mu_{\cL,\M}\btC\id_{\bS_\N})$ on the right of \eqref{eq:monPNT-1} obeys
\begin{equation}\label{phiLM,N}
    (\phi^{\cL}\star\phi^\M)\star\phi^\N = \phi^{\cL\btC\M\btC\N}\circ \uHom_{\cL\btC\M\btC\N} \bigg(L\boxtimes M\boxtimes N,\, \mu_{\cL\M,\N}\circ (\mu_{\cL,\M}\btC\id_{\bS_\N}) \bigg)
\end{equation}
then equation \eqref{eq:monPNT-1} would follow by the uniqueness in \cite[Lemma~3.5]{shimizu2019relative}. By a similar argument as in Lemma~\ref{lem:inner-hom-Serre}, it suffices to do this check on simple tensors.

From the definition of $\phi^{\cL}\star(\phi^\M\star\phi^\N)$ (see Lemma~\ref{lem:inner-hom-Serre}), equation \eqref{phiL,MN} becomes
{\small
\begin{equation}\label{monoidal_1}
    \begin{tikzcd}[column sep=normal, row sep=scriptsize]
        \uHom_{\cL\M\N} \big(LMN, \bS_{\cL}(L')\bS_{\M}(M')\bS_{\N}(N')\big) \ar[rr,"\eqref{eq:inner-hom-Deligne}"]\ar[d,"\id\btC\mu_{\M,\N}",swap]
        &&
        \uHom_{\cL} \big(L \tr {}^*\uHom_{\M\N}(MN,\bS_{\M}(M')\bS_{\N}(N')),\bS_{\cL}(L')\big)\ar[d,"\phi^\M\star\phi^\N"]\\
        \uHom_{\cL\M\N} \big(LMN, \bS_{\cL}(L')\bS_{\M\N}(M'N')\big)\ar[d,"\mu_{\cL,\M\N}",swap]&&
        \uHom_{\cL} \big(L \tr \uHom_{\M\N}(M'N',MN),\bS_{\cL}(L')\big)\ar[d,"\phi^\cL"]\\
        \uHom_{\cL\M\N} \big(LMN, \bS_{\cL\M\N}(L'M'N')\big)\ar[d,swap,"\phi^{\cL\btC\M\btC\N}"]
        &&\uHom_{\cL} \big(L',L \tr \uHom_{\M\N}(M'N',MN)\big)^*\ar[d,"\cong"]\\
        \uHom_{\cL\M\N} \big(L'M'N',LMN\big)^*&&\uHom_{\cL} \big(L'\tr {}^*\uHom_{\M\N}(M'N',MN),L\big)^*\ar[ll,"\eqref{eq:inner-hom-Deligne}"]
    \end{tikzcd}
\end{equation}}
\noindent Now equation \eqref{mu_MN} says that the isomorphisms $\mu_{\M,\N}$ and $\mu_{\cL,\M\N}$ fulfill
\begin{equation*}
    \phi^\M\star\phi^\N=\phi^{\M\btC\N}\circ \uHom_{\M\N}(MN,\mu_{\M,\N})\quad\text{ and }\quad \phi^\cL\star\phi^{\M\btC\N}=\phi^{\cL\btC\M\btC\N}\circ \uHom_{\cL\M\N}(MN,\mu_{\cL,\M\N})
\end{equation*}
respectively. This turns diagram \eqref{monoidal_1} into
{\small
\begin{equation*}
    \begin{tikzcd}[column sep=normal, row sep=scriptsize]
        \uHom_{\cL\M\N} \big(LMN, \bS_{\cL}(L')\bS_{\M}(M')\bS_{\N}(N')\big) \ar[rr,"\eqref{eq:inner-hom-Deligne}"]\ar[d,"\id\btC \,\mu_{\M,\N}",swap]
        &&
        \uHom_{\cL} \big(M \tr {}^*\uHom_{\M\N}(MN,\bS_{\M}(M')\bS_{\N}(N')),\bS_{\cL}(L')\big)\ar[d,"\id\btC \,\mu_{\M,\N}"]
        \\
        \uHom_{\cL\M\N} \big(LMN, \bS_{\cL}(L')\bS_{\M\N}(M'N')\big)\ar[rr,"\eqref{eq:inner-hom-Deligne}"]\ar[ddd,swap,"\phi^\cL\star\phi^{\M\btC\N}"]
        &&
        \uHom_{\cL} \big(L \tr {}^*\uHom_{\M\N}(MN,\bS_{\M\N}(M'N')),\bS_{\cL}(l')\big)
        \ar[d,"\;\phi^{\M\btC\N}"]
        \\
        &&
        \uHom_{\cL} \big(L \tr \uHom_{\M\N}(M'N',MN),\bS_{\cL}(L')\big)\ar[d,"\quad\phi^\cL"]
        \\
        &&
        \uHom_{\cL} \big(L',L \tr \uHom_{\M\N}(M'N',MN)\big)^*\ar[d,"\cong"]
        \\
        \uHom_{\cL\M\N} \big(L'M'N',LMN\big)^*&&\uHom_{\cL} \big(L'\tr {}^*\uHom_{\M\N}(M'N',MN),L\big)^*\ar[ll,"\eqref{eq:inner-hom-Deligne}"]
    \end{tikzcd}
\end{equation*}}
where the top square commutes due to naturality of \eqref{eq:inner-hom-Deligne} and the bottom square commutes due to the definition of $\phi^\cL\star\phi^{\M\btC\N}$, and thus proving that equation \eqref{phiL,MN} holds. A similar argument implies that \eqref{phiLM,N} holds, as well.
\end{proof}

\begin{corollary}\label{BZ_action_BrPicC}
Let $\C$ be a pivotal finite tensor category. Then the relative Serre pseudo-natural equivalence \eqref{Serre_pseudo} defines a monoidal $B\underline{\zZ}$-action on $\BrPic(\C)$. 
\end{corollary}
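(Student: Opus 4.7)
The plan is to combine Proposition~\ref{prop:RSpNE-is-Monoidal} with the description of $B\underline{\zZ}$-actions given in Example~\ref{ex: BZ and BZ2 actions}. According to that example, a monoidal $B\underline{\zZ}$-action on the monoidal $2$-category $\BrPic(\C)$ is equivalent to the data of a monoidal pseudo-natural autoequivalence of $\id_{\BrPic(\C)}$ whose self-braiding \eqref{eq:self_braiding} is trivial. Since Proposition~\ref{prop:RSpNE-is-Monoidal} already supplies the monoidal pseudo-natural equivalence $\bS$ of \eqref{Serre_pseudo}, the only outstanding task is to verify triviality of its self-braiding.

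Unpacking \eqref{eq:self_braiding} for $\bS$, for each invertible $\C$-bimodule category $\M$ one must show that the $2$-morphism
\begin{equation*}
    \bS_{\bS_\M}\;=\;\Lambda_{\bS_\M}\colon\; \bS_\M\circ \bS_\M \;\xRightarrow{~\sim~}\; \bS_\M\circ \bS_\M,
\end{equation*}
obtained by evaluating the pseudo-natural equivalence $\bS$ at the $1$-morphism $\bS_\M\colon\M\to\M$, is the identity. Here we use that $\bS_\M$ is itself a $\C$-bimodule equivalence, so $\bS_\M^{\rra}\cong\bS_\M$ and \eqref{eq:Serre_bimodule_equivalence} produces an honest bimodule automorphism of $\bS_\M\circ \bS_\M$ rather than a twisted one.

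My approach to the triviality is to exploit the uniqueness of relative Serre functors via Lemma~\ref{lem:Serre_uniqueness}, following the same pattern already used in the proof of Proposition~\ref{prop:RSpNE-is-Monoidal}. By \cite[Thm.~3.10]{shimizu2019relative}, the natural isomorphism $\Lambda_H$ attached to a bimodule equivalence $H\colon\M\to\N$ is characterized uniquely by its compatibility with the defining isomorphisms $\phi^\M$ and $\phi^\N$ of the source and target Serre structures under transport by $H$. Specialising to $H=\bS_\M$ makes this compatibility self-referential: the transported Serre data arises from $\phi^\M$ itself, and a direct diagram chase shows that the identity on $\bS_\M\circ \bS_\M$ satisfies the characterizing equation \eqref{eq:Serre_uniqueness}. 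Uniqueness then forces $\Lambda_{\bS_\M}=\id$, which gives triviality of the self-braiding and concludes the argument.

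The main obstacle I anticipate is carefully unwinding the definition of $\Lambda_{\bS_\M}$ in terms of $\phi^\M$ and verifying that the identity genuinely solves the corresponding instance of \eqref{eq:Serre_uniqueness}. This is the relative-Serre analogue of the snake-identity step used at the end of the proof of Corollary~\ref{cor: BZ action on Ext}, where triviality was obtained by observing that the data assembling the pseudo-natural transformation reduces to a composition of evaluation and coevaluation morphisms that collapses via snake relations. Once this self-referential check is done, the corollary follows immediately by combining it with Proposition~\ref{prop:RSpNE-is-Monoidal}.
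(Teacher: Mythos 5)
Your reduction is exactly the paper's: invoke Proposition~\ref{prop:RSpNE-is-Monoidal} for the monoidal pseudo-natural equivalence $\bS$, then observe via Example~\ref{ex: BZ and BZ2 actions} that the only remaining point is triviality of the self-braiding, i.e.\ that $\Lambda_{\dS_\M}\colon \dS_\M\circ\dS_\M\Rightarrow\dS_\M\circ\dS_\M$ is the identity for every $\M\in\BrPic(\C)$. The difference is in how that last point is established, and here your proposal stops short of a proof. The paper unwinds the module-Yoneda definition of $\Lambda_{\dS_\M}$ explicitly as the composite \eqref{eq:def_self_br_serre} of instances of $\phi$, $\psi_{\dS}$ and $\psi_{\overline{\dS}}$, and then runs a genuine diagram chase (using the naturality of \eqref{eq:phi_Serre}, \cite[Lemma 3.18]{shimizu2019relative}, and the definition of $\psi_F$) to collapse that composite to $\eta\circ\varepsilon$, the unit--counit composite of the biadjunction between $\dS_\M$ and its quasi-inverse, which is the identity. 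Your proposal replaces this computation with the assertion that ``a direct diagram chase shows that the identity satisfies the characterizing equation''; but that diagram chase \emph{is} the entire mathematical content of the corollary beyond Proposition~\ref{prop:RSpNE-is-Monoidal}, and it is not performed. As written, the proposal is a correct plan with its one nontrivial step left open.

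Two smaller points. First, Lemma~\ref{lem:Serre_uniqueness} does not literally apply here: it compares two relative Serre functors of $\M$, whereas $\dS_\M\circ\dS_\M$ is the square of one and carries no $\phi$-structure of the form \eqref{eq:phi_Serre}. The uniqueness you actually need is the Yoneda-lemma characterization of $\Lambda_H$ from \cite[Thm.~3.10]{shimizu2019relative}, which you also cite; but verifying that the identity satisfies that characterization for $H=\dS_\M$ is not formal or ``self-referential'' --- it is precisely the reduction of \eqref{eq:def_self_br_serre} to $\eta\circ\varepsilon$. Second, your parenthetical that $\Lambda_{\dS_\M}$ is an ``honest'' rather than twisted bimodule automorphism is fine for bimodule equivalences in $\BrPic(\C)$ once the pivotal structure of $\C$ has been used to untwist $\dS_\M$, but it plays no role in the triviality argument itself.
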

\begin{proof}
From Proposition \ref{prop:RSpNE-is-Monoidal} we consider the monoidal pseudo-natural equivalence $\bS$. As explained in Example \ref{ex: BZ and BZ2 actions}, we are left to show that the self-braiding \eqref{eq:self_braiding} is trivial for $\bS$. Explicitly, given $\M\in \BrPic(\C)$ we have to verify that the isomorphism $\Lambda_{\dS_\M}$ from \eqref{eq:Serre_bimodule_equivalence} is the identity. This is defined by means of the module Yoneda lemma applied to the composition \cite[Thm.\ 3.10]{shimizu2019relative}
\begin{equation}\label{eq:def_self_br_serre}
\begin{aligned}
    \uHom_\M(N,\dS_\M^2(M))\xrightarrow{\eqref{eq:phi_Serre}}
    \uHom_\M(\dS_\M(M),N)^*&\xrightarrow{\psi^*_\dS}\uHom_\M(M,\overline{\dS}_\M(N))^*\\
    &\xrightarrow{\eqref{eq:phi_Serre}}  \uHom_\M(\overline{\dS}_\M(N),\dS_\M(M))\xrightarrow{\psi_{\overline{\dS}}} \uHom_\M(N,\dS_\M^2(M))
\end{aligned}
\end{equation}
where $\psi_F$ for a module functor $F\colon\M\to\N$ is defined in \cite[Rem.\ 2.10]{shimizu2019relative} as the composition
\begin{equation*}
    \uHom_\N(F(M),N)\xrightarrow{\underline{F}^{\ra}}\uHom_\N(F^\ra \circ F(M),F^\ra(N))\xrightarrow{~\eta\,~~}\uHom_\N(M,F^\ra(N))
\end{equation*}
with $\eta$ being the unit of the adjunction $F\dashv F^\ra$. We show now that \eqref{eq:def_self_br_serre} is the identity. Consider the diagram
\begin{equation*}
\begin{tikzcd}
    \uHom_\M(N,\dS_\M^2(M)) \ar[r,"\eqref{eq:phi_Serre}"]\ar[d,"\varepsilon",swap]
    & \uHom_\M(\dS_\M(M),N)^*\ar[r,"\psi^*_\dS"]\ar[d,"\varepsilon",swap]
    &\uHom_\M(M,\overline{\dS}_\M(N))^* \ar[dl,"\underline{\dS}",swap]\ar[d,"\eqref{eq:phi_Serre}"]\\
    \uHom_\M(\dS_\M\overline{\dS}_\M(N),\dS_\M^2(M))\ar[r,"\eqref{eq:phi_Serre}"]
    \ar[dr,"\id", swap] 
    &\uHom_\M(\dS_\M(M),\dS_\M\overline{\dS}_\M(N))^*\ar[r,"\eqref{eq:phi_Serre}"] \ar[d,"\eqref{eq:phi_Serre}"]
    &\uHom_\M(\overline{\dS}_\M(N),\dS_\M(M))\ar[dl,"\underline{\dS}",swap]
    \ar[d,"\psi_{\overline{\dS}}"]\\
    &\uHom_\M(\dS_\M\overline{\dS}_\M(N),\dS_\M^2(M))\ar[r,"\eta",swap]&
    \uHom_\M(N,\dS_\M^2(M))
\end{tikzcd}
\end{equation*}
where the top-left square commutes due to naturality of \eqref{eq:phi_Serre} and the lower-left triangle since we are using an instance of \eqref{eq:phi_Serre} and its inverse. On the right side, we have four triangles. The two triangles involving \eqref{eq:phi_Serre} and $\underline{\dS}$ commute because of \cite[Lemma 3.18]{shimizu2019relative}. The remaining triangles commute from the definition of 
$\psi_\dS$ and its inverse \cite[Rem.\ 2.10]{shimizu2019relative}. Altogether, we have that \eqref{eq:def_self_br_serre} is $\eta\circ\varepsilon$ which gives the identity morphism, since they are the unit and counit witnessing the biadjunction of the equivalence $\dS_\M$ and its quasi-inverse.
\end{proof}
We recover the pivotal Brauer-Picard $2$-categorical group as fixed points for this action.
\begin{proposition}\label{prop:PivBrPic-is-BZ-equivariant}
Let $\C$ be a pivotal finite tensor category. The pivotal Brauer-Picard $2$-categorical group $\PivBrPic(\C)$ is monoidally $2$-equivalent to the $2$-categorical group of $B\underline{\zZ}$-fixed points of $\BrPic(\C)$.
\end{proposition}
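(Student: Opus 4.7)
The plan is to unpack both sides using the explicit description of $B\underline{\zZ}$-fixed points from Example~\ref{ex: fixed point for BZ and BZ2 actions}, applied to the monoidal pseudo-natural equivalence $\bS\colon \id_{\BrPic(\C)}\Rightarrow \id_{\BrPic(\C)}$ of Proposition~\ref{prop:RSpNE-is-Monoidal} (which yields the $B\underline{\zZ}$-action by Corollary~\ref{BZ_action_BrPicC}), and observe that the resulting $2$-groupoid agrees objectwise, on $1$-cells, and on $2$-cells with $\PivBrPic(\C)$ as defined in Definition~\ref{def:PivBrPic}. Finally, we check that the induced monoidal structures coincide.

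First I would compare the object data. By Example~\ref{ex: fixed point for BZ and BZ2 actions}, a fixed point for the $B\underline{\zZ}$-action associated to $\bS$ consists of an object $\M\in\BrPic(\C)$, i.e.\ an invertible $\C$-bimodule category, together with an invertible $2$-morphism $\tfp\colon \id_\M\Rightarrow \bS_\M=\dS^\C_\M$ in $\BrPic(\C)$, that is, a $\C$-bimodule natural isomorphism $\id_\M\Rightarrow\dS^\C_\M$. This is precisely the data of an invertible pivotal $\C$-bimodule category as in Definition~\ref{def: piv strc module}(iii).

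Next I would compare $1$-cells. According to the last part of Example~\ref{ex: fixed point for BZ and BZ2 actions}, a $1$-morphism $(\M,\tfp^\M)\to (\N,\tfp^\N)$ in $\BrPic(\C)^{B\underline{\zZ}}$ is a $\C$-bimodule equivalence $F\colon \M\to\N$ making the square \eqref{eq:1-morph-equiv} (with $\eta=\bS$) commute. Since the component of $\bS$ at $F$ is exactly $\bS_F=\Lambda_F$ from \eqref{eq:Serre_bimodule_equivalence}, this commutativity is the identity $\Lambda_F\circ(\tfp^\N\circ F)=F\circ\tfp^\M$, which is precisely condition~\eqref{eq:PivBrPic-1-cell} defining a pivotal $\C$-bimodule equivalence. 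The $2$-cells in the fixed-point $2$-groupoid impose no additional conditions beyond being $2$-cells in $\BrPic(\C)$ (there are no nontrivial relations to check in $\underline{\zZ}$), so they coincide with $2$-cells in $\PivBrPic(\C)$, namely bimodule natural isomorphisms. This establishes the underlying $2$-equivalence.

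Finally, I would verify that the equivalence is monoidal. The monoidal structure on $\BrPic(\C)^{B\underline{\zZ}}$ inherited from the monoidal pseudo-natural equivalence $(\bS,\mu)$ assigns to a pair of fixed points $(\M,\tfp^\M)$ and $(\N,\tfp^\N)$ the pivotal structure on $\M\btC\N$ obtained by composing $\tfp^\M\btC\tfp^\N$ with $\mu_{\M,\N}\colon \dS^\C_\M\btC\dS^\C_\N\Rightarrow \dS^\C_{\M\btC\N}$. This is exactly the recipe~\eqref{eq:pivotal_Deligne_pr} used to define the monoidal structure on $\PivBrPic(\C)$ in Proposition~\ref{prop: endows}. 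Hence the $2$-equivalence is monoidal.

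The only substantive point in the argument is the identification of the $1$-cell condition, which rests on the fact that the component $\bS_F$ of the pseudo-natural equivalence at a $1$-morphism $F$ agrees with the bimodule isomorphism $\Lambda_F$ used in Definition~\ref{def:PivotalBimodFunctor}; this is built into the construction of $\bS$ in Proposition~\ref{prop:RSpNE-is-Monoidal}. Everything else is a routine translation of the general fixed-point definitions through Example~\ref{ex: fixed point for BZ and BZ2 actions}.
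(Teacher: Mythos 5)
Your proposal is correct and follows essentially the same route as the paper's proof: unpacking the fixed-point data via Example~\ref{ex: fixed point for BZ and BZ2 actions}, matching objects with pivotal structures, matching $1$-cells via condition~\eqref{eq:PivBrPic-1-cell}, noting that $2$-cells carry no extra conditions, and observing that both monoidal structures are given by the relative Deligne product. Your explicit identification of the induced pivotal structure on $\M\btC\N$ with~\eqref{eq:pivotal_Deligne_pr} is a slightly more detailed version of the paper's final sentence, but the argument is the same.
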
 
\begin{proof}
Using the definition of a fixed point (see Example~\ref{ex: fixed point for BZ and BZ2 actions}), an object of $\BrPic(\C)^{B\underline{\zZ}}$ is a pair $(\M,\tfp)$ where $\M\in\BrPic(\C)$ and $\tfp\colon\id_{\M} \xRightarrow{~\sim\,} \bS_\M=\dS_{\M}^{\C}$ is an invertible $2$-morphism in $\BrPic(\C)$, i.e. a $\C$-bimodule natural isomorphism. This is the datum of a pivotal structure of $\M$ and thus of an object in $\PivBrPic(\C)$. Next, $1$-cells between $(\M,\tfp^{\M})$ and $(\N,\tfp^{\N})$ are those $1$-cells $H\colon\M\rightarrow\N$ in $\BrPic(\C)$ (that is, $\C$-bimodule equivalences) which commute $\tfp^{\M}$ and $\tfp^{\N}$. These are precisely those $H$ which satisfy \eqref{eq:PivBrPic-1-cell}. Hence, the $1$-cells match.
At the level of $2$-cells, there is no additional condition to be fulfilled with respect to the fixed point data. Monoidality follows since the monoidal structures of both $\BrPic(\C)^{B\underline{\zZ}}$ and $\PivBrPic(\C)$ are given by the relative Deligne product of pivotal invertible bimodule categories.
\end{proof}


\subsection{The classification of pivotal $G$-graded extensions}\label{subsec:classification-pivotal-G}
\label{sec: pivotal graded extensions}

Recall that, according to Theorem \ref{thm:ext_classification}, there is an equivalence 
\begin{equation}
\mathrm{E}\colon \Ext(G,\C) \longrightarrow \MonFun\left(\uuG,\BrPic(\C)\right)
\end{equation}
of $2$-groupoids for a given finite tensor category $\C$. By Corollary \ref{cor: BZ action on Ext}, we have a $B\underline{\zZ}$-action on $\Ext(G,\C)$.
Also by Corollary ~\ref{BZ_action_BrPicC}, we get a monoidal $B\underline{\zZ}$-action on the $2$-categorical group $\BrPic(\C)$. This defines a $B\underline{\zZ}$-action on the $2$-groupoid $\MonFun\left(\uuG,\BrPic(\C)\right)$ determined by the pseudo-natural equivalence
\begin{equation}\label{MonFun_pseudo}
    \begin{tikzcd}
        {\MonFun\left(\uuG,\BrPic(\C)\right)} && {\MonFun\left(\uuG,\BrPic(\C)\right)}
        \arrow[""{name=0, anchor=center, inner sep=0}, "\id", curve={height=-20pt}, from=1-1, to=1-3]
        \arrow[""{name=1, anchor=center, inner sep=0}, "{\id}"', curve={height=20pt}, from=1-1, to=1-3]
        \arrow["\bS\circ-", shorten <=5pt, shorten >=5pt, Rightarrow, from=0, to=1]
    \end{tikzcd}
\end{equation}
whose components are explicitly given for $F\in \MonFun\left(\uuG,\BrPic(\C)\right)$ by the whiskering
\begin{equation}
    \begin{tikzcd}
        \uuG\ar[rr,"F"]&&{\BrPic(\C)} && {\BrPic(\C)}
        \arrow[""{name=0, anchor=center, inner sep=0}, "\id", curve={height=-15pt}, from=1-3, to=1-5]
        \arrow[""{name=1, anchor=center, inner sep=0}, "{\id}"', curve={height=15pt}, from=1-3, to=1-5]
        \arrow["\bS", shorten <=5pt, shorten >=5pt, Rightarrow, from=0, to=1]
    \end{tikzcd}
\end{equation}
and for a $1$-cell $\eta\colon F\Rightarrow H$ in $\MonFun\left(\uuG,\BrPic(\C)\right)$ by the horizontal composition
\begin{equation}
    \begin{tikzcd}
        \uuG && {\BrPic(\C)} && {\BrPic(\C)}
        \arrow[""{name=0, anchor=center, inner sep=0}, "F", curve={height=-20pt}, from=1-1, to=1-3]
        \arrow[""{name=1, anchor=center, inner sep=0}, "{H}"', curve={height=20pt}, from=1-1, to=1-3]
        \arrow["\eta", shorten <=5pt, shorten >=5pt, Rightarrow, from=0, to=1]
        \arrow[""{name=0, anchor=center, inner sep=0}, "\id", curve={height=-20pt}, from=1-3, to=1-5]
        \arrow[""{name=1, anchor=center, inner sep=0}, "{\id}"', curve={height=20pt}, from=1-3, to=1-5]
        \arrow["\bS", shorten <=5pt, shorten >=5pt, Rightarrow, from=0, to=1]
    \end{tikzcd}
\end{equation}
which can be interpreted as the modification $\id_\bS\circ\eta\colon\bS\circ F\xRightarrow{~\;\;}\bS\circ H$. We will next show that these two actions behave nicely under \eqref{eq:ext_classification}. 

\begin{proposition}\label{prop:phi-is-equivariant}
    The equivalence $\mathrm{E}\colon\Ext(G,\C) \xrightarrow{\;\simeq\;} \MonFun\left(\uuG,\BrPic(\C)\right)$ is $B\underline{\zZ}$-equivariant.
\end{proposition}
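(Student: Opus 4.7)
The plan is to construct, for every $G$-graded extension $\D$ of $\C$, an invertible modification comparing the two pseudo-natural autoequivalences of $\mathrm{E}(\D)$ induced by the $B\underline{\zZ}$-actions on source and target, and then check the coherence conditions required of an equivariant $2$-functor.

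Recall that the equivalence $\mathrm{E}$ sends a graded extension $(\D,\eqtriv^{\D})$ to the monoidal $2$-functor $\mathrm{E}(\D)\colon \uuG \to \BrPic(\C)$ with $\mathrm{E}(\D)(g) = \D_g$, regarded as an invertible $\C$-bimodule category via $\eqtriv^{\D}$, and with monoidal constraint induced by the tensor product equivalences $\D_g \btC \D_h \xrightarrow{\sim} \D_{gh}$ of Lemma~\ref{lem:multitensor_equivalences}. Applying $\mathrm{E}$ to the pseudo-natural equivalence $(-)^{**}$ of Proposition~\ref{prop:BZ-action-Ext} at $\D$ yields a pseudo-natural equivalence $\mathrm{E}(\D) \Rightarrow \mathrm{E}(\D)$ whose $g$-component is the restriction $(-)^{**}|_{\D_g}$ of the double dual, while the $B\underline{\zZ}$-action on $\MonFun(\uuG,\BrPic(\C))$ obtained from Corollary~\ref{BZ_action_BrPicC} produces at $\mathrm{E}(\D)$ the pseudo-natural equivalence $\bS \circ \mathrm{E}(\D)$ with $g$-component $\dS^{\C}_{\D_g}$ carrying the bimodule structure~\eqref{eq:Serre_bimodule_functor}.

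The central step is to define the equivariance modification at $\D$ component-wise by the canonical isomorphism $\Omega_g \colon \dS^{\C}_{\D_g} \Rightarrow (-)^{**}|_{\D_g}$ of twisted $\D_e$-bimodule functors furnished by Lemma~\ref{lem:Serre_homogeneous}. Transferred through $\eqtriv^{\D}$ and untwisted by the pivotal structure of $\C$, each $\Omega_g$ becomes an honest $\C$-bimodule natural isomorphism, hence a valid $2$-cell in $\BrPic(\C)$. Compatibility of the family $\{\Omega_g\}_{g \in G}$ with the monoidal constraint $\D_g \btC \D_h \xrightarrow{\sim} \D_{gh}$ is the statement that $\mu_{\D_g,\D_h}$ of Proposition~\ref{prop:Serre_monoidal} intertwines $\Omega_g \btC \Omega_h$ with $\Omega_{gh}$; this follows from the uniqueness of relative Serre functors (Lemma~\ref{lem:Serre_uniqueness}), since both composites exhibit the double dual on $\D_{gh}$ as a relative Serre functor of ${}_{\D_e}\D_{gh}$.

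Naturality of the assignment $\D \mapsto \alpha_{\D}$ with respect to a $1$-cell $(F,\tau_F)\colon \D \to \D'$ in $\Ext(G,\C)$ is precisely the commutativity of the diagram~\eqref{eq:Serre_comp_graded_functor} of Lemma~\ref{lem:Serre_homogeneous}. Compatibility with $2$-cells in $\Ext(G,\C)$ is automatic from the naturality of $\Omega_g$ established in Lemma~\ref{lem:Serre_uniqueness}, and compatibility with the trivial self-braidings of both $B\underline{\zZ}$-actions (Corollary~\ref{cor: BZ action on Ext} and Corollary~\ref{BZ_action_BrPicC}) is likewise automatic. The main obstacle is the bookkeeping of the various coherence diagrams required of a $B\underline{\zZ}$-equivariant $2$-functor; but each such diagram amounts to equality of two natural isomorphisms between relative Serre functors on the same module category, and Lemma~\ref{lem:Serre_uniqueness} forces them to coincide.
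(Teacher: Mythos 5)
Your proposal follows the same route as the paper: the equivariance modification is built component-wise from the isomorphisms $\Omega_g\colon \dS^{\C}_{\D_g}\Rightarrow (-)^{**}|_{\D_g}$ of Lemma~\ref{lem:Serre_homogeneous}, naturality in $1$-cells is diagram~\eqref{eq:Serre_comp_graded_functor}, and monoidality is reduced to the uniqueness statement for relative Serre functors. The one place where you pass too quickly is the monoidality check itself: Lemma~\ref{lem:Serre_uniqueness} forces two isomorphisms between relative Serre functors to coincide only after one verifies that \emph{both} are compatible with the trace pairings $\phi$ in the sense of \eqref{eq:Serre_uniqueness}. For $\mu_{g,h}$ this holds by construction, but for the other composite around the square --- built from $\Omega_g\btC\Omega_h$, the monoidal constraint $\nu_{g,h}$ of the double dual, and $\Omega_{gh}^{-1}$ --- this compatibility is exactly the content of the paper's diagrams \eqref{eq:diagramOmegaPhiPsi} and the two that follow, which use that $\otimes_{g,h}$ is a bimodule equivalence, that $\nu_{g,h}$ intertwines the Serre structures, and the explicit description of $\phi^g\star\phi^h$ from \eqref{phiMphiN}. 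Saying that ``both composites exhibit the double dual as a relative Serre functor'' asserts the needed hypothesis rather than proving it; the strategy is right, but this verification is the actual substance of the proof and should be carried out.
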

\begin{proof}
$B\underline{\zZ}$-equivariance of the $2$-functor $\mathrm{E}$ amounts to the existence of an invertible modification $\Omega$ between the pseudo-natural equivalences
\begin{equation}
    \begin{tikzcd}[column sep=2.5em]
       {\Ext(G,\C)}\ar[r,"\mathrm{E}"] &{\MonFun\left(\uuG,\BrPic(\C)\right)} && {\MonFun\left(\uuG,\BrPic(\C)\right)}
        \arrow[""{name=0, anchor=center, inner sep=0}, "\id", curve={height=-20pt}, from=1-2, to=1-4]
        \arrow[""{name=1, anchor=center, inner sep=0}, "{\id}"', curve={height=20pt}, from=1-2, to=1-4]
        \arrow["\bS\circ-", shorten <=5pt, shorten >=5pt, Rightarrow, from=0, to=1]
    \end{tikzcd}
\end{equation}
and
\begin{equation}
    \begin{tikzcd}
        {\Ext(G,\C)} && {\Ext(G,\C)} \ar[r,"\mathrm{E}"]& \MonFun\left(\uuG,\BrPic(\C)\right)
        \arrow[""{name=0, anchor=center, inner sep=0}, "\id", curve={height=-20pt}, from=1-1, to=1-3]
        \arrow[""{name=1, anchor=center, inner sep=0}, "{\id}"', curve={height=20pt}, from=1-1, to=1-3]
        \arrow["(-)^{**}", shorten <=5pt, shorten >=5pt, Rightarrow, from=0, to=1]
    \end{tikzcd}\,.
\end{equation}
For $\D\in \Ext(G,\C)$ we need an invertible $2$-cell in $\MonFun\left(\uuG,\BrPic(\C)\right)$ between $\bS\circ\mathrm{E}(\D)$ and $\mathrm{E}((-)^{**}_\D)$, i.e. a monoidal modification $\Omega_\D\colon\bS\circ\mathrm{E}(\D)\to\mathrm{E}((-)^{**}_\D)$. Define the components of $\Omega_\D$ for $g\in G$ by the natural isomorphism $\dS_{\D_g}^{\C} \xRightarrow{~\sim\,} (-)^{**}|_{\D_g}$ from equation \eqref{eq:Serre_homogeneous_components} (which is an isomorphism of module functors by using the pivotal structure of $\C$). The naturality on $1$-morphisms for $\Omega_\D$ is immediate since all $1$-morphisms in $\uuG$ are trivial. 
Now, we will consider the identification $\otimes_{g,h}\colon \D_{g}\boxtimes_{\C}\D_{h}\simeq \D_{gh}$ coming from the tensor product of $\D$. Then, the monoidality of $\Omega_\D$ is the condition that 
\begin{equation}\label{eq:mono_OmegaD}
{\scriptstyle
\begin{tikzcd}[column sep=normal, row sep=scriptsize]
{\D_g}\boxtimes_{\C}{\D_h}\ar[""{name=L},curve={height=20pt}, swap, from=1-1, to=3-1,"\bS_{\D_g}\boxtimes_{\C} \bS_{\D_h}"]
\ar[""{name=R},curve={height=-20pt}, from=1-1, to=3-1,"\bS_{\D_g\boxtimes_{\C} \D_h}"]
\arrow["\mu_{g,h}", shorten <=5pt, shorten >=5pt, Rightarrow, from=L, to=R]
\ar[rrr,"\otimes_{g,h}"]  &&&{\D_{gh}}
\ar[dd,"\bS_{\D_{\!gh}}",""{name=L2}, bend right=35pt,swap]\ar[dd,"(-)^{**}_{\D_{\!gh}}",""{name=R2}, bend left=35pt]
\arrow["\Omega_{g,h}", shorten <=5pt, shorten >=5pt, Rightarrow, from=L2, to=R2]    \\
\\
{\D_g}\boxtimes_{\C}{\D_h}\ar[rrr,"\otimes_{g,h}",swap]  
&&&{\D_{gh}}
\end{tikzcd}
\;=\;
\begin{tikzcd}[column sep=normal, row sep=normal]
{\D_g}\boxtimes_{\C}{\D_h}
\ar[""{name=L},curve={height=25pt}, swap, from=1-1, to=3-1,"\bS_{\D_g}\btC \bS_{\D_h}"]
\ar[""{name=R},curve={height=-25pt}, from=1-1, to=3-1,"(-)^{**}_{\D_g} \btC (-)^{**}_{\D_h}"]
\arrow["\Omega_g\btC\Omega_h\,", shorten <=5pt, shorten >=5pt, Rightarrow, from=L, to=R]
\ar[rrr,"\otimes_{g,h}"]  &&&{\D_{gh}}\ar[dd,"(-)^{**}_{\D_{\!gh}}"]\\\\
{\D_g}\boxtimes_{\C}{\D_h}\ar[rrr,"\otimes_{g,h}",swap]  &&&{\D_{gh}}
\end{tikzcd}  }  
\end{equation}
for every $g,h\in G$, where the $2$-cell filling the square diagram on the right hand side of \eqref{eq:mono_OmegaD} is the isomorphism $\nu_{g,h}\colon (X_g\otimes X_h)^{**}\cong X_g^{**}\otimes X_h ^{**}$ coming from the monoidal structure of the double-dual functor of the tensor category $\D$. This condition is the commutativity of the diagram
\begin{equation}
    \begin{tikzcd}[column sep=normal, row sep=small]
\otimes_{g,h}\circ \bS_{\D_g}\btC\bS_{\D_h}\ar[d,swap,"\mu_{g,h}"]\ar[rr,"(\Omega_\D)_g\otimes(\Omega_\D)_h"]  &&\otimes_{g,h}\circ(-)^{**}_{\D_g}\boxtimes_{\C}(-)^{**}_{\D_h}
    \ar[dd,"\nu_{g,h}"]\\
\otimes_{g,h}\circ\bS_{\D_{g}\btC\D_h}\ar[d,swap,"\bS_\otimes"]&&\\
    \bS_{\D_{\!gh}}\circ\otimes_{g,h}\ar[rr,"(\Omega_\D)_{gh}",swap] &&(-)^{**}_{\D_{\!gh}}\circ \otimes_{g,h}
    \end{tikzcd}\label{eq:mono_OmegaD2}
\end{equation}
By definition $\mu_{g,h}$ is the unique isomorphism obeying that
\begin{equation}\label{eq:property_mu}
\phi^g\star\phi^h=\phi^{\D_g\btC\D_h}\circ \uHom_{\D_{\!gh}}(X_1\otimes X_2,\mu_{g,h})   \,.
\end{equation}
Thus, the commutativity of the diagram \eqref{eq:mono_OmegaD2} can be derived by proving that the composition $\Theta\coloneqq \bS_\otimes^{-1}\circ(\Omega_\D)_{gh}^{-1}\circ\nu_{g,h}\circ  (\Omega_\D)_g\otimes(\Omega_\D)_h$ also fulfills \eqref{eq:property_mu}.
To show this consider the diagram
\begin{equation}\label{eq:diagramOmegaPhiPsi}
\begin{tikzcd}[column sep=8em,row sep=2em]
    \uHom_{\D_{\!gh}}(X_1\otimes X_2,\otimes_{g,h}\circ\bS_{\D_g\D_h}(Y_1\otimes Y_2))
    \ar[r,"\otimes_{g,h}\;\circ\;\phi^{\,\D_g\D_h}"]\ar[d,swap,"\bS_\otimes"]
    & \uHom_{\D_{\!gh}}(Y_1\otimes Y_2,X_1\otimes X_2)^*\\
    \uHom_{\D_{\!gh}}(X_1\otimes X_2,\bS_{\D_{\!gh}}(Y_1\otimes Y_2))\ar[d,"(\Omega_\D)_{gh}",swap] \ar[ur,"\phi^{{gh}}\,\circ\,\otimes_{g,h}"{description}]
    & \uHom_{\D_{\!gh}}(X_1\otimes X_2,\bS_g(Y_1)\otimes \bS_h(Y_2))\ar[d,"(\Omega_\D)_g\otimes(\Omega_\D)_h"]\ar[u," \otimes_{g,h}\;\circ\;\phi^g\star\phi^h",swap]\\
    \uHom_{\D_{\!gh}}(X_1\otimes X_2,(Y_1\otimes Y_2)_{gh}^{**}) 
    \ar[uur,"\psi^{{gh}}"{description}]
    & \uHom_{\D_{\!gh}}(X_1\otimes X_2,(Y_1)_{g}^{**}\otimes( Y_2)_{h}^{**})\ar[l,"\nu_{g,h}"]
\end{tikzcd}    
\end{equation}
where the upper triangle commutes since $\otimes_{g,h}$ is a bimodule equivalence and the middle triangle commutes since $(\Omega_\D)_{gh}$ is an isomorphism of relative Serre functors. Now, since $\nu_{g,h}$ is an isomorphism of relative Serre functors we have that
\begin{equation}\label{eq:property_nu}
\psi^g\star\psi^h=\psi^{{gh}}\circ \uHom_{\D_{\!gh}}(X_1\otimes X_2,\nu_{g,h})   \,.
\end{equation}
which reduces the lower-right triangle in the diagram \eqref{eq:diagramOmegaPhiPsi} to
\begin{equation}
\begin{tikzcd}[column sep=-2em,row sep=2em]
\uHom_{\D_{\!gh}}(Y_1\otimes Y_2,X_1\otimes X_2)^*&& \uHom_{\D_{\!gh}}(X_1\otimes X_2,\bS_g(Y_1)\otimes \bS_h(Y_2))
\ar[ll,"\otimes_{g,h}\;\circ\;\phi^g\star\phi^h",swap]
\ar[dl,"(\Omega_\D)_g\otimes(\Omega_\D)_h"]
\\
& \uHom_{\D_{\!gh}}(X_1\otimes X_2,(Y_1)_{g}^{**}\otimes( Y_2)_{h}^{**})\ar[ul,"\psi^g\star\psi^h"]
&
\end{tikzcd}
\end{equation}
Now, the diagram can be rewritten, by using the definition of $\phi^g\star\phi^h$ and $\psi^g\star\psi^h$ from \eqref{phiMphiN}, as follows
\begin{equation}
\begin{tikzcd}[column sep=0.1em,row sep=1.5em]
&(X_1\otimes X_2\otimes Y_2^*\otimes Y_1^*)^*
\ar[dl,"\psi_g",swap]\ar[dr,"\phi_g"]&
\\
Y_1^{**}\otimes (X_1\otimes X_2\otimes Y_2^*)^*
\ar[rr,"\Omega_g\otimes \id"]\ar[d,swap,"\id\otimes{}^*\psi_h"] && \bS_g(Y_1)\otimes (X_1\otimes X_2\otimes Y_2^*)^* \ar[d,"\id\otimes{}^*\phi_h"] 
\\
Y_1^{**}\otimes (X_1\otimes {}^{*}(Y_2^{**}\otimes X_2^*))^*\ar[rr,"\Omega_g\otimes \Omega_h"]\ar[d,"\cong",swap]
&& \bS_g(Y_1)\otimes (X_1\otimes {}^{*}(\bS_h(Y_2)\otimes X_2^*))^*\ar[d,"\cong"]
\\
Y_1^{**}\otimes Y_2^{**}\otimes (X_1\otimes X_2)^*\ar[rr,"\Omega_g\otimes \Omega_h",swap]&& \bS_g(Y_1)\otimes \bS_h(Y_2)\otimes (X_1\otimes X_2)^*
\end{tikzcd}
\end{equation}
where the lower square commutes due to naturality and the middle square and top triangle commute owing to Lemma \ref{lem:Serre_homogeneous}, which proves that $\Omega_\D$ is monoidal. Lastly, that $\Omega$ is a modification, i.e. that it is natural on $1$-morphisms follows from the commutativity of the diagram \eqref{eq:Serre_comp_graded_functor}.
\end{proof}


Now, by considering the $2$-groupoids of fixed points, we arrive to a pivotal version of the classification of extensions \eqref{eq:ext_classification} in terms of the pivotal Brauer-Picard $2$-categorical group.
\begin{theorem}\label{thm:piv_ext}
Let $\C$ be a pivotal finite tensor category and $G$ a finite group. The equivalence of $2$-groupoids \eqref{eq:ext_classification} lifts to an equivalence of $2$-groupoids 
\begin{equation}\label{eq:piv_ext}
    \PivExt(G,\C) \xrightarrow{\;\simeq\;} \MonFun\left(\uuG,\PivBrPic(\C)\right).
\end{equation}
\end{theorem}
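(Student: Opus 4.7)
The plan is to deduce the equivalence \eqref{eq:piv_ext} from the non-pivotal classification in Theorem~\ref{thm:ext_classification} by passing to $B\underline{\zZ}$-fixed points on both sides. Proposition~\ref{prop:phi-is-equivariant} asserts that $\mathrm{E}$ is $B\underline{\zZ}$-equivariant, so it restricts to a monoidal $2$-equivalence
\[
\mathrm{E}^{B\underline{\zZ}}\colon \Ext(G,\C)^{B\underline{\zZ}}\xrightarrow{\;\simeq\;} \MonFun\bigl(\uuG,\BrPic(\C)\bigr)^{B\underline{\zZ}}.
\]
The theorem then follows by identifying each side with the desired $2$-groupoid.

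For the left-hand side, Proposition~\ref{prop: BZ fixed points} already gives $\Ext(G,\C)^{B\underline{\zZ}}\simeq \PivExt(G,\C)$. For the right-hand side, the plan is to establish
\[
\MonFun\bigl(\uuG,\BrPic(\C)\bigr)^{B\underline{\zZ}}\;\simeq\; \MonFun\bigl(\uuG,\,\BrPic(\C)^{B\underline{\zZ}}\bigr)\;\simeq\; \MonFun\bigl(\uuG,\PivBrPic(\C)\bigr),
\]
where the second equivalence is Proposition~\ref{prop:PivBrPic-is-BZ-equivariant} applied pointwise, and the first is an instance of the general principle that $2$-categorical fixed points (being a homotopy limit) commute with mapping $2$-groupoids. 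Concretely, the $B\underline{\zZ}$-action on $\MonFun(\uuG,\BrPic(\C))$ is induced by post-composition with the monoidal pseudo-natural equivalence $\bS$, so a fixed-point structure on a monoidal $2$-functor $F\colon\uuG\to\BrPic(\C)$ is nothing but a pointwise fixed-point structure on each $F(g)$, compatible with the monoidal constraints.

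To make this identification explicit I would unpack a fixed point using Example~\ref{ex: fixed point for BZ and BZ2 actions}. It consists of a monoidal $2$-functor $F\colon\uuG\to\BrPic(\C)$ together with a monoidal invertible modification $\Theta\colon\id_F\Rightarrow \bS\circ F$. For each $g\in G$, the component $\Theta_g\colon \id_{F(g)}\Rightarrow \dS^{\C}_{F(g)}$ is exactly a pivotal structure on the invertible $\C$-bimodule category $F(g)$, and the monoidality of $\Theta$ corresponds, via the definition in \eqref{eq:pivotal_Deligne_pr}, to the condition that the pivotal structures on $F(g)\btC F(h)$ and $F(gh)$ agree under the tensor constraint of $F$. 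In other words, the data is precisely a lift of $F$ through the forgetful monoidal $2$-functor $\forg\colon\PivBrPic(\C)\to\BrPic(\C)$ of Proposition~\ref{prop: endows}, i.e.\ a monoidal $2$-functor $\widetilde{F}\colon\uuG\to\PivBrPic(\C)$. The analogous unpacking at the $1$-morphism level (using \eqref{eq:1-morph-equiv}) recovers exactly the pivotal compatibility \eqref{eq:PivBrPic-1-cell}, while on $2$-cells nothing is added, since $\forg$ is fully faithful at that level.

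The main obstacle I expect is bookkeeping rather than hard new content: matching the coherence of the monoidal modification $\Theta$ with the compatibility \eqref{eq:pivotal_Deligne_pr} that defines the pivotal structure on the relative Deligne product of pivotal invertible bimodule categories, and checking monoidality of the resulting equivalence $\PivExt(G,\C)\to\MonFun(\uuG,\PivBrPic(\C))$. Both are straightforward diagram chases that closely mirror those already carried out in the proof of Proposition~\ref{prop:phi-is-equivariant} and in Proposition~\ref{prop: endows}, so no genuinely new ingredients are needed beyond those assembled in the previous subsections.
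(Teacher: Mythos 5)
Your proposal is correct and follows essentially the same route as the paper: take $B\underline{\zZ}$-fixed points of the equivalence $\mathrm{E}$ using its equivariance (Proposition~\ref{prop:phi-is-equivariant}), identify the source with $\PivExt(G,\C)$ via Proposition~\ref{prop: BZ fixed points}, and identify the target via the commutation of fixed points with $\MonFun(\uuG,-)$ together with Proposition~\ref{prop:PivBrPic-is-BZ-equivariant}. The extra unpacking you provide is consistent with, but not needed beyond, the paper's argument.
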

\begin{proof}
Since, the action on the $2$-groupoid of monoidal $2$-functors is defined by post-composition with the action on $\BrPic(\C)$, we formally have that $\MonFun(\uuG, \BrPic(\C))^{B\underline{\zZ}} \simeq \MonFun(\uuG, \BrPic(\C)^{B\underline{\zZ}} )$. The assertion follows from Proposition \ref{prop: BZ fixed points}, \ref{prop:PivBrPic-is-BZ-equivariant} and \ref{prop:phi-is-equivariant}.
\end{proof}


\section{Spherical extensions and the spherical Brauer-Picard \texorpdfstring{$2$}{2}-groupoid}\label{sec:spherical-extensions}
This section is organized as follows. In Section~\ref{subsec:spherical-G-graded}, we introduce spherical $G$-graded extensions and establish the basic framework for their study. In Section~\ref{subsec:spherical-brauer-picard}, we define the spherical Brauer-Picard $2$-categorical group and explore its structure. In Section~\ref{subsec:sphBrPic-homotopy-fixed}, we realize the spherical Brauer-Picard $2$-categorical group as the $2$-groupoid of fixed points for a natural $B\underline{\Ztwo}$-action. Finally, in Section~\ref{subsec:classification-spherical}, we provide a classification of spherical extensions in terms of monoidal functors into the spherical Brauer-Picard $2$-categorical group.


\subsection{Spherical $G$-graded extensions}\label{subsec:spherical-G-graded}
We first recall some notions relevant for the description of sphericality of a pivotal tensor category in the non-semisimple setting.
Any finite $\kk$-linear category $\M$ is endowed with the structure of a $\Vect_{\kk}$-module category defined  for $V\in\Vect_{\kk}$ and $M\in\M$ by means of the isomorphism:
\[ \Hom_{\kk}\big(V,\Hom_{\M}(M,M')\big) \cong \Hom_{\M}(V\otimes_{\kk}M,M').\]
The (right exact) \textit{Nakayama functor} $\dN_{\M}$ of $\M$ is the endofunctor defined by the coend \cite[Def.\ 3.14]{fuchs2020eilenberg}
\[ \dN_{\M}(M) = \int^{M'\in\M} \Hom_{\M}(M,M')^* \otimes_{\kk} M' \;.\]
It comes equipped with a natural isomorphism
\begin{equation}\label{eq:Nak-F}
	\dN_{\N} \circ F \xRightarrow{~\sim~} F^{\rra} \circ \dN_{\M}.
\end{equation} 
for any right exact $\kk$-linear functor $F\colon\M\longrightarrow\N$, whose right adjoint is also right exact. According to \cite[Lemma 4.10]{fuchs2020eilenberg}, the Nakayama functor of a finite tensor category $\D$ can be described using \eqref{eq:Nak-F} as
\begin{equation}\label{eq:Nak_dual}
    \dN_{\D}\cong D_{\D}^{-1}\otimes (-)^{**}\cong {}^{**}(-) \otimes D_{\D}^{-1}
\end{equation}
where $D_{\D}\coloneqq \dN_{\D}(\unit)^*$ is called the \textit{distinguished invertible object} of $\D$. For every finite tensor category $\D$, we obtain from \eqref{eq:Nak_dual} a monoidal natural isomorphism
\begin{equation}\label{eq:Radford-C}
	\R^{\D}\colon D_{\D} \otimes-\otimes D_{\D}^{-1}\xRightarrow{~\sim~}  (-)^{****}
\end{equation}
called the \textit{Radford isomorphism}. These definitions of distinguished invertible object and Radford isomorphism coincide with the original definitions in \cite{etingof2004analogue} as it was shown in \cite[Appendix A]{shimizu2023ribbon}.

A finite tensor category $\D$ is called \textit{unimodular} \cite{etingof2004analogue}
if $D_{\D}$ is isomorphic to the monoidal unit $\unit$. In that case, any isomorphism $\textbf{u}_\D\colon D_{\D}\xrightarrow{\;\sim\;} \unit$ of the distinguished invertible object provides an identification $ D_{\D} \otimes-\otimes D_{\D}^{-1}\cong \id_\D$. This does not depend on the choice of $\textbf{u}_\D$ since $\unit$ is simple and $\Hom_\D(\unit,D_\D)$ is one-dimensional. Hence, the Radford isomorphism \eqref{eq:Radford-C} turns into a monoidal trivialization
\begin{align}\label{eq: Radford for unimodular}
	\mathcal R^{\D} \colon \id_{\D} \xRightarrow{~\sim~}  (-)^{****}\,,
\end{align}
in the case that $\D$ is unimodular.
\begin{definition}{\cite[Def.\ 3.5.2.]{douglas2018dualizable}} \label{def:spherical}
A unimodular pivotal tensor category $\D$ is called \emph{spherical}
\footnote{When $\C$ is a fusion category, this definition of sphericality is equivalent to more well known definition in terms of trace, due to \cite{barrett1999spherical} (see \cite[Theorem 7.3]{etingof2004analogue} or \cite[Prop.~3.5.4]{douglas2018dualizable} for a proof). But these notions differ in the nonsemisimple setting and neither one implies the other.} if
\begin{equation}\label{eq:R=p^2}
	\begin{tikzcd}[column sep=normal, row sep=scriptsize]
			\id_{\D} \ar[rr,"\mathcal R^{\D}", Rightarrow]\ar[dr,swap,"\fp", Rightarrow]&& (-)^{****}  \\
			&(-)^{**}  \ar[ru,swap,"\fp^{**}", Rightarrow]&  
	\end{tikzcd}
\end{equation}
commutes, where $\fp\colon \id_{\D} \xRightarrow{~\sim~} (-)^{**}$ is the pivotal structure of $\D$.
\end{definition}
To discuss sphericality, then we need that graded extensions behave well together with unimodularity as shown in the following lemma.
\begin{lemma}\label{lemma: unimodularity of extension}
Let $\D$ be a $G$-extension of a finite tensor category $\C$. Then $\C$ is unimodular if and only if $\D$ is unimodular.
\end{lemma}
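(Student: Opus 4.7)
The plan is to prove the stronger statement that the distinguished invertible objects actually coincide, i.e.\ $D_\D \cong D_\C$ as objects of $\C \subset \D$, from which the equivalence of unimodularity follows at once since $\unit_\D = \unit_\C$.

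The key ingredient will be a direct computation of the Nakayama functor $\dN_\D$ restricted to the trivial component $\D_e = \C$. Starting from the coend formula
\[
\dN_\D(X) \;=\; \int^{Y \in \D} \Hom_\D(X,Y)^* \otimes_\kk Y,
\]
I will exploit the fact that for a $G$-graded tensor category the decomposition $\D = \bigoplus_{g\in G} \D_g$ is a decomposition of $\kk$-linear abelian categories, so that $\Hom_\D(X,Y) = 0$ whenever $X \in \D_g$ and $Y \in \D_h$ with $g \neq h$. Splitting the coend along the grading gives
\[
\dN_\D(X) \;\cong\; \bigoplus_{g \in G} \int^{Y \in \D_g} \Hom_\D(X,Y)^* \otimes_\kk Y,
\]
and for $X \in \D_e = \C$ only the summand $g = e$ survives, yielding the canonical isomorphism $\dN_\D(X) \cong \dN_\C(X)$ in $\C$. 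Specializing to $X = \unit$ and dualizing gives $D_\D \cong D_\C$.

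From this identification the lemma is immediate: $\C$ is unimodular, i.e.\ $D_\C \cong \unit_\C$, if and only if $D_\D \cong \unit_\D$, i.e.\ $\D$ is unimodular.

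The one point that deserves careful justification is the splitting of the coend along the grading, which amounts to saying that the Nakayama functor of a finite direct sum of $\kk$-linear abelian categories decomposes as the direct sum of the Nakayama functors of the summands. This is a general and essentially formal fact about coends in abelian categories with orthogonal block decomposition, and is where the bulk of the verification lies; everything else is bookkeeping with \eqref{eq:Nak_dual}.
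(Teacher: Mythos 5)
Your proof is correct, but it takes a genuinely different route from the paper's. You compute $\dN_\D|_{\D_e}$ directly from the coend formula: since $\D=\bigoplus_{g}\D_g$ is an orthogonal block decomposition of linear categories, the coend splits as a direct sum over blocks, and for $X\in\D_e$ the integrand vanishes on every block with $g\neq e$, giving $\dN_\D(X)\cong\dN_\C(X)$ and hence $D_\D\cong D_\C$ (here one should note, as you implicitly do, that the dual of $\dN_\D(\unit)\in\D_e$ taken in $\D$ agrees with the one taken in $\C$, since $\D_e$ is closed under duals). The paper instead regards $\D$ as a left $\C$-module category, identifies its relative Serre functor with the double dual via Lemma~\ref{lem:Serre_homogeneous}, and then invokes the Fuchs--Schaumann--Schweigert relation $\dN_\M\cong D_\C^{-1}\tl\dS^\C_\M$ (cf.~\eqref{eq:Serre-Nak-relation}) to get $\dN_\D(\unit)\cong D_\C^{-1}\tl\unit^{**}\cong D_\C^{-1}$. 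Both arguments prove the same stronger statement $D_\D\cong D_\C$, from which the equivalence of unimodularity is immediate. Your approach is more elementary and self-contained, resting only on the formal fact that coends commute with orthogonal block decompositions (which, as you say, is where the real verification lies and is standard); the paper's approach is shorter given that the relative-Serre machinery and Lemma~\ref{lem:Serre_homogeneous} are already in place, and it fits the relative-Serre viewpoint used throughout the rest of the paper.
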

\begin{proof}
Any $G$-graded extension $\D$ of $\C$ can be seen as a $\C$-module category. According to Lemma \ref{lem:Serre_homogeneous} the relative Serre functor of ${}_\C\D$ is given the double-dual functor. Thus, by \cite[Thm.\ 4.26]{fuchs2020eilenberg}, we have that
\begin{equation}
    D_\D^{-1}=\dN_\D(\unit)\cong D_\C^{-1}\tl \unit^{**}\cong D_\C^{-1}\,,
\end{equation}
which gives the desired result. This Lemma also follows from \cite[Thm.\ 6.1]{etingof2004analogue}.
\end{proof}
\begin{definition} Let $\C$ be a spherical (unimodular) tensor category.
\begin{enumerate}[(i)]
\item A \emph{spherical G-graded extension} of $\C$ is a pivotal $G$-extension $(\D,\eqtriv^{\D}, \tilde \fp)$ of $\C$  such that $(\D, \tilde \fp)$ is spherical.
\item The $2$-groupoid $\SphExt(G,\C)$ of spherical extensions of $\C$ is defined as the full sub $2$-groupoid of $\PivExt(G,\C)$ with objects being spherical $G$-graded extensions of $\C$.
\end{enumerate}
\end{definition}

For the rest of this section, we fix a spherical (unimodular) finite tensor category $\C$. In the same vein of Section \ref{subsec:PivG-ext}, the purpose of this section is to realize the $2$-groupoid $\SphExt(G,\C)$ of spherical extensions of $\C$ as the $2$-groupoid of fixed points for a natural $B\underline{\Ztwo}$-action on $\Ext(G,\C)$. In Proposition \ref{prop:BZ-action-Ext}, we proved that the double-dual functors of graded extensions, form a pseudo-natural autoequivalence of the identity $2$-functor $\id_{\Ext(G,\C)}$. It follows that the fourth power of the dual functors of graded extensions assemble into a pseudo-natural equivalence
\begin{align*}
    (-)^{****}\colon \id_{\Ext(G,\C)}\xRightarrow{~\sim~} \id_{\Ext(G,\C)}\,,
\end{align*}
as well.
According to Example \ref{ex: BZ and BZ2 actions}, to define a $B\underline{\Ztwo}$-action, it remains to define an invertible modification between $\id_{\id_\Ext(G,\C)}$ and $(-)^{****}$. The Radford isomorphisms of the $G$-graded extensions of $\C$ form such an invertible modification, as we show in the next proposition.

\begin{proposition}\label{prop: Rad-inv-modification}
The Radford isomorphisms
\eqref{eq: Radford for unimodular} assemble into an invertible modification 
\begin{equation}\label{eq:monPNT}
\R\colon  \id_{\id_\Ext(G,\C)}\xRightarrow{~\sim~}(-)^{****}\,.
\end{equation}
\end{proposition}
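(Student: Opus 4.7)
By Lemma~\ref{lemma: unimodularity of extension}, every $G$-graded extension $\D$ of $\C$ is unimodular, so the Radford isomorphism $\mathcal{R}^\D \colon \id_\D \xRightarrow{\sim} (-)^{****}$ from \eqref{eq: Radford for unimodular} is a well-defined monoidal natural isomorphism. The plan is to take these as the components of $\mathcal{R}$, and reduce the proof to verifying two items: first, that each $\mathcal{R}^\D$ is a $2$-cell in $\Ext(G,\C)$, i.e., compatible with the trivializations of $\id_\D$ and $(-)^{****}_\D$ in the sense of \eqref{eq:comp_nat_triv_component}; second, that the collection satisfies the modification naturality axiom with respect to $1$-morphisms in $\Ext(G,\C)$. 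Monoidality is built into the definition of each $\mathcal{R}^\D$.

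Both items will hinge on a single naturality property of the Radford isomorphism under tensor functors: for any tensor functor $F\colon \D \to \D'$ between unimodular finite tensor categories, the identity
\begin{equation*}
\mathcal{R}^{\D'}_{F(X)} \;=\; (\zeta^F_X)^{**} \circ \zeta^F_{X^{**}} \circ F(\mathcal{R}^\D_X)
\end{equation*}
holds, as a direct consequence of the compatibility of $F$ with the Nakayama functor (see \cite{shimizu2023ribbon}). Applied to the tensor equivalence $F$ itself (together with the description \eqref{eq:PsNat} of the pseudo-natural structure of $(-)^{****}$ as $\zeta^F \circ \zeta^F$), this is precisely the modification axiom, which settles the second item.

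For the first item, the plan is to unpack $\tau_{(-)^{****}_\D}$ as the twofold composition of $\tau_{(-)^{**}_\D}$ from \eqref{eq:triviso}. Using the displayed naturality along the tensor equivalence $\iota^\D \colon \D_e \xrightarrow{\sim} \C$ to rewrite $\iota^\D(\mathcal{R}^{\D_e}_X)$ in terms of $\mathcal{R}^\C_{\iota^\D(X)}$, then telescoping $\zeta^{\iota^\D}$ against its inverse and invoking naturality of $\fp_\C^{-1}$ with respect to $\zeta^{\iota^\D}$, the full composite will collapse to
\begin{equation*}
\fp^{-1}_{\iota^\D(X)} \circ \fp^{-1}_{\iota^\D(X)^{**}} \circ \mathcal{R}^\C_{\iota^\D(X)}.
\end{equation*}
Sphericality of $\C$, i.e., $\mathcal{R}^\C = \fp^{**}_\C \circ \fp_\C$ combined with the naturality identity $\fp_{Y^{**}}\circ\fp_Y = \fp_Y^{**}\circ \fp_Y$, will make this composite equal to the identity. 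The main obstacle will be the bookkeeping in this last step; once the displayed naturality of Radford is in hand, the only nontrivial input is sphericality of $\C$, which is precisely what makes the modification exist in this setting rather than merely the pivotal one.
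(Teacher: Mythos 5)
Your proposal is correct and takes essentially the same route as the paper: both arguments rest on the naturality of the Radford isomorphism under tensor functors from \cite[Theorem 4.4]{shimizu2023ribbon} (which yields the modification axiom for $1$-cells and, applied to $\iota^{\D}$, the compatibility with the trivializations), together with sphericality of $\C$ to collapse the twofold composite of \eqref{eq:triviso} when verifying the $2$-cell condition \eqref{eq:comp_nat_triv_component}. No gaps.
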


\begin{proof}
We first check naturality for $1$-morphisms: we need to prove that the diagram
	 \begin{equation*}
		\begin{tikzcd}[column sep=large]
			F\circ \id_{\D} \ar[r,"\id * \mathcal R^{\D}", Rightarrow]\ar[""{name=2}, d,swap,Rightarrow,"\id"]& F\circ (-)_{\D}^{****} \ar[""{name=1}, d,Rightarrow,"(-)_F^{****}"] \\
			\id_{\D'} \circ F \ar[""{name=2},r,Rightarrow,swap,"\mathcal R^{\D'} *\id"] 
			& (-)_{\D'}^{****} \circ F
		\end{tikzcd}
	\end{equation*}
commutes for every 1-cell $F:\D \xrightarrow{~\sim~} \D'$ in $\Ext(G, \C)$, i.e. that 
\begin{equation*}
	\begin{tikzcd}[column sep=large]
			F(X) \ar[r,"F(\mathcal R^{\D}_X)", rightarrow]
			\ar[""{name=2}, dr,swap,"\mathcal R^{\D'}_{F(X)}", rightarrow]& F(X^{****}) \ar[""{name=2},d,"\zeta^F_X\circ \zeta^F_X", rightarrow]  \\
			&F(X)^{****}  
	\end{tikzcd}
\end{equation*}
commutes for every $X\in \D$, where $\zeta^F_X$ is
the duality isomorphism for $F$ from \eqref{eq:iso_F_double-dual}. This is nothing else than the statement in \cite[Theorem 4.4]{shimizu2023ribbon}, once we consider that $\C$ is unimodular and that $F$ is an equivalence.

It remains to be shown that for $(\D, \eqtriv^{\D})\in \Ext(G, \C)$, the Radford isomorphism $\R^{\D}\colon \id_{\D} \Rightarrow (-)_{\D}^{****}$ is a $2$-cell in $\Ext(G,\C)$, i.e. that it obeys \eqref{eq:comp_nat_triv_component}. 
The following diagram 
\begin{equation}
    \begin{tikzcd}\label{eq:R-mod}
	{\iota^{\D}\circ(-)_{\D_e}^{****}} && {(-)_{\C}^{**} \circ \iota^{\D}} && {\id_{\C}\circ \iota^{\D}\circ (-)_{\D_e}^{**}} \\
	&&&& {\id_{\C}\circ (-)_{\C}^{**}\circ \iota^{\D}} \\
	{(-)_{\C}^{****} \circ \iota^{\D}} &&&& {\id_{\C}\circ \iota^{\D}}
	\arrow["{(-)_{\iota^{\D}}^{**}}", Rightarrow, from=1-1, to=1-3]
	\arrow["{(-)_{\iota^{\D}}^{**}\circ (-)_{\iota^{\D}}^{**}}"', Rightarrow, from=1-1, to=3-1]
	\arrow["{\fp^{-1}}", Rightarrow, from=1-3, to=1-5]
	\arrow["{(-)_{\iota^{\D}}^{**}}", Rightarrow, from=1-5, to=2-5]
	\arrow["{\fp^{-1}}", Rightarrow, from=2-5, to=3-5]
	\arrow["{\fp^{-1}}", Rightarrow, from=3-1, to=2-5]
	\arrow["{(\R^{\C})^{-1}}"', Rightarrow, from=3-1, to=3-5]
\end{tikzcd}
\end{equation}
commutes, since the bottom triangle holds by sphericality of $\C$, and the top half commutes trivially by level exchange. Now, the natural isomorphism $\tau_{(-)_{\D}^{****}}$ equals, by  definition, the composition of the top and right arrows in \eqref{eq:R-mod}. Hence, the condition \eqref{eq:comp_nat_triv_component} applied to $\R^{\D}$ translates into
    \begin{equation*}
		\begin{tikzcd}[column sep=large, row sep=scriptsize]
			\iota^{\D}(X) \ar[r,"\iota^{\D}(\mathcal R^{\D_e}_X)", rightarrow]
			\ar[""{name=2}, dr,swap,"(\R^{\C}_{\iota^{\D}})_X", rightarrow]& \iota^{\D}(X^{****}) \ar[""{name=2},d,"\zeta^{\iota^{\D}}_X\circ \zeta^{\iota^{\D}}_X", rightarrow]  \\
			&\iota^{\D}(X)^{****}  
		\end{tikzcd}
	\end{equation*}
for $X\in \D$, which also commutes due to \cite[Theorem 4.4]{shimizu2023ribbon}. 
\end{proof}

\begin{corollary}
Let $\C$ be a spherical (unimodular) finite tensor category. The data consisting of
    \begin{itemize}
        \item the invertible pseudo-natural transformation $(-)^{**}\colon\id_{\Ext(G,\C)}\xRightarrow{~\sim~} \id_{\Ext(G,\C)},$ and
        \item the invertible modification $\mathcal R\colon \id_{\id_\Ext(G,\C)}\xRightarrow{~\sim~} (-)^{****}$ defined in \eqref{eq:monPNT}, induced by the Radford isomorphisms,
    \end{itemize} 
define a $B\underline{\zZ/2\zZ}$ action on $\Ext(G,\C)$.
\end{corollary}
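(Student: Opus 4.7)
The plan is to assemble results already established in the preceding propositions, matching them against the characterization of $B\underline{\Ztwo}$-actions in Example~\ref{ex: BZ and BZ2 actions}. According to that characterization, specifying a $B\underline{\Ztwo}$-action on the $2$-category $\Ext(G,\C)$ amounts to giving (i) a pseudo-natural equivalence $\eta\colon \id_{\Ext(G,\C)}\Rightarrow \id_{\Ext(G,\C)}$ with trivial self-braiding, together with (ii) an invertible modification $m\colon \id_{\id_{\Ext(G,\C)}}\Rightarrow \eta^2$.

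For datum (i), I would take $\eta \coloneqq (-)^{**}$. Proposition~\ref{prop:BZ-action-Ext} already shows this is a pseudo-natural equivalence of $\id_{\Ext(G,\C)}$, and Corollary~\ref{cor: BZ action on Ext} (which uses only that $\C$ is pivotal, a consequence of sphericality) verifies that its self-braiding is trivial. Hence $(-)^{**}$ is the $B\underline{\zZ}$-action through which the desired $B\underline{\Ztwo}$-action should factor.

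For datum (ii), I would take $m \coloneqq \R$. Since $\eta^2=(-)^{****}$, the target matches. By Lemma~\ref{lemma: unimodularity of extension}, unimodularity of $\C$ forces unimodularity of every $G$-graded extension $\D$, so the monoidal natural isomorphism $\R^\D\colon \id_\D\Rightarrow (-)^{****}|_\D$ from \eqref{eq: Radford for unimodular} is defined for each $\D\in\Ext(G,\C)$. Proposition~\ref{prop: Rad-inv-modification} shows precisely that the family $\{\R^\D\}$ assembles into an invertible modification on the identity pseudo-natural equivalence of $\Ext(G,\C)$: invertibility is pointwise, naturality on $1$-cells uses \cite[Theorem~4.4]{shimizu2023ribbon}, and the compatibility with the trivializations $\iota^\D\colon \D_e\xrightarrow{~\sim~}\C$ (required so that $\R^\D$ lifts to a $2$-cell in $\Ext(G,\C)$) uses sphericality of $\C$ essentially --- this was the role of diagram~\eqref{eq:R-mod} in the proof of Proposition~\ref{prop: Rad-inv-modification}.

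The main obstacle I would anticipate is verifying any further higher coherence tying $\R$ to the self-braiding of $(-)^{**}$ beyond what is subsumed in (i) and (ii); but Example~\ref{ex: BZ and BZ2 actions} articulates the full datum of a $B\underline{\Ztwo}$-action as precisely the pair described above, so no additional checks are needed at this level. The corollary therefore follows immediately by combining Corollary~\ref{cor: BZ action on Ext} and Proposition~\ref{prop: Rad-inv-modification} and matching the resulting data to the description in Example~\ref{ex: BZ and BZ2 actions}.
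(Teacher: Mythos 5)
Your proposal is correct and follows exactly the paper's argument: the paper's proof likewise just cites Example~\ref{ex: BZ and BZ2 actions} together with Proposition~\ref{prop: Rad-inv-modification} (with Corollary~\ref{cor: BZ action on Ext} supplying the trivial self-braiding), and your additional remarks about Lemma~\ref{lemma: unimodularity of extension} only make explicit what the paper leaves implicit.
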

\begin{proof}
This follows from Example \ref{ex: BZ and BZ2 actions} and Proposition \ref{prop: Rad-inv-modification}.
\end{proof}

\begin{proposition}\label{prop:SphExt-Z2}
Let $\C$ be a spherical (unimodular) finite tensor category and $G$ a finite group.
The $2$-groupoid $\SphExt(G, \C)$ is equivalent to the $2$-groupoid of $B\underline{\zZ/2\zZ}$-fixed points $\Ext(G,\C)^{B\underline{\zZ/2\zZ}}$. 
\end{proposition}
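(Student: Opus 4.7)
The plan is to leverage Proposition~\ref{prop: BZ fixed points}, which already identifies $\PivExt(G,\C)$ with the $2$-groupoid of $B\underline{\zZ}$-fixed points of $\Ext(G,\C)$. Since the $B\underline{\Ztwo}$-action defined by $(-)^{**}$ together with the Radford modification $\R$ refines the earlier $B\underline{\zZ}$-action (the underlying pseudo-natural equivalence is the same; only the extra datum $\R$ distinguishes them), every $B\underline{\Ztwo}$-fixed point is in particular a $B\underline{\zZ}$-fixed point, i.e.\ a pivotal $G$-graded extension of $\C$. What remains is to recognize the extra condition imposed by the Radford modification as the sphericality condition, and then check that the higher morphism data is transported correctly.

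More concretely, following the unpacking in Example~\ref{ex: fixed point for BZ and BZ2 actions}, a $B\underline{\Ztwo}$-fixed point on $(\D,\iota^\D)\in \Ext(G,\C)$ is the same datum as a $B\underline{\zZ}$-fixed point $(\D,\iota^\D,\fq)$ (which by Proposition~\ref{prop: BZ fixed points} corresponds to a pivotal extension $(\D,\iota^\D,\fq)$ of $\C$) subject to the additional equation
\[
    (-)^{**}_\D(\fq)\circ \fq \;=\; \R^\D \colon \id_\D \xRightarrow{~\sim~} (-)^{****}_\D.
\]
Writing $(-)^{**}_\D(\fq) = \fq^{**}$ (the double-dual of the natural transformation $\fq$), this is precisely the sphericality condition \eqref{eq:R=p^2}. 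So the objects of $\Ext(G,\C)^{B\underline{\Ztwo}}$ are exactly the pivotal extensions whose pivotal structure is spherical. Lemma~\ref{lemma: unimodularity of extension} ensures that such $\D$ is automatically unimodular, so $(\D,\fq)$ is genuinely spherical and belongs to $\SphExt(G,\C)$.

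For $1$-cells and $2$-cells, observe that since $B\underline{\Ztwo}$ has the same $1$- and $2$-morphism content as $B\underline{\zZ}$ (only the pattern of relations differs, and the extra relation is a condition on objects), the descriptions of $1$-cells and $2$-cells among fixed points coincide with those in Example~\ref{ex: fixed point for BZ and BZ2 actions} for the $B\underline{\zZ}$ case. Consequently the forgetful map $\Ext(G,\C)^{B\underline{\Ztwo}}\to \Ext(G,\C)^{B\underline{\zZ}}$ is fully faithful, and its essential image under the equivalence of Proposition~\ref{prop: BZ fixed points} is exactly $\SphExt(G,\C)\subset \PivExt(G,\C)$. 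Concatenating yields the desired equivalence.

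The main obstacle will be verifying the identification of the $B\underline{\Ztwo}$ coherence condition with sphericality. This requires tracing carefully through the pseudo-naturality data of $(-)^{**}$ and the explicit form of the modification $\R$ from Proposition~\ref{prop: Rad-inv-modification}: one must show that the abstract equation $\eta_a(\Theta)\circ \Theta = m_a$ in Example~\ref{ex: fixed point for BZ and BZ2 actions}, when specialized to our $\eta = (-)^{**}$, $\Theta = \fq$, and $m = \R$, unpacks to $\fq^{**}\circ \fq = \R^\D$ componentwise on each $X\in \D$. Once this is in place, the remaining verifications are formal, relying only on the fact that both $\SphExt(G,\C)$ and $\Ext(G,\C)^{B\underline{\Ztwo}}$ are full sub-$2$-groupoids of $\PivExt(G,\C)\simeq \Ext(G,\C)^{B\underline{\zZ}}$ cut out by the same property of the pivotal structure.
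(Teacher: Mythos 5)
Your proposal is correct and follows essentially the same route as the paper: unpack the $B\underline{\Ztwo}$-fixed-point data via Example~\ref{ex: fixed point for BZ and BZ2 actions}, identify the extra coherence $\fq^{**}\circ\fq=\R^{\D}$ with the sphericality condition \eqref{eq:R=p^2}, and reuse the argument of Proposition~\ref{prop: BZ fixed points} to match the $1$- and $2$-cells. The paper's proof is just a terser version of the same argument, so no changes are needed.
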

\begin{proof}
According to Example \ref{ex: fixed point for BZ and BZ2 actions}, a fixed point for the $B\underline{\zZ/2\zZ}$ action is equivalent to a choice of an object $(\D,\eqtriv^{\D})\in \Ext(G,\C)$ and an invertible $2$-morphism $\fp_{\D}\colon \id_{\D} \Rightarrow (-)^{**}_{\D}$ such that $\fp_{\D}^{**}\circ \fp_{\D}=\R^{\D}$, i.e. a spherical structure on $\D$.
As in the proof of Proposition \ref{prop: BZ fixed points}, $\fp_{\D}$ fulfilling \eqref{eq:comp_nat_triv_component} means $\eqtriv^{\D}$ is pivotal. Hence, the data of an object in $\Ext(G,\C)^{B\underline{{\Ztwo}}}$ is the same as that of an object in $\SphExt(G,\C)$. The same argument from Proposition \ref{prop: BZ fixed points} shows that $1$- and $2$-cells in $\Ext(G,\C)^{B\underline{\zZ/2\zZ}}$ agree with those of $\SphExt(G,\C)$.
\end{proof}


\subsection{The spherical Brauer-Picard $2$-categorical group}
Given a $(\C,\D)$-bimodule category $\M$, the isomorphisms \eqref{eq:Nak-F} associated to the right and left actions on $\M$ endow the Nakayama functor $\dN_{\M}$ with the structure of a twisted $(\C,\D)$-bimodule functor \cite[Thm~4.5]{fuchs2020eilenberg}
\begin{equation}\label{eq:Nakayama_bimodule_functor}
     {}^{**}X\tl \dN_{\M}(M)\tr Y^{**} \cong \dN_{\M}(X\tl M\tr Y)\,. 
\end{equation}
The relative Serre functors and Nakayama functor of $\M$ are related by natural isomorphisms 
\begin{equation}\label{eq:Serre-Nak-relation}
	D_{\C}\tl \dN_{\M} \cong \dS_{\M}^{\C} \qquad  \text{ and } \qquad D_{\D}\tl \dN_{\M} \cong \dS_{\M}^{\overline{\D}}
\end{equation} 
of twisted bimodule functors \cite[Thm~4.26]{fuchs2020eilenberg}, where the twisted $\C$-module structure (resp. $\D$) of $D_{\C}\tl \dN_{\M}$ involves the Radford isomorphism of $\C$ (resp. $\D$).
The isomorphisms \eqref{eq:Serre-Nak-relation} lead to an analogue of the Radford isomorphism for a exact bimodule categories \cite[Thm.\ 4.14]{spherical2022}. In particular, for an invertible $\C$-bimodule category $\M$, there is an isomorphism
\begin{equation}\label{eq:Radford-M}
	\R^{\M}\colon D_{\C}\tl -\tr D_{\C}^{-1} \xRightarrow{~\sim\,} \dS_{\M}^{\C}\circ \dS_{\M}^{\C}
\end{equation}
of twisted $\C$-bimodule functors called the \textit{bimodule Radford isomorphism of $\M$} \cite[Cor.\ 4.16]{spherical2022}.
\label{subsec:spherical-brauer-picard}
In the case that $\C$ is a spherical (unimodular) finite tensor category, any trivialization $\textbf{u}_\C\colon D_{\C}\xrightarrow{\;\sim\;} \unit$ of the distinguished invertible object turns the Radford isomorphism \eqref{eq:Radford-M} of $\M$ into a natural isomorphism 
\begin{align}\label{eq:Rad for modules over sph}
    \R^{\M}\colon \id_{\M}\xRightarrow{~\sim\,} \dS_{\M}^{\C}\circ \dS_{\M}^{\C}
\end{align}
of $\C$-bimodule functors. Once again, this does not depend on the choice of 
$\textbf{u}_\C$ since $\unit$ is simple.

\begin{definition}\label{def:spherical_module}\cite[Def.\ 5.20]{spherical2022}
Let $\C$ be a spherical (unimodular) finite tensor category. An invertible pivotal $\C$-bimodule category $\M$ is called \textit{spherical} if the diagram
\begin{equation}\label{eq:R_M=p_M^2}
    \begin{tikzcd}[column sep=large]
        \id_{\M} \ar[rr,"\R^{\M}", Rightarrow]\ar[dr,swap,"\tfp", Rightarrow]&& \dS_{\M}^{\C}\circ \dS_{\M}^{\C}  \\
        &\dS_{\M}^{\C}  \ar[ru,swap,"\tfp", Rightarrow]  &
    \end{tikzcd}
\end{equation}
commutes, where $\tfp\colon \id_\M\xRightarrow{~\sim\,}\dS_\M^\C$ is the pivotal structure of $\M$.
\end{definition}

\begin{remark}
\label{rem: 2 spherical structures}
Pivotal module categories do not need to be spherical.
Let $\C$ be spherical and $\M$ an indecomposable exact left $\C$-module category admitting a pivotal structure $\tfp$. Then any other pivotal structure is a scalar multiple of $\tfp$. 
From \eqref{eq:R_M=p_M^2}, $(\R^{\M})^{-1} (\tfp_{\M}\cdot\id)\tfp_{\M}$ is an automorphism of $\id_{\M}$. As $\M$ is indecomposable, this must be a scalar multiple (say $c\in\kk^\times$) of identity. Hence, $c\,(\tfp_{\M}\circ\id)\tfp_{\M} = \R^{\M}$. Consequently, $\pm \sqrt{c}\,\tfp_{\M}$ will be a spherical structure on $\M$. However, any other choice of scalar multiple of $\tfp_{\M}$ will lead to a pivotal module category that is not spherical.
\end{remark}

\begin{definition}\label{def:SphBrPic}
Let $\C$ be a spherical (unimodular) finite tensor category. The \textit{spherical Brauer-Picard $2$-groupoid of $\C$} is the full sub $2$-groupoid of $\PivBrPic(\C)$ whose objects are spherical invertible $\C$-bimodule categories, and denoted by $\SphBrPic(\C)$.
\end{definition}
To show that $\SphBrPic(\C)$ inherits the monoidal structure of $\PivBrPic(\C)$, we need to prove the compatibility between the bimodule Radford isomorphisms and the relative Deligne product.
\begin{proposition}[Monoidality of bimodule Radford isomorphisms]\label{prop:radford is monoidal}
Let $\C$ be a unimodular finite tensor category. Given $\M,\N\in \BrPic(\C)$, then
\begin{equation*}
\begin{tikzcd}
    &&{\dS_{\M\boxtimes_{\C}\mathcal N}^{\C}\circ \dS_{\M\boxtimes_{\C}\mathcal N}^{\C}} \ar[dd,"{\mu_{\M, \mathcal N}\circ \mu_{\M, \mathcal N}}",Rightarrow]\\
    \id_{\M\boxtimes_{\C}\N}
    \ar[rru,"{\mathcal R^{\M\boxtimes_{\C}\N}}",Rightarrow]
    \ar[rrd,"{\mathcal R^{\M}\boxtimes_{\C} \mathcal R^{\mathcal N}}",swap,Rightarrow]
    &&\\
    &&\left(\dS_{\M}^{\C} \circ \dS_{\M}^{\C}\right) \boxtimes_{\C}\left(\dS_{\mathcal N}^{\C}\circ \dS_{\mathcal N}^{\C}\right)=\left(\dS_{\M}^{\C}\boxtimes_{\C}\dS_{\mathcal N}^{\C}\right) \circ \left( \dS_{\M}^{\C}\boxtimes_{\C}\dS_{\mathcal N}^{\C}\right)
\end{tikzcd}
\end{equation*}
commutes, where $\mathcal{R}$ denotes the bimodule Radford isomorphism \eqref{eq:Rad for modules over sph}.
\end{proposition}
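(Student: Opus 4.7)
The plan is to decompose the bimodule Radford isomorphism $\R^{\M}$ into elementary natural isomorphisms and verify that each factor is compatible with the relative Deligne product. Recall from \cite[Cor.\ 4.16]{spherical2022} that $\R^\M$ is assembled from the composite
\begin{equation*}
\id_\M \xRightarrow{\;\eta_\M\;} \odS_\M^\C\circ\dS_\M^\C \xRightarrow{\;\alpha_\M\;} \dS_\M^{\overline\C}\circ\dS_\M^\C \xRightarrow{\;\beta_\M\;} \dS_\M^\C\circ\dS_\M^\C,
\end{equation*}
where $\eta_\M$ is the unit of the adjoint equivalence $\dS_\M^\C\dashv\odS_\M^\C$, $\alpha_\M$ is the natural isomorphism $\odS_\M^\C\cong\dS_\M^{\overline\C}$ available for invertible $\C$-bimodules by \cite[Cor.\ 4.12]{spherical2022}, and $\beta_\M$ is induced by the pair of trivializations $\dS_\M^\C\cong D_\C\tl\dN_\M\cong\dS_\M^{\overline\C}$ from \eqref{eq:Serre-Nak-relation} together with the unimodular identification $D_\C\cong\unit$.

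The plan is to prove compatibility with $\btC$ for $\eta_\M$, $\alpha_\M$, and $\beta_\M$ separately and then assemble them. For $\eta_\M$, I would apply Proposition \ref{prop:Serre_monoidal} both to $\dS^\C$ and to its quasi-inverse $\odS^\C$: the resulting monoidality data $\mu$ and its analog $\overline{\mu}$ intertwine the unit of the adjoint equivalence $(\dS_\M^\C\btC\dS_\N^\C)\dashv(\odS_\M^\C\btC\odS_\N^\C)$ with that of $\dS_{\M\btC\N}^\C\dashv\odS_{\M\btC\N}^\C$. For $\beta_\M$, the identification is induced by the scalar trivialization $D_\C\cong\unit$ and is visibly compatible with $\btC$ once one has verified an analog of Proposition \ref{prop:Serre_monoidal} for the Nakayama functor, which in turn follows from the compatibility of coends with the universal $\btC$-balanced functor. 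The subtle step is compatibility for $\alpha_\M$: here I would prove monoidality via a Yoneda-style uniqueness argument, namely, characterize $\alpha_\M$ via its action on internal Homs (in the spirit of Lemma \ref{lem:Serre_uniqueness}), and then check that the composite $(\alpha_\M\btC\alpha_\N)$, conjugated by the appropriate $\mu$'s, satisfies the same characterizing property on $\M\btC\N$ as $\alpha_{\M\btC\N}$.

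Once each of $\eta$, $\alpha$, $\beta$ is known to be monoidal under $\btC$, stacking the three corresponding naturality squares and invoking the associativity of $\mu$ from the proof of Proposition \ref{prop:RSpNE-is-Monoidal} yields the desired identity $\R^{\M\btC\N}=(\mu_{\M,\N}\circ\mu_{\M,\N})\circ(\R^\M\btC\R^\N)$. Compatibility of the twisted $\C$-bimodule structures on the various functors throughout this chase is automatic because Proposition \ref{prop:Serre_monoidal} and Lemma \ref{lem:Serre-monoidal} promote the relevant natural isomorphisms to isomorphisms of twisted bimodule functors.

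The main obstacle will be monoidality of $\alpha_\M$: the isomorphism $\odS_\M^\C\cong\dS_\M^{\overline\C}$ uses the invertibility datum of $\M$, and it is not immediate that the invertibility data of $\M$ and $\N$ combine correctly to the invertibility datum of $\M\btC\N$ at the level of this specific isomorphism. As a fallback, one can sidestep $\alpha_\M$ entirely by rewriting $\R^\M$ through the Nakayama functor via \eqref{eq:Serre-Nak-relation} and \eqref{eq:Nak_dual}, and then proving a Nakayama-functor analog of Proposition \ref{prop:Serre_monoidal} directly; this reduces the whole verification to coend manipulations and the trivialization $D_\C\cong\unit$.
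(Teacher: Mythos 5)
Your overall strategy---factor $\R^{\M}$ into elementary isomorphisms and check compatibility of each factor with $\btC$---is reasonable in outline, but as written it is a plan rather than a proof, and the gap sits exactly where you flag it. The monoidality of $\alpha_{\M}\colon \odS_{\M}^{\C}\cong\dS_{\M}^{\overline{\C}}$ is the crux: this isomorphism is built from the invertibility data of $\M$, and your proposed ``Yoneda-style uniqueness'' argument requires first exhibiting a characterizing property of $\alpha_{\M}$ on internal Homs and then verifying that the conjugate of $\alpha_{\M}\btC\alpha_{\N}$ by the $\mu$'s satisfies the same property for $\M\btC\N$; neither step is carried out. Likewise, the unit-compatibility for $\eta$ and the Nakayama-functor analog of Proposition~\ref{prop:Serre_monoidal} needed for your fallback are asserted, not proved. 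As it stands the argument does not close.

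The paper sidesteps all of these difficulties by a globalization trick: it packages $\C$, $\M$, $\N$, $\M\btC\N$ and their inverses into an auxiliary finite multitensor category $\mathbb{B}$ (a $3\times3$ matrix category), observes that $\mathbb{B}$ is unimodular with diagonal distinguished object, that its Nakayama functor and double duals decompose componentwise into the $\dN_{\mathbb{B}_{i,j}}$ and the relative Serre functors, and hence that $\R^{\mathbb{B}}$ restricts on each block to the corresponding bimodule Radford isomorphism. The desired identity is then literally the monoidality of the Radford isomorphism of the tensor category $\mathbb{B}$ applied to $M\otimes^{\mathbb{B}}N$. If you want to salvage your route, the Nakayama-functor fallback is the more promising branch, but you would still have to prove the coend-level monoidality lemma you invoke; the multitensor-category packaging is precisely what lets the paper obtain that compatibility for free.
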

\begin{proof}
We define an auxiliary finite multitensor category $\mathbb{B}$, as follows
\begin{equation*}
\mathbb{B}=
\begin{bmatrix}
\C & \M & \M\boxtimes_{\C}\N\\
\overline{\M} & \C & \N\\
\overline{\N}\boxtimes_{\C}\overline{\M} & \overline{\N} & \C
\end{bmatrix}	
\end{equation*}
where the entries of the matrix are the components of $\mathbb{B}$ and whose tensor product is defined by matrix multiplication. Rigidity follows from \cite[Thm.\ 4.2]{spherical2022}. Moreover, the distinguished invertible object of $\mathbb{B}$ is given by the matrix with only non-zero entries being $D_\C$ in the diagonal. Thus, a trivialization of $D_\C$ gives a trivialization of $D_\mathbb{B}$ and hence $\mathbb{B}$ is unimodular. Now, the Nakayama functor of $\mathbb{B}$ decomposes as
\begin{equation}
    \dN_{\mathbb{B}}\cong \bigoplus_{i,j=1}^3\dN_{\mathbb{B}_{i,j}}\,,
\end{equation}
and a similar argument to the one used in Lemma \ref{lem:Serre_homogeneous} shows that the double-duals of $\mathbb{B}$ obey for any object $A\in \mathbb{B}_{i,j}$ that $A^{**}\cong \dS_{\mathbb{B}_{i,j}}(A)$. It follows that the Radford isomorphism of $\mathbb{B}$ is related to those of its components:
\begin{equation}\label{eq:Radford_decomp}
\begin{tikzcd}
    {}^{**}(-)\cong{}^{**}(-)\otimes D_\C^{-1}\arrow[""{name=0, anchor=center, inner sep=0}, "{{\mathcal{R}^\mathbb{B}}}", curve={height=-30pt}, from=1-1, to=1-3,Rightarrow]\ar[d,equal]\ar[r,"\eqref{eq:Serre-Nak-relation}",Rightarrow]
    &\dN_{\mathbb{B}}\ar[d,equal]\ar[r,"\eqref{eq:Serre-Nak-relation}",Rightarrow]&
    D_\C\otimes (-)^{**}\cong (-)^{**}\ar[d,equal]\\
    \bigoplus_{i,j=1}^3 \overline{\dS}_{\mathbb{B}_{i,j}}\cong\bigoplus_{i,j=1}^3 \overline{\dS}_{\mathbb{B}_{i,j}}\otimes D_\C^{-1} \arrow[""{name=1, anchor=center, inner sep=0}, "{{\oplus_{i,j=1}^3\mathcal{R}^{\mathbb{B}_{i,j}}}}", curve={height=30pt}, from=2-1, to=2-3,Rightarrow,swap]\ar[r,"\eqref{eq:Serre-Nak-relation}",swap,Rightarrow]   &\bigoplus_{i,j=1}^3\dN_{\mathbb{B}_{i,j}}\ar[r,"\eqref{eq:Serre-Nak-relation}",swap,Rightarrow]&
    \bigoplus_{i,j=1}^3 D_\C\otimes \dS_{\mathbb{B}_{i,j}}
    \cong \bigoplus_{i,j=1}^3 \dS_{\mathbb{B}_{i,j}}
\end{tikzcd}
\end{equation}
Now, since $\mathcal{R}^\mathbb{B}$ is monoidal, in particular, we obtain for $M\in\M$ and $N\in\N$ that
\begin{equation*}
\begin{tikzcd}[column sep=normal, row sep=small]
    && (M\otimes^\mathbb{B}N)^{****} \ar[dd,"{\cong}",Rightarrow]\\
    M\otimes^\mathbb{B}N
    \ar[rru,"{\mathcal R^{\mathbb{B}}_{M\otimes^\mathbb{B}N}}",Rightarrow]
    \ar[rrd,"\mathcal R^{\mathbb{B}}_M\,\otimes^{\mathbb{B}}\, \mathcal R^{\mathbb{B}}_N",swap,Rightarrow]
    &&\\
    &&M^{****}\otimes^\mathbb{B}N^{****}
\end{tikzcd}
\end{equation*}
which translates into the desired result once we consider \eqref{eq:Radford_decomp}.
\end{proof}

\begin{lemma}\label{lem:spherical-Deligne}
Let $\C$ be spherical and $\M,\N$ be spherical $\C$-bimodule categories. Then the pivotal structure \eqref{eq:pivotal_Deligne_pr} on $\M\btC\N$ is spherical.
\end{lemma}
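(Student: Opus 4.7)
The plan is to verify the sphericality condition \eqref{eq:R_M=p_M^2} for the pivotal structure $\tfp^{\M\btC\N}\coloneqq \mu_{\M,\N} \circ (\tfp^{\M}\btC\tfp^{\N})$ on $\M\btC\N$ defined in \eqref{eq:pivotal_Deligne_pr}, by combining sphericality of $\M$ and $\N$ with the monoidality of the bimodule Radford isomorphism just established in Proposition~\ref{prop:radford is monoidal}. More precisely, I would show directly that the horizontal composition $\tfp^{\M\btC\N}\cdot \tfp^{\M\btC\N}\colon \id_{\M\btC\N}\Rightarrow \dS^{\C}_{\M\btC\N}\circ \dS^{\C}_{\M\btC\N}$ coincides with $\R^{\M\btC\N}$.

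The first step is to expand $\tfp^{\M\btC\N}\cdot \tfp^{\M\btC\N}$ using the middle-four interchange law for natural transformations. Writing $\alpha\coloneqq \tfp^{\M}\btC\tfp^{\N}$ and $\mu\coloneqq \mu_{\M,\N}$, so that $\tfp^{\M\btC\N}=\mu\circ\alpha$, interchange yields
\[
(\mu\circ\alpha)\cdot(\mu\circ\alpha)\;=\;(\mu\cdot\mu)\circ(\alpha\cdot\alpha).
\]
Since $\btC$ is a bifunctor and hence compatible with horizontal composition of natural transformations, one has
\[
\alpha\cdot\alpha\;=\;(\tfp^{\M}\cdot\tfp^{\M})\btC(\tfp^{\N}\cdot\tfp^{\N}).
\]

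The second step is to invoke sphericality of $\M$ and $\N$, which by Definition~\ref{def:spherical_module} gives $\tfp^{\M}\cdot\tfp^{\M}=\R^{\M}$ and $\tfp^{\N}\cdot\tfp^{\N}=\R^{\N}$. Substituting, we obtain
\[
\tfp^{\M\btC\N}\cdot\tfp^{\M\btC\N}\;=\;(\mu_{\M,\N}\cdot\mu_{\M,\N})\circ(\R^{\M}\btC\R^{\N}).
\]
Under the identification $(\dS^{\C}_{\M}\btC\dS^{\C}_{\N})\circ(\dS^{\C}_{\M}\btC\dS^{\C}_{\N})\cong(\dS^{\C}_{\M}\circ\dS^{\C}_{\M})\btC(\dS^{\C}_{\N}\circ\dS^{\C}_{\N})$, the right-hand side is precisely the composition appearing in the lower path of the diagram of Proposition~\ref{prop:radford is monoidal}.

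The final step is to apply Proposition~\ref{prop:radford is monoidal}, which states that this lower composition equals the upper path $\R^{\M\btC\N}$. Therefore $\tfp^{\M\btC\N}\cdot\tfp^{\M\btC\N}=\R^{\M\btC\N}$, verifying \eqref{eq:R_M=p_M^2} for $\M\btC\N$ and completing the proof. The main subtlety, already handled in Proposition~\ref{prop:radford is monoidal}, is the monoidality of the Radford isomorphisms with respect to the relative Deligne product; once that is in hand, the argument here reduces to the interchange law and the bifunctoriality of $\btC$, so no further obstacles arise.
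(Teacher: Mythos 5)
Your proposal is correct and follows essentially the same route as the paper: the paper's proof is a single commutative diagram whose top triangle is Proposition~\ref{prop:radford is monoidal}, whose left triangle is sphericality of $\M$ and $\N$, and whose remaining rectangle is the level-exchange (interchange) law — exactly the three ingredients you combine equationally. The only difference is presentational (your equational manipulation of horizontal/vertical composites versus the paper's diagram chase), so there is nothing to add.
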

\begin{proof}
Sphericality of the pivotal structure on $\M\btC\N$ follows from the commutativity of the diagram below:
{\small
\[\begin{tikzcd}[column sep=normal, row sep=normal]
	{\id_{\M}\boxtimes_{\C}\id_{\mathcal N} = \id_{\M\boxtimes_{\C}\N}} && {\dS_{\M\boxtimes_{\C}\mathcal N}^{\C}\circ \dS_{\M\boxtimes_{\C}\mathcal N}^{\C}} \\
	& {\left(\dS_{\M}^{\C} \circ \dS_{\M}^{\C}\right) \boxtimes_{\C}\left(\dS_{\mathcal N}^{\C}\circ \dS_{\mathcal N}^{\C}\right)=\left(\dS_{\M}^{\C}\boxtimes_{\C}\dS_{\mathcal N}^{\C}\right) \circ \left( \dS_{\M}^{\C}\boxtimes_{\C}\dS_{\mathcal N}^{\C}\right)} \\
	\\
	{\dS_{\M}^{\C}\boxtimes_{\C}\dS_{\mathcal N}^{\C}} & {\dS_{\M\boxtimes_{\C}\mathcal N}^{\C}} & {\dS_{\M\boxtimes_{\C}\mathcal N}^{\C} \circ\left( \dS_{\M}^{\C}\boxtimes_{\C}\dS_{\mathcal N}^{\C}\right)}
	\arrow["{{\mathcal R^{\M\boxtimes_{\C}\N}}}", Rightarrow, from=1-1, to=1-3]
	\arrow["{{\mathcal R^{\M}\boxtimes_{\C} \mathcal R^{\mathcal N}}}"{pos=0.7}, Rightarrow, from=1-1, to=2-2]
	\arrow["{{\tfp_{\M}\boxtimes_\C\tfp_{\mathcal N}}}"', Rightarrow, from=1-1, to=4-1]
	\arrow["{{\mu_{\M, \mathcal N}\circ \mu_{\M, \mathcal N}}}", Rightarrow, from=2-2, to=1-3]
	\arrow["{{(\id \circ \tfp_{\M})\boxtimes_{\C} (\id \circ \tfp_{\mathcal N})= \id \circ (\tfp_{\M}\boxtimes_{\C}\tfp_{\mathcal N})}}"{description}, Rightarrow, from=4-1, to=2-2]
	\arrow["{{\mu_{\M, \mathcal N}}}"', Rightarrow, from=4-1, to=4-2]
	\arrow["{{\id \circ (\tfp_{\M}\boxtimes_\C\tfp_{\mathcal N})}}"', Rightarrow, from=4-2, to=4-3]
	\arrow["{{\id \circ \mu_{\M, \mathcal N}}}"', Rightarrow, from=4-3, to=1-3]
\end{tikzcd}\]}
Here, the top triangle commutes by Proposition \ref{prop:radford is monoidal}, the left triangle by sphericality of the pivotal structures on $\M$ and $\N$, and the remaining rectangle by level exchange. 
\end{proof}
 
\begin{proposition}
The monoidal structure on $\PivBrPic(\C)$ induces a monoidal structure on $\SphBrPic(\C)$.
\end{proposition}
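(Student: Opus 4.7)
The plan is to observe that since $\SphBrPic(\C)$ is defined as a \emph{full} sub $2$-groupoid of $\PivBrPic(\C)$, the $1$- and $2$-morphisms require no further verification: every pivotal bimodule equivalence between spherical objects and every bimodule natural isomorphism between such is already a morphism in $\SphBrPic(\C)$. Consequently, the monoidal structure inherited from $\PivBrPic(\C)$ (i.e., the relative Deligne product, together with its associators and unitors from Proposition~\ref{prop: endows}) restricts to $\SphBrPic(\C)$ provided the class of spherical objects contains the monoidal unit and is closed under $\btC$. These are the only two things I would verify.

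For the monoidal unit, I would use that the unit of $\PivBrPic(\C)$ is the regular $\C$-bimodule category $\C$ with pivotal structure $\tfp^\C$ coming from the pivotal structure $\fp$ of $\C$ as a tensor category. By Lemma~\ref{lem:Serre_homogeneous}, applied to the trivial $G$-grading, the relative Serre functor $\dS^\C_\C$ identifies canonically with the double-dual functor $(-)^{**}$, and under this identification the bimodule Radford isomorphism $\R^\C$ from \eqref{eq:Rad for modules over sph} matches the tensor-categorical Radford isomorphism $\mathcal{R}^\C$ of \eqref{eq: Radford for unimodular}. Hence the bimodule sphericality condition \eqref{eq:R_M=p_M^2} for $\C$ reduces exactly to the tensor-categorical sphericality condition \eqref{eq:R=p^2}, which holds by hypothesis on $\C$.

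For closure under $\btC$, I would simply appeal to Lemma~\ref{lem:spherical-Deligne}: if $\M,\N\in\SphBrPic(\C)$, then $\M\btC\N$ is again spherical with respect to the pivotal structure \eqref{eq:pivotal_Deligne_pr}. This directly gives closure. Moreover, since the forgetful $2$-functor $\SphBrPic(\C) \to \PivBrPic(\C)$ is then fully faithful and strict monoidal on the nose, no further coherence needs to be checked.

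I do not anticipate any real obstacle here: all the substantive work has been done in Proposition~\ref{prop:radford is monoidal} (compatibility of bimodule Radford isomorphisms with $\btC$) and Lemma~\ref{lem:spherical-Deligne} (sphericality of the relative Deligne product). The only mild subtlety is the identification of $\R^\C$ on $\C$-as-a-bimodule with $\mathcal{R}^\C$ on $\C$-as-a-tensor-category, which follows from unpacking the definitions using $\dS^\C_\C \simeq (-)^{**}$ and the trivialization of $D_\C$. This proposition is therefore essentially a packaging statement.
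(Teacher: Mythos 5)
Your proof is correct and follows essentially the same route as the paper: closure of spherical objects under $\btC$ via Lemma~\ref{lem:spherical-Deligne}, with fullness of the sub-$2$-groupoid handling all morphism-level and coherence data. The one point you spell out that the paper leaves implicit is the verification that the monoidal unit $\C$ is itself spherical as a bimodule category; your identification of the bimodule Radford isomorphism with the tensor-categorical one via $\dS^\C_\C\simeq(-)^{**}$ is the right way to see this.
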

\begin{proof}
By Lemma \ref{lem:spherical-Deligne}, we have that $\SphBrPic(\C)$ is closed under the relative Deligne product. Since $\SphBrPic(\C)$ is a full sub $2$-groupoid of $\PivBrPic(\C)$ the result follows.
\end{proof}


\subsection{Realization of $\SphBrPic(\C)$ as fixed points}\label{subsec:sphBrPic-homotopy-fixed}
Let $\C$ be a spherical (unimodular) finite tensor category. In this section, we define a $2$-categorical $B\underline{\zZ/2\zZ}$-action on $\BrPic(\C)$. 
According to Proposition \ref{prop:RSpNE-is-Monoidal}, the relative Serre functors assemble into a pseudo-natural autoequivalence of the identity $2$-functor $\id_{\BrPic(\C)}$. Hence, by Example \ref{ex: BZ and BZ2 actions}, it is enough to define an invertible monoidal modification between $\id$ and $\bS^2$. The Radford isomorphisms of invertible module categories give rise to such modification as we show next.
\begin{proposition}\label{radford is inv-modification}
The bimodule Radford isomorphisms \eqref{eq:Rad for modules over sph} form an invertible monoidal modification 
\begin{equation}\label{eq:module-Radford-modif}
\R\colon  \id_{\id_{\BrPic(\C)}}\xRightarrow{~\sim~}\bS^2\,.
\end{equation}

\end{proposition}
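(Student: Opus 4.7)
The plan is to verify that the family $\{\R^{\M}\}_{\M\in\BrPic(\C)}$ defines an invertible monoidal modification $\id_{\id_{\BrPic(\C)}} \Rightarrow \bS^{2}$. This amounts to three conditions: invertibility of each component, the naturality (modification) axiom with respect to $1$-morphisms of $\BrPic(\C)$, and compatibility with the monoidal constraint $\mu$ of the pseudo-natural equivalence $\bS$ established in Proposition~\ref{prop:RSpNE-is-Monoidal}. Invertibility is immediate, since by \eqref{eq:Rad for modules over sph} each component $\R^{\M}\colon\id_{\M}\xRightarrow{\;\sim\;}\dS^{\C}_{\M}\circ\dS^{\C}_{\M}$ is by construction an isomorphism of $\C$-bimodule functors.

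For the modification axiom, I would show that for every $\C$-bimodule equivalence $H\colon\M\to\N$ the composite
\[
H \;\xRightarrow{\;\R^{\N}\circ H\;}\; \bS_{\N}^{2}\circ H \;\xRightarrow{\;\id\circ\bS_{H}\;}\; \bS_{\N}\circ H\circ\bS_{\M} \;\xRightarrow{\;\bS_{H}\circ\id\;}\; H\circ\bS_{\M}^{2}
\]
equals $H\circ\R^{\M}$. Using the relations \eqref{eq:Serre-Nak-relation} together with unimodularity of $\C$ (which trivializes $D_{\C}$), the Radford isomorphism $\R^{\M}$ is, up to canonical identifications, obtained by iterating the natural isomorphism \eqref{eq:Nak-F} that defines the twisted bimodule structure of the Nakayama functor $\dN_{\M}$. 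Since $\bS_{H}=\Lambda_{H}$ of \eqref{eq:Serre_bimodule_equivalence} is by construction compatible with \eqref{eq:Nak-F} (cf.\ \cite[Thm.~3.10]{shimizu2019relative} and \cite[Prop.~2.11]{spherical2022}), the required equality follows from the naturality of \eqref{eq:Nak-F} applied twice along the equivalence $H$.

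The monoidality axiom is precisely the statement of Proposition~\ref{prop:radford is monoidal}, which compares $\R^{\M\btC\N}$ with the composite of $\R^{\M}\btC\R^{\N}$ and $\mu_{\M,\N}\circ\mu_{\M,\N}$. Invoking that proposition closes the last case, and combined with the componentwise invertibility yields the invertible monoidal modification as claimed.

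The main obstacle will be the modification axiom: while ultimately a consequence of the functoriality of the Nakayama construction, the verification requires carefully tracking the chain of identifications between $\dS^{\C}$, $\dN$, and their twisted bimodule structures, and then checking that $\bS_{H}$, defined via the double-right-adjoint natural transformation in \eqref{eq:Serre_module_functor}, threads through the squared Radford isomorphism correctly. Once this bookkeeping is in place, the diagram collapses to naturality of \eqref{eq:Nak-F} along $H$.
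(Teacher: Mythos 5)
Your proposal is correct and follows essentially the same route as the paper: componentwise invertibility is immediate, the monoidality axiom is exactly Proposition~\ref{prop:radford is monoidal}, and the modification axiom is reduced to the fact that the Serre--Nakayama comparison \eqref{eq:Serre-Nak-relation} is an isomorphism of twisted bimodule functors, hence intertwines $\bS_H=\Lambda_H$ with the Nakayama structure map \eqref{eq:Nak-F}. The paper carries out the same bookkeeping you anticipate by rewriting $\R^{\M}$ as the composite $\overline{\bS}_\M\Rightarrow\dN_\M\Leftarrow\bS_\M$ and checking that the resulting triangles and rectangles commute.
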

\begin{proof}
To show that $\R$ is compatible with $1$-morphisms, consider a $\C$-module equivalence $H\colon \M\longrightarrow\N$. The required condition 
\begin{equation}\label{eq: radford:mod}
    \begin{tikzcd}[column sep=3.5em, row sep=2em]
        H\circ\id_\M \ar[Rightarrow,r,"\id\circ \R^\M"] \ar[Rightarrow,d,"\id",swap]&H\circ \bS_\M\circ \bS_\M\ar[Rightarrow,d,"\bS_H\circ \bS_H"]\\
        \id_\N\circ H \ar[Rightarrow,r,"\R^\N\circ\id",swap] &\bS_\N\circ \bS_\N \circ H 
    \end{tikzcd}
\end{equation}
is equivalent to the commutativity of
\begin{equation}
\begin{tikzcd}[column sep=4em, row sep=normal]
    H\circ\overline{\bS}_\M
    \ar[Rightarrow,r,"\eqref{eq:Serre-Nak-relation}",swap]
    \ar[Rightarrow,d,"\overline{\bS}_H",swap]&H\circ \dN_\M\ar[d,"\eqref{eq:Nak-F}",Rightarrow]&H\circ  \bS_\M\ar[Rightarrow,d,"\bS_H"]\ar[Rightarrow,l,"\eqref{eq:Serre-Nak-relation}"]\\
\overline{\bS}_\M\circ H    \ar[Rightarrow,r,"\eqref{eq:Serre-Nak-relation}"]
&\dN_\N\circ H&\bS_\N\circ H \ar[Rightarrow,l,"\eqref{eq:Serre-Nak-relation}",swap]
\arrow[""{name=0, anchor=center, inner sep=0}, "\id\circ \R^\M", curve={height=-20pt}, Rightarrow, from=1-1, to=1-3]
\arrow[""{name=0, anchor=center, inner sep=0}, "\R^\N\circ\id"', curve={height=20pt}, Rightarrow, from=2-1, to=2-3]
    \end{tikzcd}
\end{equation}
where the triangles commute from the definition of the bimodule Radford isomorphism, and the rectangles commute since \eqref{eq:Serre-Nak-relation} is an isomorphism of twisted bimodule functors. Following \cite[Definition 2.15]{davydov2021braided} that the modification  $\mathcal{R}$ is monoidal corresponds to the condition
\[\begin{tikzcd}
	{\M\btC \N } && {\M\btC \N } &&&& {\M\btC \N } &&& {\M\btC \N } \\
	&&&& {=} \\
	{\M\btC \N } && {\M\btC \N } &&&& {\M\btC \N } &&& {\M\btC \N }
	\arrow["\id", from=1-1, to=1-3]
	\arrow["{\id_{\M}\boxtimes\id_{\N}}"', from=1-1, to=3-1]
	\arrow[""{name=0, anchor=center, inner sep=0}, "{\dS_{\M\btC\N}^{\C}\circ \dS_{\M\btC\N}^{\C}}"{pos=0.3}, shift left=5, curve={height=-18pt}, from=1-3, to=3-3]
	\arrow[""{name=1, anchor=center, inner sep=0}, "{\id_{\M\btC\N}}"', shift right=5, curve={height=18pt}, from=1-3, to=3-3]
	\arrow[""{name=2, anchor=center, inner sep=0}, "\id", from=1-7, to=1-10]
	\arrow[""{name=3, anchor=center, inner sep=0}, "{\id_{\M}\boxtimes\id_{\N}}"'{pos=0.8}, shift right=5, curve={height=24pt}, from=1-7, to=3-7]
	\arrow[""{name=4, anchor=center, inner sep=0}, "{(\dS_{\M}^{\C} \circ \dS_{\M}^{\C})\btC (\dS_{\N}^{\C} \circ \dS_{\N}^{\C})}"{description, pos=0.8}, shift left=5, curve={height=-24pt}, from=1-7, to=3-7]
	\arrow["{\dS_{\M\btC\N}^{\C}}", from=1-10, to=3-10]
	\arrow["\id", from=3-1, to=3-3]
	\arrow[""{name=5, anchor=center, inner sep=0}, "\id"', from=3-7, to=3-10]
	\arrow["{\R^{\M\btC\N}}", shorten <=11pt, shorten >=11pt, Rightarrow, from=1, to=0]
	\arrow["{\R^{\M}\btC\R^{\N}}", shorten <=14pt, shorten >=14pt, Rightarrow, from=3, to=4]
	\arrow["{\mu_{\M,\N}\circ \mu_{\M,\N}}"'{pos=0.2}, shift left=5, curve={height=-30pt}, shorten <=11pt, shorten >=11pt, Rightarrow, from=2, to=5]
\end{tikzcd}\]
which holds as shown in Proposition \ref{prop:radford is monoidal}.
\end{proof}

\begin{corollary}\label{BZ2_action_BrPicC}
Let $\C$ be a spherical (unimodular) finite tensor category. The data consisting of
	\begin{itemize}
		\item the monoidal pseudo-natural equivalence $\bS\colon \id_{\BrPic(\C)}\xRightarrow{~\sim~} \id_{\BrPic(\C)}$, and
		\item the invertible monoidal modification $\R\colon  \id_{\id_{\BrPic(\C)}}\xRightarrow{~\sim~}\bS^2$,
	\end{itemize}
define a monoidal $B\underline{\zZ/2\zZ}$-action on $\BrPic(\C)$.
\end{corollary}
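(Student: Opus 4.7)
The plan is to assemble the pieces already established in the section and verify that the combined data satisfies the description of a monoidal $B\underline{\zZ/2\zZ}$-action given in Example~\ref{ex: BZ and BZ2 actions}. Per that example, a monoidal $B\underline{\zZ/2\zZ}$-action on a monoidal $2$-category amounts to a monoidal pseudo-natural autoequivalence $\eta\colon \id \Rightarrow \id$ of the identity $2$-functor whose self-braiding \eqref{eq:self_braiding} is trivial, together with a monoidal invertible modification $m\colon \id_{\id}\Rightarrow \eta^2$.

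First I would take $\eta = \bS$: Proposition~\ref{prop:RSpNE-is-Monoidal} shows that $\bS$ is a monoidal pseudo-natural equivalence, and the computation carried out in the proof of Corollary~\ref{BZ_action_BrPicC} (via the adjunction-theoretic description of \eqref{eq:def_self_br_serre} and \cite[Thm.~3.10, Rem.~2.10]{shimizu2019relative}) shows that the self-braiding $\bS_{\bS_\M}$ is the identity for every $\M \in \BrPic(\C)$. Thus $\bS$ by itself provides the $B\underline{\zZ}$-part of the data.

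Second, I would take $m = \R$: Proposition~\ref{radford is inv-modification} asserts that the bimodule Radford isomorphisms \eqref{eq:Rad for modules over sph} assemble into an invertible modification $\R\colon \id_{\id_{\BrPic(\C)}} \xRightarrow{~\sim~} \bS^2$, and its compatibility diagram with $1$-morphisms (checked via \eqref{eq:Serre-Nak-relation} and \eqref{eq:Nak-F}) together with its monoidality against $\mu_{\M,\N}$ (via Proposition~\ref{prop:radford is monoidal}) are exactly the coherence conditions $\R$ needs as a monoidal modification. This is the condition required by Example~\ref{ex: BZ and BZ2 actions} to extend the $B\underline{\zZ}$-action to a $B\underline{\zZ/2\zZ}$-action.

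The only residual point to remark on is that the monoidality of the action (as opposed to just the action on a $2$-category) follows because both $\bS$ is a \emph{monoidal} pseudo-natural equivalence (Proposition~\ref{prop:RSpNE-is-Monoidal}) and $\R$ is a \emph{monoidal} modification (Proposition~\ref{prop:radford is monoidal} combined with Proposition~\ref{radford is inv-modification}); no further compatibility needs to be checked beyond what has already been established. In total, the corollary follows essentially by bookkeeping, with the substantive content being contained in the preceding two propositions; I do not anticipate a genuine obstacle, only the need to explicitly cite the unpacking in Example~\ref{ex: BZ and BZ2 actions}.
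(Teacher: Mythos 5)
Your proposal is correct and follows essentially the same route as the paper, which likewise deduces the corollary from Example~\ref{ex: BZ and BZ2 actions} together with Propositions~\ref{prop:RSpNE-is-Monoidal}, \ref{prop:radford is monoidal} and \ref{radford is inv-modification}. Your explicit mention that the trivial self-braiding of $\bS$ was already verified in Corollary~\ref{BZ_action_BrPicC} is a harmless (and helpful) addition; no gap remains.
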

\begin{proof}
This follows from Example \ref{ex: BZ and BZ2 actions}, Propositions \ref{prop:RSpNE-is-Monoidal}, \ref{prop:radford is monoidal} and \ref{radford is inv-modification}. 
\end{proof}

We recover the spherical Brauer-Picard $2$-categorical group as fixed points for this action. 

\begin{proposition}\label{prop:SphBrPic-Z2}
Let $\C$ be a spherical (unimodular) finite tensor category. The $2$-categorical group $\SphBrPic(\C)$ is monoidally $2$-equivalent to the $2$-categorical group of $B\underline{\zZ/2\zZ}$-fixed points of $\BrPic(\C)$.
\end{proposition}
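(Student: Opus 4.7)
The plan is to mirror the argument of Proposition \ref{prop:PivBrPic-is-BZ-equivariant}, now using the full $B\underline{\Ztwo}$-action $(\bS,\R)$ from Corollary \ref{BZ2_action_BrPicC} in place of the bare $B\underline{\zZ}$-action, and to identify the additional coherence condition with sphericality of the bimodule pivotal structure.

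First I would unpack the definition of a fixed point using Example \ref{ex: fixed point for BZ and BZ2 actions}. An object of $\BrPic(\C)^{B\underline{\Ztwo}}$ is the data of an invertible $\C$-bimodule category $\M$ together with an invertible $2$-morphism $\tfp\colon \id_\M \Rightarrow \bS_\M = \dS_\M^\C$ in $\BrPic(\C)$, i.e.\ a $\C$-bimodule natural isomorphism; this is precisely a pivotal structure on $\M$ in the sense of Definition \ref{def: piv strc module}(iii). So far the data agrees with $\PivBrPic(\C)$, exactly as in Proposition \ref{prop:PivBrPic-is-BZ-equivariant}. The extra requirement imposed by the $B\underline{\Ztwo}$-action, namely $(\bS_\M)(\tfp)\circ \tfp = \R_\M$, then unpacks (by the definition of the whiskering of a pseudo-natural equivalence with a $2$-morphism and by the choice $\bS=\dS^\C$, $\R$ = bimodule Radford modification \eqref{eq:module-Radford-modif}) to the identity
\[
(\dS_\M^\C * \tfp)\circ \tfp \;=\; \R^\M \colon \id_\M \Rightarrow \dS_\M^\C \circ \dS_\M^\C ,
\]
which is exactly the sphericality diagram \eqref{eq:R_M=p_M^2} of Definition \ref{def:spherical_module}. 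Hence the objects of $\BrPic(\C)^{B\underline{\Ztwo}}$ are in bijection with those of $\SphBrPic(\C)$.

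Next I would treat $1$-cells and $2$-cells. According to Example \ref{ex: fixed point for BZ and BZ2 actions}, the imposition of the modification $\R$ puts no new constraint on higher morphisms beyond what was already required by the underlying $B\underline{\zZ}$-action (the condition is placed entirely on objects). Thus the $1$- and $2$-cells of $\BrPic(\C)^{B\underline{\Ztwo}}$ are the same as those of $\BrPic(\C)^{B\underline{\zZ}}$, which by Proposition \ref{prop:PivBrPic-is-BZ-equivariant} are pivotal $\C$-bimodule equivalences (Definition \ref{def:PivotalBimodFunctor}) and $\C$-bimodule natural isomorphisms, respectively. Since $\SphBrPic(\C)$ is by Definition \ref{def:SphBrPic} the full sub-$2$-groupoid of $\PivBrPic(\C)$ on spherical objects, its higher morphisms coincide with those of $\BrPic(\C)^{B\underline{\Ztwo}}$, giving an equivalence of underlying $2$-groupoids.

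Finally, for monoidality, the product on $\BrPic(\C)^{B\underline{\Ztwo}}$ coming from the monoidal $B\underline{\Ztwo}$-action is induced componentwise by $\btC$, with the $\Theta$-data on a tensor product formed using the monoidal constraints of $\bS$ and $\R$; these were checked to be compatible with $\btC$ in Propositions \ref{prop:RSpNE-is-Monoidal} and \ref{prop:radford is monoidal}. The resulting pivotal structure on $\M\btC\N$ is exactly \eqref{eq:pivotal_Deligne_pr}, and Lemma \ref{lem:spherical-Deligne} shows it is spherical whenever $\M$ and $\N$ are, so the monoidal structure on $\BrPic(\C)^{B\underline{\Ztwo}}$ agrees with the one on $\SphBrPic(\C)$ inherited from $\PivBrPic(\C)$. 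The only place where real content enters, rather than bookkeeping, is the matching of the coherence condition with sphericality — but this is precisely the translation of the bimodule Radford monoidality established in Proposition \ref{prop:radford is monoidal}, so no further work is needed.
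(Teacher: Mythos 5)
Your proposal is correct and follows essentially the same route as the paper's proof: unpack the $B\underline{\Ztwo}$-fixed-point data via Example \ref{ex: fixed point for BZ and BZ2 actions}, identify the condition $\bS_\M(\tfp)\circ\tfp=\R^\M$ with sphericality \eqref{eq:R_M=p_M^2}, reuse the $1$- and $2$-cell matching from Proposition \ref{prop:PivBrPic-is-BZ-equivariant}, and observe that both monoidal structures are the relative Deligne product. No gaps.
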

\begin{proof}
Following Example~\ref{ex: fixed point for BZ and BZ2 actions}, an object of $\BrPic(\C)^{B\underline{\zZ/2\zZ}}$ is a pair $(\M,\tfp)$ where $\M\in\BrPic(\C)$ and $\tfp\colon\id_{\M} \Rightarrow \dS_{\M}^{\C}$ is an invertible $2$-morphism in $\BrPic(\C)$ such that $\bS(\tfp)\circ\tfp=\mathcal R_{\M}$, that is $(\M,\tfp)$ is an invertible spherical $\C$-bimodule category. The same argument in the proof of Proposition \ref{prop:PivBrPic-is-BZ-equivariant} shows that $1$-cells and $2$-cells agree. Lastly, the monoidal structure inherited by $\BrPic(\C)^{B\underline{\zZ/2\zZ}}$ is given by the relative Deligne product, the same as in $\SphBrPic(\C)$.
\end{proof}


\subsection{Classification of spherical extensions}\label{subsec:classification-spherical}

In this subsection, we prove a spherical version of the classification of extensions \eqref{eq:ext_classification} in terms of the spherical Brauer-Picard $2$-categorical group.
\begin{proposition}\label{prop:phi-is-Bz_2-equivariant}
The equivalence $\mathrm{E}\colon\Ext(G,\C) \xrightarrow{\;\simeq\;} \MonFun\left(\uuG,\BrPic(\C)\right)$ is $B\underline{\zZ/2\zZ}$-equivariant.
\end{proposition}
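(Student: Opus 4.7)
My plan is to leverage the $B\underline{\zZ}$-equivariance already established in Proposition~\ref{prop:phi-is-equivariant} and verify the single additional coherence needed to promote it to $B\underline{\zZ/2\zZ}$-equivariance. By Example~\ref{ex: BZ and BZ2 actions}, the data of a $B\underline{\zZ/2\zZ}$-action refines that of a $B\underline{\zZ}$-action by a trivialization of the square of the pseudo-natural equivalence. Consequently, an equivariant structure for the $B\underline{\zZ/2\zZ}$-action on $\mathrm{E}$ is an equivariant structure for the underlying $B\underline{\zZ}$-action satisfying one extra compatibility between the two invertible modifications $\mathcal{R}$ on either side. I will reuse the monoidal modification $\Omega$ from Proposition~\ref{prop:phi-is-equivariant} whose components are $\Omega_g \colon \dS^{\C}_{\D_g} \xRightarrow{~\sim~} (-)^{**}|_{\D_g}$ from~\eqref{eq:Serre_homogeneous_components}.

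Concretely, for an extension $\D \in \Ext(G,\C)$ and every $g\in G$, the additional coherence to be verified is the commutativity of the diagram
\begin{equation*}
\begin{tikzcd}[column sep=4em, row sep=normal]
\id_{\D_g} \ar[r, Rightarrow, "\R^{\D_g}"] \ar[dr, Rightarrow, "\R^{\D}|_{\D_g}"'] & \dS^{\C}_{\D_g} \circ \dS^{\C}_{\D_g} \ar[d, Rightarrow, "\Omega_g \,\circ\, \Omega_g"] \\
 & (-)^{****}|_{\D_g}
\end{tikzcd}
\end{equation*}
where $\R^{\D_g}$ is the bimodule Radford isomorphism~\eqref{eq:Rad for modules over sph} and $\R^{\D}$ is the Radford isomorphism~\eqref{eq: Radford for unimodular} of the unimodular tensor category $\D$ (unimodularity being ensured by Lemma~\ref{lemma: unimodularity of extension}, since $\C$ is spherical hence unimodular).

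The verification proceeds by reducing both sides to the Nakayama functor, along the same lines as the proof of Proposition~\ref{prop:radford is monoidal}. Viewing $\D = \bigoplus_{g\in G} \D_g$ as a multitensor-category-like decomposition of $\C$-bimodule categories, the Nakayama functor $\dN_{\D}$ decomposes as $\bigoplus_{g\in G} \dN_{\D_g}$. Through the identifications~\eqref{eq:Serre-Nak-relation} and~\eqref{eq:Nak_dual}, both $\R^{\D_g}$ and $\R^{\D}|_{\D_g}$ are built from $\dN_{\D_g}$ and the Radford isomorphism of $\C$, which is trivial by unimodularity of $\C$. Under the canonical isomorphism $\Omega_g$ translating $\dS^{\C}_{\D_g}$ to $(-)^{**}|_{\D_g}$, these two constructions coincide; the abstract argument is that both $(\Omega_g \circ \Omega_g) \circ \R^{\D_g}$ and $\R^{\D}|_{\D_g}$ are isomorphisms of twisted $\C$-bimodule functors $\id_{\D_g} \xRightarrow{\sim} (-)^{****}|_{\D_g}$, and by the uniqueness of isomorphisms of relative Serre functors in Lemma~\ref{lem:Serre_uniqueness} (applied twice), such an isomorphism is unique.

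The main technical obstacle is unpacking this chain of natural isomorphisms carefully enough to invoke uniqueness; in particular, one must check that the ambient twistings match, which uses the fact that the $\C$-bimodule twists in~\eqref{eq:twisted_serre_bimod} for $\D_g$ are induced from those of $\D$ via the grading. Once the local coherence at each $g\in G$ is established, compatibility of $\Omega$ with the monoidal structure on $\uuG$ is already part of the data produced in Proposition~\ref{prop:phi-is-equivariant}, and monoidality of the two modifications $\R$ was verified in Propositions~\ref{prop: Rad-inv-modification} and~\ref{radford is inv-modification}. Hence $\mathrm{E}$ acquires the structure of a $B\underline{\zZ/2\zZ}$-equivariant equivalence, as claimed.
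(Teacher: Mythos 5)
Your reduction is the same as the paper's: the only new coherence beyond Proposition~\ref{prop:phi-is-equivariant} is that the modification $\Omega$ intertwines the two Radford modifications, i.e.\ that the triangle comparing $\R^{\D_g}$, $\Omega_g\circ\Omega_g$ and $\R^{\D}|_{\D_g}$ commutes (this is exactly the paper's diagram \eqref{first diag}), and the paper, like you, establishes it by passing to Nakayama functors via \eqref{eq:Serre-Nak-relation} and the comparison $\dN_{\D_g}\cong\dN_{\D}|_{\D_g}$ coming from \eqref{eq:Nak-F} applied to the inclusion $\D_g\hookrightarrow\D$.

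However, the ``abstract argument'' you put forward as the actual justification of that triangle does not work. Lemma~\ref{lem:Serre_uniqueness} gives uniqueness of an isomorphism between two \emph{relative Serre functors} subject to compatibility with the structure maps $\phi$ of \eqref{eq:phi_Serre}; it does not say that any two twisted-bimodule natural isomorphisms between a fixed pair of functors coincide. Neither $(\Omega_g\circ\Omega_g)\circ\R^{\D_g}$ nor $\R^{\D}|_{\D_g}$ is an isomorphism of relative Serre functors in the sense of that lemma (the source $\id_{\D_g}$ is not a Serre functor and the target is the square of one), and bimodule automorphisms of $\id_{\D_g}$ form a nontrivial group containing $\kk^\times$, so two isomorphisms of twisted bimodule functors with the same source and target need not agree. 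If they did, the scalar ambiguity recorded in Remark~\ref{rem: 2 spherical structures} and the obstruction $O_2$ of Section~\ref{sec:obstructiontheory} would be vacuous. The burden of proof therefore falls entirely on the concrete Nakayama-functor comparison, which you name but do not carry out: one must chase the triangles and squares relating $\overline{\dS}^{\D_e}_{\D_g}$, $\dN_{\D_g}$, $\dN_{\D}|_{\D_g}$ and $\dS^{\D_e}_{\D_g}$ under \eqref{eq:Serre-Nak-relation} --- using Lemma~\ref{lem:Serre_uniqueness} only where it legitimately applies, namely to identify $\Omega_g$ as the unique isomorphism of relative Serre functors --- and then transport the resulting identity back through $\overline{\dS}^{\D_e}_{\D_g}$, as the paper does in \eqref{prelim diag} and the diagram following it.
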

\begin{proof}
The statement is proven following the proof in Proposition \ref{prop:phi-is-equivariant} mutatis mutandis. The key detail to verify is that the modification $\Omega$ commutes with the corresponding Radford modifications defining the $B\underline{\zZ/2\zZ}$-actions. This amounts to check that, given a $G$-extension $\D$, the bimodule Radford isomorphism of a graded component $\D_g$ is given, up to $\Omega_g$, by the Radford isomorphism of $\D$ restricted to $\D_g$, explicitly that 
\begin{equation}\label{first diag}
    \begin{tikzcd}[column sep=3ex, row sep=3ex]
    {\id_{\D_g} } && {\dS_{\D_g}^{\D_e} \circ \dS_{\D_g}^{\D_e}} \\
    & {(-)^{****}_{\D_g}} &
    \arrow["{{\mathcal{R}^{\D_g}}}", Rightarrow, from=1-1, to=1-3]
    \arrow["{{\mathcal{R}^{\D}|_{\D_g}}}", Rightarrow, from=1-1, to=2-2,swap]
    \arrow["{{\Omega_g \circ \Omega_g }}", Rightarrow, from=1-3, to=2-2]
    \end{tikzcd}
\end{equation}
commutes. To verify this, we start by considering the diagram 
\begin{equation}\label{prelim diag}
\begin{tikzcd}
\overline{\dS}_{\D_g}^{\D_e} && {\dN_{\D_g}} && {\dS_{\D_g}^{\D_e}} \\
{{}^{**}(-)_{\D_g}} && {\dN_{\D}|_{\D_g}} && {(-)^{**}_{\D_g}}
\arrow[""{name=0, anchor=center, inner sep=0}, "\overline{\dS}_{\D_g}^{\D_e}(\mathcal{R}^{\D_g})", curve={height=-30pt}, Rightarrow, from=1-1, to=1-5]
\arrow["\overline{\dS}_{\D_g}^{\D_e}(\Omega_g)"', Rightarrow, from=1-1, to=2-1]
\arrow[""{name=1, anchor=center, inner sep=0}, "\eqref{eq:Serre-Nak-relation}"', Rightarrow, from=1-3, to=1-1]
\arrow[""{name=2, anchor=center, inner sep=0}, "\eqref{eq:Serre-Nak-relation}", Rightarrow, from=1-3, to=1-5]
\arrow["\eqref{eq:Nak-F}", Rightarrow, from=1-3, to=2-3]
\arrow["\Omega_g", Rightarrow, from=1-5, to=2-5]
\arrow[""{name=3, anchor=center, inner sep=0}, "{{{}^{**}(\mathcal R^{\D}|_{\D_g})}}"', curve={height=30pt}, Rightarrow, from=2-1, to=2-5]
\arrow[""{name=4, anchor=center, inner sep=0}, "\eqref{eq:Serre-Nak-relation}"', Rightarrow, from=2-3, to=2-1]
\arrow[""{name=5, anchor=center, inner sep=0}, "\eqref{eq:Serre-Nak-relation}", Rightarrow, from=2-3, to=2-5]
\end{tikzcd}
\end{equation}
where the natural isomorphism $\dN_{\D_g}\cong \dN_{\D}|_{\D_g}$ comes from applying \eqref{eq:Nak-F} to the inclusion functor $\D_g\to \D$. The top and bottom triangles commute by definition, and the middle squares since $\Omega$ is the unique natural isomorphism realizing double-dual functors as relative Serre functors. Applying $\overline{\dS}_{\D_g}^{\D_e}$ to \eqref{first diag}, we obtain the equivalent diagram
\[\begin{tikzcd}[row sep=small,column sep=small]
	\overline{\dS}_{\D_g}^{\D_e} &&  && {\dS_{\D_g}^{\D_e}} \\
	& {{}^{**}(-)_{\D_g}} && {(-)_{\D_g}^{**}} \\
	\\
	\\
	&& \overline{\dS}_{\D_g}^{\D_e} \circ (-)^{****}_{\D_g}
	\arrow[""{name=0, anchor=center, inner sep=0}, "\overline{\dS}_{\D_g}^{\D_e}(\mathcal{R}^{\D_g})", Rightarrow, from=1-1, to=1-5]
	\arrow["\overline{\dS}_{\D_g}^{\D_e}(\Omega_g)"{pos=1}, Rightarrow, from=1-1, to=2-2]
	\arrow[""{name=0, anchor=center, inner sep=0}, "\overline{\dS}_{\D_g}^{\D_e}({\mathcal R^{\D}|_{\D_g}})"', shift right=5, curve={height=6pt}, shorten >=10pt, Rightarrow, from=1-1, to=5-3]
	\arrow["\Omega_g"', Rightarrow, from=1-5, to=2-4]
	\arrow[""{name=1, anchor=center, inner sep=0}, "{{}^{**}(\mathcal R^{\D}|_{\D_g})}",  Rightarrow, from=2-2, to=2-4]
	\arrow["\overline{\dS}_{\D_g}^{\D_e}(\Omega_g)\circ \Omega_g", shift left=6, Rightarrow, from=1-5, to=5-3, curve={height=-13pt}]
	\arrow["\overline{\dS}_{\D_g}^{\D_e}(\Omega_g)\circ \id_{(-)^{**}}"{description}, Rightarrow, from=5-3, to=2-4]
\end{tikzcd}\]
where the top square commutes by \eqref{prelim diag}, the left bottom square by naturality of $\overline{\dS}_{\D_g}^{\D_e}(\Omega_g)$, and the right triangle commutes trivially.
\end{proof}

\begin{theorem} \label{thm:sph_ext}
Let $\C$ be a spherical (unimodular) finite tensor category. There is an equivalence of $2$-groupoids 
\begin{equation}\label{eq:sph_ext_class}
   \SphExt(G,\C) \xrightarrow{\;\simeq\;} \MonFun\left(\uuG,\SphBrPic(\C)\right)\;.
\end{equation}
\end{theorem}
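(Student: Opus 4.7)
The plan is to mimic the proof of Theorem~\ref{thm:piv_ext} verbatim, replacing the $B\underline{\zZ}$-action with the $B\underline{\zZ/2\zZ}$-action. Concretely, I would apply the $2$-functor of $B\underline{\zZ/2\zZ}$-fixed points to both sides of the (already established) equivalence
\[
\mathrm{E}\colon\Ext(G,\C) \xrightarrow{\;\simeq\;} \MonFun\left(\uuG,\BrPic(\C)\right)
\]
from Theorem~\ref{thm:ext_classification}, and then identify the two resulting fixed-point $2$-groupoids with $\SphExt(G,\C)$ and $\MonFun(\uuG,\SphBrPic(\C))$ respectively.

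For the left-hand side, Proposition~\ref{prop:SphExt-Z2} identifies $\Ext(G,\C)^{B\underline{\zZ/2\zZ}}$ with $\SphExt(G,\C)$ using the $B\underline{\zZ/2\zZ}$-action on $\Ext(G,\C)$ whose underlying pseudo-natural equivalence is the double dual and whose invertible modification $\R$ is assembled from the Radford isomorphisms of all $G$-graded extensions of $\C$. For the right-hand side, the monoidal $B\underline{\zZ/2\zZ}$-action on $\BrPic(\C)$ of Corollary~\ref{BZ2_action_BrPicC} induces, by postcomposition, a $B\underline{\zZ/2\zZ}$-action on $\MonFun(\uuG,\BrPic(\C))$. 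Since fixed points for actions defined by postcomposition commute with $\MonFun(\uuG,-)$, there is a canonical equivalence
\[
\MonFun(\uuG,\BrPic(\C))^{B\underline{\zZ/2\zZ}} \;\simeq\; \MonFun\!\left(\uuG,\BrPic(\C)^{B\underline{\zZ/2\zZ}}\right),
\]
and then Proposition~\ref{prop:SphBrPic-Z2} identifies $\BrPic(\C)^{B\underline{\zZ/2\zZ}}$ with $\SphBrPic(\C)$ as monoidal $2$-categorical groups.

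It remains to check that $\mathrm{E}$ is $B\underline{\zZ/2\zZ}$-equivariant so that it descends to an equivalence of fixed-point $2$-groupoids; this is exactly the content of Proposition~\ref{prop:phi-is-Bz_2-equivariant}. Combining these three inputs yields the chain of equivalences
\[
\SphExt(G,\C)\;\simeq\; \Ext(G,\C)^{B\underline{\zZ/2\zZ}}\;\xrightarrow[\mathrm{E}^{B\underline{\zZ/2\zZ}}]{\simeq}\;\MonFun(\uuG,\BrPic(\C))^{B\underline{\zZ/2\zZ}}\;\simeq\;\MonFun(\uuG,\SphBrPic(\C)),
\]
which is the desired equivalence.

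Since all the conceptual work (the construction of the two actions, the identifications of fixed points with $\SphExt$ and $\SphBrPic$, and the equivariance of $\mathrm{E}$) has already been carried out, there is no serious obstacle left. The only point one might worry about is the formal commutation of $B\underline{\zZ/2\zZ}$-fixed points with $\MonFun(\uuG,-)$, but this is a general and routine fact for postcomposition actions on monoidal $2$-functor $2$-categories, and was already invoked in the pivotal case (Theorem~\ref{thm:piv_ext}).
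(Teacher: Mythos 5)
Your proposal is correct and follows essentially the same route as the paper: the paper's proof likewise takes $B\underline{\zZ/2\zZ}$-fixed points of the equivalence $\mathrm{E}$ from Theorem~\ref{thm:ext_classification}, identifies the two sides via Propositions~\ref{prop:SphExt-Z2} and~\ref{prop:SphBrPic-Z2} together with the formal commutation $\MonFun(\uuG,\BrPic(\C))^{B\underline{\zZ/2\zZ}}\simeq \MonFun(\uuG,\BrPic(\C)^{B\underline{\zZ/2\zZ}})$, and invokes the equivariance of $\mathrm{E}$ from Proposition~\ref{prop:phi-is-Bz_2-equivariant}.
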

\begin{proof}
By Proposition~\ref{prop:SphExt-Z2}, the fixed points under the $B\underline{\zZ/2\zZ}$-action on left hand side of \eqref{eq:ext_classification} yields $\SphExt(G,\C)$. While on the right hand side we have $\MonFun\left(\uuG,\BrPic(\C)\right)^{B\underline{\zZ/2\zZ}} \simeq \MonFun(\uuG,\BrPic(\C)^{B\underline{\zZ/2\zZ}})$. The statement follows from Proposition~\ref{prop:SphBrPic-Z2} and \ref{prop:phi-is-Bz_2-equivariant}.
\end{proof}

\begin{proposition}
The equivalence of $2$-groupoids \eqref{eq:piv_ext} factorizes as follows
\begin{equation*}
    \begin{tikzcd}[row sep=normal]
    \SphExt(G,\C)  \ar[r, hookrightarrow] \ar[d, "\eqref{eq:sph_ext_class}",swap] &\PivExt(G, \C) \ar[d, "\eqref{eq:piv_ext}"] \\
    \MonFun\left(\uuG, \SphBrPic(\C)\right)\ar[r, hookrightarrow]&\MonFun\left(\uuG, \PivBrPic(\C)\right)
    \end{tikzcd}
\end{equation*}
\end{proposition}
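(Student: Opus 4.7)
The plan is to observe that both classification equivalences \eqref{eq:piv_ext} and \eqref{eq:sph_ext_class} are obtained by taking $2$-groupoids of fixed points of closely related actions on the underlying equivalence $\mathrm{E}\colon \Ext(G,\C) \xrightarrow{\;\simeq\;} \MonFun(\uuG, \BrPic(\C))$. Specifically, the $B\underline{\zZ/2\zZ}$-action on $\Ext(G,\C)$ (resp.\ on $\BrPic(\C)$) used in Theorem~\ref{thm:sph_ext} is a refinement of the $B\underline{\zZ}$-action used in Theorem~\ref{thm:piv_ext}: in both cases, the underlying monoidal pseudo-natural equivalence is the same, namely $(-)^{**}$ (resp.\ $\bS$), and the $B\underline{\zZ/2\zZ}$-action merely adds the trivializing modification $\R$ of its square.

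First, I would make precise the general observation that restricting a $B\underline{\zZ/2\zZ}$-action to the underlying $B\underline{\zZ}$-action induces a forgetful $2$-functor between fixed-point $2$-groupoids. Applying this to both sides yields canonical commutative squares
\begin{equation*}
\begin{tikzcd}[row sep=small]
\Ext(G,\C)^{B\underline{\zZ/2\zZ}} \ar[r, hookrightarrow] \ar[d,"\simeq",swap] & \Ext(G,\C)^{B\underline{\zZ}} \ar[d,"\simeq"] \\
\SphExt(G,\C) \ar[r, hookrightarrow] & \PivExt(G,\C)
\end{tikzcd}
\qquad
\begin{tikzcd}[row sep=small]
\BrPic(\C)^{B\underline{\zZ/2\zZ}} \ar[r, hookrightarrow] \ar[d,"\simeq",swap] & \BrPic(\C)^{B\underline{\zZ}} \ar[d,"\simeq"] \\
\SphBrPic(\C) \ar[r, hookrightarrow] & \PivBrPic(\C) ,
\end{tikzcd}
\end{equation*}
where the vertical equivalences are Propositions~\ref{prop: BZ fixed points}, \ref{prop:PivBrPic-is-BZ-equivariant}, \ref{prop:SphExt-Z2} and~\ref{prop:SphBrPic-Z2}. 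Commutativity here is essentially tautological once one unpacks the descriptions of fixed points given in Example~\ref{ex: fixed point for BZ and BZ2 actions}, since forgetting the datum $\R$ of squared-trivialization of $\Theta$ is exactly the passage from a spherical structure to its underlying pivotal structure (resp.\ from a spherical bimodule category to its underlying pivotal one).

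Next, I would argue that the equivariance of $\mathrm{E}$ established in Propositions~\ref{prop:phi-is-equivariant} and~\ref{prop:phi-is-Bz_2-equivariant} is compatible, in the sense that the $B\underline{\zZ/2\zZ}$-equivariance modification restricts to the $B\underline{\zZ}$-equivariance modification upon forgetting the Radford trivialization. This is immediate from the construction: in both cases, the invertible modification $\Omega$ witnessing equivariance has components given by the same natural isomorphism \eqref{eq:Serre_homogeneous_components} identifying $\dS_{\D_g}^{\D_e}$ with $(-)^{**}|_{\D_g}$, as noted in the proof of Proposition~\ref{prop:phi-is-Bz_2-equivariant}. Consequently, $\mathrm{E}$ induces a commutative square between the associated forgetful $2$-functors on fixed-point $2$-groupoids, and concatenating with the two commutative squares above yields the desired factorization.

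The only step requiring care is the general functoriality statement: given a morphism $\bG \to \bG'$ of $2$-categorical groups acting on a $2$-category $\bA$, pullback along this morphism induces a forgetful $2$-functor $\bA^{\bG'} \to \bA^{\bG}$ between fixed-point $2$-groupoids, and this assignment is itself functorial in equivariant $2$-functors. The potential obstacle is just bookkeeping of coherence data in the sense of Definition~\ref{def:fixed points}, but since we are restricting along the truncation $B\underline{\zZ} \to B\underline{\zZ/2\zZ}$ whose effect on fixed-point data is merely to discard the modification $\Theta^{g^2} = m_a$ of Example~\ref{ex: fixed point for BZ and BZ2 actions}, no substantive coherence verification beyond what is already carried out in the proofs of Propositions~\ref{prop: BZ fixed points} and~\ref{prop:SphExt-Z2} is needed.
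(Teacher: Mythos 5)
Your proposal is correct, but it takes a genuinely different route from the paper. The paper proves the factorization by a direct check on objects: it takes a spherical extension $(\D,\fp_\D)$, writes out the pivotal module structure $\Omega_g^{-1}\circ\fp_\D|_{\D_g}$ that \eqref{eq:piv_ext} assigns to each homogeneous component $\D_g$, and verifies sphericality of that structure via an explicit diagram whose two nontrivial cells are the sphericality condition \eqref{eq:R=p^2} for $\D$ and the compatibility \eqref{first diag} of $\Omega_g$ with the Radford isomorphisms established inside the proof of Proposition~\ref{prop:phi-is-Bz_2-equivariant}. You instead deduce the commutativity formally: the $B\underline{\zZ/2\zZ}$-actions restrict along $B\underline{\zZ}\to B\underline{\zZ/2\zZ}$ to the $B\underline{\zZ}$-actions, the $B\underline{\zZ/2\zZ}$-equivariance datum of $\mathrm{E}$ restricts to the $B\underline{\zZ}$-equivariance datum (same modification $\Omega$), and the identifications of the four corners with fixed-point $2$-groupoids are compatible with forgetting the squared-trivialization condition; hence the square commutes by functoriality of fixed points. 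This is legitimate because, in the unpacking of Example~\ref{ex: fixed point for BZ and BZ2 actions}, being a $B\underline{\zZ/2\zZ}$-fixed point is a \emph{property} of a $B\underline{\zZ}$-fixed point rather than extra structure, so the horizontal maps really are full inclusions. What your approach buys is that the diagram chase is absorbed into the already-proven equivariance statements (the same identity \eqref{first diag} does the work in both arguments, just at a different stage); what it costs is that you must supply the general functoriality-of-fixed-points-under-restriction statement, which the paper never states and which, while routine, does require tracking the coherence data of Definition~\ref{def:fixed points}. You correctly flag this as the only step needing care.
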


\begin{proof}
Let $(\D, \fp_{\D})\in \SphExt(G,\C)$, via $\mathrm{E}$ we get an induced monoidal $2$-functor $\uuG\to \PivBrPic(\C)$ which maps $g$ to $\D_g$ together with pivotal $\C$-module structure given by the composition
\[ \id_{\D_g}\xRightarrow{\fp_{\D}|_{\D_g}} (-)_{\D_g}^{**} \xLeftarrow{\;\Omega_g\;}   \dS_{\D_g}^{\D_e} \] 
where $\Omega$ is the natural isomorphism from \eqref{eq:Serre_homogeneous_components}.
We need to show that this pivotal module structure is spherical, i.e. the outside of diagram 
\begin{equation}
    \begin{tikzcd}[column sep=small, row sep=small]
&& {\id_{\D_g} } &&&& {\dS_{\D_g}^{\D_e} \circ \dS_{\D_g}^{\D_e}} \\
\\
{(-)^{**}_{\D_g}} &&&& {(-)^{****}_{\D_g}} &&& {\dS_{\D_g}^{\D_e} \circ (-)^{**}_{\D_g}} \\
\\
&&&& {\dS_{\D_g}^{\D_e}}
\arrow["{{\mathcal{R}^{\D_g}}}", Rightarrow, from=1-3, to=1-7]
\arrow["{{\fp_{\D}|_{\D_g}}}"', Rightarrow, from=1-3, to=3-1]
\arrow["{{\mathcal{R}^{\D}|_{\D_g}}}", Rightarrow, from=1-3, to=3-5]
\arrow["{{\id \circ \fp_{\D}|_{\D_g}}}", Rightarrow, from=3-1, to=3-5]
\arrow["{{\Omega_g}}", Rightarrow, from=5-5, to=3-1]
\arrow["{{\Omega_g \circ \Omega_g }}", Rightarrow, from=1-7, to=3-5]
\arrow["{{\Omega_g \circ \id}}", Rightarrow, from=3-8, to=3-5]
\arrow["{{\id \circ \Omega_g}}", Rightarrow, from=1-7, to=3-8]
\arrow["{{\id \circ \fp_{\D}|_{\D_g}}}"', Rightarrow, from=5-5, to=3-8]
    \end{tikzcd}
\end{equation}
commutes. Indeed, the top left triangle commutes by sphericality of $\D,$ and the top right and bottom triangles commute trivially. The middle top triangle commutes by \eqref{first diag}.
\end{proof}


\section{Sphericalization of unimodular finite tensor categories and graded extensions}
\label{sec:sphericalization}
There is a general construction assigning a spherical tensor category to any tensor category \cite[\S 7.21]{etingof2015tensor}. The goal of this section is to relate this construction and the classification of extensions from Theorem \ref{thm:sph_ext}. In Section~\ref{subsec:sph_ftc}, we discuss this \textit{sphericalization} procedure for unimodular tensor categories. In Section~\ref{subsec:sph_modules}, we introduce an analogue to sphericalization for bimodule categories and obtain a monoidal $2$-functor between the Brauer Picard and spherical Brauer Picard $2$-categorical groups. Finally, in Section~\ref{subsec:sph_extensions} we show that the sphericalization construction commutes with the equivalence \eqref{eq:sph_ext_class} that classifies spherical extensions.


\subsection{Sphericalization of a unimodular finite tensor category}
\label{subsec:sph_ftc}
Let $\C$ be a unimodular finite tensor category (see \S\ref{subsec:spherical-G-graded}). The double-dual functor $(-)^{**}_\C$ together with the Radford isomorphism $\R^{\C}\colon \id_{\C}\xRightarrow{~\sim~} (-)_{\C}^{****}$ define a $\Ztwo$-action on $\C$ as described in Example \ref{action of Z_2 on C}. Here, the condition $\R^\C_{X^{**}}=(\R^\C_{X})^{**}$ follows from \cite[Theorem 4.4]{shimizu2023ribbon} and that \eqref{eq:iso_F_double-dual} applied to $(-)^{**}_\C$ is trivial.

\begin{definition}\cite[\S 7.21]{etingof2015tensor}
The {\em sphericalization} of $\C$ is the equivariantization $\C^\sph \coloneqq \C^{\Ztwo}$.
Explicitly, 
\begin{itemize}
    \item Objects of $\C^\sph$ are pairs $(X,\, f)$, where $X$ is an object of $\C$ and $f\colon X \xrightarrow{~\sim~} X^{**}$ is an isomorphism such that $f^{**}\circ f=\R^{\C}_X$, and 
    \item Hom spaces are given by $\Hom_{\C^\sph}((X,f),\, (X',f')) =\{h\in \Hom_\C(X,\, X') \mid f'\circ h =h^{**}\circ f \}$.
\end{itemize}
\end{definition}
\noindent The sphericalization construction comes with a forgetful tensor functor
\begin{align*}
    \forg\colon\C^\sph\longrightarrow\C,\;(X,f)\longmapsto X
\end{align*}
with identity morphisms as a tensor structure.

\begin{remark}
By Lemma~\ref{lem:equi-properties}, the sphericalization of a unimodular finite tensor category is again finite.
On the other hand, every tensor autoequivalence of a tensor category $\C$ defines a $\zZ$-action on it. In particular, the equivariantization of the action coming from the double-dual functor gives a tensor category $\C^\piv$. This tensor category is naturally endowed with a pivotal structure; it is however, not necessarily finite, even in the case $\C$ is a finite tensor category \cite{shimizu2015pivotal}.
\end{remark}

\begin{proposition}
\label{prop:equiv_unimodular}
Let $G$ be a finite group acting on a finite tensor category $\C$ by tensor autoequivalences. If $\C$ is unimodular, then so is $\C^G$.
\end{proposition}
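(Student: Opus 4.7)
The plan is to use the biadjoint pair between the forgetful functor $\forg\colon \C^G \to \C$ and the induction functor $\Ind\colon \C \to \C^G$, together with the naturality \eqref{eq:Nak-F} of the Nakayama functor. Because $\forg \dashv \Ind \dashv \forg$, the double right adjoint of $\forg$ equals $\forg$ itself; applying \eqref{eq:Nak-F} to $F = \forg$ yields a natural isomorphism $\dN_\C \circ \forg \cong \forg \circ \dN_{\C^G}$ of functors $\C^G \to \C$. Evaluating at $\unit_{\C^G}$ and using unimodularity of $\C$ (so that $\dN_\C(\unit_\C) \cong D_\C^{-1} \cong \unit_\C$) gives $\forg\bigl(\dN_{\C^G}(\unit_{\C^G})\bigr) \cong \unit_\C$ in $\C$. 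Hence $D_{\C^G}^{-1}$ is an invertible object of $\C^G$ whose underlying object in $\C$ is $\unit_\C$; such objects are in bijection with characters $\chi\colon G \to \kk^\times$ via the embedding $\Rep(G) \hookrightarrow \C^G$.

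To show the corresponding character is trivial, I would lift the argument to the level of $G$-equivariant structures. Unimodularity of $\C$ provides a natural isomorphism $\iota\colon \dN_\C \xRightarrow{\,\sim\,} (-)^{**}$ of functors $\C \to \C$. Both sides carry canonical commutations with every tensor autoequivalence $T_g$: the left-hand side via \eqref{eq:Nak-F} applied to $T_g$ (noting $T_g^{\rra}\cong T_g$), and the right-hand side via the monoidal structure of $T_g$ as in \eqref{eq:iso_F_double-dual}. I would verify that $\iota$ intertwines these two canonical $G$-structures, which is a consequence of the naturality of Radford's isomorphism under tensor autoequivalences, in the spirit of \cite[Theorem 4.4]{shimizu2023ribbon} as invoked in Proposition~\ref{prop: Rad-inv-modification}. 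With such equivariance in hand, $\iota$ descends to a natural isomorphism $\dN_{\C^G}(X) \cong X^{**}$ for all $X \in \C^G$, and specializing to $X = \unit_{\C^G}$ yields $D_{\C^G}^{-1} \cong \unit_{\C^G}$, as required.

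The main obstacle is precisely this $G$-equivariance of the trivialization $\iota$: the space of natural isomorphisms $\dN_\C \xRightarrow{\,\sim\,} (-)^{**}$ is a $\kk^\times$-torsor (since $\End(\unit_\C)=\kk$), so a priori the $G$-action rescales any such $\iota$ by a character of $G$, and this character is exactly the potential obstruction $\chi$ identified in the first paragraph. The key point is that $G$ acts on $\C$ by \emph{tensor} (not merely $\kk$-linear) autoequivalences, so every $T_g$ preserves all constructions built canonically from the rigid monoidal structure---the Nakayama functor, the double-dual, the distinguished invertible object, and Radford's isomorphism---and must therefore also preserve their canonical comparison $\iota$. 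Once this coherence is carefully unwound, the triviality of $\chi$, and hence unimodularity of $\C^G$, follows.
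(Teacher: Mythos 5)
Your first paragraph is correct and is in fact a cleaner route to the first half of the paper's argument: since $\forg\dashv\Ind\dashv\forg$, one has $\forg^{\rra}\cong\forg$, and \eqref{eq:Nak-F} gives $\forg(D_{\C^G}^{-1})\cong\dN_\C(\unit_\C)\cong\unit_\C$, so $D_{\C^G}\in\Rep(G)$ is classified by a character $\chi\colon G\to\kk^\times$. (The paper reaches the same conclusion by identifying $D^*$ with the socle of the projective cover of $\unit_{\C^G}$ and applying $\forg$.)

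The gap is in the last step. You correctly identify that the trivialization $\iota\colon\dN_\C\xRightarrow{\sim}(-)^{**}$ is only canonical up to $\kk^\times$, and that the failure of $\iota$ to be $G$-equivariant is measured by exactly the character $\chi$ you need to kill --- but then you dismiss this by appealing to the fact that $T_g$ "preserves all constructions built canonically from the rigid monoidal structure." This is circular: $\chi$ is by definition the obstruction to the canonical equivariant structure on $D_\C^{-1}=\dN_\C(\unit)$ being compatible with a chosen isomorphism $D_\C\cong\unit$, and nothing in the canonicity of $\dN$, $(-)^{**}$, or $D_\C$ forces it to vanish; structures that are canonical only up to scalar routinely carry nontrivial $G$-actions (compare Remark~\ref{rem: 2 spherical structures}, where even a sign ambiguity survives). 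Nor does the invoked equivariance of the Radford isomorphism \cite[Thm.~4.4]{shimizu2023ribbon} help: $\R^\C$ is independent of the choice of $\mathbf{u}_\C$ because the two scalars cancel in $D_\C\otimes-\otimes D_\C^{-1}$, so its $G$-equivariance imposes no constraint on $\chi$. The paper closes this gap by a genuinely non-formal argument: reduce via a composition series to $G$ simple; the non-abelian simple and $p=\charac(\kk)$ cases have no nontrivial characters; and for $G=\zZ/p\zZ$ with $\kk G$ semisimple, one computes $\Ind(P)\cong\bigoplus_{\chi}\tilde{P}\otimes\chi$, deduces $\forg(\tilde{P})\cong P$ and $\tilde{P}_\chi^*\cong\tilde{P}_{\chi^{-1}}$, and concludes unimodularity from \cite[\S 6.1]{etingof2015tensor}. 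Some argument of this kind (or another honest computation identifying $\chi$) is needed to complete your proof.
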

\begin{proof}
Let $D$ denote the distinguished object of $\C^G$. We first show that $D\in \Rep(G)=\Vect^G \subset \C^G$. 
Let $P,\, \tilde{P}$ denote the projective covers of 
$\unit_\C,\, \unit_{\C^G}$, respectively. We know that  $D^*\hookrightarrow\tilde P$ is the socle.  Applying the forgetful functor $\forg\colon \C^G \longrightarrow \C$, we find that $\forg(D^*)\hookrightarrow\forg(\tilde P)\cong P^{\oplus n}$.  Since $D^*$ is invertible, $\forg(D^*)$ must also be invertible.  In particular, $\forg(D^*)$ must be a simple subobject of $P^{\oplus n}$, but then unimodularity of $\C$ implies that $\forg(D^*)\cong\unit$, which is equivalent to $D\in \Rep(G)$. This fact also follows from \cite[Ex.\ 3.16 and Thm.\ 3.19]{jakyad2025tensorfrobenius}.

Next, observe that it suffices to prove the statement in the case where $G$ is a simple group. Indeed, if 
$\{1\}=G_0\subset G_1 \subset \cdots \subset G_n=G$ is a composition series of $G$, then $\C^G$ is obtained from $\C$
by a sequence of equivariantizations with respect to the actions of the composition factors $G_1/G_0,\, G_2/G_1, \dots, G_n/G_{n-1}$.
If $G$ is simple and non-Abelian, then it has no nontrivial linear characters, and hence $D=\unit_{\C_G}$. 
Thus we may assume that $G=\mathbb{Z}/p\mathbb{Z}$ for some prime $p$. If $p=\mathrm{char}(\kk)$, then the previous argument applies. 
This leaves the case where $\kk G$ is semisimple, so that $\Rep(G)$ has $p$ non-isomorphic invertible objects (linear characters) $\chi: G \to \kk^\times$. 

Let $\Ind\colon \C \longrightarrow \C^G$ be the induction functor, i.e. the left adjoint to  $\forg$. Since
\[
\Hom_{\C^G}(\Ind(P),\, -) \cong  \Hom_{\C}(P,\, \forg(-)),
\]
and both functors $\forg$ and $\Hom_{\C}(P,\, -)$ are exact, the composition $\Hom_{\C^G}(\Ind(P),\, -)$ is exact as well. Hence $\Ind(P)$ is projective.

The projective cover of $\chi$ in $\C^G$ is $\tilde{P}_\chi = \tilde{P}\otimes \chi$. We have $\Ind(P)\cong \Ind(P)\otimes \chi$, and hence
it contains each $\tilde{P}_\chi$ as a direct summand. Therefore,
\[
\Ind(P)\cong \bigoplus_{\chi \in \widehat{G}}\, \tilde{P}\otimes \chi.
\]
Applying the forgetful functor to both sides and comparing indecomposable projective summands, we conclude that
$\forg(\tilde{P}) \cong P$. This means that  object $P$  has $p$ equivariant structures, one for each $\chi\in \widehat{G}$, and so $\tilde{P}_\chi^* \cong \tilde{P}_{\chi^{-1}}$.
It follows from \cite[\S~6.1]{etingof2015tensor} that $\C^G$ is unimodular.
\end{proof}

\begin{corollary}
\label{prop:sph_unimodular}
If $\C$ is unimodular, then so is $\C^\sph$.    
\end{corollary}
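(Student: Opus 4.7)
The plan is to observe that Corollary \ref{prop:sph_unimodular} is an immediate specialization of Proposition \ref{prop:equiv_unimodular}. By definition, $\C^\sph = \C^{\Ztwo}$ is the equivariantization of $\C$ with respect to the $\Ztwo$-action given by the pair $(T,J) = \bigl((-)^{**}_\C,\, \R^\C\bigr)$, as recalled at the beginning of Section~\ref{subsec:sph_ftc}. Two preliminary points justify this: first, since $\C$ is unimodular, the Radford isomorphism \eqref{eq:Radford-C} reduces to a monoidal natural isomorphism $\R^\C\colon \id_\C \xRightarrow{~\sim~} (-)^{****}_\C$ as in \eqref{eq: Radford for unimodular}; second, the compatibility $\R^\C_{X^{**}} = (\R^\C_X)^{**}$ needed to fit the pattern of Example~\ref{action of Z_2 on C} is exactly the condition quoted from \cite[Theorem 4.4]{shimizu2023ribbon}.

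With the $\Ztwo$-action in place, I would apply Proposition~\ref{prop:equiv_unimodular} directly with $G = \Ztwo$. This yields that $\C^{\Ztwo} = \C^\sph$ is unimodular, which is the statement of the corollary.

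Since the content of the argument is entirely absorbed by Proposition~\ref{prop:equiv_unimodular}, there is no real obstacle; the only thing to check is that the $\Ztwo$-action is indeed given by \emph{tensor} autoequivalences (as required by the hypothesis of that proposition), which is clear because $(-)^{**}_\C$ is a tensor equivalence and $\R^\C$ is a monoidal natural isomorphism.
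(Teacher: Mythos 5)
Your proof is correct and is essentially identical to the paper's: both observe that $\C^\sph$ is by definition the $\Ztwo$-equivariantization of $\C$ and then invoke Proposition~\ref{prop:equiv_unimodular}. The extra checks you include (that the action is by tensor autoequivalences and that $\R^\C$ satisfies the compatibility of Example~\ref{action of Z_2 on C}) are already established earlier in the section, so your writeup is just a slightly more explicit version of the same argument.
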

\begin{proof}
The category  $\C^\sph$ is a $\Ztwo$-equivariantization of $\C$, so the results
follows from Proposition~\ref{prop:equiv_unimodular}.
\end{proof}

\begin{proposition}\label{prop:sphericalization}
Let $\C$ be a unimodular finite tensor category. The finite tensor category $\C^\sph$ is endowed with a canonical pivotal structure $\fp$ given by 
$$\fp_{(X,f)}\coloneqq f\colon (X,f)\xrightarrow{~\sim~} (X^{**},\, f^{**})$$
that is spherical.
\end{proposition}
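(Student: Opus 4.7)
The plan is to verify in sequence: that $\fp_{(X,f)}=f$ lands in the correct object of $\C^\sph$, that the assignment $\fp$ is a monoidal natural isomorphism, and that it satisfies the sphericality condition \eqref{eq:R=p^2}. For the first step, one identifies $(X,f)^{**}$ in $\C^\sph$ with $(X^{**}, f^{**})$. This is a well-defined object of $\C^\sph$ because applying $(-)^{**}$ to the equivariance condition $f^{**}\circ f = \R^\C_X$ yields $(f^{**})^{**}\circ f^{**} = (\R^\C_X)^{**}$, and the latter equals $\R^\C_{X^{**}}$ by \cite[Thm.\ 4.4]{shimizu2023ribbon} applied to the tensor equivalence $(-)^{**}\colon\C\to\C$. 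With this identification in place, $f$ becomes a morphism $(X,f)\to(X^{**}, f^{**})$ in $\C^\sph$ tautologically, since the required compatibility $f^{**}\circ f = f^{**}\circ f$ is trivial.

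Naturality of $\fp$ is immediate: for any morphism $h\colon (X,f)\to(X',f')$ in $\C^\sph$, the naturality square $\fp_{(X',f')}\circ h = h^{**}\circ \fp_{(X,f)}$ coincides with the defining equation $f'\circ h = h^{**}\circ f$ of $\Hom_{\C^\sph}$. Monoidality is likewise tautological: the equivariant structure on the tensor product $(X,f)\otimes(X',f')$ is by construction $\xi^{-1}_{X,X'}\circ(f\otimes f')$, where $\xi_{X,X'}\colon (X\otimes X')^{**}\xrightarrow{\sim} X^{**}\otimes X'^{**}$ is the monoidal constraint of the double-dual functor of $\C$, so the pivotality identity $\fp_{(X,f)\otimes (X',f')} = \xi^{-1}_{X,X'}\circ (\fp_{(X,f)}\otimes \fp_{(X',f')})$ holds by definition.

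For sphericality, Corollary~\ref{prop:sph_unimodular} ensures $\C^\sph$ is unimodular, so the Radford isomorphism $\R^{\C^\sph}\colon\id_{\C^\sph}\xRightarrow{\sim}(-)^{****}$ is defined as in \eqref{eq: Radford for unimodular}. The crux is the identification $\R^{\C^\sph}_{(X,f)} = \R^\C_X$ as morphisms in $\C$; this follows from the compatibility of Radford isomorphisms with the tensor functor $\forg\colon\C^\sph\to\C$ (cf.\ \cite[Thm.\ 4.4]{shimizu2023ribbon}), or alternatively by verifying directly that $\R^\C_X$ defines a morphism $(X,f)\to (X^{****}, f^{****})$ in $\C^\sph$ (well-defined by naturality of $\R^\C$) assembling into a monoidal natural isomorphism $\id_{\C^\sph}\xRightarrow{\sim}(-)^{****}$, and invoking uniqueness. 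Sphericality then reduces to the chain $\fp^{**}_{(X,f)}\circ \fp_{(X,f)} = f^{**}\circ f = \R^\C_X = \R^{\C^\sph}_{(X,f)}$, whose middle equality is precisely the equivariance condition defining $\C^\sph$. I expect the identification of the two Radford isomorphisms via $\forg$ to be the main technical point; the remaining steps are routine unpackings of the equivariantization construction.
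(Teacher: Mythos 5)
Your proof is correct and follows essentially the same route as the paper: the whole content is the identification $\R^{\C^\sph}_{(X,f)}=\R^{\C}_X$ through the forgetful functor followed by reading off $f^{**}\circ f=\R^{\C}_X$ from the equivariance condition, exactly as in the paper, which however justifies the preservation of the Radford isomorphism by $\forg$ via unimodularity of $\C^\sph$ and \cite[Thm.~3.19, Ex.~3.16]{jakyad2025tensorfrobenius} rather than \cite[Thm.~4.4]{shimizu2023ribbon}, since $\forg$ is not an equivalence. One caution: your alternative argument ``assemble $\R^{\C}_X$ into a monoidal natural isomorphism $\id_{\C^\sph}\Rightarrow(-)^{****}$ and invoke uniqueness'' does not work as stated, because monoidal natural isomorphisms $\id\Rightarrow(-)^{****}$ form a torsor over $\Aut_\otimes(\id_{\C^\sph})$, which need not be trivial, so the forgetful-functor route (using faithfulness of $\forg$) is the one to keep.
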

\begin{proof}
$\C^\sph$ and $\C$ are unimodular by Corollary \ref{prop:sph_unimodular}. Therefore, from \cite[Thm.\ 3.19]{jakyad2025tensorfrobenius} and \cite[Ex.\ 3.16]{jakyad2025tensorfrobenius}, it follows that the forgetful functor preserves the Radford isomorphism, i.e. $\forg(\R^{\C^\sph}_{(X,f)})=\R_X^\C$. By construction we have that $\R^{\C}_X=f^{**}\circ f=\forg(f^{**}\circ f)$. Since $\forg$ is faithful it follows that $\R^{\C^\sph}_{(X,f)}=f^{**}\circ f$ and thus the pivotal structure $\fp$ on $\C^\sph$ is spherical (see also \cite[Corollary 7.6]{etingof2004analogue} for the semisimple case). 
\end{proof}
The sphericalization of tensor categories extends to a $2$-functor for graded extensions.
\begin{proposition}Let $G$ be a finite group and $\C$ a unimodular finite tensor category. The assignment
    \begin{equation}
        (-)^\sph\colon \Ext(G,\C) \longrightarrow \SphExt(G,\C^\sph),\quad \D\longmapsto\D^\sph 
    \end{equation}
is a well-defined $2$-functor between $2$-groupoids.
\end{proposition}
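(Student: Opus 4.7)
The plan is to verify separately that the sphericalization is well-defined on objects, $1$-cells, and $2$-cells of $\Ext(G,\C)$, and that these assignments respect identities and composition.

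On objects, let $\D = \oplus_{g\in G} \D_g$ be a $G$-graded extension of $\C$. By Lemma~\ref{lemma: unimodularity of extension}, $\D$ is unimodular, so $((-)^{**}_\D, \R^\D)$ defines a $\Ztwo$-action on $\D$ as in Example~\ref{action of Z_2 on C}. Since the double-dual functor preserves the grading and the Radford isomorphism restricts componentwise, this $\Ztwo$-action preserves the grading, and consequently $\D^\sph$ inherits a $G$-grading with $(\D^\sph)_g = (\D_g)^\sph$. Corollary~\ref{prop:sph_unimodular} and Proposition~\ref{prop:sphericalization} then show that $\D^\sph$ is a spherical (unimodular) finite tensor category. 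The equivalence $\eqtriv\colon\D_e\xrightarrow{~\sim~}\C$ intertwines double-duals via $\zeta^\eqtriv$ and is compatible with the Radford isomorphism by \cite[Theorem 4.4]{shimizu2023ribbon}, hence is $\Ztwo$-equivariant and equivariantizes to a pivotal tensor equivalence $\eqtriv^\sph\colon (\D_e)^\sph \xrightarrow{~\sim~} \C^\sph$. This exhibits $\D^\sph$ as a spherical $G$-graded extension of $\C^\sph$.

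For $1$-cells, a grading-preserving tensor equivalence $F\colon\D\to\D'$ together with $\tau_F$ will be turned into a $\Ztwo$-equivariant functor, where the equivariant structure is precisely $\zeta^F$. I define $F^\sph(X,f) \coloneqq (F(X),\, \zeta^F_X \circ F(f))$ on objects. The main technical check is that $F^\sph(X,f)$ lies in $(\D')^\sph$, i.e.\ $(\zeta^F_X \circ F(f))^{**} \circ (\zeta^F_X \circ F(f)) = \R^{\D'}_{F(X)}$. Naturality of $\zeta^F$ applied to $f\colon X\to X^{**}$ gives $(F(f))^{**} \circ \zeta^F_X = \zeta^F_{X^{**}} \circ F(f^{**})$, so the left-hand side simplifies to $(\zeta^F_X)^{**} \circ \zeta^F_{X^{**}} \circ F(f^{**}\circ f) = (\zeta^F_X)^{**} \circ \zeta^F_{X^{**}} \circ F(\R^\D_X)$, which equals $\R^{\D'}_{F(X)}$ by \cite[Theorem 4.4]{shimizu2023ribbon} (the same identity used in Proposition~\ref{prop: Rad-inv-modification}). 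The induced functor $F^\sph$ is grading preserving, and pivotal with respect to the canonical spherical structures: the relation $\fp^{\D'}_{F^\sph(X,f)} = \zeta^F_X \circ F(\fp^\D_{(X,f)})$ is exactly condition~\eqref{eq: F preserves pivotal structure}. The monoidal structure and $\tau_{F^\sph}$ are obtained from those of $F$ by equivariantization.

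For $2$-cells and functoriality, a monoidal natural isomorphism $\gamma\colon F\Rightarrow H$ induces $\gamma^\sph\colon F^\sph \Rightarrow H^\sph$ via $\gamma^\sph_{(X,f)}\coloneqq \gamma_X$. That $\gamma_X$ is a morphism in $(\D')^\sph$ follows from the monoidality identity $\gamma_X^{**}\circ \zeta^F_X = \zeta^H_X \circ \gamma_{X^{**}}$ proved in \cite[Lemma 3.2]{shimizu2015pivotal}. Compatibility with the $\tau$-constraints is inherited from $\gamma$. Preservation of identities and composition on $1$-cells reduces to the cocycle identity $\zeta^{F\circ H}_X = \zeta^F_{H(X)} \circ F(\zeta^H_X)$ from \cite[Lemma 3.1]{shimizu2015pivotal}, already exploited in Proposition~\ref{prop:BZ-action-Ext}. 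The main obstacle is the well-definedness of $F^\sph$ on objects, which reduces cleanly to the compatibility of Radford isomorphisms with tensor equivalences; once this is handled, all remaining verifications follow routinely from the naturality and monoidality of the duality isomorphisms $\zeta^{(-)}$.
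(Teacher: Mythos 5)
Your proof is correct and follows essentially the same route as the paper: objects are handled via Lemma~\ref{lemma: unimodularity of extension} and the sphericalization construction, and $1$-cells are sent to $F^{\sph}(X,f)=(F(X),\zeta^F_X\circ F(f))$, with pivotality being exactly condition~\eqref{eq: F preserves pivotal structure}. Your explicit verification that $(\zeta^F_X\circ F(f))^{**}\circ\zeta^F_X\circ F(f)=\R^{\D'}_{F(X)}$ via naturality of $\zeta^F$ and \cite[Theorem 4.4]{shimizu2023ribbon} fills in a step the paper leaves implicit, and is correct.
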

\begin{proof}
According to Lemma \ref{lemma: unimodularity of extension}, any extension $\D$ is unimodular and thus a valid input for the sphericalization construction. At the level of $1$-cells, given an equivalence of extensions $F\colon \D\longrightarrow\D'$, the assigned $1$-cell is
\begin{equation*}
    F^\sph\colon \D^\sph\longrightarrow \D'^\sph, \quad (X,f)\longmapsto (F(X), \tilde{F}(f))
\end{equation*}
where $\tilde F(f)$ is given by the composition $F(X)\xrightarrow{F(f)} F(X^{**})\xrightarrow{~\eqref{eq:iso_F_double-dual}~}F(X)^{**}$. This means that the tensor functor $F^\sph$ is pivotal. 
\end{proof}


\subsection{Sphericalization of module categories}
\label{subsec:sph_modules}
In this section, we extend the sphericalization construction to $\C$-module categories, and show that the resulting pivotal $\C$-module is actually spherical.

Let $\C$ be a unimodular finite tensor category and $\M$ an invertible $\C$-bimodule category. The relative Serre functor 
$\dS_\M^\C$ plays the role of the double-dual functor for $\M$, and together with the bimodule Radford isomorphism $\R^{\M}\colon \id_{\M} \xRightarrow{~\sim~} \dS_\M^\C\circ \dS_\M^\C$ from \eqref{eq:Rad for modules over sph} define a $\Ztwo$-action on $\M$. Note that, since $\C$ is not necessarily pivotal, $\dS_\M^\C$ is only a twisted $\C$-bimodule equivalence, see \eqref{eq:twisted_serre_bimod}. However, $\dS_\M^\C\circ \dS_\M^\C$ is a $\C$-bimodule autoequivalence of $\M$, once we untwist the its bimodule structure with the Radford isomorphism $\R^{\C}$. 

\begin{definition}
Let $\C$ be a unimodular finite tensor category and $\M$ an invertible $\C$-bimodule category.
The {\em sphericalization} $\M^\sph$ of $\M$ is the equivariantization $\M^\sph \coloneqq \M^{\Ztwo}$.
Explicitly,
\begin{itemize}
    \item Objects of $\M^\sph$ are pairs $(M,\, s)$, where $M$ is an object of $\M$ and $s\colon M \xrightarrow{~\sim~} \dS_\M^\C(M)$ is an isomorphism such that $\dS_\M^\C(s) \circ s= \R^{\M}_M$, and 
    \item Hom spaces are given by $\Hom_{\M^\sph}((M,s),(N,t)) = \{p\in \Hom_\C(M,N) \mid t\circ p =\dS_\M^\C(p)\circ s \}$.
\end{itemize}
\end{definition}

Note that sphericalization of $\C$-module categories is a special case of the process of equivariantization of $\C$-module categories studied in \cite[\S3.5]{galindo2012module}.

\begin{lemma}
\label{lem:Mpiv-Cpiv-module}
The sphericalization $\M^\sph$ is endowed with the structure of a $\C^\sph$-bimodule category via 
\begin{equation*}
(X,f) \tl (M,s) \tr (Y,h) \coloneqq (X \tl M\tr Y, q)\;,
\end{equation*}
for $(X,f), (Y,h)\in\C^{\sph}$ and $(M,s)\in\M^{\sph}$, where $q$ is defined by the composition
\begin{equation*}   
q\colon X \tl M \tr Y  \xrightarrow{f \tl s\tr h} X^{**} \tl  \dS_\M^\C(M)\tr Y^{**} \xrightarrow{\;{\eqref{eq:twisted_serre_bimod}}\;~} \dS_\M^\C (X \tl M \tr Y)\;.
\end{equation*}
\end{lemma}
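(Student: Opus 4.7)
The plan is to verify three claims: (i) that the formula for the action is well-defined on objects, namely that the morphism $q$ satisfies the spherical equation $\dS^{\C}_{\M}(q) \circ q = \R^{\M}_{X \tl M \tr Y}$; (ii) that the action is functorial on morphisms of $\C^{\sph}$ and $\M^{\sph}$; and (iii) that the bimodule associativity and unit constraints hold.

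For claim (i), which is the main point, I would expand $\dS^{\C}_{\M}(q) \circ q$ by applying the definition of $q$ twice. Using functoriality of $\dS^{\C}_{\M}$ and naturality of the twisted bimodule constraint \eqref{eq:twisted_serre_bimod} with respect to $f \tl s \tr h$, the composition can be rearranged as the twice-applied twisted bimodule constraint for $(\dS^{\C}_{\M})^{2}$, pre-composed with $f^{**} \tl \dS^{\C}_{\M}(s) \tr h^{**}$ and with $f \tl s \tr h$. Collecting the three equations $f^{**} \circ f = \R^{\C}_X$, $\dS^{\C}_{\M}(s) \circ s = \R^{\M}_M$, and $h^{**} \circ h = \R^{\C}_Y$, the target identity reduces to the compatibility
\[
    \R^{\M}_{X \tl M \tr Y} \;=\; \bigl(\text{iterated }\eqref{eq:twisted_serre_bimod}\bigr) \circ \bigl(\R^{\C}_X \tl \R^{\M}_M \tr \R^{\C}_Y\bigr),
\]
which is precisely the statement that $\R^{\M}\colon D_{\C}\tl - \tr D_{\C}^{-1} \xRightarrow{\sim} \dS^{\C}_{\M}\circ \dS^{\C}_{\M}$ from \eqref{eq:Radford-M} is an isomorphism of twisted $\C$-bimodule functors, combined with the unimodular trivialization \eqref{eq:Rad for modules over sph}.

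For claim (ii), given morphisms $\phi\colon (X,f)\to (X',f')$, $\psi\colon (Y,h)\to (Y',h')$ in $\C^{\sph}$ and $p\colon (M,s)\to (M',s')$ in $\M^{\sph}$, I would verify that $\phi \tl p \tr \psi$ intertwines the respective $q$'s. This is a direct naturality argument using the defining equations $f'\circ\phi = \phi^{**}\circ f$, $s'\circ p = \dS^{\C}_{\M}(p)\circ s$, $h'\circ\psi = \psi^{**}\circ h$ together with naturality of \eqref{eq:twisted_serre_bimod}. For claim (iii), the associativity and unit constraints for the $\C^{\sph}$-bimodule structure on $\M^{\sph}$ are inherited from those of the $\C$-bimodule category $\M$: since the forgetful functor $\M^{\sph}\to \M$ is faithful, it suffices to note that the $\C$-bimodule constraints are morphisms in $\M^{\sph}$, which amounts to their compatibility with the equivariant structures, and this follows from the hexagon and pentagon axioms for the original actions.

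The main obstacle is claim (i): the diagram chase must faithfully combine three independent spherical conditions through the twisted bimodule structure of $\dS^{\C}_{\M}$. The key conceptual input, as established in \cite[Cor.\ 4.16]{spherical2022}, is that $\R^{\M}$ is an isomorphism of \emph{twisted} bimodule functors, whose twist on the target side is exactly governed by $\R^{\C}$ on both left and right tensor factors. Once this is unpacked, the verification is essentially bookkeeping; alternatively, the entire statement can be recognized as an instance of the general equivariantization of module categories from \cite[\S3.5]{galindo2012module} applied to the $\Ztwo$-action on $\M$ compatible with the $\Ztwo$-action on $\C$.
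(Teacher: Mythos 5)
Your proposal is correct and follows essentially the same route as the paper, whose proof is a one-line remark that the verification is "a routine check involving the condition fulfilled by \eqref{eq:twisted_serre_bimod}". In fact your write-up is more complete: you make explicit the key object-level verification that $\dS^{\C}_{\M}(q)\circ q=\R^{\M}_{X\tl M\tr Y}$, and you correctly identify the input that makes it work, namely that the bimodule Radford isomorphism $\R^{\M}$ of \cite[Cor.\ 4.16]{spherical2022} is an isomorphism of \emph{twisted} bimodule functors whose twist is governed by $\R^{\C}$ on both sides. Your closing observation that the lemma is an instance of equivariantization of module categories in the sense of \cite[\S3.5]{galindo2012module} is also exactly the remark the paper makes immediately after the definition.
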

\begin{proof}
The associativity of the bimodule action is shown by a routine check involving the condition fulfilled by \eqref{eq:twisted_serre_bimod} as the twisted bimodule structure of $\dS_\M^\C$.
\end{proof}

The procedure of sphericalization of an invertible $\C$-bimodule category $\M$ can be alternatively be understood in terms of the associated finite multitensor category
\begin{equation*}
\mathbb{M}=
\begin{bmatrix}
\C & \M \\
\overline{\M} & \C
\end{bmatrix}	
\end{equation*}
where rigidity follows from \cite[Thm.\ 4.2]{spherical2022}. According to \cite[Prop.\ 4.11]{spherical2022}, we have that the double-duals in $\mathbb{M}$ for objects $M{\in}\M$ are given by the relative Serre functor $\dS^\C_\M(M)$. Additionally, the Radford isomorphism of $\mathbb{M}$ restricts on $\M$ to the bimodule Radford isomorphism $\mathcal{R}^\M$, by a similar argument to the one in the proof of Proposition \ref{prop:radford is monoidal}. Hence, the sphericalization of $\mathbb{M}$ is the finite multitensor category
\begin{equation*}
\mathbb{M}^\sph=
\begin{bmatrix}
\C^\sph & \M^\sph \\
\overline{\M}^\sph & \C^\sph
\end{bmatrix}
\end{equation*}
which in particular, shows that $\M^\sph$ is invertible. Moreover, from the description of double-duals in the equivariantization, we have that the relative Serre functor of $\M^\sph$ is given by
\begin{equation}\label{eq:Serre_sphericalization}
    \dS_{\M^\sph}^{\C^\sph}\left(M,\, s\right)\cong\left(\dS_\M^\C(M),\dS_\M^\C(s)\right)
\end{equation}
for an object $(M,s)\in\M^{\sph}$.

\begin{proposition}
\label{prop: Mpiv is a Cpiv module}
Let $\C$ be a unimodular finite tensor category and $\M$ an invertible $\C$-bimodule category. The sphericalization $\M^\sph$ is endowed with the structure of an invertible pivotal $\C^\sph$-bimodule category given by
    \begin{equation}\label{Mpiv pivotal structure}
    \tfp_{(M,\, s)}\coloneqq \dS^\C_\M \,\colon\; (M,s)\xrightarrow{~\sim\,}\dS_{\M^\sph}^{\C^\sph}(M,\, s)\;,
    \end{equation}
that is spherical in the sense of Definition \ref{def:spherical_module}.
\end{proposition}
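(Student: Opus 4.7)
The plan is to proceed in three stages: first establish that $\M^\sph$ is invertible as a $\C^\sph$-bimodule category, then verify that the assignment $\tfp_{(M,s)} := s$ defines a pivotal bimodule structure, and finally check that this pivotal structure is spherical.

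For invertibility, I would invoke the multitensor category framework introduced just before the proposition: the finite multitensor category $\mathbb{M}$ built from $\C$ and $\M$ has associated sphericalization $\mathbb{M}^\sph$ whose blocks are $\C^\sph$ on the diagonal and $\M^\sph$, $\overline{\M}^\sph$ off the diagonal. Since $\C$ is unimodular, so is $\mathbb{M}$ (every distinguished object is supported on the diagonal), hence by Proposition~\ref{prop:equiv_unimodular} the sphericalization $\mathbb{M}^\sph$ is a (unimodular) finite multitensor category. Applying Lemma~\ref{lem:multitensor_equivalences} to $\mathbb{M}^\sph$ then yields $\C^\sph$-bimodule equivalences $\M^\sph\boxtimes_{\C^\sph}\overline{\M}^\sph\simeq\C^\sph$ and $\overline{\M}^\sph\boxtimes_{\C^\sph}\M^\sph\simeq\C^\sph$, which is invertibility.

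Next, I would verify that $\tfp_{(M,s)} := s$ really defines a $\C^\sph$-bimodule natural isomorphism $\id_{\M^\sph}\Rightarrow\dS^{\C^\sph}_{\M^\sph}$. Using the description \eqref{eq:Serre_sphericalization}, the map $s$ is a candidate morphism $(M,s)\to (\dS^\C_\M(M),\dS^\C_\M(s))$ in $\M^\sph$; that it is indeed a morphism is tautological since it amounts to the equation $\dS^\C_\M(s)\circ s = \dS^\C_\M(s)\circ s$. Naturality is equally immediate: for any morphism $p\colon(M,s)\to(N,t)$ in $\M^\sph$ the naturality square for $\tfp$ is precisely the equivariance condition $t\circ p = \dS^\C_\M(p)\circ s$ defining morphisms in $\M^\sph$. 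The non-formal step is $\C^\sph$-bilinearity: using the bimodule action of Lemma~\ref{lem:Mpiv-Cpiv-module}, one must chase the compatibility between $s$, the twisted bimodule structure \eqref{eq:twisted_serre_bimod} of $\dS^\C_\M$, and the equivariant data $f,h$ in $\C^\sph$. This unpacks to showing that the two ways of mapping $(X\tl M\tr Y, q)$ into $(X^{**}\tl\dS^\C_\M(M)\tr Y^{**}, \text{induced})$ coincide, which is exactly the definition of $q$ in Lemma~\ref{lem:Mpiv-Cpiv-module}.

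For sphericality, I need to show $\dS^{\C^\sph}_{\M^\sph}(\tfp_{(M,s)})\circ \tfp_{(M,s)} = \R^{\M^\sph}_{(M,s)}$. By \eqref{eq:Serre_sphericalization}, the left-hand side is $\dS^\C_\M(s)\circ s$, which by definition of the equivariant object $(M,s)$ equals $\R^\M_M$. It therefore suffices to establish $\forg(\R^{\M^\sph}_{(M,s)}) = \R^\M_M$, i.e.\ that the forgetful functor $\M^\sph\to\M$ preserves the bimodule Radford isomorphism. This follows by applying to $\mathbb{M}^\sph$ exactly the same argument used in Proposition~\ref{prop:sphericalization} for $\C^\sph$: the multitensor category $\mathbb{M}^\sph$ is unimodular (Proposition~\ref{prop:equiv_unimodular}), so by \cite[Ex.\ 3.16 and Thm.\ 3.19]{jakyad2025tensorfrobenius} the forgetful tensor functor $\mathbb{M}^\sph\to \mathbb{M}$ preserves the global Radford isomorphism; restricting to the off-diagonal block $\M^\sph\to\M$ and identifying this restriction with the bimodule Radford isomorphism (as in the proof of Proposition~\ref{prop:radford is monoidal}) yields the claim.

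The main obstacle will be the bilinearity check in step two, which is a somewhat lengthy but routine diagram chase involving the twisted $\C$-bimodule structure of $\dS^\C_\M$. Every other ingredient either reduces to the fact that $(M,s)$ is by construction a fixed point of the $\Ztwo$-action generated by $(\dS^\C_\M,\R^\M)$, or to the already-established sphericalization theory for multitensor categories.
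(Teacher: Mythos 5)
Your proposal is correct and follows essentially the same route as the paper: invertibility and the identification $\dS_{\M^\sph}^{\C^\sph}(M,s)\cong(\dS_\M^\C(M),\dS_\M^\C(s))$ via the auxiliary multitensor category $\mathbb{M}$ and its sphericalization, the pivotal structure read off directly from that identification, and sphericality from the defining equation $\dS_\M^\C(s)\circ s=\R^\M_M$ of equivariant objects together with the fact that the forgetful functor preserves the Radford isomorphism (the argument of Proposition~\ref{prop:sphericalization} applied to $\mathbb{M}^\sph$). Your extra care on the $\C^\sph$-bilinearity of $\tfp$ is welcome but, as you suspect, it reduces to the interchange law once one compares the definition of $q$ in Lemma~\ref{lem:Mpiv-Cpiv-module} with the bimodule structure \eqref{eq:Serre_bimodule_functor} of $\dS_{\M^\sph}^{\C^\sph}$.
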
 
\begin{proof}
That $\tfp_{(M,\, s)}$ defines a pivotal structure follows directly from the description of the relative Serre functor \eqref{eq:Serre_sphericalization}. Now, from the proof of Proposition \ref{prop:sphericalization}, we know that the component of the bimodule Radford isomorphism of $\M^\sph$ associated to an object $(M,s)$ agrees with $\R^\M_M$. Since $\dS_\M^\C(s) \circ s= \R^{\M}_M= \R^{\M^\sph}_{(M,s)}$ by construction, we conclude bimodule sphericality.
\end{proof}
\begin{remark}
The sphericalization procedure can also be applied to (left) $\C$-module categories. In that situation, we additionally need the $\C$-module category $\M$ to be unimodular in the sense of \cite{yadav2023unimodular}, i.e. such that the dual tensor category $\C^*_\M$ is unimodular. Then, we obtain a module Radford isomorphism of the form \eqref{eq:Rad for modules over sph}, which allows to define the $\zZ_2$-action.
\end{remark}
The sphericalization construction for bimodule categories extends to an appropriate $2$-functor, as well.
\begin{proposition}
Let $\C$ be a unimodular finite tensor category. The assignment 
\begin{equation*}
 (-)^\sph\colon \BrPic(\C) \longrightarrow  \SphBrPic(\C^\sph),\quad \M\longmapsto\M^\sph   
\end{equation*}
is a well-defined monoidal $2$-functor.   
\end{proposition}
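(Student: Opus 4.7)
The plan is to verify well-definedness of $(-)^\sph$ on objects, $1$-cells and $2$-cells, and then equip it with the structure of a monoidal $2$-functor. Well-definedness on objects is precisely Proposition~\ref{prop: Mpiv is a Cpiv module}: every $\M\in\BrPic(\C)$ gives rise to a spherical invertible $\C^\sph$-bimodule category $\M^\sph$.

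For a $\C$-bimodule equivalence $F\colon \M \to \N$, I would define
\[
F^\sph(M, s) \coloneqq \bigl(F(M),\; \Lambda_F^{-1} \circ F(s)\bigr),
\]
where $\Lambda_F\colon \dS_\N^\C \circ F \Rightarrow F \circ \dS_\M^\C$ is the bimodule isomorphism of \eqref{eq:Serre_bimodule_equivalence}. The equivariance condition $\dS_\N^\C(\Lambda_F^{-1} \circ F(s)) \circ (\Lambda_F^{-1} \circ F(s)) = \R^\N_{F(M)}$ follows from naturality of $\Lambda_F$, combined with the defining identity $\dS_\M^\C(s)\circ s = \R^\M_M$ of $(M,s)\in\M^\sph$ and the compatibility of $\R$ with bimodule equivalences established in Proposition~\ref{radford is inv-modification}. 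The functor $F^\sph$ inherits a $\C^\sph$-bimodule structure from the bimodule structure of $F$, and its pivotality in the sense of \eqref{eq:PivBrPic-1-cell} is automatic because the pivotal structures of $\M^\sph$ and $\N^\sph$ are tautologically given by the $\Lambda$-isomorphisms (see \eqref{Mpiv pivotal structure}). A bimodule natural transformation $\eta\colon F\Rightarrow F'$ lifts componentwise to $\eta^\sph$, the equivariance constraint being a direct consequence of the naturality of $\Lambda$ in its functor argument.

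For the monoidal structure, I would construct a coherent family of $\C^\sph$-bimodule equivalences
\[
\nu_{\M,\N}\colon \M^\sph \boxtimes_{\C^\sph} \N^\sph \xrightarrow{~\sim~} (\M \btC \N)^\sph,
\]
defined on simple tensors by $(M,s)\boxtimes (N,t)\mapsto \bigl(M\boxtimes N,\; \mu_{\M,\N}\circ (s \btC t)\bigr)$, where $\mu_{\M,\N}$ is the natural isomorphism of \eqref{eq:Serre_monoidal}. That the image lies in $(\M\btC\N)^\sph$ is exactly the content of Proposition~\ref{prop:radford is monoidal}, which states that $\mu$ intertwines the Deligne product of the bimodule Radford isomorphisms with the bimodule Radford isomorphism of $\M\btC\N$. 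The $\C^\sph$-balancedness and $\C^\sph$-bilinearity of $\nu_{\M,\N}$ follow from the twisted bimodule structure of $\mu$ together with the description of the $\C^\sph$-action on $\M^\sph$ given in Lemma~\ref{lem:Mpiv-Cpiv-module}.

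The main obstacle is verifying the hexagonator and unit coherence axioms of $(-)^\sph$ as a monoidal $2$-functor. These reduce to two inputs already in place: the associativity of the tensor structure $\mu$ of the monoidal pseudo-natural equivalence $\bS$ (identity \eqref{eq:monPNT-1} of Proposition~\ref{prop:RSpNE-is-Monoidal}), and the monoidality of the Radford modification $\R$ from Corollary~\ref{BZ2_action_BrPicC}. Translating these inputs into the coherence of $\nu$ is a careful but essentially mechanical diagram chase on equivariant objects, and no new ingredient beyond those developed in Section~\ref{sec:spherical-extensions} is required.
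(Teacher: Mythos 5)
Your treatment of objects, $1$-cells and $2$-cells coincides with the paper's: the same formula $F^\sph(M,s)=(F(M),\Lambda_F^{-1}\circ F(s))$ with $\Lambda_F$ from \eqref{eq:Serre_bimodule_equivalence}, the same verification of the equivariance condition via naturality of $\Lambda$ and the compatibility of the bimodule Radford isomorphisms with $1$-morphisms, and the same observation that pivotality of $F^\sph$ is built into the definition. The formula you propose for $\nu_{\M,\N}$ on simple tensors is also the one the paper uses.

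The gap is in the monoidal structure. You define $\nu_{\M,\N}$ on simple tensors and invoke Proposition~\ref{prop:radford is monoidal} to see that the image lands in $(\M\btC\N)^\sph$, but you never address why $\nu_{\M,\N}$ is an \emph{equivalence} of $\C^\sph$-bimodule categories, which is required for $(-)^\sph$ to be a monoidal $2$-functor (its coherence $1$-cells must be invertible). This is not automatic: objects of $(\M\btC\N)^\sph$ are equivariant structures on arbitrary objects of $\M\btC\N$, not only on simple tensors, and neither essential surjectivity nor full faithfulness of a functor defined via the universal property of $\boxtimes_{\C^\sph}$ follows from the balancing check. The paper resolves this by assembling $\C,\M,\N,\M\btC\N$ and their opposites into an auxiliary finite multitensor category $\mathbb{B}$, sphericalizing it, and reading off the equivalence $\M^\sph\boxtimes_{\C^\sph}\N^\sph\simeq(\M\btC\N)^\sph$ from Lemma~\ref{lem:multitensor_equivalences} applied to $\mathbb{B}^\sph$; the same device with a $4\times 4$ multitensor category $\mathbb{T}$ then yields the associator coherence from the pentagon axiom in $\mathbb{T}^\sph$, replacing the "mechanical diagram chase" you defer to \eqref{eq:monPNT-1} and the monoidality of $\R$. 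Your outline of the coherence step is plausible in principle, but without an argument for invertibility of $\nu_{\M,\N}$ the proof is incomplete.
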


\begin{proof}
Proposition \ref{prop: Mpiv is a Cpiv module} ensures that $(-)^\sph$ is well-defined on objects. Given a $1$-cell $H\colon\M \longrightarrow \N$ in $\BrPic(\C)$, define 
\begin{equation*}
        H^{\sph}\colon \M^{\sph}\longrightarrow \N^{\sph}, \quad (M,s) \longmapsto (H(M), \tilde H(s))
\end{equation*}
where $\tilde H(s)$ is given by the composition $H(M)\xrightarrow{H(s)} H\circ\dS_{\M}^{\C}(M)\xrightarrow{~\eqref{eq:Serre_bimodule_equivalence}~}\dS_{\N}^{\C}\circ H(M)$. 
That $(H(M), \tilde H(s))$ belongs to $\N^{\sph}$ follows from commutativity of the diagram below
\begin{equation*}
\begin{tikzcd}[column sep=normal]
	{H(M)} && {H\circ \dS_\M^\C (M)} && {\dS_\N^\C\circ H(M)} \\
	&& {H\circ\dS_\M^\C\circ \dS_\M^\C (M)} && {\dS_\N^\C\circ H\circ \dS_\M^\C (M)} \\
	&&&& {\dS_\N^\C\circ\dS_\N^\C\circ H(M)}
	\arrow["{H(s)}", from=1-1, to=1-3]
	\arrow["{H(\R^\M_M)}"'{pos=0.7}, shift left, from=1-1, to=2-3]
	\arrow["{\R_{H(M)}^\N}"', shift right, curve={height=35pt}, from=1-1, to=3-5]
	\arrow["{\eqref{eq:Serre_bimodule_equivalence}}", from=1-3, to=1-5]
	\arrow["{H\circ\dS_\M^\C(s)}", from=1-3, to=2-3]
	\arrow["{\dS_\N^\C\circ H(s)}", from=1-5, to=2-5]
	\arrow["{\eqref{eq:Serre_bimodule_equivalence}}", from=2-3, to=2-5]
	\arrow["{\eqref{eq:Serre_bimodule_equivalence}^2}"'{pos=0.3}, from=2-3, to=3-5]
	\arrow["{\eqref{eq:Serre_bimodule_equivalence}}", from=2-5, to=3-5]
\end{tikzcd}
\end{equation*}
where, the top left triangle commutes by definition of $(M,s)$, and the bottom right one commutes trivially. The top square commutes by naturality of $\eqref{eq:Serre_bimodule_equivalence}$, and the bottom left region by commutativity of \eqref{eq: radford:mod}. That $H^{\sph}$ is an equivalence follows from $H$ being an equivalence. Lastly, pivotality of $H^{\sph}$ follows from the definition of $\tilde{H}(s)$. 

The data of a monoidal structure on a $2$-functor is defined in \cite[Def.\ 2.10]{davydov2021braided}. Given $\M,\N \in \BrPic(\C)$, consider the auxiliary finite multitensor category $\mathbb{B}$ from Proposition \ref{prop:radford is monoidal}.
The equivariantization of $\mathbb{B}$ is the multitensor category
\begin{equation*}
\mathbb{B}^\sph=
\begin{bmatrix}
\C^\sph & \M^\sph & (\M\boxtimes_{\C}\N)^\sph\\
\overline{\M}^\sph & \C^\sph & \N^\sph\\
(\overline{\N}\boxtimes_{\C}\overline{\M})^\sph & \overline{\N}^\sph & \C^\sph
\end{bmatrix}	
\end{equation*}
By Lemma \ref{lem:multitensor_equivalences} we obtain an equivalence
\begin{equation}
\begin{aligned}
\Psi_{\M,\N}\colon\M^\sph\boxtimes_{\C^\sph}\N^\sph&\longrightarrow(\M\boxtimes_{\C}\N)^\sph\\
    \left((m,s)\boxtimes (n,t)\right)&\longmapsto(m\boxtimes n, \mu_{\M,\N}\circ s\boxtimes t)
\end{aligned}
\end{equation}
of $\C^\sph$-bimodule categories.
Now given $\M,\N,\cL\in \BrPic(\C)$, we can similarly consider the finite multitensor category
\begin{equation*}
\mathbb{T}=
\begin{bmatrix}
\C & \M & \M\boxtimes_{\C}\N & \M\boxtimes_{\C}\N\boxtimes_{\C}\cL\\
\overline{\M} & \C & \N& \N\boxtimes_{\C}\cL\\
\overline{\N}\boxtimes_{\C}\overline{\M} & \overline{\N} & \C&\cL\\
\overline{\cL}\boxtimes_{\C}\overline{\N}\boxtimes_{\C}\overline{\M}&\overline{\cL}\boxtimes_{\C}\overline{\N}&\overline{\cL}&\C
\end{bmatrix}	
\end{equation*}
The induced associators on $\mathbb{T}^\sph$ provide bimodule natural isomorphisms
\begin{equation*}
    \alpha_{\M,\N,\cL}\colon \Psi_{\M,\N\cL}\circ \id_\M \boxtimes_{\C^\sph}\Psi_{\N,\cL}\xRightarrow{~\sim~\;} \Psi_{\M\N,\cL}\circ  \Psi_{\M,\N}\boxtimes_{\C^\sph}\id_\cL
\end{equation*}
That these fulfill the required conditions for a monoidal structure on $(-)^\sph$ follows from the pentagon axioms that they obey in the monoidal category $\mathbb{T}^\sph$. We can obtain all associators simultaneously by considering a multitensor category of the form of $\mathbb{T}$, but involving all of the (finitely many) invertible $\C$-bimodules.
\end{proof}


\subsection{Sphericalization and the classification of extensions}
\label{subsec:sph_extensions}
Recall that for any finite tensor category $\C$ there is an equivalence \eqref{eq:ext_classification}
\begin{equation}
	 \mathrm{E}\colon\Ext(G,\C) \longrightarrow \MonFun\left(\uuG,\BrPic(\C)\right)
\end{equation}
of $2$-groupoids as established in \cite[Theorem~7.7]{etingof2010fusion}. 
Given a monoidal $2$-functor $\mathsf{F}\colon \uuG \longrightarrow \BrPic(\C),$ we denote by $\D_{\mathsf{F}}$ the corresponding $G$-graded extension of $\C$. Reciprocally, given an extension $\D\in \Ext(G,\C),$ we denote by $\mathrm{E}_{\D}\colon \uuG \longrightarrow \BrPic(\C)$ the corresponding monoidal $2$-functor. In this section, we show the relation between the sphericalization procedure and graded extensions.

\begin{proposition}\label{prop:piv-graded-ext}
Let $\C$ be a unimodular finite tensor category and $\mathsf{F}\colon \uuG\to\BrPic(\C)$ a monoidal $2$-functor. There is a canonical equivalence
\begin{equation}\label{piv-graded-ext}
\left(\D_\mathsf{F}\right)^\sph \xrightarrow{~\sim~} \D_{(-)^\sph \circ \mathsf{F}}
\end{equation}
of spherical $G$-graded extensions of $\C^\sph$.
\end{proposition}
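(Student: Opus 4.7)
The plan is to exploit the fact that both the double-dual functor $(-)^{**}_{\D_\mathsf{F}}$ and the Radford isomorphism $\R^{\D_\mathsf{F}}$ preserve the $G$-grading on $\D_\mathsf{F}$. Consequently, every object $(X,f)\in (\D_\mathsf{F})^\sph$ decomposes into its homogeneous components $(X_g, f_g)$ with $X_g\in \mathsf{F}(g)\cong(\D_\mathsf{F})_g$ and $f_g\colon X_g\xrightarrow{\sim} X_g^{**}$ satisfying $f_g^{**}\circ f_g = \R^{\D_\mathsf{F}}_{X_g}$. By Lemma~\ref{lem:Serre_homogeneous}, the restriction $(-)^{**}|_{\mathsf{F}(g)}$ is canonically isomorphic to the relative Serre functor $\dS^{\C}_{\mathsf{F}(g)}$ via $\Omega_g$, and this identification is exactly what is needed to translate the $\Z/2$-fixed point datum of Proposition~\ref{prop:SphExt-Z2} into the fixed point datum of Proposition~\ref{prop:SphBrPic-Z2} on each homogeneous component.

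Guided by this, I would define the candidate equivalence $\Phi\colon (\D_\mathsf{F})^\sph \xrightarrow{\;\sim\;}\D_{(-)^\sph\circ \mathsf{F}}$ componentwise on objects by $\Phi_g(X_g, f_g) = (X_g,\,\Omega_g^{-1}\circ f_g)$. To see that $(X_g,\,\Omega_g^{-1}\circ f_g)$ is indeed an object of $\mathsf{F}(g)^\sph$, one invokes the commutativity of diagram \eqref{first diag} from the proof of Proposition~\ref{prop:phi-is-Bz_2-equivariant}, which identifies $\R^{\D_\mathsf{F}}|_{\mathsf{F}(g)}$ with $\R^{\mathsf{F}(g)}$ up to $\Omega_g\circ\Omega_g$. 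Combined with naturality of $\Omega_g$, this forces the equivariance condition $\dS^{\C}_{\mathsf{F}(g)}(s)\circ s = \R^{\mathsf{F}(g)}_{X_g}$ for $s=\Omega_g^{-1}\circ f_g$. Fully-faithfulness and essential surjectivity of $\Phi_g$ are immediate from the invertibility of $\Omega_g$, and the $\C^\sph$-bimodule structure on each side matches by construction (cf.\ Lemma~\ref{lem:Mpiv-Cpiv-module} and Lemma~\ref{lem:Serre_homogeneous}).

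The main technical step is verifying that $\Phi$ is monoidal and grading-preserving. Reduced to homogeneous components, this amounts to the compatibility
\[
\Omega_{gh}\circ \mu_{g,h} \;=\; \nu_{g,h}\circ (\Omega_g\btC\Omega_h),
\]
where $\mu_{g,h}$ is the natural isomorphism of Proposition~\ref{prop:Serre_monoidal} and $\nu_{g,h}\colon (X_g\otimes X_h)^{**}\cong X_g^{**}\otimes X_h^{**}$ comes from the monoidal structure of the double-dual on $\D_\mathsf{F}$. This is precisely the content of the diagram chase in the proof of Proposition~\ref{prop:phi-is-equivariant} (around diagram \eqref{eq:diagramOmegaPhiPsi}) establishing monoidality of the modification $\Omega_\D$. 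The hard part will be unpacking the monoidal structure of the $2$-functor $(-)^\sph\colon\BrPic(\C)\to\SphBrPic(\C^\sph)$ and matching the induced tensor product on $\D_{(-)^\sph\circ\mathsf{F}}$ with that on the equivariantization $(\D_\mathsf{F})^\sph$; once that bookkeeping is completed, the compatibility above supplies the monoidal structure of $\Phi$. Finally, preservation of the spherical structures is built in: by Proposition~\ref{prop:sphericalization} the spherical structure on $(\D_\mathsf{F})^\sph$ at $(X,f)$ is $f$, while by Proposition~\ref{prop: Mpiv is a Cpiv module} the spherical structure on $\mathsf{F}(g)^\sph$ at $(M,s)$ is $s$, and these are matched by the definition of $\Phi_g$ via the chain $s = \Omega_g^{-1}\circ f_g$.
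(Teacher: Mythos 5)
Your proposal is correct and follows essentially the same route as the paper's proof: the same assignment $(X,f)\mapsto (X,\Omega_g^{-1}\circ f)$ on homogeneous components, the same appeal to diagram \eqref{first diag} and naturality of $\Omega_g$ for well-definedness, the identity \eqref{eq:mono_OmegaD2} for monoidality (which the paper uses to conclude the monoidal structure on the equivalence can be taken trivial), and the same matching of spherical structures. The only cosmetic difference is that the paper exhibits an explicit quasi-inverse $(X,s)\mapsto(X,\Omega_g\circ s)$ rather than arguing fully-faithfulness and essential surjectivity abstractly.
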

\begin{proof}
For an homogeneous object $(X, f)$ in the sphericalization $(\D_\mathsf{F})^\sph$, with $X{\in}\mathsf{F}(g)$,
consider the isomorphism
\begin{equation*}
    X\xrightarrow{~f~} X^{**} \xrightarrow{~\Omega_g^{-1}~} \dS_{\mathsf{F}(g)}^{\C}(X)
\end{equation*}
where $\Omega_g\colon \dS_{\mathsf{F}(g)}^{\C}\xRightarrow{~\sim~} (-)^{**}|_{\mathsf{F}(g)}$ is the natural isomorphism from Lemma \ref{lem:Serre_homogeneous}. Then, the pair $(X, \Omega_g^{-1}\circ f)$ belongs to $\D_{(-)^\sph \circ \mathsf{F}}$. Indeed, the condition
\begin{equation*}
\begin{tikzcd}[column sep=normal]
	{X} && {X^{**}} && {\dS_{\mathsf{F}(g)}^{\C}(X)} \\
	&& {X^{****}} && {\dS_{\mathsf{F}(g)}^{\C}(X^{**})} \\
	&&&& {\dS_{\mathsf{F}(g)}^{\C}\circ\dS_{\mathsf{F}(g)}^{\C}\left(X\right)}
	\arrow["{f}", from=1-1, to=1-3]
	\arrow["{\R_{X}^{\D_\mathsf{F}}|_{\mathsf{F}(g)}}"{description}, from=1-1, to=2-3]
	\arrow["{\R_{X}^{\mathsf{F}(g)}}"', shift right, curve={height=35pt}, from=1-1, to=3-5]
	\arrow["{\eqref{eq:Serre_homogeneous_components}}", from=1-3, to=1-5]
	\arrow["{f^{**}}", from=1-3, to=2-3]
	\arrow["{\dS_{\mathsf{F}(g)}^\C(f)}", from=1-5, to=2-5]
	\arrow["{\eqref{eq:Serre_homogeneous_components}}", from=2-3, to=2-5]
	\arrow["{\eqref{eq:Serre_homogeneous_components}^2}"'{pos=0.3}, from=2-3, to=3-5]
	\arrow["{\eqref{eq:Serre_homogeneous_components}}", from=2-5, to=3-5]
\end{tikzcd}
\end{equation*}
is fulfilled since the top left triangle commutes by definition of $f$, the top right square by naturality of $\Omega$, and the remaining left region by \eqref{first diag}. To define the functor \eqref{piv-graded-ext} on morphisms, consider a morphism $\lambda\colon (X, f)\to   (X', f')$ in $(\D_F)^\sph$. From the naturality of $\Omega$, it follows that
$\dS_{\mathsf{F}(g)}^{\C}(\lambda)\circ\Omega_g^{-1}\circ f=\Omega_g^{-1}\circ f\circ \lambda$ and thus $ \lambda\colon (X, \Omega_g^{-1}\circ f)\longrightarrow  (X', \Omega_g^{-1}\circ f')$ is a morphism in $\D_{(-)^\sph \circ \mathsf{F}}$.
A routine check shows that the assignment given by
\begin{align*}
	  \left(X\;,\; s\colon X\to \dS_{\mathsf{F}(g)}^{\C}(X)\right)\longmapsto   \left(X\;,\; X\xrightarrow{s} \dS_{\mathsf{F}(g)}^{\C}(X)\xrightarrow{\Omega_g} X^{**}\right)
\end{align*}
provides a quasi-inverse for \eqref{piv-graded-ext}.
To endow \eqref{piv-graded-ext} with a monoidal structure, consider homogeneous objects $(X_g, f_g)$ and $(X_h, f_h)$ in $(\D_\mathsf{F})^\sph$. Then, their tensor product gets assigned the object
\begin{align*}
  \left(X_g\otimes X_h\;,\;  X_g\otimes X_h \xrightarrow{f_g\otimes f_h}X_g^{**}\otimes X_h^{**}\cong (X_g\otimes X_h)^{**}\xrightarrow{\Omega_{gh}^{-1}}\dS_{\mathsf{F}(g)}^{\C}(X_g\otimes X_h) \right)\,.
\end{align*}
On the other hand, the tensor product of their images under \eqref{piv-graded-ext} is given by
\begin{align*}
    \left(X_g\otimes X_h\;,\;  X_g\otimes X_h \xrightarrow{f_g\otimes f_h}X_g^{**}\otimes X_h^{**}\xrightarrow{\Omega_{g}^{-1}\otimes\Omega_{h}^{-1}}\dS_{\mathsf{F}(g)}^{\C}(X_g)\otimes\dS_{\mathsf{F}(g)}^{\C}(X_h)\cong\dS_{\mathsf{F}(g)}^{\C}(X_g\otimes X_h) \right)\,,
\end{align*}
which coincide according to \eqref{eq:mono_OmegaD2}, and thus we can consider the trivial monoidal structure on \eqref{piv-graded-ext}.
Lastly, recall that the pivotal structure of an object $(X_g, f)$ in $(\D_\mathsf{F})^\sph$ is given by $f$, while the pivotal structure of $ (X_g, s) $ in $\D_{(-)^\sph \circ \mathsf{F}}$ is given by $  \Omega_g\circ s$, see Section \ref{sec: pivotal graded extensions}. Then the pivotal structure of $(X_g, \Omega_g^{-1}\circ f)$ is  given by $\Omega_g\circ\Omega_g^{-1}\circ f=f$ and thus \eqref{piv-graded-ext} is pivotal in the sense of \eqref{eq: F preserves pivotal structure}.
\end{proof}

\begin{proposition}
Let $\C$ be a unimodular finite tensor category. There exists a pseudo-natural equivalence

\begin{equation}\label{eq:pseudo-nat piv ext}
\begin{tikzcd}[row sep=large]
\Ext(G,\C) \ar[rr,"\mathrm{E}"]\ar[d,"(-)^{\sph}",swap] &&\MonFun\left(\uuG,\BrPic(\C)\right) \ar[d," (-)^\sph\circ -"]\ar[dll,Rightarrow,"\simeq"]\\
\SphExt(G,\C^\sph) \ar[rr,"\eqref{eq:sph_ext_class}"'] && \MonFun\left(\uuG,\SphBrPic(\C^\sph)\right) 
\end{tikzcd}
\end{equation}
\end{proposition}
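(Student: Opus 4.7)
The plan is to produce the pseudo-natural equivalence in \eqref{eq:pseudo-nat piv ext} componentwise, bootstrapping off the equivalence constructed in Proposition~\ref{prop:piv-graded-ext}.

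First I would define the components at each $\D\in\Ext(G,\C)$. Writing $\mathsf{F}\coloneqq \mathrm{E}_\D\colon \uuG \to \BrPic(\C)$, the two composites in \eqref{eq:pseudo-nat piv ext} applied to $\D$ produce two monoidal $2$-functors $\uuG\to\SphBrPic(\C^\sph)$: on the one hand, the image of $\D^\sph\simeq(\D_\mathsf{F})^\sph$ under \eqref{eq:sph_ext_class}; on the other hand, the composite $(-)^\sph\circ \mathsf{F}$, which under \eqref{eq:sph_ext_class} corresponds to the spherical extension $\D_{(-)^\sph\circ \mathsf{F}}$. Proposition~\ref{prop:piv-graded-ext} supplies a canonical equivalence of spherical $G$-graded extensions of $\C^\sph$
\[
    (\D_\mathsf{F})^\sph \xrightarrow{~\sim~} \D_{(-)^\sph\circ\mathsf{F}},
\]
and transporting this across \eqref{eq:sph_ext_class} yields the desired invertible $1$-cell in $\MonFun(\uuG,\SphBrPic(\C^\sph))$, which serves as the component of our pseudo-natural equivalence at $\D$.

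Second, I would check pseudo-naturality in $\D$. Given a $1$-cell $F\colon\D\to\D'$ in $\Ext(G,\C)$, corresponding under $\mathrm{E}$ to a monoidal pseudo-natural transformation $\mathrm{E}_\D \Rightarrow \mathrm{E}_{\D'}$, I would show that the explicit formula $(X,f)\mapsto (X,\Omega_g^{-1}\circ f)$ defining \eqref{piv-graded-ext} is natural in $\mathsf{F}$. Concretely, the sphericalization $F^\sph$ intertwines the two equivalences from Proposition~\ref{prop:piv-graded-ext} up to a canonical invertible modification, a fact that follows directly from the naturality of $\Omega$ with respect to graded tensor equivalences, namely the commutativity of \eqref{eq:Serre_comp_graded_functor}. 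Pseudo-naturality at $2$-cells is automatic, since $\Omega_g$ is determined uniquely by the underlying $G$-graded data.

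The main obstacle is assembling this pointwise data into a \emph{coherent} pseudo-natural equivalence, that is, verifying the monoidal modification axioms. This amounts to checking that the monoidal structure on the equivalence \eqref{piv-graded-ext}, which in Proposition~\ref{prop:piv-graded-ext} was reduced to the monoidality identity \eqref{eq:mono_OmegaD2} of $\Omega$, is compatible with the coherence data of the sphericalization $2$-functor $(-)^\sph\colon\BrPic(\C)\to\SphBrPic(\C^\sph)$ constructed in Section~\ref{subsec:sph_modules}. This verification is diagrammatic and, via the naturality of the relative Serre functors and of $\Omega$, ultimately reduces to the monoidality of $\Omega$ already established in the proof of Proposition~\ref{prop:phi-is-equivariant}.
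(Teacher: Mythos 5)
Your proposal is correct and follows essentially the same route as the paper: the components are defined via Proposition~\ref{prop:piv-graded-ext}, and pseudo-naturality at a $1$-cell $F$ is reduced to the compatibility \eqref{eq:Serre_comp_graded_functor} of $\Omega$ with graded tensor equivalences from Lemma~\ref{lem:Serre_homogeneous}. The only (harmless) difference is that the paper computes both composites on an object $(X,f)$ explicitly and observes they agree on the nose, so the $2$-cell component can be taken to be the identity rather than a nontrivial invertible modification.
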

\begin{proof}
Let $\tilde{\mathrm{E}}$ denote the composition of the $2$-functors $\mathrm{E}$, $(-)^\sph \circ -$ and  the inverse of \eqref{eq:sph_ext_class}. We define the desired pseudo-natural equivalence $(-)^{\rm sph} \xRightarrow{~\sim~} \tilde{\mathrm{E}}$ as follows: for an extension $\D\in \Ext(G, \C)$, consider the equivalence $\D^\sph \cong (\D_{\mathrm{E}_{\D}})^\sph  \xRightarrow{\eqref{piv-graded-ext}}\D_{(-)^\sph \circ \mathrm{E}_{\D}}$ as described in Proposition \ref{prop:piv-graded-ext}. Now, given a $1$-cell $F\colon\D_1\to \D_2$ in $\Ext(G, \C)$, we need to define a $2$-cell
      \begin{equation}\label{eq:PsNat diag}
  	\begin{tikzcd}[column sep=large, row sep=normal]
  		\D_1^{\sph} \ar[r,"F^{\sph}"]\ar[""{name=0}, d,swap,"\eqref{piv-graded-ext}"]& \D_2^{\sph} \ar[""{name=1}, d,"\eqref{piv-graded-ext}"] \\
  		\D_{(-)^\sph \circ \mathrm{E}_{\D_1}}  \ar[r,swap,"\tilde{\mathrm{E}}(F)"]& \D_{(-)^\sph \circ \mathrm{E}_{\D_2}}
  		\arrow["", shorten <=14pt, shorten >=14pt, Rightarrow, from=0, to=1]
  	\end{tikzcd}
  \end{equation}
obeying the required pseudo-naturality condition. To this end, note that the composition of the left and bottom functors assigns to an object $(X,f)$ in $\D_1^{\sph}$ of homogeneous degree $g\in G$, the following value
  \begin{align*}
  	(X, f)\longmapsto \left(X, \Omega_g^{-1}\circ f\right)\longmapsto \left(F(X), \tilde F(\Omega_g^{-1}\circ f)\right),
  \end{align*}                  
where $\tilde F(\Omega_g^{-1}\circ f)$ is given by the composition $F_g(X)\xrightarrow{F(\Omega_g^{-1}\circ f)}F_g\circ \dS_{(\D_1)_g}^{\C}(X)\xrightarrow{~\eqref{eq:Serre_bimodule_equivalence}~}\dS_{(\D_2)_g}^{\C}\circ F_g(X)$. On the other hand, the composition of the top and right functors yields
  \begin{align*}
  	(X, f)\longmapsto \left(F(X), \tilde F(f)\right)\longmapsto \left(F(X), \Omega_g^{-1}\circ\tilde F(f)\right),
  \end{align*}                  
where $\tilde{F}(f)$ is given by the composition $F(X)\xrightarrow{F(f)} F(X^{**})\xrightarrow{~\eqref{eq:iso_F_double-dual}~} F(X)^{**}$. It follows from Lemma \ref{lem:Serre_homogeneous}, that these two values agree and thus we can define \eqref{eq:PsNat diag} as the identity $2$-cell, thereby obtaining the desired pseudo-natural equivalence, which finishes the proof.
\end{proof}


\section{Obstruction theory}
\label{sec:obstructiontheory}

In this section, we develop an obstruction theory for pivotal and spherical extensions.
The general pattern we find is that there are two obstructions $O_1$ and $O_2$.
The first obstruction $O_1$ comes from the fact that some bimodule categories are not pivotalizable.
The second obstruction $O_2$ checks whether or not a given choice of bimodule pivotal structures is monoidal on the whole extension.


\subsection{An algebraic description of obstructions}\label{sec:algebraicobstruction}

Let $\C$ be a tensor category with a fixed pivotal structure $\mathfrak{p}$.
Recall the 1-cocycle
\[
S\colon \pi_0(\BrPic(\C))\longrightarrow \Inv(\Z(\C)),\; \M \longmapsto Z_\M
\]
defined in \eqref{eqn: hom S}. By Proposition~\ref{Z is a 1-cocycle}, pivotal invertible $\C$-bimodule categories are precisely elements of the kernel of $S$.

Let $G$ be a finite group. Consider a $G$-graded extension
\[
\D =\bigoplus_{g\in G}\, \D_g,\qquad \D_e=\C
\]
corresponding to a monoidal $2$-functor $\mathsf{F}\colon \uuG \longrightarrow \BrPic(\C),\;  g\longmapsto \D_g$.

For $\D$ to have a pivotal structure  extending that of $\C$, it is necessary (but, in general,  not sufficient) for it to have a structure of a pivotal $\C$-bimodule category. In particular, its homogeneous components $\D_g,\, g\in G$ must be invertible pivotal $\C$-bimodule categories (with respect to $\mathfrak{p}$).
In order to build such a structure, we would begin by trying to choose $\C$-bimodule pivotal structures:
\begin{equation}
\label{eqn pivotal structures on Cg}
\tilde{\mathfrak{p}}_g\colon \id_{\D_g}{\xRightarrow{~\sim\,}} \dS^{\C}_{\D_g},\; g\in G\,.
\end{equation}

The following composition gives an obstruction to being able to pick \eqref{eqn pivotal structures on Cg}:
\begin{equation}
\label{eqn: obstruction O1}
    O_1\coloneqq S\circ F\colon G \longrightarrow \Z(\C)^\times: g \mapsto Z_{\D_g}.
\end{equation}
where $S$ is the map from \eqref{eqn: hom S}.

Let $Z \mapsto  Z^g,\, Z\in \Z(\C),\, g\in G,$ denote the restriction of the action $\partial\circ F\colon G \to \Aut^{br}(\Z(\C))$ to $\Z(\C)^\times$, where $\partial$ is the map from \eqref{eqn: partial}.
Note that $Z_{\D_g}= S(\D_g)$, whereas $Z^g$ is shorthand for $\partial_{\D_g}(Z)$.

\begin{proposition}\label{prop:1-cocycle}
The map \eqref{eqn: obstruction O1} satisfies $O_1(gh)=O_1(g) O_1(h)^g$, i.e.
$O_1$ is a 1-cocycle on $G$.
\end{proposition}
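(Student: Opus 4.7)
The plan is to reduce the claim directly to the cocycle identity already established in Proposition~\ref{Z is a 1-cocycle}. The point is that $O_1$ is literally the composition $S \circ \pi_0(\mathsf{F})$, where $\pi_0(\mathsf{F})\colon G \to \pi_0(\BrPic(\C))$ is the group homomorphism induced on connected components by the monoidal $2$-functor $\mathsf{F}$. So one only has to transport the cocycle identity for $S$ through $\pi_0(\mathsf{F})$.

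First, I would unpack the hypothesis that $\mathsf{F}$ is monoidal: this provides, for every pair $g,h \in G$, a $\C$-bimodule equivalence
\[
\mathsf{F}_{g,h}\colon \D_g \boxtimes_\C \D_h \xrightarrow{~\sim~} \D_{gh},
\]
so in $\pi_0(\BrPic(\C))$ one has $[\D_{gh}] = [\D_g]\cdot [\D_h]$. Next, applying Proposition~\ref{Z is a 1-cocycle} with $\M = \D_g$ and $\N = \D_h$ yields
\[
Z_{\D_g \boxtimes_\C \D_h} \cong Z_{\D_g} \otimes \partial_{\D_g}(Z_{\D_h}).
\]
Combining this with the bimodule equivalence $\mathsf{F}_{g,h}$ (which gives $Z_{\D_{gh}} \cong Z_{\D_g \boxtimes_\C \D_h}$) and translating $\partial_{\D_g}(Z_{\D_h})$ into the shorthand $Z_{\D_h}^g$, one obtains
\[
O_1(gh) = Z_{\D_{gh}} \cong Z_{\D_g} \otimes Z_{\D_h}^g = O_1(g)\, O_1(h)^g,
\]
which is precisely the $1$-cocycle condition for $O_1$ with respect to the $G$-action on $\Inv(\Z(\C))$ induced by $\partial \circ \pi_0(\mathsf{F})$.

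There isn't really a hard step here: the content of the proposition is entirely absorbed in Proposition~\ref{Z is a 1-cocycle} together with the monoidality of $\mathsf{F}$. The only subtlety to be careful about is notational: one must verify that the $G$-action on $\Z(\C)^\times$ defined just above the statement (namely $Z \mapsto Z^g \coloneqq \partial_{\D_g}(Z)$) coincides with the action appearing on the right-hand side of the cocycle identity from Proposition~\ref{Z is a 1-cocycle}. This is immediate from the definitions of $\partial$ in \eqref{eqn: partial} and of $O_1$ in \eqref{eqn: obstruction O1}, so the proof is essentially a one-line citation once these identifications are made explicit.
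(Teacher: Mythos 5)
Your proof is correct and is exactly the paper's argument: the paper also deduces the claim as an immediate consequence of Proposition~\ref{Z is a 1-cocycle}, with your version simply spelling out the role of the monoidal structure of $\mathsf{F}$ and the identification $Z^g = \partial_{\D_g}(Z)$. No gaps.
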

\begin{proof}
This is an immediate consequence of Proposition~\ref{Z is a 1-cocycle}.
\end{proof}

\begin{corollary}\label{rem: O1Alg}
One can choose bimodule pivotal structures (with respect to $\mathfrak{p}$) on the homogeneous components \eqref{eqn pivotal structures on Cg} if and only if the obstruction $O_1$  vanishes as a function, i.e. $O_1(g)=\unit$  for all $g\in G$, (equivalently if it is trivial as an element $O_1 \in \widetilde{H}^1(G, \Inv(\Z(\C)))$ in reduced group cohomology\footnote{Recall that for a group $G$ acting on an abelian group $M$ the map $\widetilde{H}^k(G, M) \to H^k(G,M)$ from reduced to unreduced group cohomology is an isomorphism for $k\geq 2$. For $k=1$, the former is the group of `twisted' homomorphisms $G\to M$, i.e. functions satisfying the $1$-cocycle condition from Proposition~\ref{prop:1-cocycle} while the latter is the quotient thereof by the 1-coboundaries of the form $g \mapsto m^g m^{-1}$ for an $ m \in M$.}, also see \S~\ref{sec:homotopicalobstruction}).  
\end{corollary}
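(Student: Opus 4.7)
The approach is to combine the two results immediately preceding the corollary: Proposition~\ref{Z is a 1-cocycle} characterizes which invertible $\C$-bimodule categories are pivotalizable, and Proposition~\ref{prop:1-cocycle} packages this obstruction into a cocycle statement on $G$. Once these are in hand, the corollary is essentially a translation.

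First I would spell out the iff in the first sentence. By the second assertion of Proposition~\ref{Z is a 1-cocycle}, each homogeneous component $\D_g$ admits a $\C$-bimodule pivotal structure $\tilde{\mathfrak{p}}_g\colon \id_{\D_g}\xRightarrow{~\sim\,} \dS^\C_{\D_g}$ if and only if the central invertible object $Z_{\D_g}\in \Inv(\Z(\C))$ is isomorphic to $\unit$. Since $O_1(g) = S(\D_g) = Z_{\D_g}$ by the definition \eqref{eqn: obstruction O1}, simultaneous existence of the pivotal structures \eqref{eqn pivotal structures on Cg} for all $g\in G$ is equivalent to $O_1(g)\cong \unit$ for every $g$, which is the first formulation of vanishing.

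Next I would translate this into reduced cohomology. By Proposition~\ref{prop:1-cocycle}, $O_1$ is a $1$-cocycle for the action of $G$ on $\Inv(\Z(\C))$ induced by $\partial\circ \mathsf{F}$, and therefore represents a class $[O_1]\in \widetilde H^1(G,\Inv(\Z(\C)))$. Per the footnote, $\widetilde H^1$ is by definition the group of such $1$-cocycles (with no coboundary quotient applied), so $[O_1]=0$ exactly when $O_1$ is the constantly trivial function $g\mapsto \unit$; this matches the set-theoretic vanishing from the previous step.

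The only minor subtlety worth flagging explicitly is that $Z_{\D_g}$ is only well-defined up to isomorphism (cf.\ the discussion surrounding \eqref{eqn: hom S}), so ``$O_1$ vanishes as a function'' really means $Z_{\D_g}\cong \unit$ in $\Inv(\Z(\C))$ for every $g$, and the $1$-cocycle identity of Proposition~\ref{prop:1-cocycle} holds as an isomorphism. I do not anticipate any genuine obstacle beyond this bookkeeping, since the substantive content has already been established in Propositions~\ref{Z is a 1-cocycle} and~\ref{prop:1-cocycle}.
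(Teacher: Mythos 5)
Your proposal is correct and follows exactly the route the paper intends (the paper states this corollary without proof, as an immediate consequence of Proposition~\ref{Z is a 1-cocycle} and Proposition~\ref{prop:1-cocycle}): pivotalizability of each $\D_g$ is equivalent to $Z_{\D_g}=O_1(g)\cong\unit$, and triviality in $\widetilde H^1$ means triviality of the cocycle itself since no coboundary quotient is taken in degree one. Nothing is missing.
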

Let us denote $\dS_g:=\dS^{\C}_{\D_g}$ and $\id_g :=\id_{\D_g}$ for all $g\in G$.
The direct sum of module pivotal structures \eqref{eqn pivotal structures on Cg} would then be a natural isomorphism  
\begin{equation}
\label{eqn: direct sum p}    
\mathfrak{p}^\D=\bigoplus_{g\in G}\, \tilde{\mathfrak{p}}_g: \id_\D {\xRightarrow{~\sim\,}} \bigoplus_{g\in G}\, \dS_g {\xRightarrow{~\eqref{eq:Serre_homogeneous_components}\,}}(-)^{**} . 
\end{equation}

Next, we determine an obstruction for the natural isomorphism \eqref{eqn: direct sum p} to be a pivotal structure on $\D$, i.e. for $\mathfrak{p}^\D$ to be monoidal.  
The following composition of natural isomorphisms:
\begin{equation}
\label{eqn: O2alg}  
\id_{gh} \xRightarrow{\,\sim\,} \id_{g} \boxtimes_\C \id_{h}
\xRightarrow{ \tilde{\mathfrak{p}}_g \boxtimes_\C \tilde{\mathfrak{p}}_h }
\dS_g \boxtimes_\C \dS_h \xRightarrow{\,\sim\,}  \dS_{gh}
\xRightarrow{ \tilde{\mathfrak{p}}_{gh}^{-1}}  \id_{gh}, \qquad g,h\in G,
\end{equation}
where the middle isomorphism  is from Proposition~\ref{prop:Serre_monoidal},
determines a function
\[
O_2: G\times G \to \kk^\times\,.
\]

\begin{proposition}
The function $O_2$ is a $2$-cocycle whose cohomology class in $H^2(G, \kk^\times)$ is independent of the choice of bimodule pivotal structures 
$\tilde{\mathfrak{p}}_g$, $g\in G$, from
\eqref{eqn pivotal structures on Cg}. This class 
is trivial if and only if the isomorphisms $\tilde{\mathfrak{p}}_g$ can be chosen
so that $\mathfrak{p}^\D$ defined in \eqref{eqn: direct sum p} is an isomorphism of tensor functors, i.e. a pivotal structure on $\D$.
\end{proposition}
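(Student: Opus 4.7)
The plan has three parts: verify that $O_2$ is a $2$-cocycle, show that its cohomology class is independent of the chosen module pivotal structures, and relate vanishing of the class to existence of a pivotal structure on $\D$ extending $\fp$.

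For the cocycle identity, I would unpack the two composites associated with triples $(g,h,k)$ as pentagon-shaped diagrams of bimodule natural isomorphisms between endofunctors of $\D_{ghk}$, using the tensor product equivalences $\otimes_{g,h}\colon \D_g \btC \D_h \xrightarrow{\sim} \D_{gh}$ coming from the monoidal structure of $\D$. Both sides reduce to the same composition of the $\tilde{\fp}_\bullet$ with the Serre monoidality isomorphisms $\mu_{\M,\N}$ from Proposition~\ref{prop:Serre_monoidal}, differing only by the order in which $\mu$ is applied. Their agreement is exactly the associativity diagram \eqref{eq:monPNT-1} established in the proof of Proposition~\ref{prop:RSpNE-is-Monoidal}. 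Since each $\D_g$ is invertible, the composite \eqref{eqn: O2alg} is a scalar automorphism of $\id_{gh}$, and the identity of natural transformations translates into the scalar cocycle identity $O_2(g,h)\, O_2(gh,k) = O_2(g,hk)\, O_2(h,k)$.

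For independence of the class, the key observation is that for an invertible $\C$-bimodule category $\D_g$, the group of bimodule automorphisms of $\id_{\D_g}$ is $\kk^\times$. Hence any two module pivotal structures on $\D_g$ differ by a unique scalar: $\tilde{\fp}'_g = \lambda_g\, \tilde{\fp}_g$ for some $\lambda_g \in \kk^\times$. Substituting into \eqref{eqn: O2alg} gives $O'_2(g,h) = \lambda_g \lambda_h\, \lambda_{gh}^{-1}\, O_2(g,h)$, so $O_2$ and $O'_2$ differ by a $2$-coboundary and represent the same class in $H^2(G, \kk^\times)$.

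Finally, the class $[O_2]$ vanishes precisely when one can rescale the $\tilde{\fp}_g$ so that the associated cocycle becomes identically $1$. If $\fp^\D$ from \eqref{eqn: direct sum p} is monoidal, then by definition $O_2(g,h) = 1$ for all $g,h$, so $[O_2] = 0$; conversely, if $[O_2] = 0$, choose $\lambda_g \in \kk^\times$ with $\lambda_g \lambda_h\, \lambda_{gh}^{-1} = O_2(g,h)^{-1}$ and replace $\tilde{\fp}_g$ by $\lambda_g\, \tilde{\fp}_g$. The resulting new cocycle is $1$, which is precisely the condition that $\fp^\D$ be monoidal on the homogeneous blocks $\D_g \otimes \D_h \to \D_{gh}$; this monoidality then extends by additivity to all of $\D$. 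The main technical point, which I expect to be the chief obstacle, is the identification of the Serre monoidality $\mu_{g,h}$ with the double-dual comparison $\nu_{g,h}$ via the isomorphisms $\Omega_g$ of Lemma~\ref{lem:Serre_homogeneous}; this identification, already carried out in the proof of Proposition~\ref{prop:phi-is-equivariant}, is what guarantees that \eqref{eqn: O2alg} genuinely measures the monoidality of $\fp^\D$ with respect to the double-dual tensor structure of $\D$.
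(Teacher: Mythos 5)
Your proposal is correct and follows essentially the same route as the paper: both composites in the cocycle identity reduce to the same three-fold composite via the coherence \eqref{eq:monPNT-1} of the Serre monoidality isomorphisms, the cohomology class shifts by the coboundary of the scalars relating two choices of $\tilde{\fp}_g$ (each being unique up to $\kk^\times$ since $\D_g$ is invertible), and triviality of the class is equivalent to being able to rescale so the cocycle is identically $1$. Your added remark about matching $\mu_{g,h}$ with $\nu_{g,h}$ via $\Omega_g$ (diagram \eqref{eq:mono_OmegaD2}) is a useful explicit acknowledgement of a step the paper's proof leaves implicit in asserting that $\fp^\D$ is then monoidal.
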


\begin{proof}
The $2$-cocycle condition is straightforward: both  
$O_2(f,g) \circ O_2(fg,h)$ and $O_2(g,h)\circ O_2(f,gh)$ 
coincide with the following composite:
\[
\id_{fgh} 
\xRightarrow{\,\tilde{\mathfrak{p}}_{fgh}^{-1}\,} 
\dS_{fgh} \cong 
\dS_{f} \boxtimes_{\C} \dS_{g} \boxtimes_{\C} \dS_{h} 
\xRightarrow{\,\tilde{\mathfrak{p}}_f \boxtimes_\C \tilde{\mathfrak{p}}_g \boxtimes_\C \tilde{\mathfrak{p}}_h\,} 
\id_{f} \boxtimes_\C \id_{g} \boxtimes_\C \id_{h}  
\xRightarrow{\,\sim\,} 
\id_{fgh}, \qquad f,g,h\in G.
\]
If $O_2(g,h) = c(g,h)c(g)^{-1}c(h)^{-1}$ for some function $c\colon G \to \kk^\times$,
then from \eqref{eqn: O2alg} we see that $\tilde{\fp}'_g := c(g)\tilde{\fp}_g$ satisfies
$\tilde{\fp}'_g \boxtimes_\C \tilde{\fp}'_h = \tilde{\fp}'_{gh}$ for all $g,h\in G$,
i.e. $\fp^\D$ is a pivotal structure on $\D$. 
Since each $\tilde{\fp}_g$ is determined up to a nonzero scalar, the converse also holds.
\end{proof}

\begin{remark}
Pivotal structures on $\D$ 
extending the given pivotal structure on $\C$ are parameterized by a torsor over $H^1(G,\, \kk^\times)$  (note that it is isomorphic to the group of tensor automorphisms of $\id_\D$ trivial on $\id_\C$).  
\end{remark}

Now suppose that $\mathcal C$ is a spherical (unimodular) finite tensor category.
Recall from Remark \ref{rem: 2 spherical structures} that although pivotal bimodule categories need not be spherical, their pivotal structures can nevertheless be modified to produce spherical structures.
In other words, an invertible bimodule is pivotalizable if, and only if, it is sphericalizable.
Thus we find that the function $O_1$ is precisely the first obstruction to spherical structures as well.

\begin{proposition}\label{prop:O1Alg-Spherical}
Assuming $\mathcal C$ is a spherical (unimodular) finite tensor category, one can choose bimodule spherical structures (with respect to $\mathfrak{p}$) on the homogeneous components \eqref{eqn pivotal structures on Cg} if and only if the obstruction $O_1$  vanishes as a function, i.e. $O_1(g)=\unit$  for all $g\in G$, (equivalently if it is trivial as an element $O_1 \in \widetilde{H}^1(G, \Inv(\Z(\C))_2)$.)
\end{proposition}

The only difference between Corollary \ref{rem: O1Alg} and Proposition \ref{prop:O1Alg-Spherical} is that the image of $O_1$ lies in the 2-torsion subgroup $\Inv(\mathcal Z(\mathcal C))_2 \subseteq \Inv(\mathcal{Z}(\mathcal C))$.
This follows from the existence of the Radford isomorphism (\ref{eq:Rad for modules over sph}).

The second obstruction to sphericality is, once again, the same function as before.

\begin{proposition}
    Suppose that $\mathcal C$ is a spherical (unimodular) finite tensor category, and the first obstruction $O_1$ for the extension vanishes.
    Upon choosing spherical bimodule structures on each of the graded components $\,\mathcal C_g$, the resulting function $O_2$ is a $2$-cocycle whose cohomology class in $$H^2(G, (\kk^\times)_2) = \left\{ \begin{array}{cc} H^2(G, \mathbb{Z}/2\mathbb{Z}) & \mathrm{char}(\kk)\neq 2\\ 0 & \mathrm{char}(\kk)=2\end{array}\right.$$ is independent of the choice of bimodule spherical structures $\tilde{\mathfrak{p}}_g$, $g\in G$, from
    \eqref{eqn pivotal structures on Cg}. This class is trivial in $H^2(G, (\kk^{\times})_2)$ if and only if the isomorphisms $\tilde{\mathfrak{p}}_g$ can be chosen
    so that $\mathfrak{p}^\D$ defined in \eqref{eqn: direct sum p} is a spherical structure on $\D$.
\end{proposition}

\begin{remark}\label{rem:O2Alg-Spherical vs Pivotal}
    A key difference here is that both the 2-cocycles and the 1-cochains that determine the coboundaries can only take values in $(\kk^\times)_2$.
    This implies that the map $H^2(G,(\kk^\times)_2)\to H^2(G,\kk^\times)$, induced by inclusion, need not be injective.
    If the class of $O_2$ is a nontrivial element in this kernel, then this means that the spherical structure on $\mathcal C$ can be extended to a pivotal structure, but not to a spherical one.
\end{remark}

\begin{corollary}
    If $\mathcal D$ is a pivotal $G$-graded unimodular finite tensor category over an algebraically closed field $\kk$ of characteristic $2$, then $\mathcal D$ is spherical if and only if $\mathcal C:= \mathcal D_e$ is spherical. In $\mathrm{char}(\kk)\neq 2$, this is obstructed by a class in $\mathrm{ker}\left(H^2(G, \mathbb{Z}/2\mathbb{Z}) \to H^2(G, \kk^{\times}) \right) = \mathrm{Im}\left(H^1(G, \kk^{\times}) \to H^2(G, \mathbb{Z}/2\mathbb{Z})\right).$
\end{corollary}


\subsection{A homotopical perspective} \label{sec:homotopicalobstruction}

We interpret our algebraic obstruction theory from \S \ref{sec:algebraicobstruction}  in homotopical terms. 

Let $\C$ be a tensor category with a fixed pivotal structure, $G$ a group, and a given $G$-graded extension classified by a monoidal $2$-functor ${\mathsf{F}}\colon \uuG \to \BrPic(\C)$. Then, by Theorem~\ref{thm:piv_ext}, pivotal structures on the $G$-graded extension, compatible with the given pivotal structure on $\C$, are classified by monoidal lifts:
\[\begin{tikzcd}
 & \PivBrPic(\C) \arrow[d, "\forg"] \\
\uuG \arrow[r, "{\mathsf{F}}"'] \arrow[ru, dashed, "\widetilde{\mathsf{F}}"] & \BrPic(\C)   
\end{tikzcd}\]

\begin{corollary}\label{cor:trivializationofelement}
For $\C$ a tensor category with a fixed pivotal structure, $G$ a group and a given $G$-graded extension classified by a monoidal functor ${\mathsf{F}}\colon\uuG \to \BrPic(\C)$, pivotal structures on the graded extension are equivalent to trivializations of the element 
\begin{equation}\label{eq:whiskering}
\left(\begin{tikzcd}[scale=0.75, ampersand replacement=\&]
	{\underline{\underline{G}}} \& {\bf{BrPic}(\mathcal C)} \&\& {\bf{BrPic}(\mathcal C)}
	\arrow["{\mathsf{F}}", from=1-1, to=1-2]
	\arrow[""{name=0, anchor=center, inner sep=0}, "{\mathrm{id}}"', curve={height=16pt}, from=1-2, to=1-4]
	\arrow[""{name=1, anchor=center, inner sep=0}, "{\mathrm{id}}", curve={height=-16pt}, from=1-2, to=1-4]
	\arrow["{\mathbf{S}}", shorten <=5pt, shorten >=5pt, Rightarrow, from=1, to=0]
\end{tikzcd} \right) \in \Omega_{\mathsf{F}}\MonFun(\uuG, \BrPic(\C)).\end{equation}
Here, $\bS$ denotes the Serre pseudo-natural equivalence constructed in Proposition \ref{prop:RSpNE-is-Monoidal}.
\end{corollary}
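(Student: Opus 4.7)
The plan is to chain together the three classification results established earlier in the paper. First, by Theorem~\ref{thm:piv_ext}, pivotal structures on the $G$-graded extension $\D_{\mathsf{F}}$ compatible with the fixed pivotal structure on $\C$ correspond, under the equivalence of $2$-groupoids, to monoidal lifts $\widetilde{\mathsf{F}}\colon\uuG\to\PivBrPic(\C)$ of $\mathsf{F}$ along the forgetful $2$-functor $\forg$. So it suffices to exhibit a bijection (really, an equivalence of groupoids) between such lifts and trivializations of the element \eqref{eq:whiskering}.

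Second, by Proposition~\ref{prop:PivBrPic-is-BZ-equivariant}, $\PivBrPic(\C)$ is monoidally $2$-equivalent to the $2$-categorical group $\BrPic(\C)^{B\underline{\zZ}}$ of fixed points for the $B\underline{\zZ}$-action on $\BrPic(\C)$ induced by the Serre monoidal pseudo-natural equivalence $\bS$. Post-composition then endows $\MonFun(\uuG,\BrPic(\C))$ with an induced $B\underline{\zZ}$-action whose underlying pseudo-natural equivalence is precisely whiskering by $\bS$ (cf.\ \eqref{MonFun_pseudo}), and one has the formal identification
\[
\MonFun\bigl(\uuG,\PivBrPic(\C)\bigr) \;\simeq\; \MonFun\bigl(\uuG,\BrPic(\C)\bigr)^{B\underline{\zZ}}.
\]
Thus lifts $\widetilde{\mathsf{F}}$ of $\mathsf{F}$ correspond to $B\underline{\zZ}$-fixed point structures on the object $\mathsf{F}\in \MonFun(\uuG,\BrPic(\C))$.

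Third, I will unpack the data of a $B\underline{\zZ}$-fixed point structure on $\mathsf{F}$ according to Example~\ref{ex: fixed point for BZ and BZ2 actions}. For the monoidal $B\underline{\zZ}$-action on $\MonFun(\uuG,\BrPic(\C))$ determined by the pseudo-natural equivalence $\bS\circ -$, a fixed point structure on $\mathsf{F}$ is, up to equivalence, an invertible $2$-morphism
\[
\Theta\colon \id_{\mathsf{F}} \;\Rightarrow\; (\bS\circ -)_{\mathsf{F}} \;=\; \bS \ast \mathsf{F}
\]
in $\MonFun(\uuG,\BrPic(\C))$, that is, a trivialization of the element $\bS\ast \mathsf{F} \in \Omega_{\mathsf{F}}\MonFun(\uuG,\BrPic(\C))$ in \eqref{eq:whiskering}. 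Compatibility of this fixed point data with $1$- and $2$-morphisms of fixed points matches the compatibility of trivializations required at the level of $\Omega_{\mathsf{F}}$.

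The only real content beyond bookkeeping is verifying that the $B\underline{\zZ}$-action on $\MonFun(\uuG,\BrPic(\C))$ induced from the $B\underline{\zZ}$-action on $\BrPic(\C)$ really acts on $\mathsf{F}$ via $\bS\ast\mathsf{F}$, and that under these identifications the fixed point datum $\Theta$ corresponds to the pivotal lift $\widetilde{\mathsf{F}}$ of Theorem~\ref{thm:piv_ext}. Both are immediate from the construction of the induced action by post-composition (\eqref{MonFun_pseudo} and the discussion following it) together with Proposition~\ref{prop:phi-is-equivariant}. Combining the three equivalences gives the claimed bijection between pivotal structures on $\D_{\mathsf{F}}$ and trivializations of $\bS\ast\mathsf{F}$.
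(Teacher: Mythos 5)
Your proof is correct and follows essentially the same route as the paper: identify $\PivBrPic(\C)$ with $B\underline{\zZ}$-fixed points via Proposition~\ref{prop:PivBrPic-is-BZ-equivariant}, transfer the fixed-point structure to $\mathsf{F}\in\MonFun(\uuG,\BrPic(\C))$ using the action induced by post-composition, and unpack it via Example~\ref{ex: fixed point for BZ and BZ2 actions} as a trivialization of the whiskered element $\bS\ast\mathsf{F}$ in $\Omega_{\mathsf{F}}$. Your additional appeal to Proposition~\ref{prop:phi-is-equivariant} is harmless but redundant, since Theorem~\ref{thm:piv_ext} already packages that equivariance into the statement about lifts.
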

\begin{proof}
    By Corollary~\ref{prop:PivBrPic-is-BZ-equivariant}, the monoidal $2$-groupoid $\PivBrPic(\C) = \BrPic(\C)^{B\underline{\mathbb{Z}}}$ is the monoidal $2$-groupoid of homotopy fixed points for an action of the $2$-categorical group $B\underline{\mathbb{Z}}$ constructed in Proposition~\ref{BZ_action_BrPicC}. 
Therefore, compatible pivotal structures on the graded extension are also equivalent to $B\underline{\mathbb{Z}}$-fixed point data on the element  ${\mathsf{F}}\in \MonFun(\underline{G}, \BrPic(\C)) $ with $B\underline{\mathbb{Z}}$-action on $\MonFun(\underline{G}, \BrPic(\C))$ determined by that on $\BrPic(\C)$. As explained in Example~\ref{ex: fixed point for BZ and BZ2 actions}, given an element $y \in Y$ in a $2$-groupoid $Y$ with a $B\underline{\mathbb{Z}}$-action, a fixed point structure on $y$ amounts to a choice of trivialization of the element $\lambda_y \in \Omega_y Y$ determined by the action.
\end{proof}

Thus, the homotopy class of the composite~\eqref{eq:whiskering}
in the group $\pi_1 (\MonFun(\uuG, \BrPic(\C)); {\mathsf{F}})$ is an (obvious) complete obstruction to the existence of a pivotal structure.

Translated into homotopy theory, the $2$-groupoid $\MonFun(\underline{G}, \BrPic(\C))$ is the space of pointed maps $\Map_*(B G, B\BrPic(\C))$ where $BG$ and $B\BrPic(\C)$ denote the respective classifying spaces.

\begin{proposition}\label{prop:obstruction}
    Let $X$ be a pointed connected $3$-groupoid and $F\in \Map_*(BG, X)$. Then, there is a long exact sequence of group homomorphisms:
    $$0 \to \widetilde{H}^2(BG, \pi_3 X)\to \pi_1(\Map_*(BG,X); F) \to \widetilde{H}^1(BG, \pi_2 X) \to \widetilde{H}^3(BG, \pi_3 X) .$$
Here,  $\widetilde{H}$ denotes reduced group cohomology\footnote{
We remind the reader of the following homotopical description of reduced and unreduced group cohomology: Write $K(A,n)$ for an Eilenberg-MacLane space of an abelian group $A$ and recall that \[\pi_0 \Map(BG, K(A,n)) = H^n(BG, A)\hspace{1cm} \pi_0 \Map_*(BG, K(A,n)) = \widetilde{H}^n(BG, A)\] computes unreduced and reduced cohomology with trivial coefficients, respectively. More generally, for a $G$-action on $A$ with induced $G$-action on $K(A,n)$ with homotopy fixed point space $K(A,n)^{hG}$, unreduced, resp. reduced, cohomology with twisted coefficients can be computed as follows:
\[ \pi_0 K(A,n)^{hG} = H^n(BG, A) \hspace{1cm} \pi_0 \mathrm{fib}\left(K(A,n)^{hG} \to K(A,n)\right) = \widetilde{H}^n(BG, A) 
\] } with coefficients twisted by the action $\pi_1(G) \stackrel{\pi_1(F)}{\to} \pi_1(X) \to \Aut(\pi_n(X))$ for $n=2,3$, respectively,  where the latter map is the canonical action of $\pi_1(X)$ on $\pi_n(X)$. 
\end{proposition}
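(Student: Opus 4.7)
My plan is to deduce the exact sequence from the Postnikov tower of $X$. Since $X$ is a pointed connected $3$-type, it sits in two principal fiber sequences
$$K(\pi_3 X, 3) \to X \to \tau_{\leq 2} X \qquad \text{and} \qquad K(\pi_2 X, 2) \to \tau_{\leq 2} X \to B\pi_1 X,$$
with fibers carrying the natural $\pi_1 X$-action and the fibrations classified by twisted $k$-invariants. The map $F\colon BG \to X$ has composites $\overline{F}\colon BG \to \tau_{\leq 2} X$ and $\pi_1 F\colon BG \to B\pi_1 X$, and the induced $G$-action on $\pi_n X$ for $n = 2, 3$ obtained through $\pi_1 F$ is precisely the twisting appearing in the statement. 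Applying the fiber-sequence-preserving functor $\Map_*(BG, -)$, I would chase the resulting long exact sequences of homotopy groups based at the images of $F$.

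The key computational ingredient is that for any abelian $G$-module $A$, the pointed lift space over a twisted Eilenberg-MacLane fibration has homotopy groups $\pi_i = \widetilde{H}^{n-i}(BG, A)$ with coefficients twisted through $\pi_1 F$. This follows from looping $\Omega K(A, n) \simeq K(A, n-1)$ together with the footnote's identification of reduced twisted cohomology as $\pi_0$ of the fiber of $K(A, n)^{hG} \to K(A, n)$. In particular, $\pi_i \Map_*(BG, B\pi_1 X)_{\pi_1 F} = 0$ for $i \geq 1$, since $B\pi_1 X$ is a $1$-type and pointed maps from connected $BG$ into the discrete group $\Omega K(\pi_1 X, 1) = \pi_1 X$ are trivial. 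Also, $\widetilde{H}^0(BG, \pi_2 X) = 0$ because $BG$ is connected.

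Applying the long exact sequence to the second fibration $K(\pi_2 X, 2) \to \tau_{\leq 2} X \to B\pi_1 X$ based at $\pi_1 F$, the vanishing above yields
$$\pi_1 \Map_*(BG, \tau_{\leq 2} X)_{\overline{F}} \;\cong\; \widetilde{H}^1(BG, \pi_2 X) \qquad \text{and} \qquad \pi_2 \Map_*(BG, \tau_{\leq 2} X)_{\overline{F}} \;=\; 0.$$
Inserting these together with $\pi_1 \Map_*(BG, K(\pi_3 X, 3)) = \widetilde{H}^2(BG, \pi_3 X)$ and $\pi_0 \Map_*(BG, K(\pi_3 X, 3)) = \widetilde{H}^3(BG, \pi_3 X)$ into the long exact sequence of the first fibration $K(\pi_3 X, 3) \to X \to \tau_{\leq 2} X$ based at $F$ collapses it to the claimed four-term exact sequence: the vanishing of $\pi_2$ of the base gives injectivity at $\widetilde{H}^2(BG, \pi_3 X)$, and the connecting homomorphism $\widetilde{H}^1(BG, \pi_2 X) \to \widetilde{H}^3(BG, \pi_3 X)$ is induced by cup product with the pullback $\overline{F}^* k_3$ of the $k$-invariant $k_3 \in H^4(\tau_{\leq 2} X, \pi_3 X)$.

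The main technical care is required in handling the twisted coefficients: the fiber of $\Map_*(BG, X) \to \Map_*(BG, \tau_{\leq 2} X)$ over the non-canonical basepoint $\overline{F}$ is the pointed lift space, whose homotopy groups are reduced cohomology of $BG$ with coefficients twisted by the pullback of the $\pi_1 X$-action along the corresponding map to $B\pi_1 X$, rather than the naive $\Map_*(BG, K(\pi_3 X, 3))$ with trivial coefficients. The cleanest way to bookkeep this is to pull the entire Postnikov tower of $X$ back along $\pi_1 F\colon BG \to B\pi_1 X$, which converts all twisted coefficients on $X$ into $G$-equivariant local systems on $BG$ whose equivariant cohomology is exactly $\widetilde{H}^\ast(BG, -)$ with the given twisting; the argument above then goes through verbatim on the pulled-back tower.
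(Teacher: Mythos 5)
Your proposal is correct and follows essentially the same route as the paper: decompose $X$ via its Postnikov tower, identify the fibers of the induced maps of pointed mapping spaces with (fibers of) homotopy fixed point spaces of twisted Eilenberg--MacLane spaces computing reduced twisted cohomology, show $\pi_1\Map_*(BG,\tau_{\leq 2}X)=\widetilde{H}^1(BG,\pi_2X)$ and $\pi_{i>1}=0$, and conclude from the long exact sequence. Your device of pulling the tower back along $\pi_1 F$ to handle the twisting is an equivalent bookkeeping to the paper's lifting of the Postnikov fibration to $G$-spaces with the action induced by the fixed-point structure on the basepoint.
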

\begin{proof}
Recall that $\Map(BG, X) \simeq X^{hG}$ is the space of homotopy fixed points for the trivial $G$-action on $X$. Similarly, $\Map_*(BG, X) = \fib(X^{hG} \to X)$ is the homotopy fiber of the forgetful map at the basepoint of $X$, i.e. the space of $G$-fixed point structures on the basepoint of $X$. In particular, the element $F \in \Map_*(BG,X)$ defines such a $G$-fixed point structure on the basepoint. The truncation map $X\to \tau_{\leq 2} X$ is (trivially) $G$-equivariant for the trivial action. Its fiber $K(\pi_3 X,3)$ therefore inherits a $G$-action from the $G$-fixed point structure on the basepoint of $\tau_{\leq 2} X$ corresponding to $F$. This lifts the Postnikov fiber sequence  $K(\pi_3 X,3) \to X \to \tau_{\leq 2} X$ to a fiber sequence in $G$-spaces. Taking $G$-fixed points, we therefore obtain a map of fiber sequences in pointed spaces
\[
\begin{tikzcd}
\ldots \arrow[r] & K(\pi_3 X, 3)^{hG}  \arrow[r]\arrow[d] &  (X)^{hG}\simeq \Map(BG, X)\arrow[r]\arrow[d]& (\tau_{\leq 2} X)^{hG} \simeq \Map(BG, \tau_{\leq 2} X)\arrow[d]\\ 
\ldots \arrow[r] & K(\pi_3 X, 3)  \arrow[r] & X \arrow[r]& \tau_{\leq 2}X\\ 
\end{tikzcd}.\]
Taking vertical fibers therefore results in a fiber sequence (with basepoints recorded):
\begin{equation}\label{eq:fiberseq}
\mathrm{fib}\left(K(\pi_3 X,3)^{hG} \to K(\pi_3 X, 3)\right) \to (\Map_*(BG, X), F) \to (\Map_*(BG, \tau_{\leq 2}X), F).
\end{equation}
Running the same argument for $\tau_{\leq 2}X \to \tau_{\leq 1} X$, we find a fiber sequence 
\begin{equation}\label{eq:fiberseq2}
\mathrm{fib}\left(K(\pi_2X, 2)^{hG} \to K(\pi_2 X,2) \right) \to \Map_*(BG, \tau_{\leq 2} X) \to \Map_*(BG, \tau_{\leq 1} X) = \Hom(G, \pi_1(X))
\end{equation}
where the last space is the set of group homomorphisms $G \to \pi_1X$. In particular, the first map is fully faithful (i.e. injective on $\pi_0$ and an isomorphism on higher homotopy groups) and we thus find $\pi_1 (\Map_*(BG, \tau_{\leq 2}X); F) = \widetilde{H}^1(BG, \pi_2 X)$ and $\pi_{i} = 0 $ for $i>1$.  

Plugging these into the long exact sequence of homotopy groups associated to the fiber sequence~\eqref{eq:fiberseq} therefore yields the desired long exact sequence.
\end{proof}

In our case, $X= B \BrPic(\C)$ is the $3$-groupoid with homotopy groups \cite[Prop 7.1]{etingof2010fusion}
    \begin{enumerate}[$($\rm i$)$]
    \item    $ \pi_0 X = *$
        \item $\pi_1X = \pi_0\BrPic(\C)$ is the group of equivalence classes of invertible bimodule categories over $\C$.
        \item $\pi_2 X = \pi_1\BrPic(\C)\cong \Inv(\Z(\C))$, the group of isomorphism classes of invertible objects in $\Z(\C)$.
        \item $\pi_3 X = \pi_2\BrPic(\C)\cong\mathbb \kk^\times$.
    \end{enumerate}
    Moreover, while the action of $\pi_1X$ on $\pi_2 X = \Inv(\Z(\C))$ can be non-trivial (and hence leads to twisted coefficients below), the action of $\pi_1 X$ on $\pi_3 X$ will always be trivial. 
    
    Thus, Proposition~\ref{prop:obstruction} immediately yields (using that $\widetilde{H}^k \to H^k$ is an isomorphism for $k\geq 2$ and $\pi_0 \BrPic(\C)$ acts trivially on $\pi_2 \BrPic(\C) = \kk^{\times}$)  :

\begin{corollary}\label{cor:LESconcrete}
    Let $G$ be a finite group, $\C$  a finite tensor category and ${\mathsf{F}}\colon\uuG \to \BrPic(\C)$ a monoidal $2$-functor. Then, there is a long exact sequence of abelian groups
    $$0 \to H^2(BG, \kk^{\times})\to \pi_1(\MonFun(\uuG,\BrPic(\C)); {\mathsf{F}}) \to \widetilde{H}^1(BG, \Inv(\Z(\C))) \to H^3(BG, \kk^{\times}).$$
Here,  $\widetilde{H}$ denotes reduced group cohomology with coefficients twisted by the action $G \to \pi_0 \BrPic(\C) \to \Aut(\Z(\C)^{\times})$.
\end{corollary}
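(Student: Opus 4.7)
The plan is to deduce Corollary~\ref{cor:LESconcrete} as a direct specialization of Proposition~\ref{prop:obstruction}. First, I would set $X = B\BrPic(\C)$, which is a connected pointed $3$-groupoid whose loop space $\Omega X$ is the monoidal $2$-groupoid $\BrPic(\C)$. Under the standard identification between monoidal $2$-functors and pointed maps of classifying spaces, I would rewrite $\MonFun(\uuG, \BrPic(\C)) \simeq \Map_*(BG, X)$, so that the monoidal $2$-functor $\mathsf{F}$ becomes a basepoint $F \in \Map_*(BG, X)$.

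Next, I would record the homotopy groups of $X$ listed just above the statement: $\pi_0 X = *$, $\pi_1 X \cong \pi_0 \BrPic(\C)$, $\pi_2 X \cong \Inv(\Z(\C))$, and $\pi_3 X \cong \kk^{\times}$. The induced $\pi_1(X)$-action on $\pi_2(X)$ is the canonical one $\pi_0\BrPic(\C) \to \Aut(\Inv(\Z(\C)))$ coming from \eqref{eqn: Aut-Pic iso} and \eqref{eqn: partial}. For $\pi_3 X = \kk^{\times}$, I would briefly argue that the action is trivial (for instance, $\kk^{\times}$ arises as tensor automorphisms of the identity bimodule functor, which commute with conjugation by any invertible bimodule category; equivalently, the action is classified by a map $\pi_0\BrPic(\C) \to \Aut(\kk^{\times})$ that one checks is constant).

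Now I would feed these data into Proposition~\ref{prop:obstruction}, which applies since $X$ is a connected pointed $3$-groupoid. This yields the long exact sequence
\[
0 \to \widetilde{H}^2(BG, \kk^{\times}) \to \pi_1(\Map_*(BG,X); F) \to \widetilde{H}^1(BG, \Inv(\Z(\C))) \to \widetilde{H}^3(BG, \kk^{\times}),
\]
with coefficients twisted via $G \xrightarrow{\pi_1(F)} \pi_0\BrPic(\C) \to \Aut(\pi_n X)$. Since the action on $\kk^{\times}$ is trivial, the outer terms become (reduced) cohomology with constant coefficients.

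Finally, to match the stated form, I would invoke the standard fact noted in the footnote of Proposition~\ref{prop:obstruction}: the comparison map $\widetilde{H}^k(BG, M) \to H^k(BG, M)$ from reduced to unreduced group cohomology is an isomorphism for $k \geq 2$. This converts the $\widetilde{H}^2$ and $\widetilde{H}^3$ terms to $H^2$ and $H^3$, while leaving the twisted $\widetilde{H}^1$-term unchanged (since at degree one the distinction between crossed homomorphisms and their quotient by principal crossed homomorphisms is genuine when the action is nontrivial). Substituting $\Map_*(BG, X) \simeq \MonFun(\uuG, \BrPic(\C))$ yields the sequence claimed. The proof is essentially bookkeeping; the only substantive point to verify en route is the triviality of the $\pi_1$-action on $\pi_3$, which I expect to be the one spot requiring a line of justification rather than a pure reference.
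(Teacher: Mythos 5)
Your proposal is correct and follows exactly the paper's own route: the paper deduces the corollary by applying Proposition~\ref{prop:obstruction} to $X=B\BrPic(\C)$ with the listed homotopy groups, invoking the isomorphism $\widetilde{H}^k\to H^k$ for $k\geq 2$ and the triviality of the $\pi_0\BrPic(\C)$-action on $\pi_2\BrPic(\C)=\kk^\times$. Your extra sentence justifying that triviality (automorphisms of the identity bimodule functor commute with conjugation by invertible bimodule categories) is a reasonable filling-in of a point the paper merely asserts.
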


Thus, following Corollary~\ref{cor:trivializationofelement}, Corollary~\ref{cor:LESconcrete} immediately yields:
    
\begin{corollary}
Compatible pivotal structures on a $G$-graded extension ${\mathsf{F}}$ of a pivotal tensor category $\C$ are obstructed by classes
\begin{align*}
O_1(F) & \in \mathrm{ker}\left(\widetilde{H}^1(BG,  \Inv(\Z(\C))) \to H^3(BG, \kk^\times)\right)\\
O_2(F) & \in  H^2(BG, \kk^{\times})
\end{align*}
where $\widetilde{H}$ denotes reduced group cohomology with coefficients twisted by the action 
\[
G\to \pi_0 \BrPic(\C) \to \Aut(\pi_1 \BrPic(\C)) = \Aut(\Inv(\Z(\C)))
\]
where the map $\widetilde{H}^1(BG, \Inv(\Z(\C))) \to H^3(BG, \kk^\times)$ is given by composing with the $3$-cocycle $\alpha \in H^3(B\Z(\C)^{\times}, \kk^{\times})$ classifying the monoidal structure of the groupoid of invertible objects and invertible morphisms in the Drinfeld center $\Z(\C)$. If both classes vanish, then compatible pivotal structures form a torsor over the group of group homomorphisms
\[
\widetilde{H}^1(BG, \kk^{\times}) = \Hom(G, \kk^{\times}).
\]
\end{corollary}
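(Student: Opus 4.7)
The plan is to combine Corollary \ref{cor:trivializationofelement} and Corollary \ref{cor:LESconcrete} directly. By the former, compatible pivotal structures on the extension correspond to trivializations of the element $\lambda_{\mathsf{F}} \in \pi_1(\MonFun(\uuG, \BrPic(\C)); \mathsf{F})$ obtained by whiskering $\mathsf{F}$ with the Serre pseudo-natural equivalence $\bS$ from Proposition \ref{prop:RSpNE-is-Monoidal}. Existence of such a trivialization is equivalent to $\lambda_{\mathsf{F}}$ being the identity.

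Feeding this into the long exact sequence from Corollary \ref{cor:LESconcrete},
\[
0 \to H^2(BG, \kk^{\times}) \xrightarrow{\iota} \pi_1(\MonFun(\uuG,\BrPic(\C)); \mathsf{F}) \xrightarrow{p} \widetilde{H}^1(BG, \Inv(\Z(\C))) \xrightarrow{\delta} H^3(BG, \kk^{\times}),
\]
splits the obstruction into two layers. I would define $O_1(\mathsf{F}) := p(\lambda_{\mathsf{F}})$, which automatically lies in $\ker(\delta)$ by exactness. If $O_1(\mathsf{F}) = 0$, then the injectivity of $\iota$ produces a unique $O_2(\mathsf{F}) \in H^2(BG, \kk^{\times})$ with $\iota(O_2(\mathsf{F})) = \lambda_{\mathsf{F}}$. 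The vanishing of both $O_1$ and $O_2$ is then equivalent to $\lambda_{\mathsf{F}} = 0$, hence to the existence of a compatible pivotal structure.

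The main technical step is the identification of $\delta$ with the described pullback of the cocycle $\alpha$. Tracing through the proof of Proposition \ref{prop:obstruction}, $\delta$ arises as the boundary map attached to the Postnikov fibration $K(\pi_3 X, 3) \to \tau_{\leq 3} X \to \tau_{\leq 2} X$ for $X = B\BrPic(\C)$, whose classifying $k$-invariant lives in $H^3(\tau_{\leq 2} X, \pi_3 X)$. Restricted to the delooping of the abelian subgroup $\Inv(\Z(\C)) = \pi_2 \BrPic(\C)$ inside $\tau_{\leq 2} X$, and using that $\pi_3 \BrPic(\C) = \kk^{\times}$, this $k$-invariant coincides with the associator $3$-cocycle $\alpha$ of the monoidal groupoid of invertible objects and isomorphisms in $\Z(\C)$. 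Pulling back along a twisted homomorphism $G \to \Inv(\Z(\C))$ then yields the asserted formula for $\delta$.

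For the torsor statement, assume $\lambda_{\mathsf{F}} = 0$. Any two compatible pivotal structures $\fq_1, \fq_2$ on $\D$ differ by the monoidal natural automorphism $\fq_2 \circ \fq_1^{-1}$ of $\id_{\D}$, which restricts to the identity on $\id_\C$. Each homogeneous component $\D_g$ is an invertible $\C$-bimodule category, hence indecomposable, so $\Aut(\id_{\D_g}) \cong \kk^{\times}$; monoidality forces the resulting collection of scalars to define a group homomorphism $G \to \kk^{\times}$. Consequently, the set of compatible pivotal structures, when nonempty, is a torsor over $\Hom(G, \kk^{\times}) = \widetilde{H}^1(BG, \kk^{\times})$ with trivial $G$-action on $\kk^{\times}$.
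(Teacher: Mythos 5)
Your proposal is correct and follows the same route as the paper: the paper's proof is exactly the combination of Corollary \ref{cor:trivializationofelement} with the long exact sequence of Corollary \ref{cor:LESconcrete}, reading off $O_1$ as the image of the class of the whiskered Serre element and $O_2$ as its (unique) preimage in $H^2(BG,\kk^\times)$ once $O_1$ vanishes, with the torsor statement coming from the identification of monoidal automorphisms of $\id_\D$ trivial on $\C$ with $\Hom(G,\kk^\times)$. Your fleshing-out of the boundary map $\delta$ via the Postnikov $k$-invariant and of the torsor argument matches what the paper leaves implicit.
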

These classes unpack to the classes constructed in \S\ref{sec:algebraicobstruction}.

Similarly, by Proposition~\ref{prop:SphBrPic-Z2}, an extension of the spherical structure on a $G$-graded extension of a spherical tensor category $\C$ amounts to $B \underline{\Ztwo}$ fixed point structure on the classifying map $F\in \MonFun(\underline{G}, \BrPic(\C))$. By Example~\ref{ex: fixed point for BZ and BZ2 actions}, a $B\underline{\Ztwo}$-fixed point structure on an element $y$ of a $2$-groupoid $Y:=\MonFun(\underline{G}, \BrPic(\C)) $ in turn amounts to a trivialization of an element $\lambda \in \Omega_y Y$ which is compatible with the trivialization of $\lambda^2$ induced by the $B\underline{\Ztwo}$-action. Equivalently, this is a trivialization of an element in $\mathrm{fib}(2: \Omega_y Y \to \Omega_y Y)$. Thus, the corresponding class in $\pi_0 \mathrm{fib}(2:\Omega_y Y \to \Omega_y Y)$ serves as an obstruction.

\begin{proposition}\label{prop:sphericalobstruction}
    Let $X$ be a pointed connected $3$-groupoid,  $F\in \Map_*(BG, X)$ and suppose that $2: \pi_3 X \to \pi_3X$ is surjective.  Then, there is a long exact sequence of groups: 
$$0 \to \widetilde{H}^2(BG, (\pi_3 X)_2)\to \pi_0\left([\Omega_F\Map_*(BG,X)]_2\right) \to \widetilde{H}^1(BG, (\pi_2 X)_2).$$
Here, for an abelian group $A$, $A_2$ denotes the subgroup of elements of order $2$, and $[\Omega_F\Map_*(BG,X)]_2:= \fib(2: \Omega_F\Map_*(BG, X) \to \Omega_F\Map_*(BG, X)).$
\end{proposition}
\begin{proof}
For a pointed space $X$, we write $[\Omega X]_2:= \fib(2: \Omega X \to \Omega X)$, an expression which by naturality of $2: \Omega - \to \Omega-$ is functorial in the pointed space. Commuting limits, it follows that for an abelian group $A$ with an action by a group $G$, the space  $[\Omega (\fib(K(A,n)^{hG} \to K(A,n)))]_2$ has homotopy groups sitting in a long exact sequence 
\[
  \ldots \to \widetilde{H}^{n-2}(BG, A_2)  \to  \pi_1 \to \widetilde{H}^{n-3}(BG, A/2A) \to  \widetilde{H}^{n-1}(BG, A_2) \to \pi_0 \to \widetilde{H}^{n-2}(BG, A/2A). 
\]
In particular, if $n=2$, then the homotopy groups are $\pi_0 = \widetilde{H}^1(BG, A_2)$ and zero otherwise, and if $2:A \to A$ is surjective, then $\pi_k = \widetilde{H}^{n-1-k}(BG, A_2)$. 
Using this, and applying $[\Omega - ]_2$ to the fiber sequences~\eqref{eq:fiberseq} and~\eqref{eq:fiberseq2}, the statement follows as in the proof of Proposition~\ref{prop:obstruction}. 
\end{proof}

In our case $X= B \BrPic(\C)$ and $\pi_0 \BrPic(\C)$ acts trivially on $\pi_2 \BrPic(\C) = \kk^{\times}$. Moreover, since $\kk$ is algebraically closed,  $(-)^2: \kk^{\times} \to \kk^{\times}$ is surjective with kernel $(\kk^{\times})_2$ isomorphic to $\mathbb{Z}/2\mathbb{Z}$ if $\mathrm{char}(\kk) \neq 2$ and $0$ if $\mathrm{char}(\kk) =2$. Therefore, Proposition~\ref{prop:sphericalobstruction} immediately yields:

\begin{corollary}\label{cor:LESconcretespherical}
    Let $G$ be a finite group, $\C$  a finite tensor category and ${\mathsf{F}}\colon\uuG \to \BrPic(\C)$ a monoidal $2$-functor. Then, there is a long exact sequence of abelian groups
    $$0 \to H^2(BG, \mathbb{Z}/2\mathbb{Z})\to \pi_0([\Omega_F\MonFun(\uuG,\BrPic(\C))]_2) \to \widetilde{H}^1(BG, \Inv(\Z(\C))_2)$$
    if $\mathrm{char}(\kk) \neq 2$ and 
     $$0 \to 0\to \pi_0([\Omega_F\MonFun(\uuG,\BrPic(\C))]_2) \to \widetilde{H}^1(BG, \Inv(\Z(\C))_2)$$ if $\mathrm{char}(\kk)=2$.
Here,  $\widetilde{H}$ denotes reduced group cohomology with coefficients twisted by the action $G \to \pi_0 \BrPic(\C) \to \Aut(\Z(\C)^{\times})$.
\end{corollary}

Since the middle group is an obstruction group for extensions of the spherical structure, this yields:

\begin{corollary}
    Compatible spherical structures on a $G$-graded extension of a spherical (unimodular) finite tensor category $\C$ are obstructed by classes
    \begin{align*}
        O_1(F) & \in \widetilde{H}^1(BG,  \Inv(\Z(\C))_2) \\
        O_2(F) & \in \left\{ \begin{array}{lr}  H^2(BG,  \Ztwo) & \mathrm{char}(\kk) \neq 2\\ 0 & \mathrm{char}(\kk)= 2\end{array}\right. 
    \end{align*}
    where $\widetilde{H}$ denotes reduced group cohomology with coefficients twisted  as above and  $\Inv(\Z(\C))_2$ denotes the subgroup of $\Inv(\Z(\C))$ of elements of order dividing $2$.
    If both classes vanish, then compatible spherical structures form a torsor over the group of homomorphisms $G \to \Ztwo$ if $\mathrm{char}(\kk) \neq 2$ (resp. there is a unique one if $\mathrm{char}(\kk) = 2$). 
\end{corollary}


\subsection{Examples}
\begin{example}
Let us consider the above pivotal obstruction theory in the case of pointed fusion categories.
Let $C$ be a normal subgroup of a finite group $D$ and let $G=D/C$.
Then $\D=\Vect_D$ is a $G$-graded extension of $\C=\Vect_C$.
Pivotal structures in this case are simply characters, so extension of pivotal structures corresponds to the classical problem of extending a linear character from a subgroup to a group.
The group $G$ acts on characters of $C$ by conjugation, $\chi(-)\mapsto\chi(g^{-1}(-)g)$. By Remark~\ref{rem:nonpivotalizable more general}, the homogeneous component corresponding to the coset $xC,\, x\in D,$ is pivotalizable if and only if $\phi\times \phi$ vanishes on the stabilizer of $x$ in $C\times C^{\op}$. The latter is equal
to $\{(xcx^{-1},\, c^{-1}) \mid  c\in C \}$ and so  the first obstruction $O_1$ vanishes if and only if $\chi$ is $G$-invariant. And indeed, \[
     O_1 \in \widetilde{H}^1\big(G, \Hom(C, \kk^{\times})\big) \subseteq \widetilde{H}^1\big(G, \Inv(\Z(\Vect_C))\big) 
   \]
    unpacks to the function 
     \[
     G = D/C \ni d_0 C \mapsto \chi(d_0^{-1}(-)\, d_0 ) \in \Hom(C, \kk^{\times}).
     \]

The obstruction to lifting a character 
\(\chi \in H^1(C,\kk^\times)^G\) to a character of \(D\)
is determined by means of the transgression in the 
Lyndon–Hochschild–Serre five-term exact sequence (also known as the inflation-restriction exact sequence):
\[
0 \longrightarrow H^{1}\!\bigl(G,\kk^{\times}\bigr)
 \xrightarrow{\ \mathrm{inf}\ } H^{1}\!\bigl(D,\kk^{\times}\bigr)
 \xrightarrow{\ \mathrm{res}\ } H^{1}\!\bigl(C,\kk^{\times}\bigr)^{G}
 \xrightarrow{\ \mathrm{tr}\ } H^{2}\!\bigl(G,\kk^{\times}\bigr).
\]
Here, \(\mathrm{tr}(\chi)\) is represented by the cocycle
\[
(x,y) \longmapsto \chi\!\bigl(s(x)s(y)s(xy)^{-1}\bigr)\in \kk^\times,
\qquad x,y\in G,
\]
where \(s: G \to D\) is any set-theoretic section satisfying \(s(1)=1\).
This coincides with the obstruction defined in~\eqref{eqn: O2alg}; that is,
\[
O_2 = \mathrm{tr}(\chi) \in H^2(G,\kk^\times).
\]

Here are some explicit examples.
\begin{enumerate}
    \item[(i)] Let $C=\zZ/4\zZ$ with a character $\chi: C \to \mathbb{C}^\times$ given by $\chi(n)=i^n$
    and  let $D$ be the dihedral group of order $8$. Then $\chi$ is not conjugation invariant, so the 
    obstruction $O_1(\chi)$ is nontrivial.
    \item[(ii)] Let $C = \Ztwo$ with an injective character $\chi: \Ztwo \to \kk^{\times}$  and let $D$ be a non-Abelian central extension of ($\Ztwo \times \Ztwo)$ by $C$, i.e. either dihedral or quaternion group.
    Clearly, the first obstruction vanishes, because conjugation does nothing.
    Since $C \subset [D,D]$, any character of $D$ must vanish on $C$,
    so the obstruction $O_2(\chi)$ is non-trivial in this case.
    \item[(iii)] Let $C = \Ztwo$ with an injective character $\chi: \Ztwo \to \kk^{\times}$  and let $D$ be $\mathbb Z/4\mathbb Z$.
    Since $\chi$ extends to $D$ by the formula given in (i), we know that both obstructions must vanish.
    This extension to $D$ is not spherical though, and indeed there can be no spherical extension, because $O_2=\mathrm{tr}(\chi)$ is nontrivial in $H^2(\Ztwo,\Ztwo)$, despite being trivial in $H^2(\Ztwo,\mathbb C^\times)$.
\end{enumerate}
\end{example}

\begin{example}
    If $\Inv(\Z(\C))=1$ and $H^2(G,\kk^\times)=0$, then all pivotal obstructions vanish automatically, so any pivotal structure can be extended. On the other hand, if $\Inv(\Z( \C))_2 = 1 $ and $H^2(G, \mathbb{Z}/2\mathbb{Z})=0$ (resp. no additional condition in characteristic two), then any spherical structure can be extended. 

    For example, if $\Inv(\C)=1$ and $\C$ admits a nondegenerate braiding (such as the Fibonacci category), then it follows that $\Z(\C)\simeq\C\boxtimes\C^{rev}$, and therefore $\Inv(\Z(\C))\cong1$.
    If $\kk$ is algebraically closed and characteristic zero, then $H^2(G,\kk^\times)\cong H_2(G;\zZ)$, so any group with trivial Schur multiplier will satisfy the desired property.
    These Schur-trivial groups include all cyclic groups, and all groups for which all Sylow $p$-subgroups are Schur-trivial (e.g. $S_3$).
\end{example}

\begin{example}
Let $\C=\Vect_A$ for some finite abelian group $A$, and let $\D$ be the Tambara-Yamagami category $\C(A,\chi,\tau)$ determined by the nondegenerate symmetric bicharacter $\chi\colon A\times A\to\mathbb C^\times$, and $\tau=\pm|A|^{-1/2}$ (see \cite{tambarayamagami} for details).
By definition, $\D$ is a $\zZ/2\zZ$ extension of $\D_0=\C$ with the nontrivial homogeneous component $\D_1=\Vect$.
By Remark~\ref{rem:nonpivotalizable more general}, $\D_1$ is not pivotalizable, unless the pivotal structure on $\Vect_A$ is the trivial one. The obstruction $O_2$ vanishes since $H^2(\Ztwo,\,\kk^\times)=0$. So the pivotal structures on $\D$ are in bijection with 
$H^1(\Ztwo,\,\kk^\times)\cong\Hom(\Ztwo,\,\kk^\times)$, and both of these are spherical. 
\end{example}


\bibliographystyle{alpha}
\bibliography{references}

\end{document}